\documentclass[10pt,reqno]{amsart}
\usepackage{graphicx} 

\usepackage{cancel, esint}
\usepackage{enumitem}
\usepackage{amsmath,amssymb,amsthm,mathtools}
\usepackage{geometry}
\usepackage{xcolor}
\usepackage{hyperref}
\hypersetup{
    colorlinks,
    linkcolor={red!50!black},
    citecolor={blue!50!black},
    urlcolor={blue!80!black}
}
\usepackage{enumitem, float}
\usepackage{bm}
\usepackage{xfrac, nccmath}
\usepackage{caption}
\usepackage{parskip}
\newcommand{\R}{\mathbb{R}}

\newcommand{\cJ}{\mathcal{E}}
\newcommand{\dd}{\,\mathrm{d}}
\newcommand{\divv}{\operatorname{div}}

\newcommand{\G}{\Gamma}

\newcommand{\K}{K}

\newcommand{\J}{\mathcal{E}^\Phi}
\newcommand{\na}{\nabla}
\newcommand{\Reg}{\text{Reg}}
\newcommand{\Sing}{\text{Sing}}
\newcommand{\La}{\Lambda}
\newcommand{\Hm}{\mathcal{H}}
\newcommand{\e}{\varepsilon}

\newcommand{\Istarz}{I_{\La, \alpha, \C}^{*}}
\newcommand{\Jfm}{\mathcal{E}^\Phi_{f,\mu}}
\newcommand{\Div}{\text{div}}
\newcommand{\TT}{\tau}
\newcommand{\FL}{\Psi}
\newcommand{\E}{\mathcal E}
\newcommand{\Lo}{\mathcal L}
\newcommand{\db}{\bar{\delta}}
\newcommand{\C}{\Upsilon}
\newcommand{\An}{\mathcal{A}(\La)}
\newcommand{\cG}{\mathcal{G}}

\def\R {\mathbb{R}}

\def\eps{\varepsilon}

\newcommand{\II}{I_{\La, \alpha, \C}}

\def\PosS{\{u>0\}}

\def\GU{\Gamma(u)}
\def\Reg{\mathrm{Reg}}
\def\Sing{\mathrm{Sing}}
\def\SingU{\mathrm{Sing}(u)}

\newtheorem{theorem}{Theorem}[section]
\newtheorem{lemma}[theorem]{Lemma}
\newtheorem{proposition}[theorem]{Proposition}
\newtheorem{corollary}[theorem]{Corollary}

\theoremstyle{definition}
\newtheorem{definition}[theorem]{Definition}
\newtheorem{remark}[theorem]{Remark}

\def\hem{\hspace{0.5em}}
\def\vem{\vspace{0.6em}}

\numberwithin{equation}{section}

\title{Singular set estimates in quasilinear Alt-Caffarelli problems}
\author[A. De Rosa]{Antonio De Rosa}
\address{Department of Decision Sciences and BIDSA, Bocconi University, Milano, Italy}
\email{antonio.derosa@unibocconi.it}

	\author[R. Neumayer]{Robin Neumayer}
	\address{Department of Mathematical Sciences, Carnegie Mellon University, 5000 Forbes Avenue, Pittsburgh, PA 15213, United States of America }
	\email{neumayer@cmu.edu}

\author[H. Yu]{Hui Yu}
\address{Department of Mathematics, National University of Singapore, Singapore}
\email{huiyu@nus.edu.sg}

\begin{document}

\begin{abstract}
For a nonnegative, uniformly convex function $\Phi$, we study nonnegative minimizers of the quasilinear Alt-Caffarelli energy  $\int_{\Omega}\Phi(\nabla u)+\chi_{\PosS}$. Under mild regularity assumptions on $\Phi$, we show that the codimension of the singular set of the free boundary is at least $2+\eta$ for a positive $\eta$. 
With a singular minimizer in three dimensions in a forthcoming work, this result is sharp as long as integer dimensions are considered. Our proof method is flexible, and we use it to also prove a similar result for minimizers of the $p$-Laplacian counterpart of the Alt-Caffarelli energy.
\end{abstract}

\maketitle

\section{Introduction}

For a nonnegative and uniformly convex function $\Phi$, we consider the \textit{quasilinear Alt-Caffarelli energy} on a bounded domain $\Omega$ in $\R^n$, defined as 
\begin{equation}
\label{eqn: main functional}
\mathcal{E}^\Phi(u;\Omega):=\int_{\Omega}\Phi(\nabla u)+\chi_{\PosS}.
\end{equation} 
This functional appears in a variety of applications, for instance, fluid mechanics \cite{ACF1, GLS, KW1, KW2, LLWX, LSTX},   combustion theory \cite{DPS, MWo, T1}, electrostatics \cite{Ph, Tep} as well as material science \cite{F, FS,T2}. For these, it is natural to consider the \textit{one-phase} problem, that is, when the minimizer satisfies
$$
u\ge 0 \hem  \text{ in }\Omega.
$$ 
Such minimizers are obtained when we impose nonnegative boundary data.

When the {integrand} $\Phi$ is taken as $\Phi(p)=|p|^2$, our problem reduces to the {classical Alt-Caffarelli problem}, one of the most well-studied elliptic free boundary problems. 
For the classical theory, see the monographs \cite{CS, V} and 
Section~\ref{sec:aclit:classical} below. 
For some recent developments, see \cite{CFFS, DJ, DJS, EdSV, EDZ, EFeY, ESV, FeFS, FeS, FSV, FTW1, FTW2, HKM,KaWa}. 
Motivated by applications in inhomogeneous media, the quasilinear version in \eqref{eqn: main functional} has also received extensive attention, for instance, \cite{ACF2, CV,DP, LT, MW, MWo, PV05, DP, T1, T2, LN1, LN2,  DFFV, Kp, FL21}. 

\vem

Beyond its applications, the mathematical interest of \eqref{eqn: main functional} lies in the regularity properties of the minimizer $u$ as well as its {free boundary}
\begin{equation}
\label{eqn:free boundary}
\GU:=\partial\PosS\cap\Omega.
\end{equation} 
This is the interface between the {positive phase} $\PosS$ and the {zero phase} $\{u=0\}.$

Local Lipschitz regularity of minimizers and $\eps$-regularity/improvement-of-flatness 
of the free boundary $\GU$ are classical for the Alt-Caffarelli problem
\cite{AC,CS,V} and extend, under appropriate structural assumptions, to quasilinear problems \cite{ACF2,CV,DP,PV05, LN1, LN2, MW,T1, FL21}.
 Consequently, the free boundary decomposes into two pieces, the \textit{regular part}, 
i.e., the set of points $x$ where $\G(u)$ is $C^{1,\alpha}$ in a neighborhood of $x$, and the \textit{singular part}:

\begin{equation}
\label{eqn:decomposition of fb}
\GU=\Reg(u)\cup\SingU;
\end{equation} 
see Definition~\ref{def: sing point}. For a nice integrand $\Phi$, the regular part  is locally a smooth hypersurface \cite{KN}. 

\vem

The next stage is to estimate the size of the singular part $\SingU$, for instance, in terms of its Hausdorff dimension. Here the theory in the quasilinear setting \eqref{eqn: main functional} diverges substantially from its classical counterpart. 

In the classical setting (when $\Phi(p)=|p|^2$), with the monotonicity formula by Weiss \cite{W},  an estimate on the dimension of $\SingU$ is equivalent to the classification of homogeneous minimizers in each dimension. Thanks to \cite{CJK, JS}, and very recently, \cite{OpenAI} we know that  there are no singular homogeneous minimizers in $\R^n$ for $n\leq 6$. As a result, the singular set has dimension at most $n-7$
for a minimizer $u$ of the classical Alt-Caffarelli functional. This is sharp thanks to \cite{DJ}.

For the quasilinear problem \eqref{eqn: main functional}, however, almost nothing is known about the size of $\SingU$.\footnote{By \cite{MW}, we have $\Hm^{n-1}(\GU)<+\infty.$ The theory of sets of finite perimeter implies $\Hm^{n-1}(\SingU)=0$.} In Theorem~\ref{thm: weiss obstruction}, we show that a Weiss-type monotonicity formula does not hold in this general class. Even for a specialized class of integrands for which a monotonicity formula is known \cite{karakhanyan2025newmonotonicityformulaquasilinear}, it is not straightforward to reduce singularity analysis to the classification of homogeneous minimizers as in the classical setting. As a result, estimates on the size of $\SingU$ require new ideas and techniques. 

\vem 

In this article, we estimate the Hausdorff dimension of the singular set of a minimizer of the quasilinear functional \eqref{eqn: main functional}. 

To be precise, for given constants $\Lambda\ge1$, $\Upsilon>0$ and $\alpha\in(0,1)$, the class of integrands we consider satisfy the following assumptions:
\begin{enumerate}
\item\label{1}{$\Phi\ge 0$ in $\R^n$ and $\Phi(0)=0$;}
\item\label{2}{$\Phi\in C^{1,1}(\R^n)$ with 
 \begin{equation}\label{eqn: ellipt}
 \frac{1}{\Lambda} |\xi|^2 \leq  D^2 \Phi(p)[\xi,\xi] \leq  {\Lambda} |\xi|^2 \qquad \text{ for a.e. $p \in \R^n$ and for all } \xi \in\R^n\,.
 \end{equation}
}

\item\label{3}{For $R_\Lambda>1$ and $0<r_{\Lambda}<1$, depending only on $\Lambda$ and the dimension $n$, we have $\Phi\in C^{2,\alpha}(\An)$ with
\begin{equation}\label{eqn: holder intro}
[D^2\Phi]_{C^\alpha(\An)}\le\C \qquad \hem\text{ where }\qquad \An := B_{R_\Lambda}\setminus \overline{B_{r_\Lambda}}.
\end{equation}
}
\end{enumerate}
For an integrand $\Phi$ satisfying all these assumptions, we write
\begin{equation}
\label{eqn:class of integrands}
\Phi\in \II.
\end{equation} 
If a quantity depends only on the constants $\Lambda,\Upsilon, \alpha$ and the dimension $n$,  then it is referred to as a \textit{universal quantity}.

\begin{remark}
The two universal radii $R_\Lambda$ and $r_\Lambda$ arise from the Lipschitz estimate and the non-degeneracy of minimizers of \eqref{eqn: main functional} and are specified in Definition~\ref{def: annulus}.
\end{remark} 

\begin{remark}
The class $\II$ contains the following special cases:
\begin{enumerate}
\item{A uniformly convex integrand $\Phi$ in $C^{2,\alpha}(\R^n)$ that is nonnegative and vanishes at $0$;}
\item{A {positively} $2$-homogeneous integrand of the form 
$$
\Phi(p):=|p|^2\varphi(p/|p|),
$$
where $\varphi\in C^{2,\alpha}(\mathbb{S}^{n-1})$ is strictly positive and uniformly convex on the sphere.}
\end{enumerate}
In the second class, 
$D^2\Phi$ is $0$-homogeneous away from the origin. It extends continuously to the origin if and only if $\Phi$ is a quadratic form, equivalently $\varphi(\theta)=\theta\cdot A\theta$ for a symmetric positive definite matrix $A$. For this reason, we do not pose continuity  on $D^2\Phi$ near the origin in our definition of $\II$ from \eqref{eqn:class of integrands}. This leads to challenges regarding the Euler-Lagrange equation in a neighborhood of the \text{critical set} $\{\nabla u=0\}$ of a minimizer $u$. {See, for instance, \eqref{eqn: curvature estimate eta intro}, Lemma~\ref{lem: ball with big gradient}, Lemma~\ref{lem: Hessian eps reg}, and Corollary~\ref{lem: hessian bound implies small sing set}.}
\end{remark}

Our main result is the following:
\begin{theorem}\label{thm: main}
Let $n\geq2$, let $\Phi\in\II$, and let $u$ be a nonnegative
minimizer of $\E^\Phi(\cdot)$ in $\Omega$. If $n\geq3$, there exists a universal constant
$\eta\in(0,n-2)$ such that
\begin{equation}\label{eqn: intro measure est}
 \Hm^{n-2-\eta}(\Sing(u))=0.
\end{equation}
If $n=2$, then $\Sing(u)=\varnothing$.
\end{theorem}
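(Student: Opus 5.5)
The plan is to avoid any monotonicity formula. Instead, I would go through a curvature/Hessian estimate for the free boundary, in the spirit of the codimension-$(2+\eta)$ results for minimal surfaces and stable cones, where an $L^{2+\eta}$-type integrability of the second fundamental form near the boundary gives a Hausdorff measure bound on the singular set.

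\emph{Step 1: Lipschitz and nondegeneracy.} By Lipschitz regularity, $|\nabla u|\le R_\Lambda$ near $\G(u)$. By nondegeneracy together with the free boundary condition, $|\nabla u|\ge r_\Lambda$ in a neighborhood of $\Reg(u)$. Thus on the positive phase close to the free boundary, $\nabla u$ takes values in $\An$, where $D^2\Phi$ is $C^\alpha$ by assumption \eqref{eqn: holder intro}. The Euler--Lagrange equation $\divv(D\Phi(\nabla u))=0$ can then be differentiated, and each $\partial_e u$ solves a uniformly elliptic equation with $C^\alpha$ coefficients. The difficulty is the critical set $\{\nabla u=0\}$. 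I would handle it with a lemma of the type ``in every ball centered on $\G(u)$ there is a sub-ball of comparable size where $|\nabla u|\ge r_\Lambda$.'' This follows by compactness from nondegeneracy and the linear growth of $u$ away from the free boundary.

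\emph{Step 2: an $\varepsilon$-regularity statement for the Hessian.} I would show that if $\int_{B_r(x)\cap\{u>0\}}|D^2u|^{2}\,\mathrm{d}y$, suitably rescaled as $r^{2-n}\int|D^2 u|^2$, is small, then $x$ is a regular point. The argument is by compactness: blow-ups with vanishing Hessian energy are one-dimensional half-plane solutions, and then the improvement-of-flatness results apply.

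\emph{Step 3: a curvature estimate with a gain of integrability.} The key estimate is
\[
\int_{B_1\cap\{u>0\}} |D^2u|^{2}\,\mathrm{dist}(y,\G(u))^{-\eta}\,\mathrm{d}y\le C,
\]
or equivalently a higher integrability bound $|D^2u|\in L^{2+\eta}_{\rm weak}$ near the free boundary. I would derive it from a Caccioppoli/stability-type inequality for $\partial_e u$. On the regular part, $\partial_e u$ satisfies an oblique boundary condition. Testing the linearized equation with $\partial_e u\,\zeta^2$ and summing over $e$ then gives a Simons/Sternberg--Zumbrun-type inequality. A Gehring-type reverse H\"older argument upgrades this to the exponent $2+\eta$. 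The constants are controlled because everything happens in the set where $\nabla u\in\An$, which Step 1 supplies.

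\emph{Step 4: covering argument.} Cover $\Sing(u)$ by balls $B_r(x_i)$. By Step 2, each such ball carries $\int_{B_r}|D^2u|^2\gtrsim r^{n-2}$. Combining this with the $L^{2+\eta}$ bound via H\"older, each ball contributes at least $c\,r^{n-2-\eta'}$ to a fixed finite quantity. The contributions come from disjoint regions near the ball (here Step 1 localizes the mass). A standard Vitali covering argument then yields $\Hm^{n-2-\eta}(\Sing(u))=0$. When $n=2$, the same estimate applied to the scale-invariant energy forces it to vanish at small scales. Equivalently, logarithmic cutoffs make the Hessian energy arbitrarily small at every point, so Step 2 gives $\Sing(u)=\varnothing$.

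The main obstacle is Step 3. A clean stability inequality requires an equation for $\partial_e u$ up to the free boundary. That equation degenerates near the singular set, and $D^2\Phi$ is only $C^\alpha$ on an annulus. I would first try to prove the Caccioppoli inequality on the regular part using test functions supported away from small neighborhoods of $\Sing(u)$, exploiting $\Hm^{n-1}(\Sing(u))=0$ to remove those cutoffs. Only after that would I apply the reverse H\"older gain.
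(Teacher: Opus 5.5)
\textbf{Gap in Step 3.} Your Steps 1, 2 and 4 match the paper in spirit: the good-ball lemma, Hessian $\varepsilon$-regularity by compactness to half-plane solutions, and covering tubular neighbourhoods of $\Sing(u)$. Step 3, however, has a genuine gap, and it is exactly the obstruction the paper is built to get around.

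The stability inequality \eqref{eqn: stability} only holds for test functions supported away from $\Sing(u)$. So after the Bochner/stability computation you must remove a cutoff $\psi$ that vanishes near $\Sing(u)$. If $\Sing(u)$ is covered by balls $B_{r_k}(z_k)$, the error term $\int\varphi^2|\nabla\psi|^2$ is of order $\sum_k r_k^{n-2}$ (compare \eqref{eqn: capacity thing}). You therefore need $\Sing(u)$ to have zero $2$-capacity, which essentially means $\Hm^{n-2}(\Sing(u))=0$. Knowing $\Hm^{n-1}(\Sing(u))=0$ only makes $\sum_k r_k^{n-1}$ small and says nothing about $\sum_k r_k^{n-2}$, so the cutoff cannot be removed.

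But $\Hm^{n-2}(\Sing(u))=0$ is the $\eta=0$ case of the theorem itself. Without a Weiss-type monotonicity formula (not available in this class; cf. Theorem~\ref{thm: weiss obstruction}), there is no dimension-reduction argument that could supply it a priori. As written the argument is circular. This also undermines your $n=2$ argument, which needs $D^2u\in L^2$ up to the free boundary.

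\textbf{Missing idea: an open--closed argument in the space of integrands.} Let $\Istarz$ be the set of $\Phi\in\II$ all of whose minimizers satisfy $\Hm^{n-2}(\Sing(u))=0$.
\begin{enumerate}
\item It contains $|p|^2$ by Theorem~\ref{thm: AC}.
\item It is open, by propagation of singularities and of Hausdorff-content lower bounds along limits (Corollary~\ref{cor: prop of singularities}).
\item It is closed. Take $\Phi_k\to\Phi$ with $\Phi_k\in\Istarz$ smooth, and a minimizer $u$ for $\Phi$. Minimize the penalized energies $\mathcal{E}^{\Phi_k}_{f_k,1}$, where the $f_k$ are mollifications of $u$. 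The fidelity term forces $u_k\to u$, selecting $u$ among possibly non-unique minimizers. The $u_k$ satisfy your Bochner/stability $L^2$ Hessian bound uniformly, and now legitimately: $\Hm^{n-2}(\Sing(u_k))=0$ transfers to the penalized problem by blow-up (Lemma~\ref{lem: perturbed functional small singular set}). The bound then passes to $u$ by lower semicontinuity.
\end{enumerate}
Connectedness of $\II$ gives $\Istarz=\II$, and only after this is the $L^2$ Hessian bound available at all points and scales.

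\textbf{Secondary issue: the Gehring step.} The boundary terms in the Bochner and stability inequalities cancel only for the specific test function $G\zeta$ with $G=1+\Phi(\nabla u)$, not for $G-c$. So you obtain $\int_{B_r}|D^2u|^2\le Cr^{n-2}$ but no reverse H\"older inequality. In addition, $D^2\Phi$ need not be continuous near $0$. The paper instead gets the $\eta$-gain from the compactness-based non-concentration estimate of Lemma~\ref{lem: doubling}, and only on $\{|\nabla u|\ge\db\}$. That is why your Step 2 must be the restricted $\varepsilon$-regularity of Lemma~\ref{lem: Hessian eps reg} with $\delta=\db$, rather than a statement about the full $L^2$ Hessian energy.
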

Here and in the remainder of the paper, we denote the $d$-dimensional Hausdorff measure by $\Hm^d(\cdot)$. The energy $\mathcal{E}^\Phi(\cdot)= \J(\cdot; \Omega)$ is as defined in \eqref{eqn: main functional}, the class of integrands $I$ is as defined in \eqref{eqn:class of integrands}, and the singular part of the free boundary $\SingU$ is defined in \eqref{eqn:decomposition of fb}. 
 
\begin{remark}
This result is new even without $\eta>0$.  
Considering integer dimensions, this estimate is sharp since there is a singular minimizer in three dimensions \cite{DFKNTY}.
\end{remark}

\begin{remark}
For a similar estimate with $\eta=0$, the third condition in the definition of $I$ in \eqref{eqn:class of integrands} can be relaxed to merely $\Phi \in C^{2}({\overline{B_{R_\Lambda}}}\backslash B_{r_\Lambda})$ without any quantified upper bound; see  Remark~\ref{rmk: no holder}.
\end{remark}

The method is robust enough to cover a larger class of integrands that allow degeneracies in the convexity. To demonstrate the strategy, we estimate the singular set of minimizers of \eqref{eqn: main functional} when $\Phi(\cdot)=|\cdot|^p$, i.e. minimizers of 
\[
\E_p(u) = \int_\Omega |\na u|^p + \chi_{\{u>0\}}\,.
\]
\begin{theorem}\label{thm: p intro}
Let $I\subset (1,\infty)$ be a compact interval and fix $p \in I$.
    There exists $\eta\in (0,n-2)$ depending on $n$ and $I$ such that, for any nonnegative minimizer $u$ of $\E_p(\cdot)$, we have 
    \[
    \Hm^{n-2-\eta}(\Sing(u))=0.
    \] 
    For $n=2$, this should be interpreted as $\Sing(u)=\varnothing$.
\end{theorem}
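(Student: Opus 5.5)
The plan for Theorem~\ref{thm: p intro} is to run the same strategy as for Theorem~\ref{thm: main}, replacing the uniform convexity of $\Phi$ by the structure of $|\cdot|^p$ away from the origin. Three ingredients carry over. First, minimizers of $\E_p$ are locally Lipschitz and nondegenerate, with constants depending only on $n$ and $I$; this is the $p$-Laplace Alt--Caffarelli theory of Danielli--Petrosyan. So $|\nabla u|$ is bounded above near the free boundary, and there are points of $\{u>0\}$ close to $\Gamma(u)$ where $|\nabla u|$ is bounded below. Second, flat free boundaries are $C^{1,\alpha}$ by $\varepsilon$-regularity. Third, on the set $\{|\nabla u|\in(r,R)\}$ the integrand $|q|^p$ is smooth and uniformly convex, with constants depending on $r,R,n,I$. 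Hence on regions where the gradient stays in a fixed annulus, the Euler--Lagrange equation is a uniformly elliptic quasilinear equation with $C^{2,\alpha}$ coefficients. This is exactly the setting in which the paper's Hessian/curvature estimates (Lemma~\ref{lem: ball with big gradient}, Lemma~\ref{lem: Hessian eps reg}, Corollary~\ref{lem: hessian bound implies small sing set}) are used.

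The argument then proceeds in the following order.

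\textbf{Step 1.} Fix the universal annulus $\mathcal A=B_{R}\setminus \overline{B_r}$ given by the Lipschitz and nondegeneracy constants for $\E_p$, uniformly for $p\in I$. Near the free boundary, in the positive phase, $u$ is $p$-harmonic with $\nabla u\in\mathcal A$ on a region that is quantifiably large, by the analogue of Lemma~\ref{lem: ball with big gradient}.

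\textbf{Step 2.} Since $|q|^p$ is $C^{2,\alpha}$ and uniformly convex on $\mathcal A$, I can apply the $\varepsilon$-regularity for the Hessian (Lemma~\ref{lem: Hessian eps reg}). This gives an integrable curvature-type estimate of the form $\int_{B_\rho\cap\{u>0\}}|D^2u|^{2}\,\mathrm{d}x\lesssim \rho^{n-2}$, or its weighted analogue. The degenerate region $\{|\nabla u|<r\}$ is handled separately by the nondegeneracy estimate. Every constant depends only on $n$ and $I$, because $p\mapsto |q|^p$ is continuous in $C^{2,\alpha}(\overline{\mathcal A})$ for $p$ in a compact interval.

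\textbf{Step 3.} Feed this estimate into Corollary~\ref{lem: hessian bound implies small sing set}. A Hessian bound that improves by a factor $\rho^{\eta}$, obtained via a Gehring/reverse-H\"older self-improvement as in the proof of Theorem~\ref{thm: main}, yields $\mathcal H^{n-2-\eta}(\Sing(u))=0$. For $n=2$, the same bound combined with a compactness-and-blowup argument rules out singular points: in the plane every blowup limit has a flat free boundary.

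The main obstacle is Step 2 near the critical set. For $p\neq2$ the equation degenerates, for $p>2$, or becomes singular, for $p<2$, where $\nabla u=0$. In the uniformly convex case the paper only needed to avoid points where $D^2\Phi$ is discontinuous. Here, in addition, the ellipticity constants themselves blow up or vanish at such points. I would first try to show that the critical set stays a universal distance from the relevant part of the free boundary after rescaling; the reason is that $|\nabla u|$ is close to $1$ in the sense of the free-boundary condition near $\Gamma(u)$. The Hessian estimate then never needs to be applied where $|\nabla u|<r$. If that fails, I would fall back on known $C^{1,\beta}$ estimates for $p$-harmonic functions to control this region. The key point is that the curvature estimate only has to be integrated on the part of $\{u>0\}$ adjacent to the free boundary.
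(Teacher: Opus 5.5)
There is a genuine gap in Step~2, which is where all the work lies. Lemma~\ref{lem: Hessian eps reg} runs in the opposite direction from how you use it. It says that if the scale-invariant Hessian energy is \emph{small} then $0\in\Reg(u)$. It produces no upper bound on $\int|D^2u|^2$. Smoothness and uniform convexity of $|q|^p$ on an annulus give Schauder bounds only where $\na u$ stays in the annulus and the free boundary is already $C^{2,\alpha}$. That is, they work away from singular points, which is exactly where no estimate is needed.

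The bound $\int_{B_\rho\cap\{u>0\}}|\na u|^{p-2+(p-2)_+}|D^2u|^2\lesssim\rho^{n-2}$ at every free boundary point is the heart of the proof. It needs three things your proposal never mentions:
\begin{enumerate}
\item the stability inequality, i.e.\ the second variation of $\E_p$ and of the penalized functional $\E_{p,\mu,f}$;
\item a Bochner/Kato-type computation. For $p\ge2$, $|\na u|^p$ is tested against $\varphi^2$ times a level-set cutoff of $|\na u|^p$. For $p<2$, the equation for $|\na u|$ is paired with the linear truncation $(|\na u|-\e)_+$ in the stability inequality and with the Kato inequality $Q\ge c_p|\na u|^{p-2}|D^2u|^2-|\na u|^{2-p}g^2$. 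In both cases the point is that this quantity is a subsolution of the linearized equation whose Neumann data on $\GU$ reproduce the boundary term of the stability inequality;
\item a capacity cutoff around $\Sing(u)$, which is legitimate only if you already know $\Hm^{n-2}(\Sing(u))=0$.
\end{enumerate}
The third requirement is circular unless you run a continuity argument in the exponent. Enlarge $I$ to contain $2$ and consider the set of $p\in I$ for which every minimizer has $\Hm^{n-2}(\Sing(u))=0$. This set contains $2$ by the Alt--Caffarelli theory. It is open by uniform $\e$-regularity and propagation of singularities. It is closed because a minimizer for a limiting exponent is recovered as a limit of minimizers for good exponents. These minimizers are selected with the fidelity term $\int|\mu v-f|^2$, with $f$ a mollification of the target minimizer. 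They satisfy the curvature bound uniformly, and the bound passes to the limit by lower semicontinuity. Only after this does Corollary~\ref{lem: hessian bound implies small sing set} have an input.

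The later steps also have problems.
\begin{enumerate}
\item The $\eta$-gain is not a Gehring/reverse-H\"older self-improvement, since no reverse H\"older inequality is available. The paper instead proves, by compactness and contradiction, a doubling estimate. It controls the Hessian energy on $B_1$ by the energy on the band $\{1/A\le|D^2u|\le A\}\cap\{|\na u|\ge\db\}$ in $B_{1/5}$. This is iterated with a Vitali covering and concluded by layer cake. It requires the all-scale $L^2$ bound as input, and the good-ball lemma to identify half-plane limits.
\item Your fallback of keeping the critical set a universal distance from $\GU$ fails. The Bernoulli condition holds only at regular points, and $\na u$ may vanish arbitrarily close to singular points. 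This is why the paper proves only a weighted $L^2$ bound and restricts the $L^{2+\eta}$ bound to $\{|\na u|\ge\db\}$.
\item Nondegeneracy bounds $\sup u$ from below. It gives no control of $D^2u$ where $|\na u|$ is small.
\item For $n=2$, the claim that every planar blowup has a flat free boundary presupposes classifiable (e.g.\ homogeneous) blowups. You have no monotonicity formula to provide this when $p\neq2$. In the paper the case $n=2$ is simply $\Hm^{0}(\Sing(u))=0$, which comes out of the continuity argument.
\end{enumerate}
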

We refer the reader to Subsection~\ref{rmk: meta thm app} for a discussion of open problems and further settings where we believe the method may be applicable.

\vem

With $\eps$-regularity (see Lemma~\ref{lem: Hessian eps reg} and Corollary~\ref{lem: hessian bound implies small sing set}), Theorem \ref{thm: main} follows from the following curvature estimates, which are of independent interest. Theorem~\ref{thm: p intro} is proven through an analogous scheme and rests upon analogous curvature estimates; see Proposition~\ref{prop: curvature est p}.

\begin{theorem}
\label{thm: curvature estimate intro}
Let $u$ be a nonnegative minimizer of $\mathcal{E}^\Phi(\cdot)$  in $B_2$ with $\Phi\in \II$, then there exist universal constants
$\eta>0$ and $C<\infty$ such that if $0 \in \GU$, then
\begin{equation}
\label{eqn: curvature estimate intro}
\int_{B_1\cap\PosS}|D^2u|^2\le C,
\end{equation}
and
\begin{equation}
\label{eqn: curvature estimate eta intro}
\int_{B_1\cap\PosS\cap\{|\nabla u|\ge 2r_\Lambda\}}|D^2u|^{2+\eta}\le C.
\end{equation}
Here $r_\Lambda$ is the lower radius introduced in \eqref{eqn: holder intro}, which is explicitly defined in Definition \ref{def: annulus}.
\end{theorem}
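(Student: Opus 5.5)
The plan is to obtain \eqref{eqn: curvature estimate intro} from an integrated Bochner-type identity for the linearized equation in the positive phase. We then upgrade to \eqref{eqn: curvature estimate eta intro} by a Gehrlein/reverse Hölder argument on the set where the gradient stays in the annulus $\An$, where $\Phi$ is $C^{2,\alpha}$.

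We use the standard facts for minimizers: $u$ is Lipschitz in $B_{3/2}$ with $|\nabla u|\le R_\Lambda/2$ (say), and $\divv(D\Phi(\nabla u))=0$ in $\PosS$. Since $\Phi\in C^{1,1}$ is uniformly convex, difference quotients give $u\in W^{2,2}_{loc}(\PosS)$. Each derivative $u_k$ solves the linearized equation $\divv(A(x)\nabla u_k)=0$ with $A=D^2\Phi(\nabla u)$ measurable and $\Lambda$-elliptic. Along $\GU$ we use that the free boundary condition forces $|\nabla u|$ to lie in a universal range (nondegeneracy gives the lower radius $r_\Lambda$). Moreover, $\GU$ has locally finite $\Hm^{n-1}$ measure by \cite{MW}.

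\textbf{Step 1: $L^2$ bound.} Test the equation for $u_k$ with $u_k\zeta^2\,\psi_\eps(u)$. Here $\zeta$ is a cutoff in $B_{3/2}$ and $\psi_\eps(u)=\min(u/\eps,1)$ localizes away from the free boundary. Summing in $k$ yields
\[
\int \zeta^2\psi_\eps(u)\,A\nabla u_k\cdot\nabla u_k \le C\int |\nabla u|^2|\nabla\zeta|^2 + \Big|\int \zeta^2 u_k\,A\nabla u_k\cdot \nabla\psi_\eps(u)\Big| .
\]
The first term is bounded by Lipschitz. The boundary-layer term equals $\eps^{-1}\int_{\{0<u<\eps\}}\zeta^2 A\nabla u_k\cdot\nabla u\,u_k$. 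We rewrite it as $\tfrac1{\eps}\int_{\{0<u<\eps\}}\zeta^2\,\nabla(\Psi(\nabla u))\cdot\nabla u$ for a suitable potential, e.g. $\Psi(p)=D\Phi(p)\cdot p-\Phi(p)$, using $\partial_k(D\Phi(\nabla u))=A\nabla u_k$. After integration by parts on the level sets $\{u=t\}$ (coarea), this is controlled by the averages $\frac1\eps\int_0^\eps\int_{\{u=t\}}|\nabla u|^2\,|\Psi(\nabla u)|$ plus terms with $|\nabla\zeta|$. By Lipschitz continuity and the uniform bound $\Hm^{n-1}(\{u=t\}\cap B_{3/2})\le C$ for small $t$, these averages are $\le C$. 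The level-set perimeter bound follows from density estimates, i.e.\ $\int_{\{0<u<t\}\cap B_{3/2}}|\nabla u|\le Ct$. Letting $\eps\to0$ and using ellipticity gives \eqref{eqn: curvature estimate intro}.

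\textbf{Step 2: higher integrability.} On the region $\{|\nabla u|\ge 2r_\Lambda\}\cap\PosS\cap B_1$ we have $|\nabla u|\in\An$ up to a margin. There $A=D^2\Phi(\nabla u)$ is the composition of a $C^\alpha$ map with a $W^{1,2}$ map, which is enough to run a Caccioppoli inequality for $D^2u$. Concretely, we apply Step 1's computation on balls $B_r(x)$ with a cutoff also in $|\nabla u|$: multiply by $\phi(|\nabla u|)$, supported in $\{|\nabla u|>r_\Lambda\}$ and equal to $1$ on $\{|\nabla u|\ge2r_\Lambda\}$. The derivative of $\phi(|\nabla u|)$ produces $|D^2u|^2$ on the transition layer, absorbed using the convexity structure; the equation for $\Phi^*$-type quantities of $\nabla u$ is the natural tool. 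The result is
\[
\int_{B_{r/2}}\phi\,|D^2u|^2\le \frac{C}{r^2}\int_{B_r}\phi\,|\nabla u-(\nabla u)_{x,r}|^2 + (\text{free boundary term}),
\]
followed by Sobolev–Poincaré to get a reverse Hölder inequality. Gehring's lemma then gives the exponent $2+\eta$, and covering yields \eqref{eqn: curvature estimate eta intro}. Balls touching $\GU$ are handled by the same boundary-layer estimate as Step 1, at scale $r$, which gives an extra $Cr^{n-1}$-type term consistent with Gehring with a nonhomogeneous right side.

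\textbf{Main obstacle.} I expect the main obstacle to be the boundary-layer term in Step 1: showing the free boundary contributes a bounded amount, uniformly in $\eps$, without prior regularity of $\GU$. If the potential rewrite fails because $\Psi$ is only $C^{1,1}$ near $0$, I would instead use $u_k\zeta^2$ truncated to $\{|\nabla u|>r_\Lambda/2\}$ near $\GU$. This is justified since $|\nabla u|$ is bounded below near the free boundary in a measure-theoretic sense. In Step 2 the delicate point is the cutoff $\phi(|\nabla u|)$, since $D^2\Phi$ may be discontinuous at $0$.
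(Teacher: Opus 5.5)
\textbf{Gap in Step 1.} The boundary-layer term cannot be controlled with the first-order information you use. Since $\nabla\bigl(\Psi(\nabla u)\bigr)=D^2u\,D^2\Phi(\nabla u)\nabla u$ and $\nabla u=-|\nabla u|\nu$ on $\GU$, the coarea formula gives, wherever $\GU$ is regular,
\[
\frac1\eps\int_{\{0<u<\eps\}}\zeta^2\,\nabla\bigl(\Psi(\nabla u)\bigr)\cdot\nabla u\,dx\;\longrightarrow\;\int_{\GU}\zeta^2\,|\nabla u|\,\bigl\langle D^2\Phi(\nabla u)D^2u[\nu],\nu\bigr\rangle\,d\Hm^{n-1}.
\]
Up to a positive weight, this is exactly the boundary term of the stability inequality \eqref{eqn: stability}. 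For $\Phi=|p|^2$ it equals $2\int_{\GU}\zeta^2H$, the weighted total mean curvature of the free boundary. It has no sign, and Lipschitz, nondegeneracy and perimeter/density estimates say nothing about it.

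Your integration by parts on level sets cannot remove the normal derivative. Up to bounded terms involving $\nabla\zeta$, it only returns two things. The first is the difference quotient $\eps^{-1}(F(\eps)-F(0))$ of the flux $F(t)=\int_{\{u=t\}}\zeta^2\Psi(\nabla u)|\nabla u|\,d\Hm^{n-1}$, whose derivative at $t=0$ is again this curvature term. The second, for anisotropic $\Phi$, is the uncontrolled term $\eps^{-1}\int_{\{0<u<\eps\}}\zeta^2\Psi(\nabla u)\Delta u$.

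The estimate is genuinely a second-variation estimate. The paper tests the Bochner inequality of Lemma~\ref{lem: G eqn} for $G=1+\Phi(\nabla u)$ against $\zeta^2G$. This makes the free-boundary term exactly the left side of \eqref{eqn: stability} with test function $G\zeta$, and the gradient terms cancel.

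Even then a second obstruction remains, which you do not address. The inequality \eqref{eqn: stability} only holds for test functions vanishing near $\Sing(u)$. The capacity argument that removes this restriction needs $\Hm^{n-2}(\Sing(u))=0$, which for general $\Phi\in\II$ is Theorem~\ref{thm: main} itself. There is no Weiss formula to obtain it a priori (Theorem~\ref{thm: weiss obstruction}).

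The missing idea is the continuity argument in the connected class $\II$. The set $\Istarz$ is nonempty, and it is open by propagation of singularities (Corollary~\ref{cor: prop of singularities}). It is closed: approximate $u$ by minimizers of $\mathcal{E}^{\Phi_k}_{f_k,1}$, with smooth $\Phi_k\in\Istarz$ and $f_k$ mollifications of $u$. These satisfy the uniform bound of Proposition~\ref{prop: curvature estimates}, which passes to $u$ by Lemma~\ref{lem:compactness2}(5). Corollary~\ref{lem: hessian bound implies small sing set} then gives $\Hm^{n-2}(\Sing(u))=0$. Once $\Istarz=\II$, the same approximation yields \eqref{eqn: curvature estimate intro}.

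\textbf{Gap in Step 2.} On $\Reg(u)$ we have $|\nabla u|\ge\sqrt{2/\Lambda}>2r_\Lambda$. So $\PosS\cap\{|\nabla u|\ge 2r_\Lambda\}$ contains a one-sided neighborhood of every regular free boundary point, and the free boundary is the whole difficulty.

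There, Gehring's lemma has nothing to work with. An additive error $Cr^{n-1}$ on every ball centered on $\GU$ is not of the form $\int_{B_r}f$ with $f$ integrable. By the density estimates one can fit $\gtrsim r^{1-n}$ disjoint such balls, giving $\int_{N_r(\GU)}f\gtrsim1$ while $|N_r(\GU)|\to0$. Such an error behaves like surface measure on $\GU$, and a reverse H\"older inequality with it yields no higher integrability. Near $\Sing(u)$ you have no Caccioppoli inequality at all.

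The transition term from $\phi(|\nabla u|)$ is in fact harmless. After summing in $k$ it equals $\int\zeta^2\phi'(|\nabla u|)|\nabla u|\,D^2\Phi(\nabla u)[\nabla|\nabla u|,\nabla|\nabla u|]\ge0$ and sits on the good side.

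The paper gets $\eta$ from a non-concentration estimate instead, Lemma~\ref{lem: doubling}, proved by compactness. A limit with $D^2u=0$ on $\{|\nabla u|>\db\}$ is a half-plane solution near $0$ (Lemmas~\ref{lem: ball with big gradient} and~\ref{lem: local half-plane rigidity}). Then $\eps$-regularity, Corollary~\ref{cor: higher reg} and the boundary Schauder bound \eqref{eqn: Hessian bound boundary} contradict $|D^2u_k(0)|\to\infty$; this is where the $C^{2,\alpha}$ bound on $\An$ is used.

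A Vitali iteration over the levels $\{|D^2u|>M^k\}$ then gives geometric decay of $\int_{D_k}|D^2u|^2$, and the layer-cake formula gives the $L^{2+\eta}$ bound. That iteration needs the scale-invariant bound $\int_{B_r(p)\cap\PosS}|D^2u|^2\le Cr^{n-2}$ at every $p\in\GU$ and every scale. This is why the $L^2$ estimate, obtained through the continuity argument, must come first.
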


\vem 

To explain the strategies behind these curvature estimates, we work in the setting when the \textit{free boundary is smooth} {and $\Phi$ is smooth}. 

Under this assumption, the $W^{2,2}$-estimate in \eqref{eqn: curvature estimate intro} can be established for \textit{stable solutions} to the Euler-Lagrange equation {associated to \eqref{eqn: main functional}}.  
For this, we prove a Bochner-type inequality (Lemma~\ref{lem: G eqn}), showing that a function of the gradient is a subsolution with right-hand side involving $|D^2u|^2$. Then, the basic idea to prove \eqref{eqn: curvature estimate intro} is to multiply this function by a cutoff and use this as a test function for the stability inequality (see Lemma~\ref{lem: first second var prelim} and Proposition~\ref{prop: curvature estimates}). Note that this stability inequality holds in the classical sense only when the free boundary is regular.  For the classical Alt-Caffarelli problem (when $\Phi(p)=|p|^2$), a similar estimate has been established by \cite{CFFS} and \cite{FeY2} when the free boundary is smooth. 

To improve \eqref{eqn: curvature estimate intro} to \eqref{eqn: curvature estimate eta intro}, we follow a strategy from \cite{SY}. The key is to rule out the concentration of $|D^2u|$ in its large level sets as in Lemma~\ref{lem: doubling}. Such a non-concentration estimate requires a H\"older estimate of $D^2u$ in the positive phase $\PosS$, which is false since our integrand $\Phi$ is not $C^2$ near the origin. To deal with this difficulty, we show that there is always a significant portion of $\PosS$ that is quantitatively separated from the critical set $\{\nabla u=0\}$; see Lemma~\ref{lem: ball with big gradient}. As a result, it suffices to establish estimates in the set where $\na u(x)$ lies in the annulus $\An$ from \eqref{eqn: holder intro} where the integrand is assumed to be $C^{2,\alpha}.$ {See \eqref{eqn: curvature estimate eta intro} above, as well as Proposition~\ref{prop: curvature estimate 2}, Lemma~\ref{lem: Hessian eps reg}, Corollary~\ref{lem: hessian bound implies small sing set}}. 

\vem

With this, we have the desired curvature estimates in Theorem \ref{thm: curvature estimate intro} when the free boundary is smooth. Similar estimates can be established when $\Hm^{n-2}(\SingU)=0$, see Proposition~\ref{prop: curvature estimates}. Unfortunately, we cannot afford this assumption without running a circular argument, as this is part of our main result in Theorem \ref{thm: main}.

To overcome this difficulty, we adapt a topological argument by Schoen-Simon-Almgren \cite{SSA} and De Philippis-Maggi \cite{DePhilippisMaggi}. We show that the collection of integrands satisfying the desired estimates in Theorem \ref{thm: main} is non-empty, open and closed in $\II$ from \eqref{eqn:class of integrands} and conclude by connectedness of $\II$.  

More concretely, we let
\begin{equation}\label{def: good integrands}
\Istarz = \{ \Phi \in \II\ : \ \text{any minimizer $u$ of $\J(\cdot)$ has } \Hm^{n-2}(\Sing(u))=0\}
\end{equation}
denote the collection of ``good integrands'' such that Theorem~\ref{thm: main} holds with $\eta =0$.

The non-emptiness and openness of $\Istarz$ follow from known results (Theorem~\ref{thm: AC},  Theorem~\ref{thm: eps reg}, and Corollary~\ref{cor: prop of singularities}). To show the closedness of this collection is much more involved. We need to approximate a minimizer of a potentially bad integrand with minimizers of energies with good integrands, using a selection principle. For this, we need to perturb the functional in \eqref{eqn: main functional} by adding a lower order term (see \eqref{def: J f mu}). For minimizers of this perturbed functional, we establish a uniform curvature estimate  as in \eqref{eqn: curvature estimate intro} of Theorem \ref{thm: curvature estimate intro}. 
This bound passes to the limit, yielding an analogous curvature estimate for the original minimizer of the energy with potentially bad integrand. The estimate holds at all points and scales, hence a covering and scaling argument (Corollary~\ref{lem: hessian bound implies small sing set}) leads to the desired bound $\Hm^{n-2}(\Sing(u))=0$.

\vem

Before we conclude this introduction, we note that the estimate in  \eqref{eqn: curvature estimate intro} bears resemblance to Sternberg-Zumbrun estimates for stable solutions to the Allen-Cahn equation \cite{SZcurvatureest} and  curvature estimates for (anisotropic) stable minimal surfaces of Schoen-Simon-Yau, Schoen-Simon, and Allard \cite{SSY,SS,Allard-stability}. They are the key tools to prove compactness of stable minimal hypersurfaces, which is foundational in the regularity theory in min-max constructions \cite{Pitts,DPDRYminmax,DRHW2026,DRH,DeRosaPigati2025}. In the context of anisotropic minimal surfaces, White \cite{WhiteWR} gives an argument, which he attributes to Almgren, to obtain an $\eta$-improvement of the dimension of the singular set.  Our proof strategy is independent and gives a more quantified bound on the improvement. 

\vem

The paper is organized as follows. In Section~\ref{sec: prelim}, we 
introduce some necessary background. Section~\ref{sec: weiss obstruction} is dedicated to a rigidity result for Weiss-type monotonicity formulas. 
In Section~\ref{sec: reg}, we give some regularity and convergence results for minimizers of $\J$ and of the penalized functionals.
 Section~\ref{sec: curvature estimates} is dedicated to proving the curvature estimates \eqref{eqn: curvature estimate intro} and \eqref{eqn: curvature estimate eta intro} for minimizers when the integrand $\Phi$ is smooth and belongs to the class of good integrands $\Phi \in \Istarz$ defined in \eqref{def: good integrands} above. In Section~\ref{sec: main theorem proof 0}, we carry out the topological argument described above to prove Theorems~\ref{thm: main} and \ref{thm: curvature estimate intro}. Finally, Section~\ref{sec: p lap} extends this analysis to the degenerate elliptic and singular settings to establish Theorem~\ref{thm: p intro}.

\noindent{\bf Acknowledgements.}
A.D.R. was funded by the European Union: the European Research Council (ERC), through StG ``ANGEVA'', project number: 101076411. Views and opinions expressed are however those of the authors only and do not necessarily reflect those of the European Union or the European Research Council. Neither the European Union nor the granting authority can be held responsible for them.
R.N. is grateful for the support of NSF CAREER grant DMS-2340195 and NSF RTG grant DMS-2342349. 
H.Y. was supported by the Ministry of Education, Singapore, under its Academic Research Fund Tier 1 (Grant No. 25-2146-A0001).

\noindent{\bf AI Usage Statement.} Claude and Chat GPT were used to aid in the computation of variations in Appendix~\ref{app: variations}, to help choose the constants in the barrier construction of Lemma~\ref{lem: partial harnack} to ensure some finicky set containments, and to obtain the Kato-type inequality with right-hand side for $p<2$ in Proposition~\ref{prop: curvature est p}. AI did not make any serious contributions to the ideas or writing of this paper.

\section{Preliminaries}\label{sec: prelim}
Throughout the paper, $n\geq 2$, $\Lambda\geq 1$, $\alpha \in(0,1)$, and  $\C>0$ are fixed. We refer to these as {universal parameters} and call a constant universal if it depends only on these parameters.

\subsection{Facts about integrands}\label{sec: prelim integrands} Recall the class of integrands $\II$ defined in \eqref{eqn:class of integrands} and
suppose $\Phi \in \II.$ Observe that $D \Phi(0)=0$ thanks to the assumptions that $\Phi$ is nonnegative, $C^{1,1}$, and vanishes at the origin. The Hessian bounds \eqref{eqn: ellipt} and the fundamental theorem of calculus yield the upper and lower bounds 
\begin{equation}\label{eqn: basic bounds for integrands}
    \mfrac{1}{2\La} |p|^2 \leq \Phi(p) \leq   \mfrac{\La}{2}|p|^2,  \qquad \mfrac{1}{\Lambda}|p| \leq |D \Phi(p) |\leq \La |p|,  \qquad { \mfrac{|p|^2}{2\La}\leq D\Phi(p)\cdot p-\Phi(p)\leq\mfrac{\La|p|^2}{2}}.
\end{equation}
One consequence of the third estimate in \eqref{eqn: basic bounds for integrands} is that $D \Phi(p) \cdot p \geq 0$ for all $p \in \R^n$. A second is that for any $p\in \R^n$,
\begin{equation}\label{eqn: first annular constraint}
   \cG(p) :=  D\Phi(p)\cdot p -\Phi(p) =1 \qquad \implies \qquad {\sqrt{\mfrac2\La}\leq|p|\leq\sqrt{2\La}}.
\end{equation}
Similarly, the uniform convexity \eqref{eqn: ellipt} of $\Phi$ implies that $t \mapsto \cG(tp)$ is strictly increasing for all $p \neq 0$. Hence, for any direction $e \in S^{n-1},$ there is a unique $a>0$ such that the function $v_a(x) := a(x\cdot e)_+$ satisfies the free boundary condition $\cG(\na v_a)= \cG(ae)=1$ on $\partial \{v_a>0\}$; c.f. \eqref{eqn: intro EL} below. 

Next, for a sequence $\{\Phi_k\} \subset \II$, we write 
$$\Phi_k \to \Phi$$
to mean $\Phi_k \to \Phi$ locally uniformly on $\R^n$. 
From the definition of $\II$ and the considerations above, we see that if $\Phi_k \in \II$ and $\Phi_k\to \Phi$, then the common \(C^{1,1}\) bound implies $\|\Phi_k-\Phi\|_{C^1(\overline{B_M})} \to 0$ for every \(M<\infty\).
The common H\"older bound on the Hessians \eqref{eqn: holder intro} on \(\An\)
also gives $
    \|D^2\Phi_k-D^2\Phi\|_{C^0(\An)}
    \to 0.$ These assertions follow by Arzel\`a-Ascoli and identification
of limits by distributional differentiation.

\begin{remark}\label{rmk: connected}
  By the same reasoning, we see that $\II$ is compact when endowed with the topology induced by this convergence. Moreover, note that $\II$ is convex and in particular connected. Finally, a basic mollification and vertical translation argument shows that smooth integrands are dense in $\II$ in this topology. 
\end{remark}

\subsection{The main functional and the penalized functional}
Consider the main energy functional $\J(v;\Omega)$ defined in \eqref{eqn: main functional}, with integrand $\Phi \in \II$, where $\II$ is the class of integrands defined in \eqref{eqn:class of integrands}.
 When the domain $\Omega$ is understood or plays no important role, we  write $\J(u)$ in place of $\J(u;\Omega).$
 
 We say a nonnegative function  $u \in W^{1,2}(\Omega)$ is a minimizer of $\J(\cdot)$ in $\Omega$ if  $\J(u;\Omega) \leq \J(u+\varphi; \Omega)$  for every $\varphi \in W^{1,2}_0(\Omega)$. Every time we refer to a minimizer of $\J$ in $\Omega$, we tacitly assume $u\geq0$ and $u \in W^{1,2}(\Omega)$.
 Formally, a minimizer $u$ of $\J(\cdot, \Omega)$ satisfies the quasilinear Bernoulli free boundary problem
\begin{equation}
    \label{eqn: intro EL}
\begin{cases}
    \Div(D \Phi (\na u)) = 0 & \text{ in } \{ u>0\} \cap \Omega,\\
    D \Phi(\na u )\cdot \na u -\Phi(\na u) = 1 & \text{ on } \GU:= \partial\{u>0\} \cap \Omega.
\end{cases}
\end{equation}

The proof of Theorem~\ref{thm: main} calls for a functional $\Jfm$ that is a perturbation of $\J$. Indeed, for a sequence $\Phi_k$ in $\II$ with $\Phi_k \to \Phi$ and a minimizer $\bar{u}$ of $\J$, we will approximate $\bar{u}$ by functions $u_k$ that minimize an energy functional that is the sum of $\E^{\Phi_k}$ and a fidelity term that forces minimizers $u_k$ to be close to $\bar{u}$.

More precisely, given a nonnegative function $f \in C^\infty(\Omega)$ (which will be taken to be a mollification of $\bar{u}$ in the application described above) and a number $\mu \in [0,1]$, consider the energy functional
\begin{equation}
    \label{def: J f mu}
\J_{f,\mu}(v,\Omega) = \J(v,\Omega) + \int_\Omega |\mu v -f|^2.
\end{equation}
The parameter $\mu$ makes class of functionals \(\Jfm\) invariant under the blow-up rescaling:
if \(u\) is minimizer of \(\Jfm(\cdot,\Omega)\), \(x_0\in\Omega\),
and \(0<s\leq1\), then  the Lipschitz rescaling $\tilde u(x):={u(x_0+sx)}/{s}$
is a minimizer of
\( \mathcal{E}^\Phi_{\tilde f,\tilde\mu}(\cdot,\Omega_s)\), where 
\begin{equation}
    \label{eqn: rescale}
    \Omega_s:=({\Omega-x_0})/{s}, \qquad
        \tilde \mu:=s\mu\in[0,1],
    \qquad
    \tilde f:=f(x_0+s\,\cdot),
    \qquad
    \|\tilde f\|_{L^\infty(\Omega_s)}\leq \|f\|_{L^\infty(\Omega)}.
\end{equation}
Indeed, since \(\na\tilde u(x)=\na u(x_0+sx)\), all three terms of \(\Jfm\)
scale by the factor \(s^{-n}\) under this transformation.
Thus estimates for minimizers of $\Jfm$ that are uniform in $\mu \in [0,1]$ can be applied to minimizers of $\J_{f,1}$ at all scales $s \in (0,1]$.

 \begin{lemma}\label{lem: first second var prelim}
 With $f, \mu$, and $\Jfm$ as above, let $u$ be a minimizer of $\Jfm(\cdot)$ in $B_1$ and assume $\GU= \partial\{u>0\} \cap  B_1$ is $C^{2,\alpha}.$
Then $u$ satisfies the Euler-Lagrange equation
\begin{equation}\label{eqn: EL}
    \begin{cases}
        {\rm div}\,(D \Phi(\na u)) =2\mu(\mu u -f) & \text{ in } \{ u>0\}\cap B_1,\\
         D \Phi(\na u )\cdot \na u -\Phi(\na u) = 1 & \text{ on }\GU \,.
    \end{cases}
    \end{equation}
    Provided $\Phi \in C^2(\R^n)$, such a minimizer satisfies the following stability inequality:
\begin{equation}\label{eqn: stability}
\int_{\{u>0\}\cap B_1}\! D^2\Phi(\nabla u)[\nabla \psi,\nabla \psi]+2\mu^2\psi^2 \,dx \geq
\int_{\GU}\big\langle D^2\Phi(\na u)D^2u[\nu ], \nu \big\rangle \frac{ (D \Phi(\na u )\cdot \nu)}{1 +\Phi(\na u)}\psi^2 \, \mathcal{H}^{n-1}
\end{equation}
for all $\psi \in C^1_c(B_1)$.
 \end{lemma}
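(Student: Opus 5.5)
We will derive both statements from the first and second variations of $\Jfm$. The interior equation comes from inner variations; the free boundary condition and the stability inequality come from domain (Hadamard-type) variations. Since $\GU$ is $C^{2,\alpha}$, elliptic regularity for the quasilinear equation makes $u$ regular up to $\GU$ from the positive side. We will use this to justify all boundary integrations by parts.

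\textbf{Euler--Lagrange equation.} For $\varphi\in C^\infty_c(\{u>0\}\cap B_1)$ and small $|t|$, the competitor $u+t\varphi$ keeps $\{u+t\varphi>0\}=\{u>0\}$ near the support, so the characteristic term is unchanged. Differentiating at $t=0$ gives
\[
\int D\Phi(\na u)\cdot\na\varphi+2\mu(\mu u-f)\varphi=0,
\]
which is the first line of \eqref{eqn: EL} in the weak sense. Interior regularity then upgrades this to the classical sense.

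For the free boundary condition we take a vector field $X\in C^\infty_c(B_1;\R^n)$ with flow $\Psi_t$ and consider $u_t=u\circ\Psi_t^{-1}$. Expanding $\Jfm(u_t)$ to first order and using the interior equation, everything collapses to a boundary integral
\[
\int_{\GU}\big(1+\Phi(\na u)-D\Phi(\na u)\cdot\na u\big)X\cdot\nu\,d\mathcal H^{n-1},
\]
with sign conventions to be fixed. The term $D\Phi(\na u)\cdot\na u$ arises because on $\GU$ we have $\na u=-|\na u|\nu$. Since $X\cdot\nu$ is arbitrary, $\cG(\na u)=1$ on $\GU$.

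\textbf{Stability.} For the second variation we choose normal perturbations tailored to the one-phase structure. Given $\psi\in C^1_c(B_1)$, we perturb the positive phase by moving $\GU$ with normal speed proportional to $\psi$. A convenient way to set this up is to extend $u$ slightly across $\GU$ as a $C^{2,\alpha}$ solution; this is possible since the boundary is $C^{2,\alpha}$ and $\Phi\in C^2$, though one may alternatively work only with interior-side expansions. We then set $v_t=(\tilde u+t\psi)_+$, or a variant normalized by $|\na u|$.

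Expanding to second order, we organize the result as follows:
\begin{enumerate}
\item the bulk terms give $\int D^2\Phi(\na u)[\na\psi,\na\psi]+2\mu^2\psi^2$;
\item the first-order terms cancel by \eqref{eqn: EL};
\item the remaining second-order boundary terms come from the motion of $\GU$, namely the area change and the change of $\Phi(\na u)$ along the moving boundary.
\end{enumerate}
Differentiating the free boundary condition $\cG(\na u)=1$ tangentially, we have $D^2\Phi(\na u)\na u\cdot D^2u\,\tau=0$ for tangent vectors $\tau$. This relation lets us rewrite the mean curvature contributions in terms of $\langle D^2\Phi(\na u)D^2u[\nu],\nu\rangle$. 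The normalization $1+\Phi(\na u)$ in the denominator comes from the relation $D\Phi\cdot\na u=1+\Phi$ on $\GU$. We also use $D\Phi(\na u)\cdot\nu$ to convert normal displacement into the change in $u$.

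Minimality gives $\frac{d^2}{dt^2}\Jfm(v_t)|_{t=0}\ge 0$, which after the rewriting yields \eqref{eqn: stability}.

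\textbf{Main obstacle.} The main difficulty is the second-variation bookkeeping at the free boundary. This includes getting the signs and the exact weight $\frac{D\Phi(\na u)\cdot\nu}{1+\Phi(\na u)}$ right, and justifying that the second-order expansion of the measure term $|\{v_t>0\}|$ is correct given only $C^{2,\alpha}$ boundary regularity. We would first do the computation in a local graph parametrization of $\GU$, where $\{v_t>0\}$ is a graph displaced by $t\psi/|\na u|+O(t^2)$. We would then verify that the anisotropic terms reduce via the differentiated boundary condition, following the structure of the classical case $\Phi=|p|^2$.
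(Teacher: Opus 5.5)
Your Euler--Lagrange argument is fine. For the Bernoulli condition you use the inner variation $u\circ\Psi_t^{-1}$. The paper instead moves $U=\{u>0\}\cap B_1$ by $\Psi_t$, re-solves the interior equation on $\Psi_t(U)$ with zero data on $\Psi_t(\Gamma)$, where $\Gamma=\partial\{u>0\}\cap B_1$, and pulls back. Both reach $\int_\Gamma\big(1+\Phi(\na u)-D\Phi(\na u)\cdot\na u\big)\,X\cdot\nu$ through the anisotropic Rellich identity, whose bulk term cancels the derivative of the fidelity term; yours is the more direct way to get this formula.

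The paper's heavier construction is designed for the second variation. Differentiating the first-variation formula on $\Psi_t(\Gamma)$ once more, only the derivative of $\cG\big((\na u_t)\circ\Psi_t\big)$ survives, because the Bernoulli condition holds at $t=0$. The resulting flux term in $\dot u$ is converted into $\int_U D^2\Phi(\na u)[\na\dot u,\na\dot u]+2\mu^2\dot u^2$ by testing the linearized equation, and arbitrary $\psi$ are reached by the Dirichlet principle for $\dot u$. No extension of $u$ and no geometry of $\Gamma$ enter.

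Your competitor $v_t=(\tilde u+t\psi)_+$ is a genuinely different route to stability, and it can be made to work. However, the proposal defers the whole computation and misidentifies what closes it. Write $a=|\na u|$, $b=-D\Phi(\na u)\cdot\nu=(1+\Phi(\na u))/a$, $Q=D^2\Phi(\na u)$, $T=D\Phi(\na u)-(D\Phi(\na u)\cdot\nu)\nu$ and $H_\Gamma=\divv_\Gamma\nu$. The transport formulas for $\{\tilde u+t\psi>0\}$, which moves with normal speed $\psi/a$, give
\[
\frac{d^2}{dt^2}\Big|_{t=0}\Jfm(v_t)=\int_U\big(Q[\na\psi,\na\psi]+2\mu^2\psi^2\big)+\int_\Gamma\frac{T\cdot\na_\Gamma(\psi^2)}{a}+\int_\Gamma\frac{\psi^2}{a}\Big(\frac{T\cdot D^2u\,\nu}{a}+b\,H_\Gamma-2\mu f\Big).
\]
For $\Phi=|p|^2$ one has $T=0$ and the $\psi\,\partial_\nu\psi$ cross terms cancel. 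Here they leave the middle integral, which is not of the form weight $\times\,\psi^2$ and must be integrated by parts along $\Gamma$. Three facts then finish the computation:
\begin{enumerate}
\item $\na_\Gamma a=-\big(D^2u\,\nu-\langle D^2u\,\nu,\nu\rangle\nu\big)$;
\item $\divv_\Gamma D\Phi(\na u)=\operatorname{tr}(QD^2u)-\langle QD^2u\,\nu,\nu\rangle$;
\item the interior equation on $\Gamma$, namely $\operatorname{tr}(QD^2u)=-2\mu f$.
\end{enumerate}
With these, the curvature terms, the $f$-terms and the $T\cdot D^2u\,\nu$ terms all cancel. What remains is $\int_\Gamma a^{-1}\langle QD^2u\,\nu,\nu\rangle\psi^2$. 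Since $a^{-1}=-D\Phi(\na u)\cdot\nu/(1+\Phi(\na u))$, this is exactly \eqref{eqn: stability}.

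So the weight comes from the tangential divergence of $D\Phi(\na u)$ combined with the equation on $\Gamma$. The tangentially differentiated Bernoulli condition plays no role, and the mean curvature cancels rather than being rewritten.

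There are two smaller corrections. First, extending $u$ across $\Gamma$ as a solution is an elliptic Cauchy problem and is not available in general. It is also unnecessary: any $C^2$ extension works, since only one-sided second derivatives at $\Gamma$ enter. Second, the extension must stay negative on $\operatorname{spt}\psi$ inside the zero phase away from $\Gamma$. Otherwise $(t\psi)_+$ adds positive phase of fixed volume for every $t>0$.

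As for the trade-off: your route avoids differentiating re-solved Dirichlet problems in $t$ and avoids the Dirichlet principle. It pays for this with second-order transport on a moving hypersurface and tangential calculus on $\Gamma$.
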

Here $\nu$ denotes the outer unit normal to $\{u>0\}$.
The proofs of \eqref{eqn: EL} and \eqref{eqn: stability} are given in appendix, in Lemmas~\ref{lem: first var} and \ref{lem: second var} respectively.
 The interior equation in \eqref{eqn: EL} holds in the weak sense, and holds classically on $\{x:  \na u(x) \in \An\}$ with $\An$ as in \eqref{eqn: holder intro}. The free boundary condition in \eqref{eqn: EL} holds classically under this assumed regularity of the free boundary.

In the important special case when $\Phi \in \II$ is two homogeneous, the free boundary condition in \eqref{eqn: EL} simplifies to $\Phi(\na u) =1$. The stability inequality \eqref{eqn: stability} takes a form that is  analogous to that  for the classical Alt-Caffarelli energy, involving the mean curvature of the free boundary; see Remark~\ref{rmk: aniso MC} and in particular \eqref{eqn: 2hom stability 2}.

In general, the free boundary $\GU$ of a minimizer $u$ of $\Jfm(\cdot)$ in $B_1$ need not be everywhere $C^{2,\alpha}$ as assumed above. 
We show in Proposition~\ref{prop: min is viscosity solution} and Corollary~\ref{cor: higher reg} below that if, for a point $x_0 \in \GU$, there is a scale $r>0$ such that $\GU$ is flat at $x_0$ at scale $r$ in the sense of \eqref{eqn: flat 1} and the right-hand side $g = 2\mu(\mu u-f)$ is small at scale $r$ in the sense of \eqref{eqn: small RHS}, then $\GU$ is a $C^{2,\alpha}$ graph in a neighborhood of $x_0$.
\begin{definition}\label{def: sing point}
A point $x_0\in\GU$ is regular if, after a rotation,
$\{u>0\}$ is locally the epigraph of a $C^{1,\alpha}$ function near $x_0$.
We denote the set of regular points by $\Reg(u)$ and define
\begin{equation}\label{eqn: prelim sing set def}
\Sing(u)=\GU\setminus\Reg(u).
\end{equation}
\end{definition}
For $\Phi\in\II$ and smooth data, we improve the regularity of such a graph to $C^{2,\alpha}$, see Corollary~\ref{cor: higher reg} below.

\subsection{Viscosity solutions}\label{sec: prelim visc} 
As above, let $\Phi \in \II$ where $\II$ is the class of integrands defined in \eqref{eqn:class of integrands}. In this section we use the shorthand  
\begin{equation}
    \label{eqn: short hand}
\Lo w= \Div(D \Phi(\na w))\qquad\text{ and } \qquad\cG(\na u) = D \Phi(\na u )\cdot \na u -\Phi(\na u).
\end{equation}
For $g\in C(B_1)\cap L^\infty(B_1)$,  let us define  viscosity solutions to the free boundary problem
\begin{equation}
    \label{eqn: e reg sec EL}
\begin{cases}
    \Lo u = g & \text{ in } \{u>0\} \cap \Omega,\\
    \cG(\na u) = 1 & \text{ on } \GU \,.
    \end{cases}
\end{equation}
For a  nonnegative function $u\in C^0(\Omega)$, we say
\begin{equation}
    \label{eqn: visc BC}
u \text{ satisfies }\cG(\na u)=1\text{ on }\GU \text{ in the viscosity sense} 
\end{equation}
if the following holds for any $x_0 \in \GU$ and $\phi\in C^\infty(\Omega)$: 
if $\phi$ touches $u$ from below 
(resp., $\phi_+$ touches $u$ from above) in $\Omega$ at $x_0$, then $\cG(\na\phi(x_0)) \leq 1$ (resp., $\cG(\na \phi(x_0))\geq 1.$).

A subtlety arises in defining viscosity solutions to $\Lo u=g$ in $\{u>0\}$ since $D^2 \Phi(p)$ only exists a.e. outside of $\An$, thus  $\Lo\phi =\Div(D\Phi (\na \phi(x)))$ may not be defined for a smooth function $\phi$ at a point $x$ where $\na \phi(x)$ is in the set where $\Phi$ is not twice differentiable.  
 To deal with this, we recall the Pucci extremal operators defined for a symmetric matrix $M$ with eigenvalues $\{\mu_i\}$ by 
\[
\mathcal{M}^+(M, \lambda, \Lambda) = \Lambda \sum_{\mu_i>0} \mu_i + \lambda \sum_{\mu_i<0} \mu_i,\qquad \mathcal{M}^-(M, \lambda, \Lambda) = \lambda \sum_{\mu_i>0} \mu_i + \Lambda \sum_{\mu_i<0} \mu_i \,.
\]
Recall that a function $w \in C^0(\Omega)$ satisfies 
 $\mathcal{M}^-(D^2 w(x_0), \lambda,\La) \leq g(x_0)\leq \mathcal{M}^+(D^2 w(x_0), \lambda,\La) $ in the viscosity sense at $x_0 \in \Omega$
 if we have $\mathcal{M}^-(D^2\phi,\lambda, \Lambda) \leq g(x_0)$ for any function $\phi \in C^\infty(\Omega)$ that touches $w$ from below at $x_0$, and $\mathcal{M}^+(D^2\phi,\lambda, \Lambda)\geq g(x_0)$ for any function $\phi \in C^\infty(\Omega)$ that touches $w$ from above at $x_0$.

 So, for a nonnegative function $u\in C^0(\Omega)$, we say 
\begin{equation}
    \label{eqn: visc int cond}
    u \text{ satisfies } \Lo u=g \text{ in }\{ u>0\}\text{ in the viscosity sense} 
\end{equation}
if the following holds for any $x_0 \in \{ u>0\} \cap \Omega$ and $\phi\in C^\infty(\Omega)$:\\
-- if $\phi$  touches $u$ from below (resp., above) in $\Omega$ at $x_0$ and $\na \phi(x_0) \in \An$, then $\Lo \phi(x_0) \leq g(x_0)$ (resp., $\Lo \phi (x_0)\geq g(x_0)$).\\
-- if $\phi$ touches $u$ from below (resp., above) in $\Omega$ at $x_0$ and $\na\phi(x_0) \not\in \An$, then $\mathcal{M}^-(D^2\phi,1/\Lambda, \Lambda) \leq g(x_0)$  (resp., $\mathcal{M}^+(D^2\phi,1/\Lambda, \Lambda)\geq g(x_0)$).\\
 
Finally, we say $u$ is a viscosity solution to \eqref{eqn: e reg sec EL} in $\Omega$ if $u \in C^0(\Omega)$ is nonnegative and satisfies \eqref{eqn: visc BC} and \eqref{eqn: visc int cond}.

Note that if  $u$ satisfies \eqref{eqn: visc int cond} then $\mathcal{M}^-(D^2 u, 1/\La,\La) \leq g\leq \mathcal{M}^+(D^2 u, 1/\La,\La) $ in the viscosity sense
in $\{u>0\}$.

In our application, we will consider minimizers $u$ of $\Jfm$ where $f$ is smooth and bounded; we show  $u$ is bounded (Lemma~\ref{lem:interior-lip}) and a viscosity solution to \eqref{eqn: EL} (Proposition~\ref{prop: min is viscosity solution}), 
thus we treat $g = 2\mu(\mu u -f)$ as a bounded right-hand side and consider viscosity solutions to the problem \eqref{eqn: e reg sec EL}.

\subsection{The classical Alt-Caffarelli problem}
\label{sec:aclit:classical}
When $\Phi(v)=|v|^2$, in their foundational work, Alt and Caffarelli \cite{AC}
established existence, local Lipschitz continuity and nondegeneracy of
minimizers, together with density estimates for the two phases and local
finiteness of the $(n-1)$-dimensional measure of the free boundary. 
Their improvement-of-flatness theorem gives $\Hm^{n-1}(\Sing(u))=0$ and that $\Reg(u)$ is relatively open and locally a $C^{1,\alpha}$
hypersurface. Higher regularity theory then implies that $\Reg(u)$ is analytic; see Kinderlehrer-Nirenberg \cite{KN}.
A different proof of  improvement of flatness was developed by De Silva \cite{DeSilva}, combining a Harnack inequality
with compactness and linearization around a half-plane solution.

The principal tool for improving the bound on $\Sing(u)$ is the
monotonicity formula of Weiss \cite{W}. For
$x_0\in\G(u)$ and $B_r(x_0)\Subset\Omega$, set
\begin{equation}\label{eq:aclit:weiss}
    W(u,x_0,r)
    :=  r^{-n}\int_{B_r(x_0)}
        \bigl(|\na u|^2+\chi_{\{u>0\}}\bigr)\,dx-r^{-n-1}\int_{\partial B_r(x_0)}u^2\,d\Hm^{n-1}.
\end{equation}
For almost every $r$,
\begin{equation}\label{eq:aclit:weiss-derivative}
    \frac{d}{dr}W(u,x_0,r)
    =2r^{-n}\int_{\partial B_r(x_0)}
        \left(\partial_\nu u-\frac{u}{r}\right)^2\,d\Hm^{n-1}
    \geq 0,
\end{equation}
where $\nu$ is the outer unit normal to $\partial B_r(x_0)$.
Consequently, every subsequential blow-up is a nonzero, one-homogeneous global
minimizer. Moreover, a half-plane blow-up implies regularity of $x_0$
by improvement of flatness. Singular points are therefore detected by
nonflat minimizing cones, and a Federer dimension-reduction argument converts low-dimensional cone
classification into a Hausdorff dimension estimate.

Caffarelli-Jerison-Kenig \cite{CJK} ruled out
nonflat stable cones in dimension three, and Jerison-Savin
\cite{JS} extended this conclusion to dimension four. Very recently OpenAI extended this though dimension six \cite{OpenAI}.
This implies  free boundaries are analytic in dimensions $n\leq 6$,
and the universal bound in higher dimensions is
$\dim_{\Hm}\Sing(u)\leq n-7$.
In \cite{DJ}, De Silva-Jerison constructed a singular minimizing cone in $\R^7$; see also \cite{FTW1,FTW2} for further
minimizing cones in $\R^7$.
 All together, these works amount to the following:
\begin{theorem}\label{thm: AC}
    Let $u$ be a minimizer of the classical Alt-Caffarelli energy $\int_{\Omega} |\na u|^2 + \chi_{\{u>0\}}$. Then the free boundary $\GU$ is analytic outside a singular set $\Sing(u)$ with 
    \begin{equation}
        \label{eqn: AC sing}
          \dim_{\mathcal{H}}(\Sing(u)) \leq n-7.
    \end{equation}
\end{theorem}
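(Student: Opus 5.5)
Theorem~\ref{thm: AC} is a compilation of known results rather than a new statement, so I will assemble it from the literature in the order the preceding paragraphs suggest.

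\textbf{Step 1: basic regularity and the regular part.} Local Lipschitz continuity, nondegeneracy and the density estimates come from Alt--Caffarelli \cite{AC}. Their improvement-of-flatness theorem (or De Silva's version \cite{DeSilva}) shows that near any point where $\GU$ is flat at some scale, $\GU$ is a $C^{1,\alpha}$ graph. Hence $\Reg(u)$ is relatively open and $C^{1,\alpha}$.

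\textbf{Step 2: analyticity of the regular part.} On a $C^{1,\alpha}$ piece of $\GU$, $u$ is harmonic in $\{u>0\}$ with $|\na u|=1$ on the boundary. This is an elliptic free boundary problem with analytic data, so the Kinderlehrer--Nirenberg hodograph argument \cite{KN} upgrades $\Reg(u)$ to analytic. Thus $\GU$ is analytic outside $\Sing(u)$.

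\textbf{Step 3: blow-ups via Weiss.} By the monotonicity \eqref{eq:aclit:weiss-derivative}, $W(u,x_0,r)$ has a limit as $r\downarrow0$. Take the rescalings $u_{x_0,r}(x)=u(x_0+rx)/r$. Lipschitz bounds and nondegeneracy give compactness in $C^{0,\beta}_{\rm loc}$ with strong $W^{1,2}_{\rm loc}$ convergence. The standard Alt--Caffarelli argument shows that minimality and $\chi_{\{u>0\}}$ pass to the limit. Since the Weiss energy is constant along the blow-up, the derivative formula forces every limit to be a nonzero one-homogeneous global minimizer.

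If some blow-up at $x_0$ is a half-plane solution, flatness at a small scale together with Step~1 gives $x_0\in\Reg(u)$. So at every singular point, all blow-ups are nonflat minimizing cones. Upper semicontinuity of the Weiss density then supplies the hypotheses of Federer's dimension reduction. Hence $\dim_{\Hm}\Sing(u)\le n-k^*-1$, where $k^*$ is the largest dimension in which every minimizing one-homogeneous cone is flat.

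\textbf{Step 4: classification of cones.} Nonflat minimizing cones are excluded in dimension $2$ by direct ODE classification. In dimension $3$ this is Caffarelli--Jerison--Kenig \cite{CJK}, in dimension $4$ Jerison--Savin \cite{JS}, and in dimensions $5,6$ \cite{OpenAI}. Thus $k^*\ge 6$, which yields \eqref{eqn: AC sing}. Sharpness, via De Silva--Jerison's cone in $\R^7$ \cite{DJ}, is not needed for the inequality.

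\textbf{Main obstacle.} The only delicate point I would actually check is Step~3. One must verify that the limiting cones are minimizers, not merely stationary or stable, in the sense used in \cite{CJK,JS,OpenAI}. One must also check that dimension reduction passes correctly to cones with translation-invariant directions, i.e.\ splitting off lines. Both of these are standard.
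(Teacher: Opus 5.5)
Your proposal is correct and follows essentially the same route the paper indicates: the theorem is presented there as a compilation of Alt--Caffarelli's Lipschitz, nondegeneracy and improvement-of-flatness theory, Kinderlehrer--Nirenberg analyticity of the regular part, Weiss monotonicity combined with Federer dimension reduction, and the cone classifications of \cite{CJK}, \cite{JS} and \cite{OpenAI} through dimension six. Your bookkeeping $\dim_{\Hm}\Sing(u)\le n-k^*-1$ with $k^*\ge 6$ gives exactly the stated bound.
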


Fern\'andez-Real-Yu \cite{FeY1, FeY2} establish improved bounds for the singular set for generic boundary data. Finer properties  of the singular set are provided in the form of  rectifiability and Minkowski estimates for the strata of the singular set (Edelen-Engelstein \cite{ACsurvey-EE19});  uniqueness of blow-ups at isolated singularities (Engelstein-Spolaor-Velichkov
\cite{ESV}), and Hardt-Simon type foliations for homogeneous global minimizers (De Silva-Jerison-Shahgholian
\cite{DJS} and Edelen-Spolaor-Velichkov \cite{EdSV}; see related recent work of  Engelstein-Restrepo-Zhao \cite{EDZ}).

\subsection{PDE Estimates}

In the course of proving the main theorems, we will make use of the following interior and boundary regularity estimates for solutions to the interior equation in \eqref{eqn: EL}.
  
\begin{lemma}
\label{lem: C1beta}
Let $\Phi\in\II$. The following estimates hold.
\begin{enumerate}
    \item (Interior estimates) Let
$\psi\in L^\infty(B_r(x_0))$ and let
$w\in H^1(B_r(x_0)) \cap L^\infty(B_r(x_0))$ be a weak solution to
\[
    \Div\bigl(D\Phi(\na w)\bigr)=\psi
    \qquad\text{in }B_r(x_0).
\]
There are $\beta=\beta(n,\La)\in(0,1)$ and $C=C(n,\La)$ such that
    $w\in C^{1,\beta}_{\mathrm{loc}}(B_r(x_0))
    \cap H^2_{\mathrm{loc}}(B_r(x_0))$ and
\begin{equation}
    \label{eqn: gradient reg interior}
r\|\na w\|_{L^\infty(B_{r/2}(x_0))}
    +r^{1+\beta}[\na w]_{C^{0,\beta}(B_{r/2}(x_0))}
\leq C\left(
        \|w\|_{L^\infty(B_{r}(x_0))}
        +r^2\|\psi\|_{L^\infty(B_{r}(x_0))}
    \right)\,.
\end{equation}
Moreover, if  $\psi=0$ and $\na w(x)\in \An$ for all $x \in B_{r}(x_0)$, then
$w\in C^{2,\alpha}_{\mathrm{loc}}
    \bigl(B_r(x_0))$ with 
\begin{equation}
    \label{eqn: Hessian bound interior}
r^2\|D^2 w\|_{L^\infty(B_{r/2}(x_0))} + r^{2+\alpha}[D^2 w]_{C^{0,\alpha}(B_{r/2}(x_0))}\leq C\,.
\end{equation}
for a universal constant $C>0$.
\item (Boundary estimates) Let $\Omega$ be an open bounded set such that $\G:=\partial \Omega \cap B_r(x_0)$ is of class $C^{2,\alpha}$. Let $w\in H^1(\Omega \cap B_r(x_0)) \cap C^0(\overline{\Omega}\cap B_r(x_0))$ be a weak solution to 
\[
\Div\bigl(D\Phi(\na w)\bigr)=0
    \quad\text{in }\Omega \cap B_r(x_0), \qquad\qquad w=0 \text{ on }\G.
\]
{Assume that, after rescaling by $r^{-1}$, the boundary admits $C^{2,\alpha}$ graph charts of fixed radius, with their $C^{2,\alpha}$ norms bounded by $M$.} There are $\beta=\beta(n,\La)\in(0,1)$ and $C=C(n,\La,{M})$ such that $w \in C^{1,\beta}_{loc}(\overline{\Omega}\cap B_{r}(x_0)) \cap H^2_{loc}(\overline{\Omega}\cap B_r(x_0))$ with 
\begin{equation}
    \label{eqn: gradient reg boundary}
   r\| \na w\|_{L^\infty(\overline{\Omega}\cap B_{r/2}(x_0))}  + r^{1+\beta} [\na w]_{C^{0,\beta}(\overline{\Omega}\cap B_{r/2}(x_0))} \leq C \| w\|_{L^\infty(\overline{\Omega}\cap B_{r}(x_0))}\,.
\end{equation}
Moreover, if  $\na w(x)\in \An$ for all $x \in \Omega \cap B_{r}(x_0)$, then
$w\in C^{2,\alpha}_{\mathrm{loc}}(\overline{\Omega}\cap B_r(x_0))$ and 
\begin{equation}
    \label{eqn: Hessian bound boundary}
    r^2\|D^2 w\|_{L^\infty({\overline{\Omega}\cap}B_{r/2}(x_0))} + r^{2+\alpha}[D^2 w]_{C^{0,\alpha}({\overline{\Omega}\cap}B_{r/2}(x_0))}\leq C\,.
\end{equation}
\end{enumerate}
\end{lemma}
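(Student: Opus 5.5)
This lemma collects standard regularity theory for quasilinear equations $\Div(D\Phi(\na w))=\psi$ with uniformly elliptic $C^{1,1}$ integrand. The plan is to reduce each item to classical results and to track constants by scaling. By the rescaling $w_r(x):=w(x_0+rx)/r$ (for the interior statement) we may assume $r=1$ and $x_0=0$. Indeed, $\na w_r(x)=\na w(x_0+rx)$, so the structure conditions and the annulus condition $\na w\in\An$ are invariant. The right-hand side becomes $r\psi$, and $\|w_r\|_{L^\infty}=r^{-1}\|w\|_{L^\infty}$. This produces exactly the weights $r$, $r^{1+\beta}$, $r^2$ appearing in \eqref{eqn: gradient reg interior}–\eqref{eqn: Hessian bound boundary}.

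\textbf{Interior $C^{1,\beta}$ and $H^2$.} Since $D\Phi$ is Lipschitz with $D^2\Phi$ satisfying \eqref{eqn: ellipt} a.e., the difference-quotient method gives $H^2_{loc}$. Test the equation with $\Delta_{-h}(\zeta^2\Delta_h w)$ and write $D\Phi(\na w(x+he))-D\Phi(\na w(x))=A_h(x)\,\Delta_h\na w$, where $A_h=\int_0^1D^2\Phi(\dots)\,dt$ is uniformly elliptic with constants $\La^{-1},\La$. Then $\partial_e w$ solves the linear divergence-form equation $\Div(D^2\Phi(\na w)\na\partial_e w)=\partial_e\psi$ (in the weak sense, with the right-hand side moved as a divergence). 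De Giorgi–Nash–Moser applied to $\partial_e w$ yields $C^{0,\beta}$ of $\na w$ with $\beta=\beta(n,\La)$. The $L^\infty$ gradient bound in terms of $\|w\|_\infty+\|\psi\|_\infty$ follows from Caccioppoli plus Moser iteration. Alternatively, one can cite Ladyzhenskaya–Ural'tseva or Lieberman's $C^{1,\beta}$ theory for operators with this structure. I would cite these rather than redo them.

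\textbf{Interior $C^{2,\alpha}$ when $\na w\in\An$ and $\psi=0$.} On $\An$ the coefficient $a_{ij}(x):=D^2\Phi(\na w(x))$ is defined classically. Since $D^2\Phi\in C^\alpha(\An)$ with seminorm $\le\C$ and $\na w\in C^{0,\beta}$, we get $a_{ij}\in C^{\alpha\beta}$. Schauder theory for $\Div(a\na\partial_ew)=0$ gives $\na w\in C^{1,\alpha\beta}$. Then $\na w$ is Lipschitz, so $a_{ij}\in C^{\alpha}$, and a second Schauder step gives $\na w\in C^{1,\alpha}$. For the bound by a universal constant without the $\|w\|_\infty$ dependence, note that $|\na w|\le R_\La$ on $B_1$. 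We may subtract the constant $w(0)$ (the equation is invariant), so $\|w-w(0)\|_{L^\infty(B_1)}\le R_\La$, and all constants become universal. One must also check that the intermediate balls stay inside $B_1$ by shrinking radii $1\to3/4\to1/2$.

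\textbf{Boundary versions.} Flatten $\G$ with a $C^{2,\alpha}$ chart of norm $\le M$. The transformed equation keeps the divergence structure with $C^{1,\alpha}$ dependence on $x$. The $C^{1,\beta}$ estimate up to a flat boundary with zero Dirichlet data is Lieberman's boundary regularity theorem. Tangential difference quotients give $H^2$, and the equation itself recovers the normal-normal derivative. For $C^{2,\alpha}$, tangential derivatives satisfy a linear equation with Hölder coefficients and zero boundary data; boundary Schauder estimates then apply, the normal second derivative is solved for from the equation, and we bootstrap as before. The universal bound again uses $|\na w|\le R_\La$ and $w=0$ on $\G$, which give $|w|\le CR_\La$ near the boundary.

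\textbf{Main obstacle.} The main subtlety is the bootstrap in the annulus regime near the boundary, where the coefficients depend on $\na w$ and the chart simultaneously. I expect to handle it by citing standard references (Gilbarg–Trudinger, Chapters 6 and 8; Lieberman) rather than proving anything new.
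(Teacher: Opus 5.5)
Your proposal is correct in substance and follows the same scheme as the paper: De Giorgi applied to the differentiated equation for the $C^{1,\beta}$ bound, and Schauder estimates together with $|\nabla w|\le R_\Lambda$ for the universal Hessian bound. The real difference is how you cope with $\Phi$ being only $C^{1,1}$.

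The paper replaces $\Phi$ by smooth $\Phi^\varepsilon\in\II$ (dense by Remark~\ref{rmk: connected}). For these, $\partial_i w$ classically solves a linear equation with ellipticity $\Lambda$; the paper proves the estimates with constants depending only on $n,\Lambda$ and passes to the limit. You instead work directly on $w$ with difference quotients, which avoids approximating $w$. But then you should run De Giorgi on $\Delta_h w$ uniformly in $h$, not on $\partial_e w$ with coefficient $D^2\Phi(\nabla w)$. Since $D^2\Phi$ exists only a.e., that composition is not well defined if $\nabla w$ lands in the exceptional null set on a set of positive measure. The same objection applies to your $A_h=\int_0^1 D^2\Phi(\cdots)\,dt$ along a fixed segment. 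The repair is easy: $V=\Delta_h[D\Phi(\nabla w)]$ satisfies $V\cdot\nabla\Delta_h w\ge\Lambda^{-1}|\nabla\Delta_h w|^2$ and $|V|\le\Lambda|\nabla\Delta_h w|$, by monotonicity and the Lipschitz bound on $D\Phi$. Moreover $\Delta_h\psi=\Div\bigl(e\,h^{-1}\int_0^h\psi(\cdot+se)\,ds\bigr)$, and this is all De Giorgi's method needs.

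Separately, your claim that rescaling produces exactly the weight $r^2$ in \eqref{eqn: Hessian bound interior} and \eqref{eqn: Hessian bound boundary} is not right. Since $D^2w_r(x)=r\,D^2w(x_0+rx)$, what you actually prove is $r\|D^2w\|_{L^\infty(B_{r/2}(x_0))}+r^{1+\alpha}[D^2w]_{C^{0,\alpha}(B_{r/2}(x_0))}\le C$. This implies the stated bounds only for $r\le1$, which is the only regime where the paper uses them; Step~3 of Lemma~\ref{lem: doubling} even uses the sharper $C/r$ form. For large $r$ the stated weights genuinely fail. Take $\Phi(p)=\tfrac12|p|^2$ and $w=ax_1+\tfrac{c}{r}(x_1^2-x_2^2)$, with $c$ small and $a$ chosen so that $\nabla w\in\An$ on $B_r$. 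Then $r^2\|D^2w\|_{L^\infty(B_{r/2})}\sim c\,r\to\infty$.
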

Note that the constants for the gradient estimates \eqref{eqn: gradient reg interior} and \eqref{eqn: gradient reg boundary} depend only on $n$ and $\La$, not on $\alpha$ or $\C$. These estimates hold by applying De Giorgi to the equation satisfied by $\partial_i u$; see \cite[Ch. 8]{GT},  first for a regularization $\Phi^\e \in \II$ of $\Phi$ (recall Remark~\ref{rmk: connected}) and then passing the estimates to the limit. The Hessian estimates \eqref{eqn: Hessian bound interior} and \eqref{eqn: Hessian bound boundary} follow from Schauder estimates; note that assumption $\na w\in \An$ in particular implies $\| \na w \|_{L^\infty} \leq R_\La$.

\section{Rigidity of Weiss-type monotonicity formulas}
\label{sec: weiss obstruction}
As described in Section~\ref{sec:aclit:classical}, the Weiss monotonicity formula \eqref{eq:aclit:weiss} plays a central role in singularity analysis for the classical Alt-Caffarelli problem. In this section, we show that the specific structure of the integrand $p\mapsto |p|^2$ is quite special, and the space of integrands admitting a Weiss-type monotonicity formula is rigid. This is an analogue of the  rigidity result in
\cite[Section~3]{AllardArea} in the context of anisotropic minimal surfaces, where the lack of the monotonicity formula for the density ratio makes the anisotropic minimal surface theory sensibly harder \cite{DeR-survey,DeRosa2026Lectures}. 

Let $\rho:\R^n\to[0,\infty)$ be positive on $\R^n\setminus\{0\}$,
$C^2$ away from the origin, and positively one-homogeneous. Set
\[
    B_r^\rho(x_0):=\{x:\rho(x-x_0)<r\},
    \qquad \nu_\rho(z):=\frac{D\rho(z)}{|D\rho(z)|}.
\]
Thus $\nu_\rho$ is the outer unit normal to $\partial B_1^\rho(0)$ and
$z\cdot\nu_\rho(z)>0$. Let
$h\in C^1(\partial B_1^\rho(0)\times[0,\infty))$, with $h(z,0)=0$.
The normalization at zero only removes an additive constant from the
quantity below. For a nonnegative locally Lipschitz minimizer $u$ of
$\J$ in $\Omega$, define
\begin{equation}\label{eqn: weiss local functional}
    W_{\rho,h}(u,x_0,r)
    :=\frac{\J(u;B_r^\rho(x_0))}{r^n}
      -\int_{\partial B_1^\rho(0)}
          h\left(z,\frac{u(x_0+rz)}{r}\right)
          \,\dd\Hm^{n-1}(z),
\end{equation}
whenever $B_r^\rho(x_0)\Subset\Omega$. 
Introduce the algebraic defect
\begin{equation}\label{eqn: weiss local defect}
    \sigma_{\Phi,\rho,h}(p,z,s)
    :=(p\cdot z-s)
       \bigl(D\Phi(p)\cdot\nu_\rho(z)-\partial_s h(z,s)\bigr),
    \qquad p\in\R^n,\quad z\in\partial B_1^\rho(0),\quad s\geq0.
\end{equation}

\begin{theorem}
\label{thm: weiss obstruction}
Let $n\geq2$, $\Phi\in \II$, and let $\rho,h$ be as above.
For every minimizer $u$ as above,
$W_{\rho,h}(u,x_0,\cdot)$ is locally absolutely continuous and,
for almost every admissible $r$,
\begin{equation}\label{eqn: weiss local derivative}
    \frac{d}{dr}W_{\rho,h}(u,x_0,r)
    =\frac1r\int_{\partial B_1^\rho(0)}
       \sigma_{\Phi,\rho,h}
       \left(\na u(x_0+rz),z,\frac{u(x_0+rz)}r\right)
       \,\dd\Hm^{n-1}(z).
\end{equation}
Moreover, the following are equivalent:
\begin{enumerate}[label=\textup{(\roman*)}]
    \item\label{item: weiss defect positive}
    The pointwise sign condition
    \begin{equation}\label{eqn: weiss pointwise positivity}
        \sigma_{\Phi,\rho,h}(p,z,s)\geq0
        \qquad\text{for every }p\in\R^n,\ z\in\partial B_1^\rho(0),
        \ s\geq0
    \end{equation}
    holds.

    \item\label{item: weiss quadratic case}
    There exist an invertible linear map $L:\R^n\to\R^n$ and $c>0$
    such that
    \begin{equation}\label{eqn: weiss quadratic classification}
        \rho(x)=|L^{-1}x|,
        \qquad \Phi(p)=c|L^T p|^2,
        \qquad
        h(L\theta,s)=\frac{c s^2}{|L^{-T}\theta|}
    \end{equation}
    for all $x,p\in\R^n$, $\theta\in S^{n-1}$, and $s\geq0$.
\end{enumerate}
In particular, for Euclidean balls the sign condition holds if and
only if $\Phi(p)=c|p|^2$ and $h(\theta,s)=cs^2$.
For a genuinely nonquadratic integrand, no choice of $\rho$ and $h$
in the above class gives an everywhere nonnegative defect in
\eqref{eqn: weiss local derivative}, even when $\Phi$ is
two-homogeneous.
\end{theorem}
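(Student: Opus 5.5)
The proof has three parts. First, I derive the derivative formula \eqref{eqn: weiss local derivative}. Then I show that (ii) implies (i) by direct algebra. Finally, I show that (i) implies (ii) by testing the sign condition at carefully chosen triples $(p,z,s)$.

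\textbf{Derivative formula.} Change variables $x=x_0+rz$ and write the first term as $\int_{B_1^\rho(0)}\bigl(\Phi(\nabla u(x_0+rz))+\chi_{\{u(x_0+rz)>0\}}\bigr)\,dz$. Differentiating in $r$ directly is not legitimate, since $u$ is only Lipschitz. Instead I use the inner variation (domain variation) identity for minimizers. Testing the minimality of $u$ against $u_t(x)=u(x+t\xi(x))$ with $\xi(x)=\varphi(\rho(x-x_0))(x-x_0)$, and letting $\varphi$ approximate $\chi_{[0,r]}$, gives a Pohozaev-type identity. This is the standard argument from Weiss \cite{W}, adapted to $\Phi$; it uses Lipschitz regularity of $u$ and the a.e. existence of traces. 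The identity reads
\[
\frac{d}{dr}\Bigl(r^{-n}\J(u;B_r^\rho)\Bigr)=\frac1r\int_{\partial B_1^\rho}(\nabla u\cdot z)\,D\Phi(\nabla u)\cdot\nu_\rho\,d\Hm^{n-1}-\frac1r\int_{\partial B_1^\rho}\frac{u}{r}\,D\Phi(\nabla u)\cdot\nu_\rho\,d\Hm^{n-1}.
\]
The second integral comes from the outer variation $u\mapsto u+\varepsilon u\,\varphi(\rho)$, whose first variation is $\int D\Phi(\nabla u)\cdot\nabla(u\varphi)=0$.

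For the boundary term, differentiating $h(z,u(x_0+rz)/r)$ in $r$ gives $\partial_s h\cdot(\nabla u\cdot z-u/r)/r$. Combining the two pieces yields exactly $\sigma$. Absolute continuity in $r$ follows because every term is an integral of a bounded function over $\rho$-balls.

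\textbf{(ii) implies (i).} With $\tilde p=L^Tp$, $z=L\theta$ and $\nu_\rho=L^{-T}\theta/|L^{-T}\theta|$, one computes $D\Phi(p)\cdot\nu_\rho=2c\,\tilde p\cdot\theta/|L^{-T}\theta|$ and $p\cdot z=\tilde p\cdot\theta$. Hence
\[
\sigma=\frac{2c}{|L^{-T}\theta|}\,(\tilde p\cdot\theta-s)^2\ge0.
\]

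\textbf{(i) implies (ii).} This is the main work. Fix $z$ and $s$, and consider the affine hyperplane $H=\{p: p\cdot z=s\}$. The function $F(p)=D\Phi(p)\cdot\nu_\rho(z)-\partial_sh(z,s)$ must have the same sign as $p\cdot z-s$ on both sides of $H$. Since $F$ is continuous, $F$ vanishes on $H$. Thus $D\Phi(p)\cdot\nu_\rho(z)$ is constant on every hyperplane $\{p\cdot z=s\}$ with $s\ge0$, and equals $\partial_s h(z,s)$ there.

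Taking $s=0$ and using $D\Phi(0)=0$ gives $D\Phi(p)\cdot\nu_\rho(z)=0$ whenever $p\perp z$. Differentiating along $H$ gives $D^2\Phi(p)[\nu_\rho(z),w]=0$ for all $w\perp z$ and a.e. $p$, with $p\cdot z\ge0$. So $D^2\Phi(p)\nu_\rho(z)$ is parallel to $z$. Letting $z$ range over $\partial B_1^\rho$, the half-spaces $\{p\cdot z\ge0\}$ cover every $p$, and for each fixed $p$ the matrix $A=D^2\Phi(p)$ maps $\nu_\rho(z)$ into $\R z$ for every $z$.

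The key linear-algebra step is to use this for a fixed $p$ and all $z$ in the half-sphere where $p\cdot z\ge0$, which is an open set of directions. The map $z\mapsto\nu_\rho(z)$ is the Gauss map of the convex-ish body $B_1^\rho$, and $A^{-1}z\parallel\nu_\rho(z)$ says that $\nu_\rho$ is normal to the ellipsoid level sets of $z\mapsto z\cdot A^{-1}z$. Therefore $\rho(z)=\sqrt{z\cdot A^{-1}z}$ up to a constant on that open cone. By homogeneity and overlap of cones, the matrix $A$ is determined up to scale independently of $p$, and then by continuity $\rho$ is globally of this form. Since $D^2\Phi(p)=c(p)A$ with $A$ fixed, differentiating the Hessian symmetry in the form $\partial_k(c\,A_{ij})=\partial_i(c\,A_{kj})$ forces $c$ to be constant when $n\ge2$. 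Using $\Phi(0)=D\Phi(0)=0$, this gives $\Phi(p)=c\,|L^Tp|^2$ with $LL^T=A^{-1}$, so that $\rho(x)=|L^{-1}x|$.

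Finally, $\partial_sh(z,s)=D\Phi(p)\cdot\nu_\rho$ on $H$ can be evaluated at any point of $H$. Integrating in $s$ from $h(z,0)=0$ gives the stated formula for $h$. The Euclidean corollary follows by taking $L$ orthogonal.

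The main obstacle is the rigorous version of the Gauss-map argument: $D^2\Phi$ exists only a.e. and $\rho$ is only $C^2$ away from the origin. To handle it, I would work on the annulus $\An$, where $\Phi\in C^{2,\alpha}$, to pin down $A$. I would then extend using only first derivatives, via the identity that $D\Phi(p)\cdot\nu_\rho(z)$ depends only on $p\cdot z$.
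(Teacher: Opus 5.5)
Your proposal is correct. The derivative formula (inner variation plus the amplitude variation $(1+\varepsilon\varphi)u$) and the implication (ii)$\Rightarrow$(i) match the paper. Your proof of (i)$\Rightarrow$(ii) takes a different route.

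\textbf{The paper's route.} It extracts the flux identity $D\Phi(p)\cdot\nu_\rho(z)=\partial_sh(z,p\cdot z)$ and then proceeds in three steps:
\begin{enumerate}
\item It settles the Euclidean case using only first derivatives: $D\Phi(q)\parallel q$, so $\Phi=F(|p|)$, and $F'(R)/R$ is constant.
\item It uses Hessians only at two non-collinear points $p_0,q_0$ of the annulus $\An$, where $\Phi\in C^2$, to get $\nu_\rho(z)\parallel Q^{-1}z$ for all $z$. This makes $\rho$ ellipsoidal.
\item It reduces to the Euclidean case via $\widetilde\Phi(q)=\Phi(L^{-T}q)$.
\end{enumerate}

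\textbf{Your route.} You use a.e.\ Hessians everywhere.
\begin{enumerate}
\item At every Rademacher point $p$ of $D\Phi$ you get $D^2\Phi(p)\nu_\rho(z)\in\R z$ for all $z$ with $p\cdot z>0$.
\item The Gauss-map argument makes $\rho$ ellipsoidal on that cone.
\item Overlapping cones force $D^2\Phi=c(p)A$ with $A$ fixed.
\item Symmetry of third distributional derivatives gives $A_{ij}\partial_kc=A_{ik}\partial_jc$. Contracting with $A^{-1}$ and taking $j\neq k$ (possible since $n\ge2$) shows $c$ is constant. This must be read distributionally, since $c$ is only $L^\infty$.
\end{enumerate}
You also obtain the flux identity at $s=0$ directly, by moving $p$ across the hyperplane $\{p\cdot z=s\}$ instead of moving $s$. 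This avoids the paper's limit $s\downarrow0$.

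\textbf{Comparison.} Your argument is more direct: there is no change-of-variables reduction and no radial ODE. The obstacle you flag is not really one, because Rademacher gives a genuine derivative at a.e.\ $p$, valid for all $z$ at once. Your fallback (fix $A$ on $\An$, then use only first derivatives) is exactly the paper's route. Its advantage is economy of regularity: it needs $C^2$ only on the annulus, and the quadratic identification uses only $C^1$ plus strict convexity.

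\textbf{Two slips to fix.}
\begin{itemize}
\item The inclusion $D^2\Phi(p)\nu_\rho(z)\in\R z$ holds only for $z$ with $p\cdot z\ge0$, not for every $z$. You use it correctly in the very next sentence.
\item You have $\rho(x)^2\propto x\cdot A^{-1}x$ and $\Phi\propto p\cdot Ap$, so you need $LL^T\propto A$, not $LL^T=A^{-1}$. With $LL^T=A^{-1}$ you would get $|L^{-1}x|^2=x\cdot Ax$ and $|L^Tp|^2=p\cdot A^{-1}p$. Both of those contradict what you derived.
\end{itemize}
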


\begin{remark}
\label{rmk: weiss obstruction scope}
For $\rho(x)=|x|$ and $h(\theta,s)=cs^2$, the quantity \eqref{eqn: weiss local functional} above is
\[
    W_c(u,x_0,r)
    =\frac{\J(u;B_r(x_0))}{r^n}
     -\frac{c}{r^{n+1}}
        \int_{\partial B_r(x_0)}u^2\,\dd\Hm^{n-1},
\]
and \eqref{eqn: weiss local derivative} becomes
\begin{align*}
    \frac{d}{dr}W_c(u,x_0,r)
    ={}&\frac{2c}{r^n}\int_{\partial B_r(x_0)}
          \left(\partial_\nu u-\frac ur\right)^2
          \,\dd\Hm^{n-1}+\frac1{r^n}\int_{\partial B_r(x_0)}
          \left(\partial_\nu u-\frac ur\right)
          \bigl(D\Phi(\na u)-2c\na u\bigr)\cdot\nu
          \,\dd\Hm^{n-1}.
\end{align*}
The second term is the obstruction to the usual square-completion
argument that yields monotonicity in the classical Alt-Caffarelli setting \cite{W}. In the quadratic case in
\eqref{eqn: weiss quadratic classification}, the change of variables
$v(y):=u(x_0+Ly)$ gives instead
\[
    W_{\rho,h}(u,x_0,r)
    =|\det L|\left[
       \frac1{r^n}\int_{B_r}
             \bigl(c|\na v|^2+\chi_{\{v>0\}}\bigr)\,dy
       -\frac{c}{r^{n+1}}
          \int_{\partial B_r}v^2\,\dd\Hm^{n-1}\right],
\]
so that the classical Weiss formula is recovered on the corresponding
ellipsoids.

{The pointwise condition \eqref{eqn: weiss pointwise positivity} is stronger than monotonicity of $W_{\rho,h}$ along minimizers. Our argument does not show that \eqref{eqn: weiss pointwise positivity} is necessary for the integral in \eqref{eqn: weiss local derivative} to be nonnegative for every minimizer. In particular, the theorem does not rule out cancellations in the boundary integral.}
\end{remark}
\begin{remark}
    It is important to remark that
Theorem~\ref{thm: weiss obstruction} does not
exclude corrections involving gradients, nonlocal quantities, or
auxiliary functions. The correction used for a class
of radial integrands in
\cite{karakhanyan2025newmonotonicityformulaquasilinear} lies outside
the class considered here.
\end{remark}

\begin{proof}[Proof of Theorem~\ref{thm: weiss obstruction}]
\emph{Step 1: The radial first-variation identity.}
By translation, assume $x_0=0$.
Inner variations and the positivity-preserving variations
$u_\varepsilon=(1+\varepsilon\psi)u$ give, respectively,
\begin{align}
    &\int_\Omega
       \left((\Phi(\na u)+\chi_{\{u>0\}})\divv X
       -D\Phi(\na u)\cdot DX^T\na u\right)\,dx=0,
       \label{eqn: weiss inner stationarity}\\
    &\int_\Omega D\Phi(\na u)\cdot
       \left(\psi\na u+u\na\psi\right)\,dx=0,
       \label{eqn: weiss amplitude stationarity}
\end{align}
for $X\in C_c^1(\Omega;\R^n)$ and $\psi\in C_c^1(\Omega)$; see Appendix~\ref{app: variations}.
These identities only require $\Phi\in C^{1,1}$ and do not require
regularity of $\GU$.

Use radial cutoffs $X(x)=\eta(\rho(x))x$ and
$\psi(x)=\eta(\rho(x))$, with $\eta$ constant near zero, and let
$\eta$ approximate the characteristic function of $[0,r)$.
For almost every $r$, the coarea formula yields
\begin{align}
    n\J(u;B_r^\rho)-\int_{B_r^\rho}D\Phi(\na u)\cdot\na u\,dx
    &=\int_{\partial B_r^\rho}
       \left((\Phi(\na u)+\chi_{\{u>0\}})\,x\cdot\nu
       -(x\cdot\na u)D\Phi(\na u)\cdot\nu\right)
       \,\dd\Hm^{n-1},
       \label{eqn: weiss radial domain}\\
    \int_{B_r^\rho}D\Phi(\na u)\cdot\na u\,dx
    &=\int_{\partial B_r^\rho}
       uD\Phi(\na u)\cdot\nu\,\dd\Hm^{n-1},
       \label{eqn: weiss radial amplitude}
\end{align}
where $\nu$ is the outer unit normal to $\partial B_r^\rho$.
Euler's identity for $\rho$ gives
$x\cdot\nu=r/|D\rho(x)|$ on $\partial B_r^\rho$, and hence
\[
    r\frac{d}{dr}\bigl(\J(u;B_r^\rho)\bigr)=\int_{\partial B_r^\rho}
              (\Phi(\na u)+\chi_{\{u>0\}})\,x\cdot\nu\,\dd\Hm^{n-1}.
\]
Combining these identities, we obtain
\begin{equation}\label{eqn: weiss bulk derivative}
    \frac{d}{dr}\bigl(r^{-n}\J(u;B_r^\rho)\bigr)
    =\frac1{r^{n+1}}\int_{\partial B_r^\rho}
       (x\cdot\na u-u)D\Phi(\na u)\cdot\nu
       \,\dd\Hm^{n-1}.
\end{equation}
For almost every $(r,z)$,
\[
    \partial_r \left(\frac{u(rz)}{r}\right)=\frac{z\cdot\na u(rz)-\frac{u(rz)}{r}}r.
\]
Changing variables $x=rz$ in
\eqref{eqn: weiss bulk derivative} and differentiating the boundary
term in \eqref{eqn: weiss local functional} proves
\eqref{eqn: weiss local derivative}.
Local absolute continuity follows from the coarea formula and the
local Lipschitz regularity of $u$.
Notice that no homogeneity assumption on $\Phi$ was used.

\emph{Step 2: The sign condition and Euclidean rigidity.}
Assume \eqref{eqn: weiss pointwise positivity}.
Fix $z\in\partial B_1^\rho(0)$ and $p\in\R^n$ with $p\cdot z>0$.
For every $t\in \R$ with  $|t|$ small enough, we can test \eqref{eqn: weiss pointwise positivity} at $s=p\cdot z+t$ and we get
\[
-t\bigl(D\Phi(p)\cdot\nu_\rho(z)-\partial_s h(z,p\cdot z+t)\bigr)\geq0.
\]
Letting $t \downarrow 0^+$ and $t\uparrow0^-$ yields
\begin{equation}\label{eqn: weiss flux factorization}
    D\Phi(p)\cdot\nu_\rho(z)=\partial_s h(z,p\cdot z)
    \qquad\text{whenever }p\cdot z>0.
\end{equation}

First suppose $\rho(x)=|x|$, so that $z=\nu_\rho(z)=\theta$.
For $q\cdot\theta=0$ and $s>0$, the preceding identity reads
\begin{equation}\label{eqn: weiss euclidean flux}
    D\Phi(q+s\theta)\cdot\theta=\partial_s h(\theta,s).
\end{equation}
Taking $q=0$ and letting $s\downarrow0$, we obtain
$\partial_s h(\theta,0)=0$, since $D\Phi(0)=0$.
Letting $s\downarrow0$ for arbitrary $q\perp\theta$ then gives
\[
    D\Phi(q)\cdot\theta=0
    \qquad\text{whenever }q\cdot\theta=0.
\]
Thus $D\Phi(q)$ is parallel to $q$ for every $q\ne0$.
Since $n\geq2$ and spheres are connected, there is a function $F$
such that $\Phi(p)=F(|p|)$.

Given $0<s\leq R$, choose $q\perp\theta$ with
$|q|^2=R^2-s^2$. Equation~\eqref{eqn: weiss euclidean flux} becomes
\[
    \partial_s h(\theta,s)=s\frac{F'(R)}R.
\]
For any $R_1,R_2>0$, choosing $0<s<\min\{R_1,R_2\}$ shows that
$F'(R_1)/R_1=F'(R_2)/R_2$. Consequently $F'(R)=2cR$ for a constant
$c>0$. Since $\Phi(0)=0$ and $h(\theta,0)=0$, it follows that
\[
    \Phi(p)=c|p|^2,
    \qquad h(\theta,s)=cs^2.
\]
This Euclidean argument uses only $C^1$ regularity,
$\Phi(0)=D\Phi(0)=0$, and strict convexity of $\Phi$.

\emph{Step 3: Rigidity of the domains.} 
Return to an arbitrary $\rho$.
Write $Q(p):=D^2\Phi(p)$ for $p$ in $\An$,
where $\Phi$ is $C^2$. Choose $p_0\ne0$ in this annulus.
For $p_0\cdot z>0$, differentiate
\eqref{eqn: weiss flux factorization} with respect to $p$ along any
$v\perp z$. The right-hand side stays constant, so $Q(p_0)\nu_\rho(z)\cdot v=0.$
It follows that
\begin{equation}\label{eqn: weiss hessian normal}
    Q(p_0)\nu_\rho(z)\in(0,\infty)z
    \qquad\text{if }p_0\cdot z>0.
\end{equation}
The multiple is positive because $Q(p_0)$ is positive definite and
$z\cdot\nu_\rho(z)>0$.

Choose $q_0$ in $\An$ which is not collinear
with $p_0$. On the nonempty relatively open set
\[
    \{z\in\partial B_1^\rho(0):p_0\cdot z>0,\ q_0\cdot z>0\},
\]
\eqref{eqn: weiss hessian normal} implies that
$Q(q_0)Q(p_0)^{-1}z$ is parallel to $z$.
The positive rays through this set form an open subset of $\R^n$.
A linear map for which an open set of vectors consists of eigenvectors
is a scalar multiple of the identity. Thus
$Q(q_0)$ is a positive scalar multiple of $Q(p_0)$.
The same argument applied to $q_0$ and $-p_0$ shows that
$Q(-p_0)$ is also a positive scalar multiple of $Q(p_0)$.
Therefore, with $Q:=Q(p_0)$, equation~\eqref{eqn: weiss hessian normal}
and continuity imply
\[
    \nu_\rho(z)\in(0,\infty)Q^{-1}z
    \qquad\text{for every }z\in\partial B_1^\rho(0).
\]
The tangential derivative of $z\mapsto z\cdot Q^{-1}z$ consequently
vanishes on $\partial B_1^\rho(0)$. This hypersurface is connected, since it
is a radial graph over $S^{n-1}$. Hence
\[
    z\cdot Q^{-1}z=R_0^2\qquad\text{on }\partial B_1^\rho(0)
\]
for some $R_0>0$. Since every positive ray meets $\partial B_1^\rho(0)$
exactly once, setting $L:=R_0Q^{1/2}$ gives
\[
    \partial B_1^\rho(0)=L S^{n-1},\qquad \rho(x)=|L^{-1}x|.
\]

\emph{Step 4: Reduction to the Euclidean case and the converse.}
For $\theta\in S^{n-1}$, set
\[
    \widetilde\Phi(q):=\Phi(L^{-T}q),
    \qquad
    \widetilde h(\theta,s):=|L^{-T}\theta|h(L\theta,s).
\]
Since
\[
    \nu_\rho(L\theta)=\frac{L^{-T}\theta}{|L^{-T}\theta|},
\]
a direct computation yields
$$(q\cdot\theta-s)
      \bigl(D\widetilde\Phi(q)\cdot\theta
                     -\partial_s\widetilde h(\theta,s)\bigr)
     =|L^{-T}\theta|\,
        \sigma_{\Phi,\rho,h}(L^{-T}q,L\theta,s)\geq0.
$$
Applying the Euclidean argument from Step~2 gives
$\widetilde\Phi(q)=c|q|^2$ and
$\widetilde h(\theta,s)=cs^2$. These are precisely
\eqref{eqn: weiss quadratic classification}.

Conversely, if \eqref{eqn: weiss quadratic classification} holds,
then
\[
    \sigma_{\Phi,\rho,h}(p,L\theta,s)
    =\frac{2c}{|L^{-T}\theta|}
       (p\cdot L\theta-s)^2\geq0.
\]
This proves the equivalence and all the assertions.
\end{proof}

\section{Regularity and compactness for minimizers of the penalized functional}\label{sec: reg}
Minimizers of the penalized functional $\J_{f,\mu}$ introduced in \eqref{def: J f mu} enjoy many of the regularity properties of minimizers of the classical Alt-Caffarelli functionals,  and sequences of minimizers have good compactness properties. 

\subsection{Geometric properties of minimizers}\label{sec: geom reg}
We start with the classical Lipschitz and non-degeneracy estimates and their consequences. Throughout this subsection, we let $\Phi \in C^{1,1}(\R^n)$ have $\Phi\ge 0$ in $\R^n$ and $\Phi(0)=0$ and satisfy the ellipticity bound \eqref{eqn: ellipt}, without assuming any further regularity of $\Phi$ on an annulus. The estimates of this subsection depend only on $\La$ and $n$, not $\alpha$ or $\C$,  and will determine the  annulus $\An$ in Definition~\ref{def: annulus} below.

These results were established by Petrosyan and Valdinoci \cite{PV05} for the non-perturbed functional $\J$.  In \cite{PV05} the authors consider integrands $\Phi$ with more general $p$-Laplace type degeneracy, and hence assume $\Phi$ is $C^2$ away from the origin so they can apply interior gradient estimates of \cite{Tolks}. Instead, when $\Phi\in \II$, and in particular satisfies the uniform ellipticity  \eqref{eqn: ellipt}, estimate \eqref{eqn: gradient reg interior} of Lemma~\ref{lem: C1beta} takes the place of \cite{Tolks}. For minimizers of the perturbed functional  $\Jfm$ rather than $\J$, the proofs from \cite{PV05} go through with minor modification and thus we do not repeat them here.

We first recall the Lipschitz bound, compare \cite[Theorem 4.1]{PV05}.

\begin{lemma}\label{lem:interior-lip}
Let $u$ minimize $\Jfm$ in $B_1$, with $0\leq\mu\leq1$, $f\geq0$, and $\|f\|_{L^\infty(B_1)}\leq F$.
For $0<\rho<\rho'<1$, if $\|u\|_{L^\infty(B_{\rho'})}\leq K$, then
\[
 \|\nabla u\|_{L^\infty(B_\rho)}\leq C(n,\La,F,\rho,\rho')(1+K).
\]
If $\{u=0\}\cap B_{1/4}\ne\varnothing$, then
\[
 \|u\|_{L^\infty(B_\rho)}+\|\nabla u\|_{L^\infty(B_\rho)}\leq C(n,\La,F,\rho).
\]
\end{lemma}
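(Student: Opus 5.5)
The proof follows the Alt-Caffarelli scheme as adapted by Petrosyan-Valdinoci \cite[Theorem 4.1]{PV05}. The fidelity term $\int|\mu v-f|^2$ enters only as a bounded right-hand side: since $0\le\mu\le1$ and $\|f\|_\infty\le F$, it is comparable to $r^n(K+F)^2$ on a ball $B_r$ whenever the competitor stays below $K$.

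\textbf{Step 1: basic properties of $u$.} Truncating $u$ from above by $\sup_{\partial B_{\rho'}}u$ does not increase $\int\Phi(\na u)$ or the positivity term. It also does not increase the fidelity term once $u\ge F/\mu$ there, so comparison gives an $L^\infty$ bound. Comparing with the $\Lo$-harmonic replacement in any ball shows $\Lo u\ge -2\mu F$ weakly. Hence $u$ is a subsolution, $u$ is continuous, $\{u>0\}$ is open, and in $\{u>0\}$ we have $\Lo u=2\mu(\mu u-f)=:g$ with $|g|\le 2(K+F)$.

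\textbf{Step 2: linear growth near the free boundary.} This is the main obstacle. Take $x_0\in\{u=0\}$ and $B_r(x_0)\subset B_{\rho'}$, and let $v$ solve $\Lo v=g$ in $B_r(x_0)$ with $v=u$ on $\partial B_r(x_0)$. Minimality, combined with the uniform convexity \eqref{eqn: ellipt}, gives
\[
\frac{1}{\Lambda}\int_{B_r}|\na(u-v)|^2\le |B_r\cap\{u=0\}|+C(K+F)r^{n}\cdot\text{(lower order)}.
\]
The standard Alt-Caffarelli estimate bounds the left side from below by $c\,|\{u=0\}\cap B_r|\,(\fint_{\partial B_r}u/r)^2$. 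This yields $\fint_{\partial B_r(x_0)}u\le C(1+K+F)\,r$. Carrying this through requires a Harnack or barrier argument for the operator with bounded measurable ellipticity. I would use the De Giorgi/Harnack inequality for the linearized equation, together with the $C^{1,\beta}$ estimate \eqref{eqn: gradient reg interior}, to control the harmonic replacement near $x_0$. Combined with subharmonicity up to an error $Cr^2$, this gives $\sup_{B_{r/2}(x_0)}u\le C(1+K+F)r$.

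\textbf{Step 3: gradient bound.} For $x\in\{u>0\}\cap B_\rho$, set $d=\mathrm{dist}(x,\{u=0\})$. If $d\ge(\rho'-\rho)/4$, apply \eqref{eqn: gradient reg interior} in $B_{(\rho'-\rho)/4}(x)$ using $\|u\|_\infty\le K$ and $|g|\le C(K+F)$. Otherwise apply the estimate in $B_d(x)$, where Step 2 gives $\|u\|_{L^\infty(B_d(x))}\le C(1+K)d$. Scaling then yields $|\na u(x)|\le C(1+K)$. Inside $\{u=0\}$ the gradient vanishes a.e.

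\textbf{Second claim.} If $z\in\{u=0\}\cap B_{1/4}$, Step 2 applied at $z$ at scale $r\sim 1-\rho'$ gives an $L^\infty$ bound on $B_{\rho''}$ with $\rho<\rho''<1$. For this I would iterate the linear growth along a chain of balls, or use the Harnack inequality for $u$, which satisfies a uniformly elliptic equation with bounded right-hand side in the positive phase. This bound is controlled by a constant $C(n,\La,F,\rho)$ independent of $K$. Since the Step 2 bound is linear in $K+F$ with a small factor at small scale, any apparent dependence on $K$ is absorbed. The gradient bound then follows from the first claim.
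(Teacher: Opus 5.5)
Your scheme (replacement, linear growth at zero points, then the interior $C^{1,\beta}$ estimate) is the one the paper relies on, since it simply defers to Petrosyan--Valdinoci with minor modifications. However, Step~2 is not correct as formulated, and the second claim is not actually proved.

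\emph{Step~2.} The Alt--Caffarelli lower bound in terms of $\fint_{\partial B_r}u$ rests on the Poisson kernel of the Laplacian, namely $v(x)\ge c\,\frac{r-|x|}{r}\fint_{\partial B_r}u$. Linearizing $\Lo$ only gives bounded measurable coefficients. There is then no mean value property, and elliptic measure need not be comparable to surface measure, so this bound is not available and is not what Harnack or the $C^{1,\beta}$ estimate give you. You should work with an interior quantity instead:
\begin{itemize}
\item Since $\Lo u\ge g=\Lo v$ weakly and $u=v$ on $\partial B_r$, comparison gives $u\le v$.
\item Then $\Lo v-2\mu^2v=2\mu^2(u-v)-2\mu f\le0$ and $\Lo v\ge-2F$. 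The weak Harnack inequality and the local maximum principle for the linearized operator therefore give $\inf_{B_{r/2}}v\ge c\sup_{B_{r/2}}u-Cr^2F$.
\item An exponential Pucci-type barrier on $B_r\setminus B_{r/2}$, compared with $v$ for the monotone operator $\Lo-2\mu^2$, gives $v(x)\ge c\,\frac{r-|x|}{r}\inf_{B_{r/2}}v$.
\end{itemize}
The Alt--Caffarelli ray argument then yields $\int_{B_r}|\na(u-v)|^2\ge c\,r^{-2}(\sup_{B_{r/2}}u)^2\,|\{u=0\}\cap B_r|$ as soon as $\sup_{B_{r/2}}u\ge C_1r$. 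Also, the fidelity error cannot be an additive $O(r^n)$ term. If it were, the dichotomy ``$|\{u=0\}\cap B_r|=0$ or a linear bound'' would be lost whenever the zero set is small. With your $v$ the error is exactly $\mu^2\int_{B_r}(u-v)^2\le Cr^2\int_{B_r}|\na(u-v)|^2$, which can be absorbed for $r\le r_0$. With these repairs, Step~3 and the first claim go through.

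\emph{Second claim (the real gap).} Your growth bound carries $K$ through $|g|\le 2(K+F)$, and the absorption you invoke does not work. Step~2 controls $u$ only on a ball of radius about $r$ around a zero point. Transporting that control to the rest of $B_\rho$ through the positive phase costs a Harnack-chain factor exponential in $1/r$, so the coefficient of $K$ never becomes small.

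The missing observation is that the argument above uses only one-sided information: $\Lo u\ge-2\mu F$, $\Lo v-2\mu^2 v\le0$, and $\Lo v\ge-2F$. It therefore gives $\sup_{B_{r/2}(x_0)}u\le C(n,\La,F)\,r$ at every zero $x_0$, with no dependence on $\sup u$. Likewise, in $\{u>0\}$ you have $\operatorname{div}(A\na u)-2\mu^2u=-2\mu f$ with $A=\int_0^1D^2\Phi(t\na u)\,dt$. Treating $2\mu^2u$ as a bounded nonnegative zero-order term gives a Harnack inequality with $K$-free constants.

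The bound on $B_\rho$ then follows by cases. If $\operatorname{dist}(y,\{u=0\})<\delta\sim 1-\rho$, use linear growth at the nearest zero. Otherwise, follow the segment from $y$ to the zero point in $B_{1/4}$ until you first reach distance $\delta$ from $\{u=0\}$, and run a Harnack chain of $O(1/\delta)$ balls along it.

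A minor point: continuity of $u$ does not follow from $u$ being a subsolution. It comes from $\int_{B_r}|\na(u-v)|^2\le Cr^n$ together with the $C^{1,\beta}$ estimate for $v$, via Campanato.
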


Next, we recall nondegeneracy of minimizers of \(\Jfm\); compare \cite[Lemma 5.2]{PV05}.

\begin{lemma}
    \label{lem:nondeg}
Let \(u\) be a minimizer of \(\Jfm\) in \(\Omega\) and let
\(x_0\in\{u>0\}\). For
\(0<r<\min\{1,\operatorname{dist}(x_0,\partial\Omega)\}\), denote by
\(U_r\) the connected component of \(\{u>0\}\cap B_r(x_0)\) containing
\(x_0\). Then
\[
    \sup_{U_r\cap B_{r/2}(x_0)}u
    \geq
    c\bigl(n,\La,\|f\|_{L^\infty(\Omega)}\bigr)\,r .
\]
In particular, for every connected component \(U\) of \(\{u>0\}\cap\Omega\)
and every \(x_0\in U\), we have \(\sup_{U\cap B_{r/2}(x_0)}u\geq c\,r\) for
such \(r\).
\end{lemma}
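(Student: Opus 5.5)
We follow the classical Alt--Caffarelli comparison argument, as in \cite[Lemma 5.2]{PV05}, adapted to the fidelity term. By the rescaling \eqref{eqn: rescale} we may take $x_0=0$ and normalize to $r=1$. Indeed, $\tilde u(x)=u(sx)/s$ minimizes $\mathcal{E}^\Phi_{\tilde f,\tilde\mu}$ with $\tilde\mu\le 1$ and $\|\tilde f\|_\infty\le\|f\|_\infty$, and the desired inequality $\sup u\ge c r$ is scale invariant. So it suffices to show: if $\sup_{U_1\cap B_{1/2}}u\le \epsilon$ with $\epsilon$ small depending only on $n,\La,F$, then $u(0)=0$, contradicting $0\in\{u>0\}$.

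\textbf{Step 1 (localizing to the component).} Set $w:=u\chi_{U_1}$ in $B_1$ and $w:=u$ elsewhere. On $\partial B_1$ the function $w$ may jump, so we will only modify $u$ inside $B_{1/2}\cap U_1$ (or rather on $U_1\cap B_{1/2}$ extended by the comparison below). Modifications of $u$ supported in $\overline{U_1}\cap B_{1/2}$ that stay $\le u$ and vanish on $\partial U_1$ remain in $W^{1,2}$ and are admissible competitors.

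\textbf{Step 2 (comparison function).} Let $\sqrt{\epsilon}\,$-type barrier: take $\phi(x)=C_0\epsilon\,\big(e^{-\kappa|x|^2}-e^{-\kappa/4}\big)/\big(e^{-\kappa/16}-e^{-\kappa/4}\big)$, radially decreasing, equal to $C_0\epsilon$ on $\partial B_{1/4}$ and vanishing on $\partial B_{1/2}$. For $\kappa$ large depending on $n,\La$, it satisfies $\mathcal{M}^-(D^2\phi)\ge c\epsilon$ in the annulus $B_{1/2}\setminus B_{1/4}$. Define the competitor $v:=\min\{u,\phi\}$ in $U_1\cap(B_{1/2}\setminus B_{1/4})$, $v:=0$ in $U_1\cap B_{1/4}$, and $v:=u$ otherwise. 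Since $u\le\epsilon<C_0\epsilon$ on $U_1\cap B_{1/2}$, the set $\{u>\phi\}$ is contained in the annulus, and this construction is continuous across $\partial B_{1/4}$ and across $\partial B_{1/2}$.

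\textbf{Step 3 (energy comparison).} Minimality gives
\[\int_{U_1\cap B_{1/2}}\big(\Phi(\na u)-\Phi(\na v)\big)+|\{u>0\}\cap U_1\cap B_{1/4}|\le \int |\mu v-f|^2-|\mu u-f|^2 .\]
The right side is bounded by $C(F)\int_{U_1\cap B_{1/2}}(u-v)\le C\epsilon\,|U_1\cap B_{1/2}\cap\{u>v\}|$. On the left, convexity \eqref{eqn: basic bounds for integrands} gives $\Phi(\na u)-\Phi(\na v)\ge D\Phi(\na v)\cdot\na(u-v)+\frac1{2\La}|\na(u-v)|^2$. On $B_{1/4}$ we have $v=0$, so the left side contains $\frac1{2\La}\int_{B_{1/4}\cap U_1}|\na u|^2+|\{u>0\}\cap U_1\cap B_{1/4}|$.

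Integrating the linear term by parts in the annulus requires $\phi$ to be a supersolution of $\Div D\Phi(\na\phi)$ with a favorable sign. Here I would use the Pucci bound satisfied by $\Div D\Phi$ (valid a.e. by \eqref{eqn: ellipt}). This produces a boundary term on $\partial B_{1/4}$ of size $\lesssim C\epsilon\int_{\partial B_{1/4}\cap U_1}u\le C\epsilon^2\Hm^{n-1}$-mass. That term is controlled, via the trace inequality on $B_{1/4}$, by $C\epsilon\int_{B_{1/4}\cap U_1}(|\na u|+u)\le C\epsilon\big(\int|\na u|^2+|\{u>0\}\cap U_1\cap B_{1/4}|\big)$, using $|\na u|\le \frac{1}{2}(|\na u|^2+1)$. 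The fidelity term is absorbed in the same way, since $\{u>v\}\cap(B_{1/2}\setminus B_{1/4})$ is a region where $u>\phi$. Choosing $\epsilon$ small then forces $\int_{B_{1/4}\cap U_1}|\na u|^2+|\{u>0\}\cap U_1\cap B_{1/4}|=0$, so $u\equiv0$ on $U_1\cap B_{1/4}\ni 0$, a contradiction. The ``in particular'' statement follows because $U\cap B_r(x_0)\supset U_r$.

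\textbf{Main obstacle.} The hard step is absorbing the fidelity term and the boundary flux on $\partial B_{1/4}$ using only $u\le\epsilon$, $\mu\le 1$, and bounded $f$. In particular, the annular region where $u>\phi$ must be handled so that $\int(u-v)$ is paid for by the gain there. If the barrier computation does not close, I would instead follow Alt--Caffarelli's original version, which replaces $u$ by $\min\{u,\phi\}$ with a $\sqrt{\cdot}$-scaled radius and estimates the $L^1$ trace directly.
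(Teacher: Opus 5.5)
Your overall strategy, an Alt--Caffarelli comparison restricted to the component $U_1$, is the route the paper has in mind (it defers to \cite[Lemma 5.2]{PV05}). The rescaling, the localization to $U_1$, the convexity inequality and the trace absorption are all fine, but the argument does not close as written.

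First, the competitor is not admissible. Your $\phi$ equals $C_0\epsilon$ on $\partial B_{1/4}$ and vanishes on $\partial B_{1/2}$. So on $U_1\cap\partial B_{1/4}$ the function $v$ jumps from $0$ (inside) to $\min\{u,C_0\epsilon\}=u$ (outside). On $U_1\cap\partial B_{1/2}$ it jumps from $\min\{u,0\}=0$ to $u$. These are points where $u>0$, which you cannot rule out, so $v\notin W^{1,2}$ and the asserted continuity across both spheres is false. In addition, $\mathcal{M}^-(D^2\phi)\geq c\epsilon$ makes $\phi$ a \emph{sub}solution. Hence, after integrating $-\int D\Phi(\nabla\phi)\cdot\nabla w$ by parts with $w=(u-\phi)^+\chi_{U_1}$, the bulk term $\int\operatorname{div}(D\Phi(\nabla\phi))\,w$ is a cost rather than a gain. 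The barrier has to be reversed: radially increasing, zero on the inner sphere, at least $\epsilon$ on the outer one, and a supersolution. For example, take $\phi=\epsilon\psi$ with $\psi=(4^\gamma-|x|^{-\gamma})/(4^\gamma-2^\gamma)$ and $\gamma+1>\Lambda^2(n-1)$. Then $\operatorname{div}(D\Phi(\nabla\phi))\leq\epsilon\,\mathcal{M}^+(D^2\psi,1/\Lambda,\Lambda)\leq-c_1\epsilon$ weakly on $B_{1/2}\setminus B_{1/4}$ (mollify $\Phi$ as in Proposition~\ref{prop: min is viscosity solution}). With this $v$, the flux on $\partial B_{1/4}$ is at most $C\epsilon\int_{\partial B_{1/4}}u\chi_{U_1}$, and your trace absorption goes through.

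Second, the fidelity term on the annulus is not ``absorbed in the same way.'' Using $v\le u$ and $f\ge 0$, its cost is at most $2\mu F\epsilon\,|U_1\cap B_{1/4}|+2\mu F\int_{B_{1/2}\setminus B_{1/4}}w$. The first piece is absorbed. The second is not controlled by $\int_{U_1\cap B_{1/4}}|\nabla u|^2+|U_1\cap B_{1/4}|$: the gain $\frac{1}{2\Lambda}\int|\nabla w|^2$ only turns it into an additive error of order $\mu F\epsilon$. That leaves you with a bound $|U_1\cap B_{1/4}|\lesssim\mu F\epsilon$, not a contradiction.

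The missing idea is to use the \emph{strictness} of the supersolution, which puts $c_1\epsilon\int w$ on the left. This dominates $2\tilde\mu F\int w$ as soon as $2\tilde\mu F\le c_1\epsilon$. Arrange this as follows:
\begin{enumerate}
\item Fix $\epsilon=\epsilon(n,\Lambda)$ first, from the trace absorption.
\item Run the argument at scale $\theta r$ with $\theta=\theta(n,\Lambda,F)$ small; the rescaled coefficient is then $\theta r\mu\le\theta$.
\item Since $U_{\theta r}\subset U_r$, conclude $\sup_{U_r\cap B_{r/2}}u\ge\sup_{U_{\theta r}\cap B_{\theta r/2}}u\ge\epsilon\theta r$.
\end{enumerate}
This is exactly where the dependence of $c$ on $\|f\|_{L^\infty(\Omega)}$ enters.
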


By combining the Lipschitz and nondegeneracy estimates, we obtain volume density estimates for the positivity set of a minimizer $u$, compare \cite[Theorem 2.2]{PV05}.

\begin{corollary}\label{cor: density estimates}
Let \(u\) be a minimizer of \(\Jfm\) in \(\Omega\). Then for any
\(x_0\in\GU\) and
\(0<r<\frac12\min\{1,\operatorname{dist}(x_0,\partial\Omega)\}\), we have
\[
    c\bigl(n,\La,\|f\|_{L^\infty(\Omega)}\bigr)
    \leq
    \frac{|\{u>0\}\cap B_r(x_0)|}{|B_r(x_0)|}
    \leq
    1-c\bigl(n,\La,\|f\|_{L^\infty(\Omega)}\bigr) .
\]
Moreover, for any compact subset \(\K\Subset\Omega\), we have
\(\Hm^{n-1}(\GU\cap\K)<\infty\).
\end{corollary}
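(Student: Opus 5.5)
The density bounds follow the classical Alt--Caffarelli pattern, combining Lemma~\ref{lem:interior-lip} and Lemma~\ref{lem:nondeg}. The perimeter bound then comes from the density bounds plus the Radon-measure structure of $\Lo u$ on the positive phase. Fix $x_0\in\GU$ and $r$ as in the statement, and rescale via \eqref{eqn: rescale} with $s=r$ to reduce to $r=1$. The rescaled minimizer solves a problem of the same class with $\tilde\mu\in[0,1]$ and $\|\tilde f\|_{L^\infty}\le\|f\|_{L^\infty}$, so all constants stay uniform. Since $x_0\in\partial\{u>0\}$, the second part of Lemma~\ref{lem:interior-lip} gives $\|u\|_{L^\infty(B_{3/2})}+\|\na u\|_{L^\infty(B_{3/2})}\le C$. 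In particular $u\le C\,\mathrm{dist}(\cdot,\{u=0\})$.

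\emph{Lower density bound.} Pick $y\in\{u>0\}$ arbitrarily close to $x_0$ and apply Lemma~\ref{lem:nondeg} at $y$ with radius about $1/2$. This gives a point $z\in B_{1/4}(y)$ with $u(z)\ge c$. The Lipschitz bound then gives $u>0$ on $B_{c/C}(z)$, so $|\{u>0\}\cap B_1(x_0)|\ge c'$.

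\emph{Upper density bound.} This is the step needing the energy comparison, and I expect it to be the main obstacle. Suppose $|\{u=0\}\cap B_{1/2}|\le\epsilon$. Let $v$ solve $\Div(D\Phi(\na v))=2\mu(\mu v-f)$ in $B_{1/2}$ with $v=u$ on $\partial B_{1/2}$; equivalently, $v$ minimizes the smooth part of $\Jfm$, so $v>0$ in the interior. Comparing $\Jfm(u)\le\Jfm(v)$, and using uniform convexity \eqref{eqn: ellipt} together with the weak equation for $v$, gives
\[
\frac1{C}\int_{B_{1/2}}|\na(u-v)|^2\le |\{u=0\}\cap B_{1/2}|\le\epsilon.
\]
The quadratic fidelity term is convex and does not spoil this. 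On the other hand, nondegeneracy and interior Harnack for $v$ (via the Pucci bounds) give $v\ge c$ on $B_{1/4}$. Since $u(x_0)=0$ and $u$ is Lipschitz, $u\le c/2$ on a ball of fixed radius around $x_0$. Poincar\'e then yields $\int|\na(u-v)|^2\ge c''$, a contradiction for $\epsilon$ small. The delicate points are the positive lower bound on $v$ and controlling the right-hand side $2\mu(\mu v-f)$ by $\|f\|_\infty$ and the $L^\infty$ bound; both are routine given boundedness. The argument follows \cite[Theorem~2.2]{PV05}.

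\emph{Finite perimeter.} For $\psi\ge0$ in $C_c^\infty$, test minimality with $(u-t\psi)_+$ as $t\downarrow0$. This shows $\lambda:=\Lo u-2\mu(\mu u-f)$ is a nonnegative Radon measure supported on $\GU$, with $\lambda(B_r(x))\le Cr^{n-1}$ by the Lipschitz bound. For the matching lower bound $\lambda(B_r(x))\ge cr^{n-1}$, integrate by parts against a cutoff and use nondegeneracy together with the density bounds. These give $c\,\Hm^{n-1}\llcorner\GU\le\lambda\le C\,\Hm^{n-1}\llcorner\GU$, via density theorems for measures. Local finiteness of $\lambda$ on $\K$ then gives $\Hm^{n-1}(\GU\cap\K)<\infty$, exactly as in \cite{AC,PV05}.
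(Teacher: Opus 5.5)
Your outline is the same classical route the paper invokes. The paper gives no argument beyond saying that the Lipschitz bound and nondegeneracy combine as in \cite[Theorem 2.2]{PV05}. Your lower density bound is correct as written, and so is the energy inequality $\frac{1}{C}\int_{B_{1/2}}|\na(u-v)|^2\le|\{u=0\}\cap B_{1/2}|$. The upper density step needs two more sentences. First, nondegeneracy is a property of $u$, not of $v$, so before using it you must show $u\le v$. This follows from the subsolution inequality $\Lo u\ge 2\mu(\mu u-f)\chi_{\PosS}\ge 2\mu(\mu u-f)$, which you only derive in your last paragraph, together with comparison for the monotone operator $w\mapsto -\Lo w+2\mu^2 w$. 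Second, at unit scale the right-hand side is of size $1+\|f\|_{L^\infty}$, not small. So Harnack gives $\inf_{B_\rho}v\ge c\rho$ only on a ball of radius $\rho=\rho(n,\La,\|f\|_{L^\infty})$ about $x_0$, where $\rho^2\|g\|_{L^\infty}\ll c\rho$; this is still enough for the Poincar\'e step.

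The substantive gap is in the $\Hm^{n-1}$ bound. To begin with, $\Lo u-2\mu(\mu u-f)$ is not supported on $\GU$, since on the interior of $\{u=0\}$ it equals $2\mu f\,dx$. What the $(u-t\psi)_+$ variation actually gives is $\lambda:=\Lo u-2\mu(\mu u-f)\chi_{\PosS}\ge 0$, and this $\lambda$ does live on $\GU$. More importantly, the lower bound $\lambda(B_r(x))\ge cr^{n-1}$ cannot be obtained exactly as in \cite{AC}. There it comes from expressing spherical means of $u$ through $\Delta u$ via the Green function, which has no analogue for the anisotropic operator $\Lo$. Testing against a plain cutoff only gives the upper bound.

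The missing idea is a replacement argument. Let $w$ solve $\Lo w=2\mu(\mu u-f)\chi_{\PosS}$ in $B_r(x)$ with $w=u$ on $\partial B_r(x)$. Then $\Lo u-\Lo w=\lambda\ge 0$ gives $0\le u\le w\le Cr$, and testing with $w-u$ yields
\[
\frac{1}{\La}\int_{B_r(x)}|\na(w-u)|^2\le\int_{B_r(x)}(w-u)\,d\lambda\le Cr\,\lambda(B_r(x)).
\]
Now rerun the positivity argument from your upper density step: $w\ge u$, nondegeneracy, and Harnack at a small scale. Since the right-hand side is only bounded, use the De Giorgi--Nash--Moser Harnack inequality, writing $D\Phi(\na w)=a(x)\na w$ with $a=\int_0^1 D^2\Phi(t\na w)\,dt$. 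Combined with $u(x)=0$, the Lipschitz bound and Poincar\'e, this gives $\int_{B_r(x)}|\na(w-u)|^2\ge cr^n$ for $r\le r_0(n,\La,\|f\|_{L^\infty})$. Hence $\lambda(B_r(x))\ge cr^{n-1}$. The density theorem for measures then gives $\Hm^{n-1}(\GU\cap\K)\le C\lambda(K')<\infty$ for a slightly larger compact set $K'$.
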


\begin{remark}\label{rmk: connected components}
 Suppose $u$ is a minimizer of $\J$ in $B_1$  such that $\{ u>0\}\cap B_1$ has $k$ connected components $V_1,\dots, V_k$, and on each connected component, 
\[
u(x) = (x\cdot \xi_i + c_i)_+ \qquad \text{ for }x \in V_i,
\]
for some $c_i\geq 0$ and $\xi_i \in \R^n$. Suppose $c_1 =0$ so $0\in \GU$.  By Corollary~\ref{cor: density estimates} and disjointness of the connected components, there exists $\rho_0=\rho_0(n, \La)>0$ such that $V_1$ is the only connected component intersecting $B_{\rho_0}$. In particular,
\[u = (x\cdot \xi_1)_+\qquad \text{ in } B_{\rho_0}.
\]
The first variation \eqref{eqn: EL} then guarantees that $D\Phi(\xi_1 ) \cdot \xi_1 -\Phi(\xi_1)=1.$
\end{remark}

The following lemma is the key tool that allows us to focus on regions where $|\nabla u|$ has a positive lower bound, since this always captures a definite ball in the
positivity set.

\begin{lemma}\label{lem: ball with big gradient}
Suppose that $u$ minimizes $\E^\Phi$ in $B_1$ and $0\in\GU$. There is $\db=\db(n,\La)>0$ such that every connected component $V$ of $B_1\cap\{u>0\}$ which intersects $B_{1/8}$ contains a ball
\[
 B_{\db}(y_V)\subset V\cap B_{1/2} \cap\{|\nabla u|\geq8\db\}.
\]
We may require $\db<\min\{1/64,\sqrt{2/\La}/16\}$.
\end{lemma}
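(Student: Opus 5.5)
The plan is to find a point of $V$ well inside $B_{1/2}$ where $u$ is large relative to its distance to the free boundary, and then to use the interior gradient estimates together with a mean-value argument to locate a ball on which $|\nabla u|$ is quantitatively large. Throughout, $u$ minimizes $\E^\Phi$, so $\mu=0$ and $f=0$ (we may take $F=0$). Since $0\in\GU$, Lemma~\ref{lem:interior-lip} gives $\|u\|_{L^\infty(B_{3/4})}+\|\nabla u\|_{L^\infty(B_{3/4})}\le L_0(n,\La)$. In $\{u>0\}$ the function $u$ solves $\Div(D\Phi(\nabla u))=0$ weakly.

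\emph{Step 1: a point with linear growth.} Fix a component $V$ meeting $B_{1/8}$ and pick $x_0\in V\cap B_{1/8}$. Apply Lemma~\ref{lem:nondeg} with $r=1/4$: the component of $\{u>0\}\cap B_{1/4}(x_0)$ containing $x_0$ lies in $V$, and it contains a point $x_1\in B_{1/8}(x_0)\subset B_{1/4}$ with $u(x_1)\ge c_0/4$, where $c_0=c_0(n,\La)$. Set $d=\operatorname{dist}(x_1,\GU)$. Because $u$ is $L_0$-Lipschitz and vanishes on $\GU$, we get $c_0/4\le u(x_1)\le L_0 d$, so $d\ge c_0/(4L_0)=:d_0$. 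Shrinking $d_0$ if needed so that $d_0<1/8$, we have $B_{d_0}(x_1)\subset V\cap B_{1/2}$.

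\emph{Step 2: a point of large gradient.} Choose $z\in\partial\{u>0\}$ with $|z-x_1|=d$. Along the segment from $x_1$ toward $z$, $u$ drops from at least $c_0/4$ to $0$ over a length $d\le 3/4$. If $d\le d_0$, or more generally on the truncated segment of length $d_0/2$ starting at $x_1$, this is not directly enough, so I instead argue on the ball $B=B_{d_0}(x_1)$. The quantity $u-u(x_1)$ cannot stay tiny on all of $B$ together with gradient smallness; otherwise one contradicts the propagation of positivity given by nondegeneracy and Harnack.

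The cleaner route is a contradiction argument. Suppose $|\nabla u|<\epsilon$ on $B_{d_0/2}(x_1)$. Then $u$ is nearly constant there, close to $u(x_1)\ge c_0/4$. I would instead pick the point of $\overline{B_{d/2}(x_1)}$ closest to $z$, call it $x_2$, which lies at distance $d/2$ from $\GU$. By the mean value theorem along the segment $[x_1,z]$ there is a point $y$ on it with $|\nabla u(y)|\ge u(x_1)/d\ge c_0/(4\cdot 3/4)=c_0/3$. However, $y$ might be near $\GU$.

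To fix this, I would use a Harnack inequality for the linearized uniformly elliptic equation (divergence form with measurable coefficients $\int_0^1 D^2\Phi$) to show $u\ge c\,u(x_1)$ on $B_{d/2}(x_1)$. Concavity-type Lipschitz control near $z$, namely $u(x)\le L_0|x-z|$, then forces the drop across the sub-segment from $x_1$ to the point at distance $d/4$ from $z$ to be at least $c\,u(x_1)-L_0d/4$. This is problematic if $d$ is large. It is safer to replace $x_1$ by the point $x_2$ on $[x_1,z]$ with $u(x_2)=c_1d'$, $d'=\operatorname{dist}(x_2,\GU)$, chosen via nondegeneracy applied at scale comparable to the distance. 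Nondegeneracy at $x_2$ with $r=d'$ gives a point in $B_{d'/2}(x_2)$ with $u\ge c_0d'$, while $u\le L_0\cdot(3d'/2)$ on that ball.

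Then on $B_{d'/2}(x_2)$ the oscillation of $u$ is at least a fixed fraction of $d'$ or, failing that, $u\ge c_0 d'$ throughout and comparison with the free boundary gives the drop. In either case the mean value theorem yields a point $y\in B_{3d'/4}(x_2)$, at distance at least $d'/8$ from $\GU$, with $|\nabla u(y)|\ge c_2(n,\La)$.

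\emph{Step 3: spreading to a ball.} On $B_{d'/8}(y)\subset V$, estimate \eqref{eqn: gradient reg interior} with $\psi=0$ gives $[\nabla u]_{C^{0,\beta}(B_{d'/16}(y))}\le C L_0 (d')^{-\beta}$. Hence $|\nabla u|\ge c_2/2$ on $B_{\delta'}(y)$ with $\delta'=c_3 d'$. Since $d'\ge d_0$ up to universal factors, we take $\db=\min\{c_3 d_0,\,c_2/16,\,1/65,\,\sqrt{2/\La}/17\}$, which gives $B_{\db}(y)\subset V\cap B_{1/2}\cap\{|\nabla u|\ge 8\db\}$.

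The main obstacle is Step 2: producing a point with gradient bounded below that is also quantitatively far from $\GU$. This requires balancing nondegeneracy against the Lipschitz bound at a scale comparable to the distance to the free boundary. Keeping that scale bounded below, and the resulting ball inside $B_{1/2}$, is where the constants must be chosen carefully.
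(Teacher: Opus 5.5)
Your Steps 1 and 3 are fine, but Step 2, which you correctly identify as the crux, is never actually carried out, so as written the proof has a gap.

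\begin{itemize}
\item The Harnack detour is abandoned midway.
\item The replacement point $x_2$ is never specified. You do not say which point of $[x_1,z]$ satisfies $u(x_2)=c_1 d'$, and nothing in the construction prevents it from lying arbitrarily close to $z$.
\item Consequently the claim in Step 3 that ``$d'\ge d_0$ up to universal factors'' is unsupported. The radius $c_3 d'$ of your final ball is then not bounded below, so the choice of $\bar\delta$ is not universal.
\item The dichotomy (``the oscillation is at least a fixed fraction of $d'$, or \dots\ comparison with the free boundary gives the drop'') describes the desired outcome rather than proving it.
\end{itemize}

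The missing idea is small, and you nearly had it when you looked at the point at distance $d/4$ from $z$. Cut the segment at a \emph{fixed universal} distance from $z$, not at a distance proportional to $d$.

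\begin{itemize}
\item Setup: $u(x_1)\ge c_0/4$, $d=|x_1-z|\in[d_0,1/4]$ (the upper bound uses $0\in\Gamma(u)$), and $B_d(x_1)\subset V\cap B_{1/2}$.
\item Set $t_0=c_0/(8L_0)$, which is less than $d_0\le d$, and let $y=z+t_0(x_1-z)/d$.
\item The Lipschitz bound from $z$ gives $u(y)\le L_0 t_0=c_0/8$.
\item The mean value theorem on $[y,x_1]$, a segment of length at most $1/4$, gives a point $a$ with
\[
|\nabla u(a)|\ge \frac{c_0/4-c_0/8}{1/4}=\frac{c_0}{2}.
\]
\item Since $|a-x_1|\le d-t_0$, we get $B_{t_0}(a)\subset B_d(x_1)\subset V\cap B_{1/2}$. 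So this point sits at universal depth inside the positive phase.
\end{itemize}

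No Harnack inequality and no second application of nondegeneracy are needed. Your Step 3 then applies on $B_{t_0}(a)$ to $u-u(a)$, using $\|u-u(a)\|_{L^\infty(B_{t_0}(a))}\le L_0t_0$. The interior $C^{1,\beta}$ estimate gives $|\nabla u|\ge c_0/4$ on a ball of universal radius around $a$. This is precisely the paper's argument.
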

\begin{proof}
Let $L=L(n,\La)$ bound the Lipschitz constant of $u$ on $B_{3/4}$.
Choose $x\in V\cap B_{1/8}$. Componentwise nondegeneracy, applied
in $B_{1/8}(x)$, gives $q\in V\cap B_{1/4}$ with $u(q)\geq c_0$,
where $c_0>0$ depends only on $n,\La$.
Let $z$ be a closest zero of $u$ to $q$, and put $d=|q-z|$.
Since $u(0)=0$,
\[
 c_0/L\leq d\leq1/4,
 \qquad B_d(q)\subset V\cap B_{1/2}.
\]
Set $t_0=c_0/(4L)$ and let $y=z+t_0(q-z)/d$. Then
$u(y)\leq c_0/4$. The mean value theorem on $[y,q]$ provides
$a\in(y,q)$ with $|\nabla u(a)|\geq3c_0$.
Moreover $B_{t_0}(a)\subset B_d(q)$.
The interior $C^{1,\beta}$ estimate \eqref{eqn: gradient reg interior} of Lemma~\ref{lem: C1beta} therefore gives a radius
$\rho=\rho(n,\La)>0$, with $\rho<t_0/2$, such that
$|\nabla u|\geq2c_0$ on $B_\rho(a)$.
Choose $\db\leq\min\{\rho,c_0/4,1/64,\sqrt{2/\La}/16\}$,
decreasing it slightly if necessary. The result follows.
\end{proof}

With these lemmas in hand, we can now specify the radii $0<r_\La<R_\La$ that define the annulus $\An$ in \eqref{eqn: holder intro}.
\begin{definition}\label{def: annulus}
Let $\db$ be fixed as in Lemma~\ref{lem: ball with big gradient}.
Choose $L_*=L_*(n,\La)\geq1$ so that every minimizer of
$\E^\Phi$ in $B_2$ with $0\in\GU$ satisfies
$\|\nabla u\|_{L^\infty(B_{15/8})}\leq L_*$, and set
\[
 r_\La=\db/2,
 \qquad R_\La=4\max\{L_*,\sqrt{2\La},1\},
 \qquad \An=B_{R_\La}\setminus\overline{B_{r_\La}}.
\]
\end{definition}

\begin{remark}\label{rmk: annulus}
Thanks to \eqref{eqn: first annular constraint}, any $p\in\R^n$ satisfying the identity $D \Phi(p)\cdot p -\Phi(p)=1$ corresponding to the free boundary condition \eqref{eqn: EL} lies in a compact subannulus of $\An$.
 Moreover, for any minimizer of $\J$ in $B_2$ with $0\in\G$, for each point $x$ in each ball $B_{\db}(y_V)$ obtained in Lemma~\ref{lem: ball with big gradient},  $\na u(x)$ lies in a compact subset of $\An$.
\end{remark}

\subsection{Compactness properties of minimizers}

The Lipschitz and nondegeneracy estimates of the previous subsection imply
various compactness properties for minimizers of our penalized functionals.
All statements are formulated on balls, which is all that is needed in the sequel, but could of course be shown on general domains. The results of this section also hold for the more general class of integrands considered in Section~\ref{sec: geom reg}, but now that the annulus $\An$ has been specified in Definition~\ref{def: annulus}, for the remainder of the paper we restrict our attention to $\Phi \in \II$ for simplicity.

\begin{lemma}\label{lem:compactness}
Let \(\{\Phi_k\}\subset \II\), \(\{\mu_k\}\subset[0,1]\), and let
\(\{f_k\}\subset L^\infty(B_1)\) be nonnegative with
\(\sup_k\|f_k\|_{L^\infty(B_1)}\leq F<\infty\). For each \(k\), let \(u_k\)
be a minimizer of \(\mathcal{E}^{\Phi_k}_{f_k,\mu_k}(\cdot,B_1)\) with
\[
    \{u_k=0\}\cap B_{1/4}\neq\emptyset .
\]
Then, for every \(\rho\in(0,1)\),
\begin{equation}\label{eq:cpt-unif-lip}
    \sup_k\Bigl(\sup_{B_\rho}u_k
    +\operatorname{Lip}(u_k;B_\rho)\Bigr)
    \leq C(n,\La,F,\rho) ,
\end{equation}
and there is a nonnegative \(u\in C^{0,1}_{\mathrm{loc}}(B_1)\) such that, up
to a subsequence (unrelabeled),
\begin{equation}
    \label{eqn: conv from cpt}
    u_k\to u\ \text{ locally uniformly in }B_1,
    \qquad
    u_k\rightharpoonup u\ \text{ weakly in }H^1(B_\rho)
    \ \text{ for every }\rho\in(0,1) .
\end{equation}
\end{lemma}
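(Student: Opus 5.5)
The plan is to apply the second part of Lemma~\ref{lem:interior-lip} to each $u_k$, extract a limit by Arzel\`a--Ascoli and weak compactness in $H^1$, and pass to the whole of $B_1$ by a diagonal argument.

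\textbf{Uniform bounds.} The hypothesis $\{u_k=0\}\cap B_{1/4}\neq\emptyset$ is exactly the condition in the second part of Lemma~\ref{lem:interior-lip}. Its constant $C(n,\La,F,\rho)$ depends on $\Phi_k$ only through $\La$, which is common to all of $\II$. It does not depend on $\mu_k\in[0,1]$. It depends on $f_k$ only through $F\geq\sup_k\|f_k\|_{L^\infty}$. So for each $\rho\in(0,1)$ we obtain
\[
\sup_{B_\rho}u_k+\|\nabla u_k\|_{L^\infty(B_\rho)}\leq C(n,\La,F,\rho).
\]
Since $B_\rho$ is convex, the $L^\infty$ bound on the gradient controls the Lipschitz constant on $B_\rho$. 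This gives \eqref{eq:cpt-unif-lip}.

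\textbf{Extraction of a limit.} For a fixed $\rho$, the family $\{u_k\}$ is equibounded and equi-Lipschitz on $\overline{B_\rho}$. Arzel\`a--Ascoli then yields a uniformly convergent subsequence. The same bounds give $\sup_k\|u_k\|_{H^1(B_\rho)}<\infty$, so a further subsequence converges weakly in $H^1(B_\rho)$. The weak $H^1$ limit agrees with the uniform limit, since both are limits in $L^2(B_\rho)$.

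\textbf{Diagonal argument and properties of $u$.} Take $\rho_j\uparrow 1$ and pass to successive subsequences, then extract the diagonal one. This gives a single subsequence converging locally uniformly on $B_1$ and weakly in $H^1(B_{\rho_j})$ for every $j$. Since every $\rho<1$ satisfies $\rho\leq\rho_j$ for some $j$, weak convergence holds in every $H^1(B_\rho)$. The limit $u$ is nonnegative as a pointwise limit of nonnegative functions. It is locally Lipschitz because the uniform Lipschitz bounds pass to uniform limits on each $B_\rho$.

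There is no genuine obstacle here. The only point to check is that the constants in Lemma~\ref{lem:interior-lip} are uniform over the varying $\Phi_k$, $\mu_k$ and $f_k$. This holds because that lemma's constant depends only on $n$, $\La$, $F$ and $\rho$.
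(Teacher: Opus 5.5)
Your proposal is correct and follows the paper's own proof: the uniform bound \eqref{eq:cpt-unif-lip} is exactly the second part of Lemma~\ref{lem:interior-lip}, whose constant is uniform in $\Phi_k$, $\mu_k$, $f_k$. The limit then comes from Arzel\`a--Ascoli, weak $H^1$ compactness, and a diagonal extraction over an exhaustion of $B_1$. The only cosmetic difference is that you extract the uniform and weak $H^1$ limits together at each stage, whereas the paper performs the two diagonal extractions one after the other.
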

 
\begin{proof}
The bound \eqref{eq:cpt-unif-lip} is Lemma~\ref{lem:interior-lip}. By the
Arzel\`a-Ascoli theorem and a diagonal argument over an exhaustion
\(B_{\rho_m}\uparrow B_1\), a subsequence converges locally uniformly to a
nonnegative \(u\), which is locally Lipschitz with the same constants. For
each \(\rho<1\), the sequence \(u_k\) is bounded in \(H^1(B_\rho)\) by
\eqref{eq:cpt-unif-lip}, so, after a further diagonal extraction,
\(u_k\rightharpoonup u\) weakly in \(H^1(B_\rho)\) for every \(\rho<1\).
\end{proof}

\begin{lemma}\label{lem:compactness2}
Suppose \(\Phi_k,\Phi\in \II\) and
\(\Phi_k\to\Phi\) locally uniformly in \(\mathbb R^n\). Suppose \(\{f_k\}\subset C^0(\overline{B_1})\) are
nonnegative with \(f_k\to f\) uniformly in \(B_1\), and
\(\{\mu_k\}\subset[0,1]\) with \(\mu_k\to\mu\). For each \(k\), let \(u_k\)
be a nonnegative minimizer of \(\mathcal{E}^{\Phi_k}_{f_k,\mu_k}\) in \(B_1\), and
suppose that \(u_k\to u\) locally uniformly and weakly in
\(H^1_{\mathrm{loc}}(B_1)\) (by Lemma~\ref{lem:compactness}, this holds
along a subsequence whenever \(\{u_k=0\}\cap B_{1/4}\neq\emptyset\)). Then:
\begin{enumerate}
    \item \(u_k\to u\) strongly in \(H^1_{\mathrm{loc}}(B_1)\);
    \item \(u\) is a local minimizer of \(\Jfm\) in \(B_1\).
If, in addition, \(u\in H^1(B_1)\), then \(u\) is a minimizer
with respect to all perturbations in \(H^1_0(B_1)\);
    \item \(\{u_k>0\}\to\{u>0\}\) locally in the Hausdorff
distance, and
\(\chi_{\{u_k>0\}}\to\chi_{\{u>0\}}\) in
\(L^1_{\mathrm{loc}}(B_1)\);
    \item if \(\{x_k\}\Subset B_1\) is a sequence with
    \(x_k\in\partial\{u_k>0\}\) for each \(k\) and \(x_k\to x\), then
    \(x\in\partial\{u>0\}\). Conversely, every point of \(\GU\) is a limit of points
of \(\G_{u_k}\). In particular,
\(\G_{u_k}\to\GU\) locally in the Hausdorff distance;
    \item for any \(r\in(0,1]\) and \(p\in(1,\infty)\),
    \(\liminf_k\int_{\{u_k>0\}\cap B_r}|D^2u_k|^p
    \geq\int_{\{u>0\}\cap B_r}|D^2u|^p\).
\end{enumerate}
\end{lemma}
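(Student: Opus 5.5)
The plan is to prove items (1)--(5) in an order where each feeds the next: first the minimality and energy convergence (which give (1) and (2) together), then the geometric convergence (3)--(4) via nondegeneracy, and finally (5) by lower semicontinuity. The starting point is the standard comparison argument. Fix \(B_r\Subset B_1\) and a competitor \(v\) with \(v-u\in H^1_0(B_r)\). For \(\delta>0\) small, let \(\zeta\) be a cutoff equal to \(1\) on \(B_{r}\) and supported in \(B_{r+\delta}\), and set \(v_k:=\zeta v+(1-\zeta)u_k\) on \(B_{r+\delta}\), extended by \(v\) in \(B_r\). Then \(v_k-u_k\in H^1_0(B_{r+\delta})\), and minimality gives
\[
\mathcal{E}^{\Phi_k}_{f_k,\mu_k}(u_k;B_{r+\delta})\le \mathcal{E}^{\Phi_k}_{f_k,\mu_k}(v_k;B_{r+\delta}).
\]
On the annulus \(B_{r+\delta}\setminus B_r\), the right-hand side is controlled using:
\begin{itemize}
\item the uniform Lipschitz bound \eqref{eq:cpt-unif-lip};
\item the uniform quadratic bound \(\Phi_k(p)\le \frac{\Lambda}{2}|p|^2\);
\item the uniform convergence \(u_k\to u\), which makes the term \(|\nabla\zeta|\,|u_k-u|\) negligible.
\end{itemize}
Altogether the annulus contributes \(O(\delta)+o_k(1)\). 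Since \(\Phi_k\to\Phi\) in \(C^1\) on compact sets and everything is uniformly bounded, the right-hand side converges to \(\Jfm(v;B_{r+\delta})+O(\delta)\).

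For the left-hand side I use lower semicontinuity. The bulk term is handled by convexity and weak convergence: \(\int\Phi_k(\nabla u_k)\ge\int\Phi(\nabla u_k)-o(1)\), and \(\liminf\int\Phi(\nabla u_k)\ge\int\Phi(\nabla u)\). For the characteristic functions, \(\chi_{\{u>0\}}\le\liminf\chi_{\{u_k>0\}}\) pointwise by uniform convergence. The fidelity term converges outright. This shows \(u\) is a local minimizer, i.e.\ (2); the statement for \(u\in H^1(B_1)\) follows by exhausting \(B_1\).

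Taking \(v=u\) gives \(\limsup\big(\int\Phi_k(\nabla u_k)+|\{u_k>0\}\cap B_r|\big)\le\int\Phi(\nabla u)+|\{u>0\}\cap B_r|\), and since each piece is separately lower semicontinuous, each converges. Strong \(H^1\) convergence then follows from uniform convexity:
\[
\tfrac{1}{2\Lambda}|\nabla u_k-\nabla u|^2\le \Phi_k(\nabla u_k)-\Phi_k(\nabla u)-D\Phi_k(\nabla u)\cdot(\nabla u_k-\nabla u).
\]
Integrating and using energy convergence, weak convergence and \(D\Phi_k\to D\Phi\) uniformly on bounded sets yields (1). The same bookkeeping gives \(|\{u_k>0\}\cap B_r|\to|\{u>0\}\cap B_r|\); combined with the pointwise liminf inequality, this gives \(L^1_{\rm loc}\) convergence of the characteristic functions.

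For the Hausdorff convergence in (3)--(4), I use Lemma~\ref{lem:nondeg}, whose constant is uniform in \(k\) because \(\|f_k\|_\infty\le F\).
\begin{itemize}
\item If \(x_k\in\overline{\{u_k>0\}}\) with \(x_k\to x\), nondegeneracy gives points near \(x_k\) with \(u_k\ge c\,r\); by uniform convergence, \(\sup_{B_r(x)}u\ge cr/2\) for every small \(r\), so \(x\in\overline{\{u>0\}}\).
\item Conversely, if \(u_k>0\) at points accumulating at \(x\) while \(u\equiv0\) near \(x\), the same estimate gives a contradiction.
\item Points where \(u>0\) are eventually in \(\{u_k>0\}\) by uniform convergence.
\end{itemize}
These three facts give Hausdorff convergence of the positive sets. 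For the free boundaries, let \(x_k\in\partial\{u_k>0\}\) with \(x_k\to x\). Then \(x\in\overline{\{u>0\}}\) by the first bullet. Moreover \(u(x)=\lim u_k(x_k)=0\), so \(x\in\partial\{u>0\}\). For the converse, take \(x\in\GU\). Near \(x\) there are points with \(u>0\), where \(u_k>0\) eventually, and there are also zeros of \(u_k\) close to \(x\): otherwise \(u_k>0\) on a fixed ball \(B_r(x)\), and then \(x\) would lie in the interior of \(\{u>0\}\) up to a null set. This last step is where I expect the main obstacle: excluding it needs the lower density estimate of Corollary~\ref{cor: density estimates} applied to \(\{u=0\}\), together with the \(L^1\) convergence of \(\chi_{\{u_k>0\}}\) proved above. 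Once zeros of \(u_k\) and positive points of \(u_k\) both accumulate at \(x\), connecting them produces free boundary points of \(u_k\) converging to \(x\).

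Finally, (5) follows by lower semicontinuity. By (3), every compact \(K\Subset\{u>0\}\cap B_r\) lies in \(\{u_k>0\}\) for large \(k\). If \(\liminf_k\int_{\{u_k>0\}\cap B_r}|D^2u_k|^p\) is finite, then \(D^2u_k\) is bounded in \(L^p(K)\) along a subsequence realizing the liminf. Since \(\nabla u_k\to\nabla u\) in \(L^2\), the weak limit of \(D^2u_k\) is \(D^2u\), and weak lower semicontinuity of the \(L^p\) norm gives \(\int_K|D^2u|^p\le\liminf_k\int_{\{u_k>0\}\cap B_r}|D^2u_k|^p\). Exhausting \(\{u>0\}\cap B_r\) by such compact sets \(K\) concludes.
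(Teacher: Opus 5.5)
Your proposal is correct and follows the paper's proof essentially step for step: comparison plus lower semicontinuity for (2), energy convergence plus uniform convexity for (1), nondegeneracy and density estimates for (3)--(4), and weak $L^p$ lower semicontinuity on sets compactly contained in $\{u>0\}$ for (5). The only real difference is the gluing competitor. You interpolate with a cutoff, $\zeta v+(1-\zeta)u_k$, on a thin annulus where $v=u$, so the $|\nabla\zeta|\,\|u_k-u\|_{L^\infty}$ term vanishes as $k\to\infty$ before $\delta\to0$. The paper instead dilates $v$ into $B_{(1-D)s}$ and interpolates radially between $u\circ\pi$ and $u_k\circ\pi$. Your version is slightly more economical, since it avoids both the dilation error in $\Phi_k$ and the modulus of continuity of $f$. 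You also spell out the $L^1$ and Hausdorff convergence in (3)--(4), which the paper leaves to standard references.
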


 The proof of Lemma~\ref{lem:compactness2} is postponed to Appendix~\ref{sec: cptness proof}.
\begin{remark}\label{rem:cv-limit-min}
By item (2), all the results of the previous sections apply to the limit
\(u\): in particular, \(u\) satisfies the Lipschitz estimate, the
nondegeneracy of Lemma~\ref{lem:nondeg}, and the density estimates
of Corollary~\ref{cor: density estimates}; the latter imply
\(|\partial\{u>0\}\cap B_1|=0\).
\end{remark}

\subsection{Minimizers are viscosity solutions}

Next we show that any minimizer of $\Jfm$ is a viscosity solution to the associated Euler-Lagrange equation. The following proof is inspired by an argument from anisotropic minimal surfaces; see 
\cite{Hardt77, philippis2015regularity, DPDRminmax,DePhilippisDeRosaHirsch, DPDRYminmax, DRNR}. Throughout this section, $\Phi \in \II$ is fixed, $f\in C^\infty(\Omega)\cap L^\infty(\Omega)$ is nonnegative, and $\mu \in [0,1]$. We again use the short-hand $\Lo w= \Div(D \Phi(\na w))$ and $\cG(p) = D \Phi(p)\cdot p - \Phi(p)$ from \eqref{eqn: short hand}.

\begin{proposition}\label{prop: min is viscosity solution} If $u$ is a  minimizer of $\Jfm$ in $\Omega$, then $u$ is a viscosity solution to \eqref{eqn: e reg sec EL} with $g = 2\mu(\mu u -f)$. 
\end{proposition}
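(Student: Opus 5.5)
We verify the two parts of the definition of viscosity solution separately: the interior condition \eqref{eqn: visc int cond} in $\{u>0\}$ and the free boundary condition \eqref{eqn: visc BC}. Throughout, $u$ is locally Lipschitz and bounded by Lemma~\ref{lem:interior-lip}. It is continuous and satisfies $\Lo u=g$ weakly in the open set $\{u>0\}$, with $g=2\mu(\mu u-f)$ continuous and bounded. The weak equation follows from variations $u+\e\varphi$ with $\varphi\in C^\infty_c(\{u>0\})$; for $|\e|$ small these do not change $\chi_{\{u>0\}}$.

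\emph{Interior condition.} Let $\phi$ touch $u$ from below at $x_0\in\{u>0\}$, and suppose the claimed inequality fails. For the touching-from-below case the violation is $\Lo\phi(x_0)>g(x_0)$ when $\na\phi(x_0)\in\An$, and $\mathcal M^-(D^2\phi(x_0),1/\La,\La)>g(x_0)$ otherwise. In both cases, since $D^2\Phi$ is continuous on $\An$, respectively since $D^2\Phi(p)$ has eigenvalues in $[1/\La,\La]$ for a.e.\ $p$, we get $\Div(D\Phi(\na\phi))\ge \mathcal M^-(D^2\phi,1/\La,\La)>g+\delta$ a.e.\ in a small ball $B_r(x_0)\subset\{u>0\}$. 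In the first case we use the continuity of $\mathcal M^-(D^2\phi)$ and of $g$; in the second we note that $\Div D\Phi(\na\phi)=\operatorname{tr}(D^2\Phi(\na\phi)D^2\phi)$ holds a.e., by the chain rule for the Lipschitz map $D\Phi$ composed with the smooth $\na\phi$. This chain rule is the technical point, since $\na\phi$ might spend positive measure where $D\Phi$ is not differentiable. I would handle it by approximating $\Phi$ with the mollifications $\Phi^\e\in\II$ of Remark~\ref{rmk: connected}; alternatively, I would simply use the weak formulation, where $D\Phi(\na\phi)$ is Lipschitz and the distributional divergence is a.e.\ bounded below by the Pucci expression.

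Next replace $\phi$ by $\tilde\phi=\phi-\e|x-x_0|^2+\e r^2/2$ with $\e$ small. Then $\tilde\phi$ is still a strict subsolution in $B_r(x_0)$, $\tilde\phi>u$ near $x_0$, and $\tilde\phi<u$ on $\partial B_r(x_0)$. Set $D=\{\tilde\phi>u\}\Subset B_r(x_0)$. Testing the weak inequalities $\Lo\tilde\phi\ge g+\delta/2$ and $\Lo u=g$ against $(\tilde\phi-u)_+$ and subtracting gives
\[
\int_D (D\Phi(\na\tilde\phi)-D\Phi(\na u))\cdot\na(\tilde\phi-u)\le -\tfrac{\delta}{2}\int_D(\tilde\phi-u)<0.
\]
The left-hand side is nonnegative by the monotonicity of $D\Phi$, which follows from \eqref{eqn: ellipt}. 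This is a contradiction. The touching-from-above case is symmetric.

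\emph{Free boundary condition.} This is the harder step, and here I would use a domain-variation (cut-and-paste) comparison rather than a PDE argument. Suppose $\phi$ touches $u$ from below at $x_0\in\GU$ with $\cG(\na\phi(x_0))>1$. I would blow up: consider $u_r(x)=u(x_0+rx)/r$, which minimizes $\mathcal E^\Phi_{\tilde f,\tilde\mu}$ by \eqref{eqn: rescale}. By Lemma~\ref{lem:compactness} and Lemma~\ref{lem:compactness2}, along a subsequence $u_r\to u_0$ with $u_0$ a minimizer of $\E^\Phi$, since $\tilde\mu=r\mu\to0$, and $0\in\Gamma(u_0)$. Moreover $u_0\ge (x\cdot p)_+$ with $p=\na\phi(x_0)$, since $\phi$ touches from below and $\na\phi(x_0)\neq0$. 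The latter holds because a zero gradient would give $\cG=0$, excluding the case at hand. I would then show that a minimizer lying above the half-plane solution $(x\cdot p)_+$ with $\cG(p)>1$ and touching it at the origin is impossible. To do this, construct a competitor by replacing $u_0$ in a small ball by $\max\{u_0,(x\cdot p+t)_+\}$ with the boundary data cut off. The energy gained from enlarging the positivity set costs $|\cdot|$, while the Dirichlet-energy change is quantified by $\Phi(p)$ versus $D\Phi(p)\cdot p$; the sign $\cG(p)>1$ makes the competitor strictly better to first order in $t$. Making this first-order computation rigorous, using convexity of $\Phi$ and the density estimates of Corollary~\ref{cor: density estimates}, is the main obstacle.

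The case of $\phi_+$ touching from above with $\cG(\na\phi(x_0))<1$ is handled by the analogous inward competitor $\min\{u_0,(x\cdot p-t)_+\}$, which removes positivity set. Nondegeneracy (Lemma~\ref{lem:nondeg}) rules out the degenerate case $p=0$, since it forces $u_0$ to have linear growth.
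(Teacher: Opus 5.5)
Your interior argument is fine. It is essentially the paper's weak-comparison argument, used in both cases. One small slip: when $\nabla\phi(x_0)\in\An$ the violated inequality is $\Lo\phi(x_0)>g(x_0)$, so you should use continuity of $D^2\Phi$ on $\An$, not the Pucci bound. The blow-up to a minimizer $u_0\ge (x\cdot p)_+$ of $\E^\Phi$ with $0\in\Gamma(u_0)$ is also the paper's first move.

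The gap is the step you call ``the main obstacle'', and the heuristic you give for it does not hold. ``Strictly better to first order in $t$, with the Dirichlet change governed by $\Phi(p)$ versus $D\Phi(p)\cdot p$'' is the first variation at the half-plane $(x\cdot p)_+$. It assumes $u_0$ coincides with $(x\cdot p)_+$ near its free boundary, which is exactly what is unknown. For a general $u_0$, set $v:=x\cdot p+t\eta$, $\ell=v_+$, $w=\max\{u_0,\ell\}$. Use convexity on $\{\ell>u_0>0\}$ and the identity $\Phi(q)=D\Phi(q)\cdot q-\cG(q)$ on $\{u_0=0<\ell\}$. Since $(\ell-u_0)_+=(v-u_0)_+$, this gives
\[
\E^\Phi(w;B_1)-\E^\Phi(u_0;B_1)\le-\int_{B_1}\Div\bigl(D\Phi(\nabla v)\bigr)(\ell-u_0)_+\,dx-\int_{\{u_0=0<\ell\}}\bigl(\cG(\nabla\ell)-1\bigr)\,dx .
\]
The gain is the last term. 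Density estimates only give $|\{u_0=0<\ell\}|\gtrsim t^n$, because nothing places $\{u_0=0\}$ near the hyperplane $\{x\cdot p=0\}$ away from the origin. For a cutoff $\eta$ supported in $B_1$, $D^2\eta$ has no sign, and $(\ell-u_0)_+\le t$. So the first term is only bounded by $Ct^2|\{\ell>u_0\}|$, which can be of order $t^2$. Convexity plus density estimates therefore do not show the competitor is better.

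What is missing is a way to control that error term. There are two ways.
\begin{itemize}
\item The paper reduces to an exact half-plane. Among all blow-ups $v$ of $u_\infty$ at $0$, a compact family, it maximizes $a(v)=\sup\{a:\ a(x\cdot e)_+\le v\}$. A Hopf-lemma argument on half-balls shows the maximizer equals $\bar a(x\cdot e)_+$ on $\{x\cdot e>0\}$. Restricting to that component gives a half-plane minimizer, which forces $\cG(\bar a e)=1$, i.e.\ $\bar a=b<a_*$, a contradiction.
\item Alternatively, your competitor works if $\eta$ makes $v$ a strict subsolution. Take $\mathcal M^-(D^2\eta,1/\Lambda,\Lambda)>0$, $\eta(0)>0$, and $\eta<0$ on $\partial B_1\cap\{x\cdot e\ge-\delta\}$. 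For example, $\eta=1-A\,x\cdot e+B(x\cdot e)^2-2|x-(x\cdot e)e|^2$ with $B>2(n-1)\Lambda^2$ and $A=B+2$. Then for small $t$:
  \begin{itemize}
  \item $w=u_0$ near $\partial B_1$;
  \item the first term is $\le 0$;
  \item $\cG(\nabla\ell)>1$;
  \item positive density of $\{u_0=0\}$ at $0$ gives a strict energy decrease.
  \end{itemize}
  The touching-from-above case needs a strict supersolution $x\cdot p-t\eta$ in the same way.
\end{itemize}
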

\begin{proof}[Proof of Proposition~\ref{prop: min is viscosity solution}]
 {\it Step 1:} We begin by showing $u$ satisfies \eqref{eqn: visc int cond}.  Suppose $x_0 \in \{u>0\}\cap \Omega$ and $\phi\in C^\infty(\R^n)$ touches $u$ from below in $\{u>0\}$ at $x_0$.  Recall that $u \in C^{1,\alpha}(\{u>0\})\cap C^{2,\alpha}(\{x: \na u(x) \in \An\}).$

   {\it Case 1: }
  First suppose that $\na u(x_0) \in \An$, and hence $u$ is $C^2$ in a neighborhood of $x_0$ and $D^2 u(x_0)- D^2\phi(x_0)$ is nonnegative definite. Since $\na \phi(x_0) = \na u(x_0):=p_0$, we see that  $\Lo u(x_0 )- \Lo\phi(x_0) = D^2\Phi(p_0) : (D^2 u(x_0)-D^2\phi(x_0)) \geq 0$ by ellipticity. Since $\Lo u(x_0)=g(x_0)$, it follows that $\Lo\phi(x_0) \leq g(x_0).$ 

  {\it Case 2:} Next suppose that $\na u(x_0)\not\in \An$.
  We assume by way of contradiction that $\mathcal{M}^-(D^2 \phi(x_0) ,1/\Lambda, \Lambda)>g(x_0)$. Since $\phi$ is smooth and $g$ is continuous by Lemma~\ref{lem:interior-lip}, we may choose $r_0>0$ so that $\mathcal{M}^-(D^2 \phi(x) ,1/\Lambda, \Lambda)\geq g(x) + r_0$ for all $x$ in $B_{r_0}(x_0)$. 
  This also implies that $\Lo \phi(x) \geq g(x)+r_0$ weakly in $B_{r_0}(x_0)$. 
Indeed, letting $\Phi_\e$ be a scale-$\e$ mollification of $\Phi$ (which satisfies the same ellipticity bounds), for any nonnegative test function $\psi \in C^1_c(B_{r_0}(x_0))$ we have 
\[
-\int D\Phi_\e(\na \phi) \cdot \na \psi = \int (D^2\Phi_\e(\na \phi):D^2\phi)\,\psi   \ge
\int \mathcal{M}^-(D^2 \phi ,1/\Lambda, \Lambda)\,\psi  \geq \int (g+ r_0) \psi.
\]
Passing $\e\to 0$ shows that $\Lo \phi(x) \geq g(x)+r_0$ weakly in $B_{r_0}(x_0)$. By taking a small vertical translation of $\phi$, which leaves the equation unchanged, this contradicts the comparison principle for weak solutions. Thus $\mathcal{M}^-(D^2 \phi(x_0) ,1/\Lambda, \Lambda)\leq g(x_0)$.

Pairing this with an analogous argument when $\phi\in C^\infty(\R^n)$ touches $u$ from above in $\{u>0\}$ at $x_0$ shows that $u$ satisfies $\Lo u=g$ in $\{u>0\}$ in the viscosity sense. 

{\it Step 2:} Next we show that $u$ satisfies \eqref{eqn: visc BC}. The sub and super solution arguments are analogous, so we just consider the case when $x_0 \in \GU$ and $\phi$ touches $u$ from below.  Suppose by way of contradiction that $\cG(\na \phi(x_0)) >1.$ Write $\na \phi(x_0) = a_*e$ with $a_*>0$ and $e \in S^{n-1}$. Recall from Section~\ref{sec: prelim integrands} that $t\mapsto \cG(te)$ is strictly increasing, so there is a unique $b>0$ such that $\cG(be)=1$ and $b<a_*.$ 

Now, consider a blowup sequence $u_k = u(x_0 + r_kx)/r_k$ with $r_k \to0$. For each $k$, $u_k$ minimizes $\J_{f_k,\mu_k}$ in $B_{r_k^{-1}}$, where $\mu_k, f_k$ are determined by the rescaling \eqref{eqn: rescale}. So, by Lemmas~\ref{lem:compactness} and \ref{lem:compactness2}, up to a subsequence, $u_k$ converges to a function $u_\infty:\R^n \to \R$ with $0 \in \G(u_\infty)$ such that
\begin{equation}\label{eqn: ordered}
    u_\infty(x) \geq a_*(x\cdot e)_+ \qquad \text{ for all }  x \in \R^n,
\end{equation}
and $u_\infty$ is an entire minimizer of $\J$. Indeed, by Lemma~\ref{lem:compactness2}, for each $R>0$, $u_\infty$ minimizes $\E^\Phi(\cdot; B_R) + |f(x_0)|^2|B_R|$ and thus minimizes $\E^\Phi(\cdot; B_R)$. 

Let $\mathcal{B}(u_\infty)$ denote the collection of all blowups of $u_\infty$ at $0$. The argument above shows that every $v \in \mathcal{B}(u_\infty)$ is again an entire minimizer of $\J$  with $0 \in \G(v)$ that satisfies \eqref{eqn: ordered} with $v$ in place of $u_\infty$. In particular, $ \{y\cdot e>0\}\subset \{v>0\}$ for  every $v \in \mathcal{B}(u_\infty)$. A diagonal argument shows that $\mathcal{B}(u_\infty)$ is closed, and hence Lemma~\ref{lem:compactness} shows $\mathcal{B}(u_\infty)$ is sequentially compact, with respect to the convergence of the lemma.

{\it Claim:} There exist $\bar{v} \in \mathcal{B}(u_\infty)$ and $\bar{a}>0$ such that $\bar{v}(x) = \bar{a}(x\cdot e)_+$ for all $x \in \{y\cdot e>0\}.$

Before proving the claim, let us see how it will yield a contradiction. On one hand, \eqref{eqn: ordered} guarantees that $ \bar{a} \geq a_* >b$. On the other hand, the blowup $\bar{v}$ produced by the claim is an entire minimizer of $\J$.  Since the restriction of a minimizer to one component of its positivity set is again a minimizer, this means $\bar{a}(x\cdot e)_+$ is also a minimizer of $\J$. But as noted above, the free boundary condition cannot be satisfied unless $\bar{a} = b$, a contradiction.

Let us now prove the claim. For each $v \in \mathcal{B}(u_\infty),$ let 
\[
a(v) = \sup\{a: a(x\cdot e)_+ \le v(x) \text{ for all } x\in \R^n\}.
\]
By \eqref{eqn: ordered}, $ a(v) \geq a_*,$ while the Lipschitz bound of Lemma~\ref{lem:interior-lip} guarantees that $a(v)\leq L$ for $L=L(n,\La)$. 
Moreover, one easily checks that $v \mapsto a(v)$ is upper semicontinuous with respect to local uniform convergence on $\R^n.$
So, $\bar{a}:=\sup\{a(v) : v\in \mathcal{B}( u_\infty)\}$ is achieved by some $\bar{v} \in \mathcal{B}(u_\infty)$. It is for this $\bar{v}$ that we establish the claim.

Suppose by way of contradiction that $\bar{v}(x) -\bar{a}(x\cdot e)_+$
 is not identically zero in $\{x\cdot e>0\}$, so there exists some $\bar{x}\in \{x\cdot e>0\}$ such that $\bar{v}(\bar{x}) > \bar{a}(\bar{x}\cdot e)_+.$  Let $\bar{r}= |\bar{x}|$ and let $B^+ = \{ x  : |x|<\bar{r} , x\cdot e>0\}$. Now, we can find a function $w \in C^{1,1}(B^+) \cap \text{Lip}(\partial B^+)$ such that   $w=0$ on $\partial B^+ \cap \{x\cdot e=0\}$ and
$ \bar{a}(x\cdot e)_+ \leq w(x) \leq \bar{v}(x)$ on $\partial B^+\cap \{x\cdot e>0\}$, with strict inequality at $\bar{x} \in \partial B^+\cap \{x\cdot e>0\}$.

Now, let $\tilde{w}$ solve $\divv(D\Phi(\na \tilde{w}) ) =0$ in $B^+$ with $\tilde{w}=w$ on $\partial B^+.$ Since $B^+ \subset \{ \bar{v}>0\}$, $\bar{v}$ solves the same equation on $B^+$ and by the maximum principle, $\tilde{w} \leq \bar{v}$ in $B^+.$ Moreover, $\tilde{w} \in C^{1,\beta} (\bar{B}^+_{\bar{r}/2})$ by estimate \eqref{eqn: gradient reg boundary} of  Lemma~\ref{lem: C1beta}, and in particular $\na \tilde{w}(0)$ exists. Note that $\na \tilde{w}(0) = \tilde{a} e$ for some $\tilde{a}>0$ thanks to our choice of boundary data. By strong maximum principle, $\tilde{w} >\bar{a}(x\cdot e)_+$ in $B^+$ and then by Hopf lemma, $\tilde{a}>\bar{a}$. But then since $\hat{v}\geq \tilde{w}$ in $B^+,$ this means that we can find a blowup $\hat{v}$ of $\bar{v}$ (which is also a blowup of $u_\infty)$ with $a(\hat{v}) \geq \tilde{a} >\bar{a}$. This contradicts the definition of $\bar{a}$ and proves the claim.
 \end{proof}

\subsection{Epsilon regularity for viscosity solutions}
\label{sec: e reg}
Next, viscosity solutions to \eqref{eqn: e reg sec EL}, which we recall were defined in Section~\ref{sec: prelim}, enjoy an $\e$-regularity theorem.
Throughout this section, $\Phi \in \II$, and we continue to use the short-hand $\Lo w= \Div(D \Phi(\na w))$ and $\cG(p) = D \Phi(p)\cdot p - \Phi(p)$ as in \eqref{eqn: short hand}. We consider viscosity solutions $u$ to \eqref{eqn: e reg sec EL} in $B_r$ with $0 \in \Gamma_u$ that are flat in the sense that
\begin{equation}
    \label{eqn: flat 1}
\left(x \cdot e -\bar{\e}r\right)_+ \leq u(x) \leq \left(x \cdot e +\bar{\e} r\right)_+\qquad \text{ in } B_r
\end{equation}
 for some $e \in \R^n$ with $\cG(e)=1$ and such that 
\begin{equation}\label{eqn: small RHS}
r \|g\|_{L^\infty} \leq \bar{\e}.
\end{equation}
For such solutions, we have the following.
\begin{theorem}[$\e$-Regularity]\label{thm: eps reg} 
There are universal constants $\bar{\e}>0$ and $\delta>0$ such that the following holds.
Fix $r>0$, $g \in C(B_r)\cap L^\infty(B_r)$, and $e \in \R^n$ with $\cG(e)=1$. 

Suppose $u$ is a viscosity solution to \eqref{eqn: e reg sec EL} in $B_r$ with $0 \in \Gamma_u$ satisfying \eqref{eqn: flat 1} and \eqref{eqn: small RHS}.
Then $\GU$ is $C^{1,\beta}$  for some universal $\beta\in (0,1/2)$  at scale $\delta r$ near the origin.
\end{theorem}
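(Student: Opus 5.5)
We prove Theorem~\ref{thm: eps reg} by the De Silva approach \cite{DeSilva}: a partial Harnack inequality, compactness, and linearization around the one-plane solution $(x\cdot e)_+$. By the scaling \eqref{eqn: rescale} (which keeps $r\|g\|_{L^\infty}$ invariant) we take $r=1$. The key structural point is the choice of annulus. Since $\cG(e)=1$, \eqref{eqn: first annular constraint} and Remark~\ref{rmk: annulus} place $e$ in a compact subannulus of $\An$. Flatness together with the interior estimate \eqref{eqn: gradient reg interior} then gives $\na u$ close to $e$ away from the free boundary, so in the relevant region $\na u\in\An$. There $\Lo$ is a genuine $C^{\alpha}$-coefficient quasilinear operator, $D^2\Phi$ is continuous, and we can linearize at $e$. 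Test functions whose gradients are near $e$ also lie in $\An$, so only the first clause of \eqref{eqn: visc int cond} is ever used.

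\emph{Step 1: Partial Harnack (Lemma~\ref{lem: partial harnack}).} We show there is a universal $c\in(0,1)$ such that the following holds. If $(x\cdot e+a_0)_+\le u\le (x\cdot e+b_0)_+$ in $B_1$ with $b_0-a_0\le\e$ and $\|g\|_\infty\le \e^2$, then in $B_{1/20}$ the same holds with oscillation $(1-c)(b_0-a_0)$. The proof follows De Silva.
\begin{enumerate}
\item At a point $\bar x$ at distance $1/10$ into the positive phase, either $u(\bar x)\ge \bar x\cdot e+a_0+\e/2$ or the reverse inequality with $b_0$ holds. Suppose the first.
\item Harnack for the Pucci class (recall $\mathcal M^-\le g\le\mathcal M^+$) in $B_{1/20}(\bar x)$ gives $u-(x\cdot e+a_0)\ge c\e$ on a ball.
\item We then slide a barrier $w_t(x)=(x\cdot e+a_0+c\e(\varphi(x)-1)+t)_+$ with $\varphi$ radially decaying and $\mathcal M^-(D^2\varphi)\ge 1$ (or $\Lo$ strictly positive) off that ball and $|\na \varphi|$ small.
\item Since $\na w_t$ is a small perturbation of $e$ and $\cG$ is strictly increasing along rays with $\partial_t\cG(te)>0$, we get $\cG(\na w_t)>1$ on $\partial\{w_t>0\}$ where $\na\varphi\cdot e>0$ is arranged. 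So $w_t$ is a strict subsolution, and by the viscosity definition it cannot touch $u$ from below.
\end{enumerate}
Choosing the barrier constants to get the set containments right is fiddly but standard.

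\emph{Step 2: Compactness.} We iterate Step 1 to get H\"older-type control of $\tilde u_\e=(u-x\cdot e)/\e$ on $\overline{\{u>0\}}$ down to scale $\e$. By Arzel\`a–Ascoli in the Hausdorff sense, a sequence of counterexamples with $\e_k\to0$ has $\tilde u_k\to\tilde u$ uniformly on $\overline{B_{1/2}\cap\{x\cdot e\ge0\}}$.

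\emph{Step 3: Linearized problem.} Linearizing $\Lo$ at $\na u\approx e$ gives $D^2\Phi(e):D^2\tilde u=0$ in $\{x\cdot e>0\}$. Here the $\e^2$ smallness of $g$ makes the right side vanish, and the continuity of $D^2\Phi$ on $\An$ from \eqref{eqn: holder intro} is what makes the linearization legitimate. The free boundary condition $\cG(\na u)=1$ linearizes to $D\cG(e)\cdot\na\tilde u=0$, i.e.\ an oblique condition $\big(D^2\Phi(e)e\big)\cdot\na \tilde u=0$ on $\{x\cdot e=0\}$; note $D^2\Phi(e)e\cdot e>0$. Both hold in the viscosity sense, verified by the usual touching argument with test polynomials, again using that $\cG$ is $C^1$ near $e$. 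After the linear change of variables $y=D^2\Phi(e)^{1/2}x$ this becomes the Laplace equation with a Neumann condition on a half-space. Solutions of that problem are smooth with $C^2$ estimates by reflection, so $\tilde u$ is approximated by a linear function $\tilde u(0)+\xi\cdot x$ with quadratic error.

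\emph{Step 4: Improvement of flatness and iteration.} Undoing the blow-up, $u$ is $\e\rho^{1+\gamma}$-flat in $B_\rho$ in a tilted direction $e'=e+\e\xi'$, renormalized so that $\cG(e')=1$; the oblique condition exactly guarantees that this tilt is compatible with $\cG=1$ to first order. Iterating with geometric decay gives $C^{1,\beta}$ of $\GU$ in $B_\delta$. The hypothesis \eqref{eqn: small RHS} is preserved under rescaling because $\rho\|g\|$ shrinks faster than $\e\rho^{\gamma}$, provided $\beta<\gamma$; this is why $\beta<1/2$.

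I expect the main obstacle to be Step 1, the barrier construction. It must simultaneously keep $\na w_t$ inside $\An$ so that the first clause of \eqref{eqn: visc int cond} applies, make $\Lo w_t>g$ using $\|g\|\le\e^2$, and make the free boundary inequality strict through the monotonicity of $t\mapsto\cG(te)$ combined with anisotropic tilting of the level sets.
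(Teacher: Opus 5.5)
Your architecture matches the paper's: partial Harnack, compactness of $(u-x\cdot e)/\e$, linearization to $D^2\Phi(e):D^2\tilde u=0$ with the oblique condition $D^2\Phi(e)e\cdot\nabla\tilde u=0$, improvement of flatness, and iteration. However, the one place where the anisotropy genuinely enters, the barrier in Step~1(4), fails as you state it.

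To first order, $\cG(e+c\e\nabla\varphi)=1+c\e\,\nabla\varphi\cdot D^2\Phi(e)e+o(\e)$. Monotonicity of $t\mapsto\cG(te)$ only gives $D^2\Phi(e)e\cdot e>0$. It says nothing about the component of $D\cG(e)=D^2\Phi(e)e$ orthogonal to $e$. Unless $e$ happens to be an eigenvector of $D^2\Phi(e)$, that component is nonzero and can be comparable to $|e|$.

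For a Euclidean-radial $\varphi$ about $\bar x\in\R_+e$, $\nabla\varphi$ is a positive multiple of $\bar x-x$. At points of the annulus near $\{x\cdot e=0\}$ off the axis $\bar x+\R e$, this vector has a tangential part comparable to its $e$-part. There $\nabla\varphi\cdot e>0$ holds, yet $\nabla\varphi\cdot D^2\Phi(e)e$ can be negative; take $n=2$, $e$ a coordinate direction, and $D^2\Phi(e)$ with an off-diagonal entry of order one. Then $\cG(\nabla w_t)<1$ on that part of $\partial\{w_t>0\}$, and a first touching point there contradicts nothing. You flag ``anisotropic tilting'' as the difficulty, but monotonicity along rays cannot resolve it. The condition to arrange is $\nabla\varphi\cdot D^2\Phi(e)e>0$ on $\{x\cdot e\le\tau\}$, not $\nabla\varphi\cdot e>0$.

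The paper's fix, and its only real departure from the isotropic proof, is to build the barrier on the anisotropic quadratic $q(x)=D^2\Phi(e)^{-1}[x-\bar x,x-\bar x]$. Namely, $w=\bar c\,(e^{-\gamma q}-e^{-\gamma/(4\La)})$ on $\{1/(64\La^3)<q<1/(4\La)\}$, with $\bar x=e/(3\La|e|)$ and $\tau=|e|/(24\La)$. Then $\nabla w$ is a negative multiple of $D^2\Phi(e)^{-1}(x-\bar x)$, so $\nabla w\cdot D^2\Phi(e)e$ is a positive multiple of $(\bar x-x)\cdot e\ge|e|/(3\La)-\tau>0$ on $\{x\cdot e\le\tau\}$. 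Meanwhile $q\simeq|x-\bar x|^2$ and $\gamma=\gamma(n,\La)$ large keep $\mathcal{M}^-(D^2w)\ge c(n,\La)>0$ on the annulus. Equivalently, you can use a Euclidean-radial barrier in the variables $y=D^2\Phi(e)^{-1/2}x$, where the linearized free boundary condition becomes a derivative along the normal of the image hyperplane.

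That is also the correct change of variables for your Step~3: with $y=D^2\Phi(e)^{1/2}x$ the equation becomes $D^2\Phi(e)^2:D^2v=0$, not $\Delta v=0$.

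Finally, in Step~4 the constraint to preserve is $\|g_k\|_{L^\infty}\le\e_k^2$, i.e.\ $\rho^k\|g\|_{L^\infty}\le(\sigma^k\e)^2$, so you need $\rho\le\sigma^2$. This quadratic comparison, not a comparison of $\rho\|g\|$ with $\e\rho^\gamma$, is what forces $\beta\le 1/2$.
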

 More precisely, let $p, q$ be the projections onto $\{x\cdot e =0\} \simeq \R^{n-1}$ and $\{x\cdot e = \pm |x||e|\} \simeq \R$ respectively and let $B_r' =p(B_{r})$. The conclusion of Theorem~\ref{thm: eps reg} says there is a function $h: B_{\delta r}'\to [-\delta r,\delta r]$  such that 
\[
\{u>0\}\cap C_{e}(\delta r) = \{x=(px, qx)\ : \ px \in B_{\delta r}',\  qx >h(px)\}
\]
in the cylinder $C_{e}(\delta r):= \{ (px, qx) : |px|< \delta r,\,  |qx|<\delta r\}$ and $\|h(r\cdot )/r\|_{C^{1,\beta}(B_{\delta}')}\leq C$ for a universal constant $C>0$. 

Theorem~\ref{thm: eps reg} is proven using De Silva's method \cite{DeSilva}. 
This flexible approach has been used to prove $\e$-regularity theorems for viscosity solutions of one- and two-phase problems with various operators and right-hand sides. The equation \eqref{eqn: e reg sec EL} does not precisely fit into the existing frameworks since $\Phi$ is only $C^{1,1}$ outside of $\An$, thus $D^2\Phi$ is not defined everywhere. However, $\Phi$ is $C^{2,\alpha}$ in the annulus $\An,$ which includes in particular a neighborhood of $\{ p \in\R^n : \cG(p)=1\}$ as noted in Remark~\ref{rmk: annulus}. Thus, the proof goes through without major modifications from, e.g.  \cite{DeSilva, DFS15,CV}.

We sketch the proof in Appendix~\ref{sec: e reg proof}, emphasizing the places where the proof differs from the standard one.

For minimizers $u$ of $\Jfm$, the assumed $C^{2,\alpha}$ regularity of $\Phi$ on $\An$ means that flat free boundaries are $C^{2,\alpha}$.
\begin{corollary}\label{cor: higher reg}
    With the same assumptions and notation of Theorem~\ref{thm: eps reg}, if $g \in C^{0,\alpha}(B_r)$, then the function $h$ is $C^{2,\alpha}$ in $B'_{\delta r/2}$. In particular, this holds for minimizers of $\Jfm$ with  $g =2\mu(\mu u-f)$ for $\mu \in [0,1]$ and $f \in C^\infty(B_r)\cap L^\infty(B_r)$ nonnegative.
\end{corollary}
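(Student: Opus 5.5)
The plan is a bootstrap in two steps. First we get $C^{2,\alpha}$ regularity of $u$ up to the free boundary by boundary Schauder theory. Then we translate the free boundary condition into an oblique, nonlinear boundary condition for the graph function $h$, which upgrades it from $C^{1,\beta}$ to $C^{2,\alpha}$. The key structural fact is Remark~\ref{rmk: annulus}: near $\GU$ the gradient $\na u$ takes values in a compact subannulus of $\An$, where $\Phi$ is $C^{2,\alpha}$. So the problem is locally uniformly elliptic with $C^{1,\alpha}$ coefficients $D\Phi$, and the lack of regularity of $D^2\Phi$ near the origin never enters.

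\emph{Step 1: $\na u$ lies in $\An$ near the free boundary, and $u\in C^{1,\beta}$ up to the boundary.} By Theorem~\ref{thm: eps reg}, in the cylinder $C_e(\delta r)$ the positivity set is the epigraph of $h\in C^{1,\beta}$. The interior equation is $\Div(D\Phi(\na u))=g$ with $g$ bounded. Lemma~\ref{lem: C1beta} is stated for zero right-hand side at the boundary, but its $C^{1,\beta}$ part extends to bounded $g$ on $C^{1,\beta}$ domains by the same De Giorgi / Lieberman argument. This gives $u\in C^{1,\beta'}(\overline{\{u>0\}}\cap C_e(3\delta r/4))$.

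The viscosity free boundary condition then holds pointwise: $\cG(\na u)=1$ on $\GU$, with $\na u=|\na u|\nu_{\rm in}$. By \eqref{eqn: first annular constraint}, $|\na u|\in[\sqrt{2/\La},\sqrt{2\La}]$ on $\GU$. By continuity of $\na u$ up to the boundary, after shrinking the cylinder we get $\na u\in\An$ (indeed in a compact subset of it) in a one-sided neighborhood of $\GU$. This is the one place where quantitative care is needed. The $C^{1,\beta'}$ bound is universal, and the flatness from \eqref{eqn: flat 1} together with \eqref{eqn: small RHS} forces $\na u$ to be close to $e$, which satisfies $\cG(e)=1$. Hence the shrinking can be taken universal, say down to scale $\delta r/2$ up to a universal factor.

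\emph{Step 2: linearize and run the bootstrap.} In this neighborhood, differentiating the equation shows that $w=u$ solves the nondivergence equation $D^2\Phi(\na u):D^2 u=g$. The coefficients are $C^{\alpha\beta'}$ and uniformly elliptic there, since $D^2\Phi\in C^\alpha(\An)$ and $\na u\in C^{\beta'}$. The boundary condition is $u=0$ on a $C^{1,\beta}$ boundary.

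Next, apply a partial hodograph (or Kinderlehrer–Nirenberg–Spruck) transform. Set $y'=px$, $y_n=u(x)$, and let $x_n=\psi(y',y_n)$ be the inverse, defined for $y_n\ge0$. Then $\psi$ satisfies a fully nonlinear uniformly elliptic equation in the half-ball $\{y_n>0\}$ whose dependence on $D\psi$ and $D^2\psi$ is governed by $D^2\Phi$ on $\An$, and hence is $C^\alpha$ in the gradient variable. The free boundary condition $\cG(\na u)=1$ on $\{y_n=0\}$ becomes a nonlinear oblique condition $\cG\bigl((-\na'\psi,1)/\partial_n\psi\bigr)=1$. Its obliqueness follows from $D\cG(p)=D^2\Phi(p)p$ and $D^2\Phi(p)p\cdot p>0$. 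Since $\cG\in C^{1,\alpha}(\An)$ and $g\in C^{0,\alpha}$, boundary Schauder theory for fully nonlinear equations with oblique conditions (Lieberman–Trudinger, or Safonov) gives $\psi\in C^{2,\alpha'}$.

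To close the loop, note that the coefficient regularity $C^{\alpha\beta'}$ improves to $C^\alpha$ once $\na u\in C^{1}$, so one more pass yields exponent $\alpha$. Thus $u\in C^{2,\alpha}$ up to $\GU$ on $C_e(\delta r/2)$. Since $h(y')=\psi(y',0)$, this gives $h\in C^{2,\alpha}(B'_{\delta r/2})$.

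\emph{Step 3: the application to minimizers.} For minimizers of $\Jfm$, Proposition~\ref{prop: min is viscosity solution} shows that $u$ is a viscosity solution with $g=2\mu(\mu u-f)$. This $g$ is Lipschitz by Lemma~\ref{lem:interior-lip} and smoothness of $f$, hence $C^{0,\alpha}$, so the first part applies.

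The main obstacle is Step 2. The delicate points are checking that the hodograph transform yields an equation whose regularity depends only on $D^2\Phi$ restricted to $\An$, and that the oblique boundary operator satisfies the structural hypotheses of the boundary Schauder theory. Both reduce to the fact that all gradients stay in a compact subannulus of $\An$. If the hodograph route proves cumbersome, the fallback is the higher-regularity argument of De Silva–Ferrari–Salsa \cite{DFS15} for one-phase problems with Hölder data, which the corollary directly parallels.
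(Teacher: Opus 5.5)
Your proposal is correct and follows the same route as the paper. Both arguments confine $\nabla u$ near $\GU$ to a compact subannulus of $\An$, apply the Kinderlehrer--Nirenberg hodograph argument, and, for minimizers, note that $g=2\mu(\mu u-f)$ is H\"older because $u$ is Lipschitz and $f$ smooth. The paper reads off the confinement directly from the classical attainment of $\cG(\nabla u)=1$ in Theorem~\ref{thm: eps reg} and Remark~\ref{rmk: annulus}, rather than from weak-solution boundary estimates, which would need separate justification for a general viscosity solution. Your obliqueness check is a detail the paper leaves implicit; in fact the transformed condition is exactly the conormal condition $\partial_{q_n}\bigl(q_n\Phi((-q',1)/q_n)\bigr)=-\cG\bigl((-q',1)/q_n\bigr)=-1$ of the hodograph-transformed energy, which makes the boundary Schauder step routine.
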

\begin{proof}[Proof of Corollary~\ref{cor: higher reg}]
By Theorem~\ref{thm: eps reg}, the free boundary condition $\cG(\na u )=1$ is achieved classically in $B_{\delta r}(0)$. In particular, by Remark~\ref{rmk: annulus}, $\{ \na u(x)  : x \in \GU \cap {B_{\delta r}(0)}\}$ is contained in a compact subset of $\An.$
The proof then follows from a classical argument using the hodograph transform \cite{KN}. 
To see the ``in particular,'' if $u$ is a minimizer of $\Jfm$, then  $u$ is a viscosity solution to \eqref{eqn: e reg sec EL} with $g =2\mu(\mu u -f)$ by Proposition~\ref{prop: min is viscosity solution}. Since $f$ is assumed to be (qualitatively) smooth and $u$ is Lipchitz thanks to Lemma~\ref{lem:interior-lip}, this choice of $g$ is H\"{o}lder  continuous for any $\beta\in (0,1)$. 

{For the unperturbed problem, the argument above is quantitative. More precisely, there are universal constants $c,C>0$ such that
$\|h(r\,\cdot)/{r}\|_{C^{2,\alpha}(B'_c)}\leq C$.
Indeed, after normalizing $r=1$, Theorem~\ref{thm: eps reg} gives a uniformly controlled $C^{1,\beta}$ graph. The boundary gradient estimates and the Bernoulli condition give a uniformly nondegenerate hodograph map on a fixed smaller neighborhood. The Schauder estimates in the hodograph argument are therefore uniform, since the ellipticity constants and the annular $C^{2,\alpha}$ bounds for $\Phi$ are uniform.
}
\end{proof}

A direct consequence of Theorem~\ref{thm: eps reg} is the following propagation of singularities statement; we omit its standard proof. In this statement, $\Hm^s_\infty$ is the Hausdorff content.
\begin{corollary}
    \label{cor: prop of singularities} 
   Suppose $\Phi_k, \Phi \in \II$ with $\Phi_k \to \Phi$, $f_k, f \in C^\infty(B_1)\cap L^\infty(B_1)$ nonnegative with $f_k \to f$ uniformly, and take $\mu_k,\mu \in [0,1]$ with $\mu_k\to \mu$. Let $u_k, u$ be minimizers of $\E^{\Phi_k}_{f_k,\mu_k}$ and $\Jfm$ respectively in $B_1$. If $u_k \to u$ in the sense of Lemma~\ref{lem:compactness2} above and $x_k \in \Sing(u_k)\cap B_1$ with $x_k \to x\in B_1$, then $x \in \Sing(u)\cap B_1.$ Moreover, for any compact set $K\Subset B_1$ and $s \geq 0$, if $\Hm_\infty^s(\Sing(u_k)\cap K) \geq c$, then $\Hm_\infty^s(\Sing(u)\cap K)\geq c.$
\end{corollary}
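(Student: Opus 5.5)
The plan is to prove the two assertions in turn: closedness of the singular set under the limit, then persistence of Hausdorff content.

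\emph{Step 1: limits of singular points are singular.} Suppose $x_k\in\Sing(u_k)$, $x_k\to x$, and argue by contradiction, assuming $x\in\Reg(u)$. Lemma~\ref{lem:compactness2}(4) gives $x\in\GU$. Since $x$ is regular, $\G(u)$ is a $C^{1,\alpha}$ graph near $x$. I would use this to check the flatness hypothesis \eqref{eqn: flat 1} for $u$ itself at some small scale $r$ around $x$, with accuracy $\bar\e r/2$ and direction $e$ satisfying $\cG(e)=1$. To get this:
\begin{itemize}
\item by Corollary~\ref{cor: higher reg}, $u$ is $C^{1,\beta}$ up to the free boundary near $x$;
\item by Remark~\ref{rmk: annulus}, the free boundary condition holds classically, so $\cG(\na u(x))=1$;
\item set $e:=\na u(x)$; then $C^{1,\beta}$ regularity gives $|u(y)-((y-x)\cdot e)_+|\le Cr^{1+\beta}$ on $B_r(x)$, using that $\{u>0\}$ lies on one side of the $C^{1,\alpha}$ graph.
\end{itemize}
For $r$ small this error is below $\bar\e r/4$. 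Shrinking $r$ further, and using that $g=2\mu(\mu u-f)$ is bounded uniformly in $k$ by Lemma~\ref{lem:interior-lip}, the smallness condition \eqref{eqn: small RHS} also holds, with a margin, for every $g_k=2\mu_k(\mu_ku_k-f_k)$.

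\emph{Step 2: transfer flatness to $u_k$.} Because $u_k\to u$ uniformly on $\overline{B_r(x)}$, for $k$ large $u_k$ is within $\bar\e r/4$ of $((y-x)\cdot e)_+$ there. Translating the center to a free boundary point $x_k'\in\G(u_k)$ close to $x$, which exists by Lemma~\ref{lem:compactness2}(4), costs another small error. So \eqref{eqn: flat 1} holds for $u_k$ at $x_k'$ in $B_{r/2}(x_k')$, with the same $e$. One subtlety: $e$ must satisfy $\cG_k(e)=1$ for the integrand $\Phi_k$. I would replace $e$ by $t_ke$, where $t_k$ is the unique scalar with $\cG_k(t_ke)=1$ (it exists by the monotonicity of $t\mapsto\cG_k(te)$). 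Since $\Phi_k\to\Phi$ in $C^1$, we get $t_k\to1$, and the extra error is absorbed.

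\emph{Step 3: conclude the contradiction.} Each $u_k$ is a viscosity solution by Proposition~\ref{prop: min is viscosity solution}, so Theorem~\ref{thm: eps reg} applies. Its constants are universal, hence uniform in $k$ since $\Phi_k\in\II$. It shows $\G(u_k)\cap B_{\delta r/2}(x_k')$ is a $C^{1,\beta}$ graph. For large $k$, $x_k$ lies in this neighborhood, so $x_k\in\Reg(u_k)$, a contradiction. The main obstacle is the bookkeeping in Steps 1 and 2: making the flatness of the limit quantitative with room to spare, and keeping the normalization $\cG_k(e)=1$ at the recentered point.

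\emph{Step 4: the content statement.} Let $S:=\Sing(u)\cap K$, which is compact by Step 1 and the closedness of $\GU$. Given $\epsilon>0$, cover $S$ by finitely many open balls $B_{\rho_i}$ with $\sum\rho_i^s\le\Hm^s_\infty(S)+\epsilon$, and let $U$ be their union. I claim $\Sing(u_k)\cap K\subset U$ for large $k$. If not, there would be points $x_k\in(\Sing(u_k)\cap K)\setminus U$, along a subsequence converging to some $x\in K\setminus U$. By Step 1, $x\in\Sing(u)\cap K=S\subset U$, a contradiction. Hence
\[
c\le\Hm^s_\infty(\Sing(u_k)\cap K)\le\Hm^s_\infty(S)+\epsilon,
\]
and letting $\epsilon\to0$ gives $\Hm^s_\infty(S)\ge c$.
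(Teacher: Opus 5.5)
Your architecture is the standard argument the paper has in mind: contradiction via the uniform $\e$-regularity of Theorem~\ref{thm: eps reg}, then upper semicontinuity of the content by a finite-cover/compactness argument. Steps 3 and 4 are fine. The gap is in Step 2.

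Uniform convergence only gives $\|u_k-((\cdot-x)\cdot e)_+\|_{L^\infty(B_r(x))}\le \bar{\e} r/4+o_k(1)$. This $L^\infty$-closeness does not yield the two-sided trapping \eqref{eqn: flat 1}. The upper barrier $((y-x_k')\cdot e+\bar{\e} r)_+$ vanishes on $\{(y-x_k')\cdot e<-\bar{\e} r\}$, so \eqref{eqn: flat 1} demands $u_k\equiv 0$ there, i.e.\ that $\Gamma(u_k)\cap B_{r/2}(x_k')$ lies in a thin strip. Uniform closeness is compatible with $u_k$ being small but positive below the plane, for instance through thin positive components or a free boundary dipping far down. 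Theorem~\ref{thm: eps reg} needs the strip.

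The missing ingredient is nondegeneracy uniform in $k$ (Lemma~\ref{lem:nondeg}), or equivalently Lemma~\ref{lem:compactness2}(3). The latter gives, for $k$ large, that $\{u_k>0\}\cap B_r(x)$ lies in a $\sigma_k$-neighbourhood of $\{u>0\}$ with $\sigma_k\to 0$. Meanwhile $\{u>0\}\cap B_r(x)\subset\{(y-x)\cdot e>-Cr^{1+\alpha}\}$, because $\{u>0\}$ is the epigraph of the $C^{1,\alpha}$ graph. You use this fact in Step 1 but only record it as $L^\infty$-closeness. This is exactly how the paper transfers flatness in Step 3 of the proof of Lemma~\ref{lem: doubling} (``local uniform convergence and nondegeneracy imply\dots''). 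If you argue directly with Lemma~\ref{lem:nondeg}, watch the constants. A positive point $y$ at distance $\rho$ below the plane forces $\sup_{B_\rho(y)}u_k\ge c\rho$, so the $L^\infty$ accuracy must be $\ll c\,\bar{\e} r$. You can arrange this because at a regular point it is $O(r^{1+\beta})$.

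There is a smaller, circular step in Step 1. You cannot invoke Corollary~\ref{cor: higher reg} to get $C^{1,\beta}$ regularity of $u$ up to $\GU$ near $x$. That corollary presupposes the hypotheses \eqref{eqn: flat 1}--\eqref{eqn: small RHS} of Theorem~\ref{thm: eps reg}, which is what Step 1 is trying to establish. Likewise, Remark~\ref{rmk: annulus} does not assert that the free boundary condition holds classically. You have two ways out:
\begin{itemize}
\item Cite boundary $C^{1,\beta}$ estimates for uniformly elliptic quasilinear equations in $C^{1,\alpha}$ domains.
\item Use a blow-up. Rescalings of $u$ at $x$ converge (Lemmas~\ref{lem:compactness} and \ref{lem:compactness2}) to a minimizer of $\J$ whose positive phase is a half-space $\{y\cdot\nu>0\}$. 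The boundary estimate \eqref{eqn: gradient reg boundary} on $B_R\cap\{y\cdot\nu>0\}$, letting $R\to\infty$, forces it to equal $a(y\cdot\nu)_+$, and Remark~\ref{rmk: connected components} gives $\cG(a\nu)=1$. Flatness of $u$ at small scales then follows from uniform convergence together with Hausdorff convergence of the positive phases.
\end{itemize}
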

\section{$L^q$ Hessian estimates in the space of good integrands}\label{sec: curvature estimates}

As described in the introduction, the key ingredients in the proof of Theorem~\ref{thm: main} are a priori $L^q$ estimates for the Hessian akin to those in Theorem~\ref{thm: curvature estimate intro}. In this section, we establish these curvature estimates for minimizers of $\J$ (and its perturbations) when $\Phi$ belongs to the restricted class of integrands $\Istarz \cap C^\infty(\R^n)$. Later on, in Section~\ref{sec: main theorem proof 0} we will extend these estimates to minimizers of $\J$ for any $\Phi \in \II$, establishing Theorem~\ref{thm: curvature estimate intro}.

\subsection{$L^2$ Hessian Estimates}
We begin by proving an $L^2$ estimate for the Hessian; compare to \eqref{eqn: curvature estimate intro}. Note that this estimate holds for minimizers of the penalized functional $\Jfm$.
\begin{proposition}
    \label{prop: curvature estimates}  Fix $\Phi \in \Istarz {\cap C^\infty(\R^n)}$, a nonnegative function $f \in C^\infty(\Omega)\cap L^\infty(\Omega)$ , and $\mu \in [0,1]$. 
There is $C>0$ depending on the universal parameters and on an upper bound for $\|f\|_{L^\infty(\Omega)}$ such that if $u$ is a minimizer of $\Jfm$ in $\Omega$, then for all $p \in \GU$ and $r\in(0,1]$ with $B_{2r}(p)\subset \Omega$, we have 
    \begin{equation}
        \label{eqn: curvature estimates}
        \int_{\{u>0\} \cap B_r(p)} |D^2 u|^{2} \leq Cr^{n-2}.
    \end{equation}
\end{proposition}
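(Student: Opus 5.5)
By the rescaling \eqref{eqn: rescale}, the estimate \eqref{eqn: curvature estimates} reduces to the case $r=1$, $p=0$, $B_2\subset\Omega$. The rescaled minimizer minimizes $\E^\Phi_{\tilde f,\tilde\mu}$ with $\tilde\mu\in[0,1]$ and $\|\tilde f\|_{L^\infty}\le\|f\|_{L^\infty}$. Since $\Phi\in\Istarz$, we expect $\Hm^{n-2}(\Sing(u))=0$ for minimizers of the perturbed functional as well. We will need to justify this: either $\Istarz$ is understood to cover $\Jfm$, or we use blow-ups via Corollary~\ref{cor: prop of singularities}, since blow-ups at singular points are minimizers of $\J$. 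Away from $\Sing(u)$, the free boundary is $C^{2,\alpha}$ by Corollary~\ref{cor: higher reg}, so Lemma~\ref{lem: first second var prelim} applies on $B_2\setminus\Sing(u)$.

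The core is a Bochner-type inequality. Differentiating $\divv(D\Phi(\na u))=g$ with $g=2\mu(\mu u-f)$ gives that each $u_k$ solves the linearized equation $\partial_i(a_{ij}\partial_j u_k)=\partial_k g$, where $a_{ij}=\partial_{ij}\Phi(\na u)$. We test stability \eqref{eqn: stability} with $\psi=G(\na u)\zeta$, where $\zeta$ is a cutoff supported in $B_2$ with $\zeta\equiv1$ on $B_1$. The natural choice is $G(\na u)=D\Phi(\na u)\cdot\nu$-type quantities, or simply $|\na u|$-like functions. We aim for an inequality of the form
\[
\divv\big(D^2\Phi(\na u)\na G\big)\ \ge\ c|D^2u|^2 - C(|\na g|+|g|)
\]
in a suitable sense.

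Integrating $\int a_{ij}\partial_i\psi\partial_j\psi$ by parts in $\{u>0\}$ with this $\psi$ produces three contributions:
\begin{enumerate}
\item a bulk term $-\int \zeta^2 G\,\divv(a\na G)\le -c\int\zeta^2|D^2u|^2+C$, using that $G$ is bounded by the Lipschitz bound of Lemma~\ref{lem:interior-lip};
\item cutoff terms $\int G^2 a\na\zeta\cdot\na\zeta\le C$;
\item a boundary term on $\GU$.
\end{enumerate}
The key is to choose $G$ so that the boundary term from the integration by parts exactly matches, or dominates with the right sign, the boundary integrand $\langle D^2\Phi D^2u[\nu],\nu\rangle\frac{D\Phi\cdot\nu}{1+\Phi}\psi^2$. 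The model is the classical Alt-Caffarelli case, where $G=|\na u|$ and the free boundary condition makes the flux terms cancel. Here we would differentiate $\cG(\na u)=1$ tangentially along $\GU$ to rewrite $D^2u[\nu]$-terms. The $2\mu^2\psi^2$ and $\na g$ terms are bounded, since $f$ is smooth on the relevant scale. The $f$-dependence needs care: only $\|f\|_{L^\infty}$ may enter. So we will move derivatives off $g$ by integration by parts, using $|\na u|\le C$, rather than using $\na f$.

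To handle $\Sing(u)$, we multiply the test function by a logarithmic or capacity cutoff $\chi_\e$ vanishing near $\Sing(u)$. This is possible because $\Hm^{n-2}(\Sing(u))=0$, so $\Sing(u)$ has zero $2$-capacity and $\int|\na\chi_\e|^2\to0$. The extra terms $\int G^2|\na\chi_\e|^2$ vanish in the limit, since $G$ is bounded. A further issue is the critical set $\{\na u=0\}$, where $D^2\Phi$ is merely bounded measurable and the Bochner computation needs $\Phi\in C^2$. Here the assumption $\Phi\in C^\infty$ helps: $u\in W^{2,2}_{loc}$ in the interior by Lemma~\ref{lem: C1beta}, and the computation holds a.e.

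\textbf{Main obstacle.} The hardest step is the algebraic one: finding $G$ for a general anisotropic, non-homogeneous $\Phi$ so that the integrated Bochner inequality has a coercive $|D^2u|^2$ term. At the same time, its boundary term must be controlled by the stability boundary term using the Bernoulli condition $\cG(\na u)=1$. I would first try $G=\cG(\na u)$ or $G=D\Phi(\na u)\cdot\na u$. For these, the linearized operator applied to a function of the gradient yields $D^3\Phi$ terms, which must be absorbed by Cauchy--Schwarz into $c|D^2u|^2$. If the fully general case resists, I would fall back to a Kato-type inequality, which gives a weaker coercive constant but still suffices.
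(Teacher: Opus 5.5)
Your scaffolding matches the paper's: rescaling; $\Hm^{n-2}(\Sing(u))=0$ for the perturbed minimizer via blow-ups and Corollary~\ref{cor: prop of singularities}, as in Lemma~\ref{lem: perturbed functional small singular set}; a Bochner-type subsolution tested against \eqref{eqn: stability}; and a capacity cutoff near $\Sing(u)$. But the step you call the main obstacle is the heart of the proof. You leave it open, and the candidates you would try first do not work.

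For any $G=\Psi(\na u)$ one has
\[
\divv\big(D^2\Phi(\na u)\na G\big)=D\Psi(\na u)\cdot\na g+\operatorname{tr}\big(D^2\Phi(\na u)\,D^2u\,D^2\Psi(\na u)\,D^2u\big).
\]
Differentiating $\cG(\na u)=1$ tangentially gives $D^2u\,D^2\Phi(\na u)\nu\parallel\nu$ on $\Reg(u)$. Hence the flux $\zeta^2G\,D^2\Phi(\na u)[\na G,\nu]$ equals the boundary integrand of \eqref{eqn: stability} at $\psi=G\zeta$ exactly when $D\Psi(\na u)\cdot\na u=\Psi(\na u)$ on $\GU$. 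Only then do the terms $\zeta^2D^2\Phi[\na G,\na G]+2G\zeta D^2\Phi[\na G,\na\zeta]$ that stability produces cancel those from the integration by parts.

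For $\Psi=\cG$ this fails twice. First, $D^2\cG(p)=D^2\Phi(p)+D^3\Phi(p)[p]$, so the quadratic term carries third derivatives of $\Phi$. It is of the same order as $|D^2u|^2$, so Cauchy--Schwarz cannot absorb it, and it need not be nonnegative for non-homogeneous $\Phi$. It is also not controlled by the universal parameters. That matters because $\Phi\in C^\infty$ is only qualitative, and the constant is used along $\Phi_k\to\Phi$ in Step~2 of the proof of Theorem~\ref{thm: main}. Second, $D\cG(p)\cdot p=D^2\Phi(p)[p,p]$ while $\cG=1$ on $\GU$, so a nonconstant factor $D^2\Phi(\na u)[\na u,\na u]$ remains on the boundary. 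The choice $\Psi(p)=D\Phi(p)\cdot p$ fails in the same two ways. Both choices also vanish at $\na u=0$, so the coercive term $G\zeta^2|D^2u|^2$ degenerates on the critical set.

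The missing idea is $G=1+\Phi(\na u)$, as in Lemma~\ref{lem: G eqn}. This choice has three properties:
\begin{enumerate}
\item $D^2\Psi=D^2\Phi$, so the quadratic term is $|D^2\Phi(\na u)^{1/2}D^2u\,D^2\Phi(\na u)^{1/2}|^2\ge\La^{-2}|D^2u|^2$, with no third derivatives.
\item $G\ge1$, so coercivity survives at critical points.
\item $D\Psi(p)\cdot p=D\Phi(p)\cdot p=1+\Phi(p)=\Psi(p)$ exactly on $\{\cG=1\}$. The boundary terms therefore match, and stability with $\psi=G\zeta$ leaves only $\int G^2D^2\Phi[\na\zeta,\na\zeta]+2\mu^2G^2\zeta^2$.
\end{enumerate}

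Your passing mention of $|\na u|$ plus a Kato inequality can also be completed. Since $|p|$ is $1$-homogeneous, the flux matches. The second derivative in the direction $\na u/|\na u|$, which the quadratic term misses, is recovered from the equation. However, $|\na u|$ is singular on $\{\na u=0\}$. You must test with $(|\na u|-\e)_+$ and control an $O(\e)$ boundary error, as in Step~1 for $1<p<2$ of Proposition~\ref{prop: curvature est p}. Your proposal does neither.

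Finally, integrating $\zeta^2G\,D\Phi(\na u)\cdot\na f$ by parts produces a boundary term $\int_{\GU}\zeta^2Gf\,D\Phi(\na u)\cdot\nu$. It can be dropped only because $f\ge0$ and $D\Phi(\na u)\cdot\nu\le0$. The bulk term satisfies $-\int\zeta^2Gf\,\divv D\Phi(\na u)\le2\mu\int\zeta^2Gf^2$, using the equation and $u,f\ge0$.
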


 To prove Proposition~\ref{prop: curvature estimates},
we need two preparatory lemmas. 
The first is a Bochner-type inequality for the function $G=\Phi(\na u)+1$. Similar computations have been carried out in various settings; see e.g. \cite[Theorem 4]{WangXia11}.
\begin{lemma}\label{lem: G eqn}
{Let $\Phi \in \II \cap C^\infty(\R^n)$.} 
Let $\Omega \subset \R^n$ be an open bounded set, let $f \in C^\infty(\Omega)\cap L^\infty(\Omega)$ and let $\mu \in [0,1]$. Suppose $u \in W^{1,2}(\Omega) $ is a continuous nonnegative function satisfying 
    \begin{equation}
    \label{eqn: G comp eqn for u}
    \begin{cases}
         \divv(D\Phi(\nabla u))=2\mu(\mu u- f) & \text{ in } \{u>0\} \cap \Omega \\
         D \Phi(\na u ) \cdot \na u -\Phi(\na u) = 1 & \text{ on } \Reg(u)\,.
    \end{cases}
\end{equation}
Then the function $G =1+ \Phi(\na u)$ satisfies
\begin{equation}\label{eqn: G equation}
 \begin{cases}
         \divv(D^2\Phi(\nabla u)\nabla G) \geq \Lambda^{-2}|D^2u|^2
 -2\mu \na\Phi(\nabla u)\cdot\nabla f & \text{ on }  \{u>0\} \cap \Omega\\
 D^2\Phi(\nabla u)[\nabla G, \nu]  = (D \Phi(\na u) \cdot \nu) \, (D^2\Phi(\nabla u)D^2 u[\nu] \cdot \nu )& \text{ on }\Reg(u)\,.
    \end{cases}
\end{equation}
Here $\nu$ is the outer unit normal to $\{u>0\} \cap\Omega$ at $x$.
\end{lemma}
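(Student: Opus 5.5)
We prove the interior inequality by direct differentiation; the boundary identity comes from differentiating the Bernoulli condition along $\Reg(u)$. Throughout, write $A(x):=D^2\Phi(\nabla u(x))$ and $a^{ij}=\Phi_{ij}(\nabla u)$. Since $\Phi$ is smooth and uniformly convex, the interior equation is uniformly elliptic with smooth coefficients, and $f$ is smooth. Bootstrapping from the $C^{1,\beta}$ estimate of Lemma~\ref{lem: C1beta} then gives $u\in C^\infty(\{u>0\})$. Near regular points, the hodograph argument behind Corollary~\ref{cor: higher reg} gives regularity of $u$ up to $\Reg(u)$, so all computations below are classical.

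\emph{Interior inequality.} We have $\partial_j G=\Phi_k(\nabla u)u_{kj}$, so
\[
\divv(A\nabla G)=\partial_i\bigl(a^{ij}\Phi_k u_{kj}\bigr).
\]
Differentiating the equation $\partial_i(\Phi_i(\nabla u))=2\mu(\mu u-f)$ in $x_k$ gives the linearized equation
\[
\partial_i(a^{ij}u_{jk})=2\mu(\mu u_k-f_k).
\]
Expanding the divergence,
\[
\partial_i\bigl(a^{ij}\Phi_k u_{kj}\bigr)=\Phi_k\,\partial_i(a^{ij}u_{jk})+a^{ij}u_{jk}\,\Phi_{kl}u_{li}.
\]
The first term equals $2\mu^2\,D\Phi(\nabla u)\cdot\nabla u-2\mu\,D\Phi(\nabla u)\cdot\nabla f$. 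Its first part is nonnegative by \eqref{eqn: basic bounds for integrands}, so we drop it. The second term equals $\operatorname{tr}(A\,D^2u\,A\,D^2u)$. Writing $M=A^{1/2}D^2u\,A^{1/2}$, this is $|M|^2\ge \Lambda^{-2}|D^2u|^2$, since $A\ge\Lambda^{-1}I$ by \eqref{eqn: ellipt}. Combining the two terms gives the claimed inequality.

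\emph{Boundary identity.} Since $u=0$ on $\Reg(u)$ and $u>0$ inside, $\nabla u=-|\nabla u|\nu$ there. The Bernoulli condition says $\cG(\nabla u)$ is constant along $\Reg(u)$. Its tangential derivatives are $\partial_\tau\cG(\nabla u)=\nabla u\cdot A\,D^2u\,\tau$, using $D\cG(p)=D^2\Phi(p)p$. Hence $A\,D^2u\,\nabla u$ is parallel to $\nu$, so $A D^2u\,\nu=\lambda\nu$ with $\lambda=\langle A D^2u\,\nu,\nu\rangle$. The quantity to compute is
\[
A\nabla G\cdot\nu=\langle A\,D^2u\,D\Phi(\nabla u),\nu\rangle=\langle D\Phi(\nabla u),D^2u\,A\nu\rangle,
\]
using the symmetry of $A$ and $D^2u$. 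By the transpose of the relation above, $D^2u\,A\,\nu=\lambda\nu$. Therefore $A\nabla G\cdot\nu=\lambda\,(D\Phi(\nabla u)\cdot\nu)$, which is the stated identity.

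The main subtlety is in this last step. One must recognize that the tangential derivative of the Bernoulli condition pins down $AD^2u\nu$ (not $D^2u\nu$) as normal, and then transpose correctly. One also has to make sure the regularity up to $\Reg(u)$ suffices for these pointwise derivatives.
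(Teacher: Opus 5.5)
Your interior computation is the same as the paper's: the split into $\Phi_k\,\partial_i(a^{ij}u_{jk})$ plus $\operatorname{tr}(A\,D^2u\,A\,D^2u)$. Your idea for the boundary identity, differentiating the Bernoulli condition tangentially, is also the paper's. But the linear algebra in your boundary step is wrong as written, even though the final formula comes out right.

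The tangential derivative gives $0=\partial_\tau[\cG(\nabla u)]=\langle \nabla u, A\,D^2u\,\tau\rangle=\langle D^2u\,A\,\nabla u,\tau\rangle$ for all $\tau\perp\nu$. So what follows is that $D^2u\,A\,\nu$ is parallel to $\nu$. It does not follow that $A\,D^2u\,\nu$ (equivalently $A\,D^2u\,\nabla u$) is parallel to $\nu$, and this is false in general. For instance, take $n=2$, $\nu=e_1$, $A=\begin{pmatrix}1&1/2\\1/2&1\end{pmatrix}$ and $D^2u=\begin{pmatrix}-1&-1\\-1&2\end{pmatrix}$. Both the tangential constraint and $A:D^2u=0$ hold, and $D^2u\,A\,\nu=(-3/2,0)$, but $A\,D^2u\,\nu=(-3/2,-3/2)$.

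Your next step is also invalid on its own terms. Transposing $A\,D^2u\,\nu=\lambda\nu$ gives $\nu^T D^2u\,A=\lambda\nu^T$, which is a left-eigenvector statement, not $D^2u\,A\,\nu=\lambda\nu$. The two errors happen to cancel, so your conclusion $D^2u\,A\,\nu=\lambda\nu$ with $\lambda=\langle A\,D^2u\,\nu,\nu\rangle$ is correct; note $\langle D^2uA\nu,\nu\rangle=\langle AD^2u\nu,\nu\rangle$ by symmetry. However, this relation comes directly from the tangential derivative, with no transposition at all. Your closing remark about the ``main subtlety'' has it backwards.

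The paper avoids eigenvector language entirely. Write $D\Phi(\nabla u)=a\nu+b\tau$ with $\tau\perp\nu$. Then $\langle A\,D^2u\,D\Phi(\nabla u),\nu\rangle=a\langle A D^2u\,\nu,\nu\rangle+b\langle A D^2u\,\tau,\nu\rangle$. The last term equals $b\,\langle A D^2u\,\tau,\nabla u\rangle/\partial_\nu u$, since $\nabla u=\partial_\nu u\,\nu$, and this vanishes by the differentiated Bernoulli condition. With the boundary step repaired this way, your proof coincides with the paper's.
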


\begin{proof} We let subscripts denote partial derivatives. Thanks to the smoothness assumptions on $\Phi$ and $f$, all derivatives are classical. 
Keeing in mind that $\partial_iG=D\Phi(\nabla u)\cdot \nabla u_i$, we directly compute in $\{u>0\} \cap \Omega$
\begin{align*}
 \divv(D^2\Phi(\nabla u)\nabla G)
 &=\sum_k\partial_{k}
 \big(\sum_j\Phi_{kj}(\nabla u)G_j\big)
 =\sum_k\partial_{k}
 \big(\sum_{j,m}\Phi_{k,j}(\nabla u)\Phi_m(\nabla u)u_{mj}\big)\\
 &=\sum_{k,j,m}\Phi_m(\nabla u)
\partial_{k}\bigl(\Phi_{kj}(\nabla u)u_{mj}\bigr)+\sum_{k,j,m}\Phi_{kj}(\nabla u)u_{mj}
\partial_{k}\bigl(\Phi_m(\nabla u)\bigr)=:I+II.
\end{align*}
For term $I$, we use the interior equation \eqref{eqn: G comp eqn for u} satisfied by $u$. Differentiating \eqref{eqn: G comp eqn for u} in the $m$-direction, we find that 
$ \divv(D^2\Phi(\nabla u)\nabla u_m)
 =2\mu(\mu \, u_m- f_m)$
in $\{u>0\} \cap \Omega$. Recalling from Section~\ref{sec: prelim integrands} that $D \Phi(p)\cdot p \ge 0$, we thus find
\begin{equation}
    \label{eqn: G comp term 1}
    \begin{split}
 I =\sum_m\Phi_m(\nabla u)
 \operatorname{div}(D^2\Phi(\nabla u)\nabla u_m)
 &=2\mu(\mu \,D \Phi(\nabla u)\cdot\nabla u
 - D \Phi(\nabla u)\cdot\nabla f) \geq -2\mu D \Phi(\nabla u)\cdot\nabla f.
    \end{split}
\end{equation}
 Next, for term $II$, we note that $
 \partial_k\Phi_m(\nabla u)
 =\sum_l\Phi_{ml}(\nabla u)u_{lk}$ in $U$.
Thus,
\[
 II=\sum_{kjml}\Phi_{kj}(\nabla u)u_{mj}\Phi_{ml}(\nabla u)u_{lk} = |D^2\Phi(\na u)^{1/2} D^2 u D^2\Phi(\na u)^{1/2} |^2
 \geq \Lambda^{-2}|D^2u|^2.
\]
 Combining this with \eqref{eqn: G comp term 1} gives the first part of \eqref{eqn: G equation}.

Now, for $x \in \Reg(u),$ write $D \Phi(\na u) =a \nu + b \tau$ where $a = D \Phi (\na u) \cdot \nu$ and $|\tau|=1$ with $\tau \cdot \nu =0$.
Then, differentiating the free boundary condition $D \Phi(\na u ) \cdot \na u -\Phi(\na u) = 1$ in the $\tau $ direction, we find that $ D^2\Phi(\na u) D^2 u[\tau] \cdot \na u =0$. So, keeping in mind that $\na u = \partial_\nu u \nu$ since $u$ is constant on $\G$, we have 
\begin{align*}
 D^2\Phi(\nabla u)[\nabla G, \nu]  = (D \Phi(\na u) \cdot \nu)\, ( D^2\Phi(\nabla u)D^2 u[\nu]\cdot \nu)\,.
\end{align*}
This gives the boundary term in \eqref{eqn: G equation} and completes the proof.
\end{proof}

Next, for $\Phi \in \Istarz$, the assumed bound $\Hm^{n-2}(\Sing(u))=0$ for minimizers $u$ of $\J$ also holds for minimizers of the perturbed functional $\Jfm.$ 
\begin{lemma}\label{lem: perturbed functional small singular set}
    Let $\Phi \in \Istarz$. For a nonnegative function $f \in C^\infty(B_2)\cap L^\infty(B_2)$ and $\mu \in [0,1]$, any minimizer $u$ of $\Jfm$ in $B_2$ satisfies $\Hm^{n-2}(\Sing(u)\cap B_2 )= 0.$
\end{lemma}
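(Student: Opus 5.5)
The plan is a blow-up argument combined with the standard density estimate for Hausdorff content. Suppose, for contradiction, that $\Hm^{n-2}(\Sing(u)\cap B_2)>0$. Then for $\Hm^{n-2}$-a.e. point of $\Sing(u)$, the upper $(n-2)$-dimensional Hausdorff content density is positive. Concretely, we can pick $x_0\in\Sing(u)$ and radii $r_k\downarrow 0$ with
\[
\Hm^{n-2}_\infty\bigl(\Sing(u)\cap B_{r_k}(x_0)\bigr)\geq c\, r_k^{n-2}
\]
for a fixed $c>0$. This is the classical upper-density lemma, applied to $\Hm^{n-2}_\infty$, which is comparable to $\Hm^{n-2}$ on sets of positive measure.

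Next we rescale. Set $u_k(x)=u(x_0+r_kx)/r_k$. By \eqref{eqn: rescale}, $u_k$ minimizes $\E^\Phi_{f_k,\mu_k}$ in $B_{1/r_k}$ (at least in $B_2$ for large $k$), with $\mu_k=r_k\mu\to 0$ and $f_k=f(x_0+r_k\cdot)$ uniformly bounded. Since $0\in\Gamma(u_k)$, Lemma~\ref{lem:compactness} gives uniform Lipschitz bounds, and a subsequence converges locally uniformly and strongly in $H^1_{\rm loc}$ to some $u_\infty$.

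The one delicate point is identifying the limit functional. Since $\mu_k\to0$, the fidelity term $\int|\mu_k v-f_k|^2$ has a $v$-dependent part of size $O(\mu_k)$ on bounded sets, and this part vanishes in the limit. However, $f_k$ itself converges only to the constant $f(x_0)$ locally uniformly, because $f$ is smooth and hence continuous at $x_0$. Lemma~\ref{lem:compactness2} therefore applies with $f_k\to f(x_0)$ and $\mu_k\to0$. Its conclusion is that $u_\infty$ minimizes $\E^\Phi_{f(x_0),0}$, which differs from $\J$ by the additive constant $f(x_0)^2|B_R|$. Hence $u_\infty$ is a minimizer of $\J$ in $B_2$, exactly as in Step 2 of the proof of Proposition~\ref{prop: min is viscosity solution}.

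Finally we pass the content bound to the limit. Scaling gives $\Hm^{n-2}_\infty(\Sing(u_k)\cap \overline{B_1})\geq c$. To use Corollary~\ref{cor: prop of singularities} on a compact subset of the domain, we apply it to the rescaled functions on $B_2$, with $\Phi_k=\Phi$ fixed and $K=\overline{B_1}$. It yields $\Hm^{n-2}_\infty(\Sing(u_\infty)\cap\overline{B_1})\geq c>0$, so $\Hm^{n-2}(\Sing(u_\infty))>0$. This contradicts $\Phi\in\Istarz$, because $u_\infty$ is a minimizer of $\J$.

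The main obstacle is the first step: obtaining a point with positive upper content density and making sure the density estimate is taken with respect to $\Hm^{n-2}_\infty$, which is the quantity that is upper semicontinuous under Hausdorff convergence of singular sets. If the density lemma is awkward to apply directly, an alternative is to run the argument with $\Hm^{n-2}(\Sing(u)\cap K)>0$ for some compact $K$ and use Federer's density theorem for $\Hm^{n-2}_\infty\llcorner \Sing(u)$, which gives $\limsup_{r\to0}r^{2-n}\Hm^{n-2}_\infty(\Sing(u)\cap B_r(x))\geq 2^{2-n}\omega_{n-2}$ for $\Hm^{n-2}$-a.e. $x\in\Sing(u)$.
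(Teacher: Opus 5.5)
Your proposal is correct and follows essentially the same route as the paper. You pick a point of positive upper $\Hm^{n-2}_\infty$-density via the upper density theorem, blow up, identify the limit as a minimizer of $\J$ because $f_k\to f(x_0)$ uniformly and $\mu_k\to0$, and then contradict $\Phi\in\Istarz$ using propagation of singularities (Corollary~\ref{cor: prop of singularities}) for the content.

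One small correction: drop the phrase ``comparable to $\Hm^{n-2}$''. The two only share null sets, and the precise density statement you give at the end (valid for arbitrary sets) is exactly the one the paper cites.
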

\begin{proof}
Let $u$ be a minimizer as in the lemma and suppose by way of contradiction that $\Hm^{n-2}(\Sing(u) \cap B_2) >0$.  By a general measure theory fact, see \cite[Theorem 3.26(2)]{SimonLN}, for $\Hm^{n-2}$-a.e. $x \in \Sing(u)\cap B_2$ we have 
    \begin{equation}
        \label{eqn: content LB}
    \limsup_{r\to0}\frac{\Hm_{\infty}^{n-2}(\Sing(u)\cap B_2\cap B_r(x))}{\omega_{n-2}r^{n-2}} \geq \frac{1}{2^{n-2}}\,.
     \end{equation}
Choose one such point $\bar{x} \in \Sing(u) \cap B_2$, and take a  sequence $r_k\to 0$ such that  $u_k = u(\bar{x} +r_kx)/r_k$ satisfies $\Hm_{\infty}^{n-2}(\Sing(u_k) \cap B_1) \geq \omega_{n-2} 2^{-n}$. By scaling, $u_k$ is a minimizer of $\E^{\Phi}_{f_k, r_k \mu}$ in $B_{r_k^{-1}}$ where $f_k(x) = f(\bar{x} + r_k x)$.

By Lemma~\ref{lem:compactness}, up to a subsequence, $u_k \to u_\infty$ in $B_2$ in the sense of \eqref{eqn: conv from cpt}, and then by Lemma~\ref{lem:compactness2}, $u_\infty$ minimizes $\E^\Phi(\cdot) + |B_{3/2}|f(\bar{x})^2$, and thus also $\J$,  in $B_{3/2}.$  Since $\Phi \in \Istarz$, we know $\Hm^{n-2}(\Sing(u)\cap B_{3/2})=0$. In particular, $\Hm^{n-2}(\Sing(u_\infty)\cap \overline{B_1})=0$ and thus {$\Hm^{n-2}_{\infty}(\Sing(u_\infty)\cap \overline{B_1}) =0$} as well. This contradicts Corollary~\ref{cor: prop of singularities}. 
\end{proof}

We now prove Proposition~\ref{prop: curvature estimates}.
The idea is to use the fact that the $G=1+ \Phi(\nabla u)$ is a subsolution to the linearized equation with right-hand side bounding $|D^2u|^2$ (recall \eqref{eqn: G equation}), and relate this to the stability inequality \eqref{eqn: stability}.  

\begin{proof}[Proof of Proposition~\ref{prop: curvature estimates}]
    By scaling it suffices to assume that $p$ is the origin and $r =1$.
    Moreover, by Lemma~\ref{lem: perturbed functional small singular set} and since $\Phi \in \Istarz,$ we know $\mathcal H^{n-2}(\Sing(u)\cap B_2)=0$. Let $U = \{ u>0\} \cap B_{3/2}$,  let $\G = \partial U\cap B_{3/2}$ and let $\nu$ be the outer unit normal to $U.$ Since $u$ is a minimizer of $\Jfm$ on $B_2$, we have $\|\na u\|_{L^\infty(B_{3/2})} \leq L$ where $L$ depends on $n, \La$ and an upper bound for $\| f\|_{L^\infty(B_2)}$ by Lemma~\ref{lem:interior-lip}.
Throughout the proof, $C>0$ denotes a constant that may change from line to line and depends only on the universal parameters (in fact, only $n$ and $\La$) and an upper bound for  $\| f\|_{L^\infty(B_2)})$.

{\it Step 1: } In the first step we test the equation \eqref{eqn: G equation} for $G$ against $G$ multiplied by a cutoff function that vanishes on $\Sing(u)$. This step does not require smallness of the singular set.
Let $\zeta\in C_c^1(B_{3/2} \setminus \Sing(u)).$ We multiply $\divv(D^2\Phi(\nabla u)\nabla G)$ by $\zeta^2(x)G$ and integrate by parts to obtain 
\begin{equation}\label{eqn: int eqn G1}
\begin{split}
  \int_{U}\zeta^2G\, \, \divv(D^2\Phi(\nabla u)\nabla G)
  = &\int_{\G}\ \zeta^2 G D^2\Phi(\nabla u)[\nabla G, \nu] \, d\Hm^{n-1}
  - \int_{U} 
 \langle D^2\Phi(\nabla u)\nabla G,\nabla (G\zeta^2)\rangle 
= I - II.   
\end{split} 
\end{equation}
We bound the right-hand side above as follows. Using the free boundary condition \eqref{eqn: G equation} for $G$ and regrouping terms, we see that $I$ is coincides with the left-hand side of the stability inequality \eqref{eqn: stability}. {Since $\zeta$ vanishes on $\Sing(u)$, the stability inequality \eqref{eqn: stability} holds for $\zeta$.} Therefore, 
\begin{equation}
\begin{split}
I&=\int_{\G}\zeta^2G \, \langle D\Phi(\nabla u), \nu \rangle \langle D^2\Phi(\nabla u)D^2u\nu, \nu\rangle=\int_{\G}\frac{\langle D\Phi(\nabla u), \nu \rangle \langle D^2\Phi(\nabla u)D^2u\nu, \nu\rangle}{1+\Phi(\nabla u)}(G\zeta)^2\\
& \leq \int_U D^2 \Phi(\na u )[ \na (G\zeta), \na (G\zeta)] + 2 \mu^2 G^2\zeta^2= \int_U (G^2\,D^2\Phi(\nabla u)[\nabla\zeta,\nabla\zeta]+2\mu^2G^2\zeta^2) + II.
\end{split}
\end{equation}
Combining this with \eqref{eqn: int eqn G1} and  applying the Lipschitz bound for $u$ (which implies $G\leq C$), we find
\begin{align*}
 \int_{U}\zeta^2G\divv(D^2\Phi(\nabla u)\nabla G) & \leq C\int_U( |\na \zeta|^2 + \zeta^2)\,.
\end{align*}
 At the same time, we can bound the left-hand side of  below using  \eqref{eqn: G equation}, which, after rearranging terms, yields
\begin{align}\label{eqn: Hessian bound intermediate}
{\La^{-2}}\int_{U} G \zeta^2|D^2u|^2 \leq 
{C}
 \int_{U}(|\nabla\zeta|^2 +\zeta^2 )\,dx  + 2\mu\int_{U}\zeta^2\,G
 D \Phi(\nabla u)\cdot\nabla f\,dx
\end{align}
 Here we have once again used the Lipschitz bound for $u$.
 It remains bound the second term on the right-hand side of \eqref{eqn: Hessian bound intermediate} from above.\footnote{We could do this immediately to get a constant in \eqref{eqn: curvature estimates} depending on $\|\na f\|_{L^\infty}$, but as shown here, this dependence is not necessary.}. We
 integrate by parts 
 to find
\begin{equation}\label{eqn: bound f term}
    \begin{split}
     \int_{U}\zeta^2\,G D \Phi(\nabla u)\cdot\nabla f
 =&-  \int_{U} \zeta^2Gf \,\divv(D \Phi(\nabla u))
 - \int_{U} ( 2\zeta f\, G D \Phi(\nabla u)\cdot\nabla\zeta + \zeta^2 f \na G \cdot D\Phi(\na u))\\
 &
 +\int_{\G}\zeta^2 G f\,D\Phi(\nabla u)\cdot\nu\, d\Hm^{n-1}\\
     \end{split}
 \end{equation}
 The integrand of the boundary term is nonpositive a.e., since $f$ is assumed to be nonnegative and for all $x \in \Reg(u)$ (in particular for $\Hm^{n-1}$-a.e. $x \in \G$), we have $D \Phi(\na u)\cdot \nu =- \frac{1}{|\partial_\nu u|} D \Phi(\na u) \cdot \na u \leq 0$, as $D\Phi(p)\cdot p \geq 0 $ for all $p$. Moreover, thanks to the equation for $u$ and the nonnegativity of $u$, $G$, and $f$, we see that 
$f G \divv(D \Phi(\nabla u))= 2\mu^2 u f G - 2\mu G f^2 \geq- 2\mu G  f^2$.  Therefore, also using the Lipschitz bound to esitmate the second term on the right-hand side of \eqref{eqn: bound f term}, we find
 \begin{equation}\label{eqn: bound f term2}
    \begin{split}  
 \int_{U}\zeta^2\,G D \Phi(\nabla u)\cdot\nabla f  &\leq 2\mu\int_{U}\zeta^2f^2
+ C \int_U (\zeta f G |\na \zeta| + \zeta^2 f |D^2u|) \\
&\leq C\int (|\na \zeta|^2 +\zeta^2) + \frac{C}{\delta} \int_U \zeta^2  + C\delta \int_U  |D^2 u|^2 \zeta^2 .    \end{split}
\end{equation}
Here we choose $\delta>0$ small enough so that $C\delta < \Lambda^{-2}/2$. Therefore, applying the bound  \eqref{eqn: bound f term2} to the right hand side of \eqref{eqn: Hessian bound intermediate} and keeping in mind that $G\geq 1$, we can absorb this term into the left-hand side of   \eqref{eqn: Hessian bound intermediate} to obtain
\begin{align}\label{eqn: almost done}
\int_{U}\zeta^2|D^2u|^2 \leq 
{C}
 \int_{U}(|\nabla\zeta|^2+\zeta^2 )\,dx  \qquad \forall \zeta\in C_c^1(B_{3/2} \setminus \Sing(u))\,.
\end{align}

{\it Step 2:} In this step we shrink the cutoff function $\zeta$ to extend \eqref{eqn: almost done} to general $\varphi \in C^1_c(B_{5/4})$, using a capacity argument going back to Sternberg and Zumbrun, compare \cite[Lemma 2.4]{SZ}. 

Thanks to Lemma~\ref{lem: perturbed functional small singular set} and the assumption $\Phi \in \Istarz,$ we know $\Hm^{n-2}(\Sing(u)\cap B_2)=0.$  Let $\Sigma = \Sing(u)\cap \overline{B}_{5/4}$.  Since $\Hm^{n-2}(\Sigma)=0$, for any $\e>0$ we may find a countable collection of balls $\{B_{r_k}(z_k)\}$ covering $\Sigma$ with $\sum_k r_k^{n-2}<\e$.  By Theorem~\ref{thm: eps reg}, $\Sigma$ is a compact set, so we can choose a finite subcover, again denoted $\{B_{r_k}(z_k)\}$ after re-indexing.
For each $k$, let $\psi_k \in C^\infty(\R^n)$ with $\psi_k = 0$ in $B_{r_k}(x_k)$, $\psi_k = 1$ on $\R^n \setminus B_{2r_k}(x_k)$, and $|\na \psi_k|\leq 2/r_k$. Let $\psi =\min_k\psi_k$, so that $\psi=0$ on $U' =\cup_{k}B_{r_k}(z_k) \supset \Sigma$ and $\psi=1$ on $\R^n \setminus U$ where $U=\cup_{k}B_{2r_k}(z_k)$. Note that $U$ contains an $\delta$-tubular neighborhood of $\Sigma$ for some $\delta>0$. 
Now, for any $\varphi \in C^{\infty}_c(B_2)$ with support contained in $\overline{B}_{5/4}$, we let $\zeta =  \psi\varphi$, which satisfies $\zeta \in C^1_c(B_2\setminus \Sing(u))$, and thus \eqref{eqn: almost done} holds for this choice of $\zeta.$

By the chain rule and the fact that $0 \leq \psi \leq 1$ by construction, we have 
\begin{equation}
    \label{eqn: capacity thing}
|\na \zeta(x)|^2 \leq 2|\na \varphi(x)|^2+ 2|\varphi|^2 \sum_{k} r_k^{-2} \chi_{B_{r_k}(z_k)} 
\end{equation}
and thus using this to bound the the right-hand side of \eqref{eqn: almost done} with $\zeta$ from above yields 
$$
\int_{U}\zeta^2|D^2u|^2 \leq  {C} \int_{U}|\na \varphi|^2\,dx + \| \varphi\|_{L^\infty}^2 ( \sum_k r_k^{n-2} +1 + \| f\|_{L^\infty})
$$
Here we again use $\|\na u\|_{L^\infty(B_{3/2})}\leq L$.
Take a sequence $\e_k \to 0$ and the corresponding $\psi = \psi_{\e_k}$ and $\zeta_k = \psi_{\e_k}\varphi.$
By Fatou's lemma,  $\int_U \varphi^2 |D^2 u|^2 \leq C \| \varphi\|_{C^1}^2.$
Choosing a cutoff $\varphi$ with $\varphi\equiv 1$ in $B_{1}$ and $|\na \varphi|\leq C(n)$, we conclude. 
\end{proof}

\begin{remark}
Observe that the proof of Proposition~\ref{prop: curvature estimates} only used that $u$ is a {\it stable} - not minimizing - solution to the free boundary problem \eqref{eqn: EL} whose free boundary was smooth outside a codimension-two null set.
    Curvature estimates like \eqref{eqn: curvature estimates} for stable {classical} solutions (or stable solutions for which $\Sing(u)$ is known to be small) have various applications, see e.g. \cite{CFFS}. So, let us note that the proof also works for classical stable solutions, or stable solutions with $\Hm^{n-2}(\Sing(u)) = 0$, when $\Phi \in \II$ is regular outside the origin. This in particular includes the important $2$-homogeneous case. To deal with the singularity of $\Phi$ at the origin, the only modification to the proof of Proposition~\ref{prop: curvature estimates} is to test the equation for $G$ with a function $\zeta^2 \,  \eta \circ G$ where $\eta$ is a cutoff function on the level sets of $G$; this contributes an extra term in the integration-by-parts that has a favorable sign. This cutoff is defined and the proof is carried out in a degenerate elliptic setting in Proposition~\ref{prop: curvature est p} in Section~\ref{sec: p lap} below, and works with the obvious modifications  when $\Phi \in\II $ is $C^{2,\alpha}$ away from the origin. 
\end{remark}

\subsection{$L^{2+\eta}$ Hessian Estimates }
Next, we establish an improved $L^q$ bound for the Hessian; compare \eqref{eqn: curvature estimate eta intro}.  We give statement and proof for minimizers of $\J$, not $\Jfm$, for clarity of presentation and since this is all that is needed in our application. The same proof yields analogous estimates for minimizers of $\Jfm$ with constants depending on $\|f\|_{C^1}$. 
 Recall the universal constant $\db >0$ was defined in Lemma~\ref{lem: ball with big gradient} and that $r_\La = \db/2$ from Definition~\ref{def: annulus}. The main estimate of this section is the following.
\begin{proposition}
    \label{prop: curvature estimate 2}
Let $\Phi \in \II$ and suppose $u$ is a minimizer of $\J(\cdot)$ in $B_2$ with $0\in\partial\{u>0\}$ satisfying the $L^2$ Hessian estimate \eqref{eqn: curvature estimates} for all $p\in \GU$ and $r \in (0,1]$ with $B_{2r}(p)\subset B_2$. There are universal constants $\eta >0$  and $C>0$ such that
\[
\int_{B_{3/4}\cap\{|\nabla u|\geq\db \}}|D^2u|^{2+\eta}\leq C.
\]
\end{proposition}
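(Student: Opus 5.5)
The plan is to follow the Savin--Yu strategy described in the introduction: prove a reverse-H\"older/doubling type non-concentration estimate for the level sets of $|D^2u|$ on the region $\{|\nabla u|\ge\db\}$, and then upgrade the $L^2$ bound to $L^{2+\eta}$ by a standard Gehring-type (or level-set iteration) argument. Throughout, the key regularity input is that on $\{|\nabla u|\geq \db\}\cap\{u>0\}$ the gradient lies in a compact subset of $\An=B_{R_\La}\setminus\overline{B_{r_\La}}$ (since $r_\La=\db/2$ and $|\nabla u|\le L_*\le R_\La/4$), so the interior and boundary Schauder estimates \eqref{eqn: Hessian bound interior}, \eqref{eqn: Hessian bound boundary} of Lemma~\ref{lem: C1beta} are available there. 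Near regular free boundary points the gradient automatically lies in $\An$ by Remark~\ref{rmk: annulus}.

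\emph{Step 1 (local $\varepsilon$-regularity for the Hessian).} Show: there are universal $\varepsilon_0,c_0,C_0$ such that if $x\in B_{3/4}\cap\overline{\{u>0\}}$, $r\le 1/8$, $|\nabla u(x)|\ge \db$, and $r^{2-n}\int_{B_r(x)\cap\{u>0\}}|D^2u|^2\le\varepsilon_0$, then $|D^2u|\le C_0/r$ on $B_{c_0r}(x)\cap\{u>0\}$. For interior balls this follows from $C^{1,\beta}$ continuity (so $|\nabla u|\ge \db/2$ on a comparable ball) plus \eqref{eqn: Hessian bound interior}. If $B_r(x)$ meets $\GU$, smallness of the scaled $L^2$ Hessian norm forces $\nabla u$ to be nearly constant in $L^2$ (Poincar\'e), hence $u$ is close to a one-plane solution $a(y\cdot e)_+$ with $\cG(ae)=1$ (using compactness Lemmas~\ref{lem:compactness}--\ref{lem:compactness2} and Remark~\ref{rmk: connected components}); Theorem~\ref{thm: eps reg} and the quantitative part of Corollary~\ref{cor: higher reg} then give a uniform $C^{2,\alpha}$ free boundary and, via \eqref{eqn: Hessian bound boundary}, the Hessian bound.

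\emph{Step 2 (non-concentration / doubling).} Using Step 1 and the hypothesis $\int_{B_r(p)\cap\{u>0\}}|D^2u|^2\le Cr^{n-2}$ at all free boundary points and scales (and the trivial interior analogue from \eqref{eqn: Hessian bound interior} plus Lipschitz bounds), show that for $t$ large the superlevel set $E_t=\{|D^2u|>t\}\cap\{|\nabla u|\ge\db\}$ satisfies a decay of the form $\int_{E_{Mt}}|D^2u|^2\le (1-\theta)\int_{E_t}|D^2u|^2$ for universal $M>1$, $\theta\in(0,1)$. The mechanism: cover $E_{Mt}$ by balls of radius $r\sim C_0/(Mt)$ at which Step 1 must fail (else $|D^2u|\le Mt$ nearby), so each carries scaled energy $\ge\varepsilon_0$; Hölder continuity of $D^2u$ at scale $r$ (Schauder, again only in the $\An$ region) then shows a fixed fraction of that energy sits in $\{t<|D^2u|\le Mt\}$ on a slightly larger ball, and Vitali covering sums this up. Lemma~\ref{lem: ball with big gradient} is what ensures the relevant balls stay in the good region and the constants are universal.

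\emph{Step 3 (conclusion).} Iterating the decay gives $\int_{E_{M^kt_0}}|D^2u|^2\le C(1-\theta)^k$, and summing $\int_{E_{M^k}\setminus E_{M^{k+1}}}|D^2u|^{2+\eta}\le M^{(k+1)\eta}C(1-\theta)^k$ converges for $\eta$ with $M^\eta(1-\theta)<1$, which yields the claim on $B_{3/4}\cap\{|\nabla u|\ge\db\}$. The main obstacle I expect is Step 2, specifically handling balls near the boundary of the region $\{|\nabla u|\ge\db\}$ (where Schauder control is lost) and near the free boundary; I would enlarge to $\{|\nabla u|\ge \db/2\}$ in intermediate steps, exploiting the $C^{1,\beta}$ estimate to keep a buffer, and I would try first to make Step 1's boundary case rigorous via a compactness-contradiction argument.
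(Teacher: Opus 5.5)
Your overall architecture matches the paper's: a non-concentration (doubling) estimate at points of large Hessian, a Vitali covering that yields $\int_{E_{Mt}}|D^2u|^2\le \frac{A}{1+A}\int_{E_t}|D^2u|^2$, iteration, and the layer-cake formula. Your Step 1 is essentially the endgame of the paper's doubling lemma: a half-plane limit, flatness-based $\varepsilon$-regularity, the quantitative $C^{2,\alpha}$ free boundary, and boundary Schauder.

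\textbf{The gap is in Step 2.} There you claim that a fixed fraction of the energy of $B_r(x_0)$ lies in the middle layer $\{t<|D^2u|\le Mt\}$, and neither ingredient you cite gives this.

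\emph{Step 1 does not give it.} The failure of Step 1 only yields $\int_{B_r(x_0)\cap\{u>0\}}|D^2u|^2\ge\varepsilon_0r^{n-2}$ for the \emph{total} energy. That says nothing about how the energy is distributed among the layers. A priori all of it could sit in $\{|D^2u|>Mt\}$ near the free boundary or near the critical set, and that concentration is exactly what you must exclude.

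\emph{H\"older continuity does not give it either.} Take $x_0\in E_{Mt}$ and $r\sim C_0/(Mt)$. The uniform scale-$r$ Schauder estimates (interior, or boundary at a flat free boundary) are unavailable there: if they held, they would give $|D^2u(x_0)|\le C/r<Mt$ once $C_0>C$. In any case, H\"older continuity near $x_0$ would only spread the \emph{large} value $|D^2u(x_0)|>Mt$ to a neighborhood. That produces energy at level $\gtrsim Mt$, not below $Mt$.

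\textbf{What is needed.} You need a non-concentration lemma proved by compactness, as in Lemma~\ref{lem: doubling}: if $|D^2u(x_0)|\ge A/r$, then
\[
\int_{B_r(x_0)\cap\{u>0\}}|D^2u|^2\le A\int_{B_{r/5}(x_0)\cap\{u>0,\ |\nabla u|\ge\db\}\cap\{(Ar)^{-1}\le|D^2u|\le Ar^{-1}\}}|D^2u|^2 .
\]
Argue by contradiction with a rescaled sequence.
\begin{itemize}
\item The total energy is bounded by the $L^2$ hypothesis at a nearby free boundary point. Such a point exists, since otherwise interior Schauder bounds $|D^2u(x_0)|$.
\item The middle-layer energy therefore tends to zero.
\item In rescaled variables, the bottom layer $\{|D^2u|\le 1/A\}$ contributes only $O(A^{-2})$.
\item The top layer is absent from every compact subset of the limit's set $\{u>0,\ |\nabla u|>\db\}$. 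There the gradients converge locally uniformly into a compact subannulus of $\An$, so interior Schauder bounds $D^2u_k$ uniformly.
\item Hence the limit satisfies $D^2u=0$ on its nondegenerate region.
\item Lemma~\ref{lem: local half-plane rigidity} then makes the limit a half-plane. That lemma relies on Lemma~\ref{lem: ball with big gradient}, so that every component meets the nondegenerate region.
\item Your Step 1 endgame now contradicts $|D^2u_k(0)|\to\infty$.
\end{itemize}
The hypothesis driving this contradiction is smallness of the \emph{middle-layer} energy, not of the total energy. Your Step 1 as stated therefore cannot replace this lemma.

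With $M=A^2$, and with shrinking balls $B_{3/4+4^{-k}}$ to absorb the enlargement from the covering, your Step 3 then goes through as written.
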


In the course of proving Proposition~\ref{prop: curvature estimate 2}, we will make use of the following lemmas.
\begin{lemma}\label{lem:rigiditystrict}
    Let $U\subset\mathbb R^n$ be connected and open, and let $u\in C^1(U)\cap W^{2,1}_{\mathrm{loc}}(U)$.
Fix $\delta\geq0$ and suppose that $D^2u=0$
a.e. in $E_\delta:=U\cap\{|\nabla u|>\delta\}$.
If $E_\delta\neq\emptyset$, then $u(x)=e\cdot x+c$ in $U$
for some constant vector $e$ with $|e|>\delta$.
\end{lemma}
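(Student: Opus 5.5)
The plan is a connectedness argument. On the open set $E_\delta$ the gradient $\nabla u$ is locally constant, and the task is to propagate this to all of $U$. First observe that $E_\delta$ is open because $u\in C^1(U)$. On each connected component $W$ of $E_\delta$, the distributional gradient of $\nabla u$ is $D^2u=0$ a.e., hence zero as a distribution, since $u\in W^{2,1}_{\mathrm{loc}}$. Therefore $\nabla u\equiv e_W$ is constant on $W$ with $|e_W|>\delta$.

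Next, fix a component $W$ and show $W=U$. Suppose not. Since $U$ is connected and $W$ is a nonempty open subset of $U$ different from $U$, the set $W$ is not relatively closed in $U$. So there is a point $x_0\in \partial W\cap U$. By continuity of $\nabla u$ we get $\nabla u(x_0)=e_W$, so $|\nabla u(x_0)|>\delta$ and $x_0\in E_\delta$. Again by continuity, a small ball $B$ around $x_0$ lies in $E_\delta$. This ball is connected and meets $W$, so $B\subset W$, contradicting $x_0\in\partial W$. Hence $W=U$, i.e.\ $E_\delta=U$ and $\nabla u\equiv e$ on $U$ with $|e|>\delta$.

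Finally, integrate. $U$ is connected and open, hence path connected by polygonal paths. Since $\nabla(u-e\cdot x)=0$ on $U$, the function $u-e\cdot x$ is constant there, so $u=e\cdot x+c$.

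The only mildly delicate point is the second step, the openness-and-closedness of a component. The strict inequality $|\nabla u|>\delta$ is what keeps a boundary point of $W$ inside $E_\delta$. This is also why the lemma is stated with the strict superlevel set. For $\delta=0$ the same argument works, with $E_0=\{\nabla u\neq 0\}$ and $e\ne0$.
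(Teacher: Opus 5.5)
Your proposal is correct and follows essentially the same argument as the paper: $\nabla u$ is constant on a component of $E_\delta$, and that component is relatively closed in $U$ by continuity of $\nabla u$ together with the strict inequality $|e|>\delta$. The only difference is that you spell out the final integration step from $\nabla u\equiv e$ to $u=e\cdot x+c$, which the paper leaves implicit.
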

\begin{proof}
    Fix a connected component $V$ of $E_\delta$. Since the weak derivatives of $\nabla u$ vanish on $V$, by connectedness and continuity we have $\na u\equiv e$ on $V$, for a constant vector $e\in\mathbb R^n$ with $|e|>\delta$.

    We claim that $V$ is relatively closed in $U$, so that since it is also relatively open in $U$, $V\neq \emptyset$, and $U$ is connected, we deduce $V=U$.

    If $x\in\overline V\cap U$, continuity gives $\na u(x)=e$. Since $|e|>\delta$, there is a ball $B_r(x)$ contained in $E_\delta$. This ball intersects $V$, so connectedness of $B_r(x)$ and $V$ being connected component implies $B_r(x)\subset V$. Hence $x\in V$, as desired.
\end{proof}
{The proof only needs $W^{2,1}_{\mathrm{loc}}$ regularity on $U\cap\{|\nabla u|>\delta\}$ and continuity of $\nabla u$ on $U$. This localized version will be used in Section~\ref{sec: p lap} when establishing the $p$-Laplacian analogues of Lemma~\ref{lem: Hessian eps reg}, in the verification of Ingredient~\eqref{p2}, and Lemma~\ref{lem: doubling}, in the verification of Ingredient~\eqref{p3}.
}

\begin{lemma}\label{lem: local half-plane rigidity}
Let $u$ minimize $\E^\Phi$ in $B_1$, with $0\in\GU$. Let $0\leq\delta\leq\db$, and suppose that $D^2u=0$ a.e. in $B_1\cap\{u>0,|\nabla u|>\delta\}$.
There exists $\rho_0=\rho_0(n,\La)\in(0,1]$ and $e \in \R^n$ such that
\[
 u(x)=(e\cdot x)_+\quad\text{in }B_{\rho_0},
 \qquad\cG(e)=1.
\]
\end{lemma}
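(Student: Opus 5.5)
The plan is to reduce to Lemma~\ref{lem:rigiditystrict} on each connected component of $\{u>0\}\cap B_1$, and then use Remark~\ref{rmk: connected components} to isolate the component through the origin. For $u$ to lie in $C^1(V)\cap W^{2,1}_{\mathrm{loc}}(V)$ on each component $V$, we invoke the interior estimate \eqref{eqn: gradient reg interior} of Lemma~\ref{lem: C1beta} with $\psi=0$, since $\divv(D\Phi(\na u))=0$ weakly in $\{u>0\}$. Lemma~\ref{lem:rigiditystrict} only needs $D^2u=0$ a.e. on $V\cap\{|\na u|>\delta\}$, which is exactly the hypothesis.

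\emph{Components meeting $B_{1/8}$.} Let $V$ be such a component. Lemma~\ref{lem: ball with big gradient}, applied in $B_1$ with $0\in\GU$, gives a ball $B_{\db}(y_V)\subset V$ on which $|\na u|\geq 8\db>\delta$. Thus $E_\delta=V\cap\{|\na u|>\delta\}$ is nonempty, and Lemma~\ref{lem:rigiditystrict} yields $u=e_V\cdot x+c_V$ on $V$. Since $u>0$ in $V$ and $u=0$ on $\partial V\cap B_1$, we get $u=(e_V\cdot x+c_V)_+$ there.

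\emph{Components not meeting $B_{1/8}$.} These are irrelevant once we restrict to $B_{\rho_0}$ with $\rho_0\le 1/8$. So we may write $u$ on $B_{1/8}$ as a union of affine pieces on finitely many components; finiteness follows from the lower density bound of Corollary~\ref{cor: density estimates}, or is not even needed.

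\emph{Applying Remark~\ref{rmk: connected components}.} The remark is stated for all components of $B_1$. We apply its argument on $B_{1/8}$ after rescaling $u_{1/8}(x)=8u(x/8)$, which is again a minimizer of $\J$ with $0$ in its free boundary. Since $0\in\GU$, some component $V_1$ has $0\in\partial V_1$, and continuity forces $c_{V_1}=0$. The density estimates then show that any other component with $c_V\ge 0$ cannot come within a universal distance $\rho_0(n,\La)$ of the origin. The reason is that a second component touching near $0$ would be a half-space region disjoint from $V_1$, and the density of $\{u>0\}$ at a nearby free boundary point would be incompatible with $1-c$. Alternatively, one can argue directly: a two-plane configuration $\{e_1\cdot x>0\}\cup\{e_2\cdot x+c>0\}$ with small gap violates the upper density bound at free boundary points in between. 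This gives $u=(e\cdot x)_+$ in $B_{\rho_0}$.

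\emph{Free boundary condition.} Finally, $\cG(e)=1$ follows from the first variation, or from the viscosity free boundary condition of Proposition~\ref{prop: min is viscosity solution} (with $g=0$). Here $(e\cdot x)_+$ is smooth up to the flat free boundary, so both touching directions apply and give $\cG(e)\le1$ and $\cG(e)\ge1$.

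\emph{Main obstacle.} The delicate step is the quantitative separation of the other components from the origin, i.e. making $\rho_0$ depend only on $n,\La$ rather than on $u$. I would handle it via the density estimates as in Remark~\ref{rmk: connected components}: if a second affine component came within distance $r$ of $0$, then at a free boundary point of $V_1$ near it the positivity set would occupy almost all of a small ball, violating the upper density bound.
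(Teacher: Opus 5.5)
Your proposal is correct and follows essentially the same route as the paper. You use the good-ball lemma to make $\{|\nabla u|>\delta\}$ nonempty on each component meeting $B_{1/8}$, Lemma~\ref{lem:rigiditystrict} to make $u$ affine there, and the density-based separation of Remark~\ref{rmk: connected components} to isolate the component through $0$ and obtain $\cG(e)=1$. Your rescaling to $B_{1/8}$ makes explicit a point the paper glosses over, namely that only components meeting $B_{1/8}$ are shown to be affine. Finiteness of those components comes most directly from the disjoint balls $B_{\db}(y_V)\subset B_{1/2}$ rather than from the lower density bound.
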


\begin{proof}
By Lemma~\ref{lem: ball with big gradient}, every connected component $V$ of $B_1\cap\{u>0\}$, intersecting $B_{1/8}$ nontrivially, contains a ball $B_{\db}(p_V){\subset B_{1/2}}$ on which $|\nabla u|\geq8\db>\delta$. Lemma~\ref{lem:rigiditystrict} thus guarantees that $u$ coincides with an nonzero affine function on each connected component of $\{u>0\}\cap B_1$. Since the balls $B_{\db}(p_V)$ are disjoint and contained in $B_{1/2}$, there are only finitely many such components. By Remark~\ref{rmk: connected components}, we conclude the proof.
\end{proof}

The main tool to prove Proposition~\ref{prop: curvature estimate 2} is the following doubling estimate.
\begin{lemma}\label{lem: doubling}
Let $\Phi \in \II$ and suppose  $u$ is a minimizer of $\J(\cdot)$ in $B_4$ satisfying the $L^2$ Hessian estimate \eqref{eqn: curvature estimates}  for all $p\in \GU$ and $r>0$ such that $B_{2r}(p)\subset B_4$.
Suppose additionally that $|\nabla u|\leq L$ in $B_4$ for some $L\in(0, R_\La)$. There is a constant $
A\gg1$ depending on $L$ and the universal parameters
such that, if $
u(0)>0,$ $ 
|\nabla u(0)|\geq \db$, and $|D^2u|(0)\geq A,$
then
\begin{equation}
    \label{eqn: doubling esimate}
\int_{B_1\cap\{u>0\}}|D^2u|^2
\leq
A\int_{B_{1/5}\cap\mathcal{B}_A(u)}
|D^2u|^2,
\end{equation}
where $\mathcal{B}_A(u)= \{u>0\}\cap\{|\nabla u|\geq\db\}
\cap\{\frac1A\leq|D^2u|\leq A\}.$
Here  $\db$ is the universal constant from Lemma~\ref{lem: ball with big gradient} and $R_\La$ is defined in Definition~\ref{def: annulus}.
\end{lemma}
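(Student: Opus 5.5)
We argue by contradiction and compactness, following the strategy of \cite{SY}. Suppose that for some $A_k\to\infty$ there are minimizers $u_k$ of $\E^{\Phi_k}$ in $B_4$ with $\Phi_k\in\II$ satisfying all the hypotheses: the $L^2$ Hessian estimate \eqref{eqn: curvature estimates}, $|\nabla u_k|\le L$, $u_k(0)>0$, $|\nabla u_k(0)|\ge\db$ and $|D^2u_k|(0)\ge A_k$, but with \eqref{eqn: doubling esimate} failing. The failure means
\[
\int_{B_{1/5}\cap\mathcal B_{A_k}(u_k)}|D^2u_k|^2<\frac1{A_k}\int_{B_1\cap\{u_k>0\}}|D^2u_k|^2\le \frac{C}{A_k}\to0 .
\]
Here the last bound comes from covering $B_1$ by boundedly many balls centered on free boundary points, using the $L^2$ estimate there. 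Interior pieces far from $\GU$ are handled by the interior estimate of Lemma~\ref{lem: C1beta}, since $u_k\in H^2_{loc}$ with bounds in terms of $L$.

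First I would pass to a limit. By the compactness remark (Remark~\ref{rmk: connected}) $\Phi_k\to\Phi\in\II$. If $0$ stays a bounded distance from $\Gamma_{u_k}$ we instead work with interior solutions; otherwise Lemmas~\ref{lem:compactness} and \ref{lem:compactness2} give $u_k\to u$ in $H^1_{loc}$, where $u$ is a minimizer of $\E^\Phi$. The uniform $L^2$ Hessian bound gives $D^2u_k\rightharpoonup D^2u$ weakly in $L^2$ on compact subsets of the positive phase, and also $\nabla u_k\to\nabla u$ locally uniformly there by the $C^{1,\beta}$ estimate \eqref{eqn: gradient reg interior}.

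Next I would show that $D^2u=0$ a.e. on $B_{1/5}\cap\{u>0,|\nabla u|>\db\}$. On this set $|\nabla u_k|\ge\db$ for large $k$, locally. The contribution from $\{|D^2u_k|\le 1/A_k\}$ vanishes trivially, and the contribution from $\mathcal B_{A_k}$ tends to $0$ by the displayed inequality. The remaining part $\{|D^2u_k|>A_k\}$ has measure at most $C/A_k^2$ by Chebyshev. On it the $L^1$ mass of $D^2u_k$ is at most $\|D^2u_k\|_{L^2}\,|\{|D^2u_k|>A_k\}|^{1/2}\le C/A_k\to0$. Hence $D^2u_k\to0$ in $L^1$ on this set, and by lower semicontinuity $D^2u=0$ there. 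Lemma~\ref{lem:rigiditystrict}, or Lemma~\ref{lem: local half-plane rigidity} rescaled around nearby free boundary points, then makes $u$ affine, equal to $e\cdot x+c$, on the component of $\{u>0\}\cap B_{1/5}$ containing $0$. This uses $|\nabla u(0)|=\lim|\nabla u_k(0)|\ge\db$, so that the component does meet $\{|\nabla u|>\db\}$ after slightly lowering the threshold; it is convenient to set up the contradiction with threshold $\db/2$.

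Finally I would derive the contradiction with $|D^2u_k|(0)\ge A_k$. Near $0$ we now have $u_k\to u$ with $\nabla u$ constant, so $\nabla u_k\in\An$ in a fixed ball $B_\rho(0)\subset\{u_k>0\}$; recall $\db=2r_\La$ and $L<R_\La$. The Schauder estimate \eqref{eqn: Hessian bound interior} of Lemma~\ref{lem: C1beta} then gives $|D^2u_k|(0)\le C\rho^{-2}$, contradicting $A_k\to\infty$.

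The main obstacle is the case where $0$ is very close to $\Gamma_{u_k}$, or more generally where $\{u_k>0\}$ near $0$ does not contain a fixed ball. Then the interior Schauder estimate cannot be applied at a uniform scale. To handle it, I would use that $u$ is affine near the relevant free boundary points, hence a flat half-plane solution there. The $\e$-regularity of Theorem~\ref{thm: eps reg} and Corollary~\ref{cor: higher reg} then gives uniform $C^{2,\alpha}$ free boundaries for $u_k$, and the boundary Schauder estimate \eqref{eqn: Hessian bound boundary} supplies the uniform Hessian bound at $0$.
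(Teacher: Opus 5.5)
Your proposal is correct and follows the paper's proof. The architecture is the same: a contradiction and compactness argument in which the limit is forced to be a half-plane solution near $0$ via Lemma~\ref{lem: local half-plane rigidity}. Theorem~\ref{thm: eps reg}, the quantitative part of Corollary~\ref{cor: higher reg}, and \eqref{eqn: Hessian bound boundary} then bound $|D^2u_k(0)|$ uniformly, while the interior case is excluded by \eqref{eqn: Hessian bound interior}.

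The differences are minor and valid. You get $D^2u=0$ on $\{|\nabla u|>\db\}$ from a Chebyshev/Cauchy--Schwarz bound on $\{|D^2u_k|>A_k\}$, instead of the paper's interior Schauder bound. You bound $\int_{B_1\cap\{u_k>0\}}|D^2u_k|^2$ by a covering that needs a quantitative interior $H^2$ estimate; this estimate is standard via difference quotients, but it is not literally stated in Lemma~\ref{lem: C1beta}. The paper avoids it by first showing $\GU\cap B_{\rho_0/100}\neq\varnothing$ for large $A$, so that a single ball suffices.
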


\begin{proof}
 {\it Step 0: Initial proximity to the free boundary.} Let $\rho_0=\rho_0(n,\La)\in (0,1]$ be as in Lemma~\ref{lem: local half-plane rigidity}. To begin, we claim that for $A$ sufficiently large, the assumptions of Lemma~\ref{lem: doubling} guarantee that  
 \begin{equation}\label{eqn: see FB}
\GU \cap B_{\rho_0/100}\neq\varnothing.
 \end{equation}
Suppose not; since $u(0)>0$ this means $B_{\rho_0/100}\subset \{u>0\}$. In this case, applying the interior estimates \eqref{eqn: gradient reg interior} of Lemma~\ref{lem: C1beta} to $u-u(0)$ in $B_{\rho_0/100}$ shows that $
\|\nabla u\|_{C^{0,\beta}(B_{\rho_0/200})} \leq CL$ for a universal $C>0$. {Since $|\nabla u(0)|\geq\db$, if we set $s=\min\{(\frac{\db}{4CL})^{1/\beta},\frac{\rho_0}{200}\}$, the H\"older estimate gives
$3\db/4\leq|\nabla u|\leq L$ on $B_s$.
Since $3\db/4>r_\La$ and $L<R_\La$, this is a fixed compact subannulus of $\An$, and the interior Hessian estimate applies.}
 In turn, applying the Hessian bound \eqref{eqn: Hessian bound interior} of  Lemma~\ref{lem: C1beta} in this smaller ball yields 
\[
|D^2u(0)|\leq \frac{C}{s^2}
\]
for $C$ depending on $L$ and the universal parameters. This is  a contradiction for $
A>\frac{C}{s^2}.$ Thus \eqref{eqn: see FB} holds.

\emph{Step 1: Set up and compactness.}
Suppose the conclusion of the lemma fails. We find a sequence of integrands $\{\Phi_k\} \subset \II$ and minimizers $u_k$ of $\E^{\Phi_k}$ in $B_4$ with $u_k(0)>0, |\na u_k(0)|\geq \db$, $|D^2 u_k(0)|\geq k$, and $|\na u_k|\leq L$ 
such that
\begin{equation}
    \label{eqn: contra 1}
\int_{B_1\cap\{u_k>0\}}|D^2u_k|^2
\geq
k\int_{B_{1/5}\cap\mathcal{B}_k(u_k)}
|D^2u_k|^2.
\end{equation}
For $k$ large, Step 0 guarantees that \eqref{eqn: see FB} holds with $u_k$ in place of $u$. In particular, there is a point in $B_{\rho_0/100}$ for which \eqref{eqn: curvature estimates} holds with $r=1+1/100.$ In particular, by set containment, \eqref{eqn: curvature estimates} implies $\int_{B_1\cap\{u_k>0\}} |D^2u_k|^2 \leq C.$
Pairing this with \eqref{eqn: contra 1} and the trivial bound $\int_{B_{1/5}\cap \{|D^2u_k|\leq 1/k\}} |D^2u_k|^2 \leq C/k^2$  gives
\begin{equation}\label{eqn: hessian bound on B_k}
\int_{B_{1/5}\cap
 \mathcal{D}_k}
|D^2u_k|^2
\leq \frac{C}{k}, 
\qquad \text{where} \qquad \mathcal{D}_k=\{u_k>0\}\cap\{|\nabla u_k|\geq\db\}\cap\{|D^2u_k|\leq k\}
\end{equation}
for all $k$ large and for a universal constant $C>0$.

Up to a subsequence, $\Phi_k \to \Phi \in \II$. Moreover, 
since \eqref{eqn: see FB} holds for each $u_k$, Lemma~\ref{lem:compactness} and Lemma~\ref{lem:compactness2} guarantee that,  up to a further subsequence, $u_k$ converges locally uniformly and locally in $H^1$ in $B_4$ to a function $u$ that minimizes  $\E^\Phi(\cdot)$ on compact subsets of $B_4$. 
Moreover, let $\bar x_k$ be a closest point of $\G(u_k)$ to the origin. We claim that $|\bar x_k|\to0$. Otherwise, after passing to a subsequence, there would be a fixed $\sigma>0$ such that $B_\sigma\subset\{u_k>0\}$ for every $k$. Repeating the interior argument of Step 0 on $B_\sigma$, using $|\nabla u_k(0)|\geq\db$ and $|\nabla u_k|\leq L$, would give a bound for $|D^2u_k(0)|$ independent of $k$, contradicting $|D^2u_k(0)|\geq k$.
Thus $\bar x_k\to0$, and convergence of the free boundaries gives $0\in\G(u)$. 
\medskip

\noindent\emph{Step 2: Identifying $u$.} Let $E=B_{1/5}\cap\{u>0\}\cap\{|\nabla u|>\db\}$. 
We claim $D^2 u = 0$ a.e. on $E$. Indeed, take open sets $D \Subset D'\Subset  E$. The local uniform convergence of $u_k$ and $\na u_k$ (the latter thanks to estimate \eqref{eqn: gradient reg interior} of Lemma~\ref{lem: C1beta}) guarantee that $
 D'\subset\{u_k>0\}\cap\{|\nabla u_k|>\db\}$ for all sufficiently large $k$.
Hence,  $\na u_k(x) \in \An$ for all such $k$ and for all $x \in \overline{D}'$. Therefore, $|D^2 u_k|\leq \bar{C}$ on $D$ by estimate \eqref{eqn: Hessian bound interior} of Lemma~\ref{lem: C1beta}. Here $\bar{C}$ depends on $D, D'$ as well as on $L$ and the universal parameters. This means, by definition, $D\subset \mathcal{D}_k$ for all $k$ large, with $\mathcal{D}_k$ as defined in \eqref{eqn: hessian bound on B_k}. So,  \eqref{eqn: hessian bound on B_k} implies that $D^2u_k\to0$ in $L^2(D)$. Distributional convergence of the second derivatives then gives
$D^2u=0$ in $D$. Exhausting $E$, we obtain $D^2u=0$ a.e. in $E$.

{Applying Lemma~\ref{lem: local half-plane rigidity} to $v(y)=10u(y/10)$ in $B_1$, we obtain, up to a rotation, that $u(x)=a(x\cdot e_1)_+$ in $B_{\rho_0/10}$, where $a>0$ satisfies $D\Phi(ae_1)\cdot(ae_1)-\Phi(ae_1)=1$.}
As observed in Remark~\ref{rmk: annulus}, thus guarantees that $ae_1$ lies in a compact subset of $\An.$

\medskip
\noindent\emph{Step 3: $C^{2,\alpha}$ estimates on $\{u_k>0\}$ and conclusion.} {By Step 2 and $\bar x=0$, $u(x)=a(x\cdot e_1)_+$ in $B_{\rho_0/10}$, with $\cG(ae_1)=1$. For each $k$, let $a_k>0$ be the unique number satisfying
$$D\Phi_k(a_ke_1)\cdot(a_ke_1)-\Phi_k(a_ke_1)=1.$$
The convergence of the integrands imply $a_k\to a$. Since $\bar x_k\to0$, then $B_{\rho_0/20}(\bar x_k)\subset B_{\rho_0/10}$ for $k$ large. Local uniform convergence and nondegeneracy imply that, for all such $k$,
$$(a_k(x-\bar x_k)\cdot e_1- \bar{\e}\rho_0/20)_+ \leq u_k(x)\leq (a_k(x-\bar x_k)\cdot e_1+ \bar{\e}\rho_0/20)_+ \qquad \text{in }B_{\rho_0/20}(\bar x_k),$$
where $\varepsilon_0$ is the flatness threshold in Theorem~\ref{thm: eps reg}.
Theorem~\ref{thm: eps reg} and the quantitative observation in the proof of Corollary~\ref{cor: higher reg} give $C^{2,\alpha}$ boundary graphs with uniformly bounded rescaled norms on a fixed smaller neighborhood of $\bar x_k$. Consequently, the constants in Lemma~\ref{lem: C1beta} are independent of $k$.
The boundary gradient estimate and the Bernoulli condition also place $\nabla u_k$ in a fixed compact subannulus of $\An$ on a further fixed smaller neighborhood. Applying \eqref{eqn: Hessian bound boundary} with $w=u_k$ and $\Omega=\{u_k>0\}$, we therefore obtain
$$
\|D^2u_k\|_{L^\infty(B_{c\rho_0/20}(\bar x_k)\cap\{u_k>0\})}\leq \frac{C}{\rho_0/20},
$$
where $c,C>0$ are independent of $k$. Since $\bar x_k\to0$ and $u_k(0)>0$, this estimate applies at the origin for all sufficiently large $k$, contradicting $|D^2u_k(0)|\geq k$.}
\end{proof}

We now see how Lemma~\ref{lem: doubling} is applied to get improved integrability in the curvature estimate.

\begin{proof}[Proof of Proposition~\ref{prop: curvature estimate 2}]
We proceed in several steps. \\

\emph{Step 0: Set up.}
Since $u(0)=0$, we may apply Lemma~\ref{lem:interior-lip} in $B_2$ to find $
|\nabla u|\leq L=L(\Lambda,n)$ in $B_{15/8}.$ Keeping in mind Definition~\ref{def: annulus},  $L<R_\La$. 
Let $A$ be the constant from Lemma~\ref{lem: doubling} corresponding to this choice of $L$.
Let $M \geq 4A$ be a constant depending on the universal parameters to be specified below. For each
$k\in\mathbb N$, set
\begin{equation}
    \label{eqn: D def}
D_k:=B_{\frac34+(\frac14)^k}
\cap\{u>0\}
\cap\{|\nabla u|>\db\}
\cap\{|D^2u|>M^k\}.
\end{equation}

\medskip
\noindent\emph{Step 1: Applying the doubling estimate to the rescaled problem.}
For each $x_0\in D_k$, take $
r=\frac{A}{M^k} $
and $
u_r(x):=\frac1r u(x_0+rx).$
Since $x_0\in B_1$ and $M\geq 4A$, we see that $u_r$ is a minimizer of
$\E^\Phi(\cdot)$ in $B_4$ satisfying
\[
u_r(0)=\frac1r u(x_0)>0,
\qquad
|\nabla u_r|(0)=|\nabla u(x_0)|>\delta,\qquad
|D^2u_r(0)|=r|D^2u(x_0)|
>\frac{A}{M^k}M^k=A.
\]
Moreover, $
|\nabla u_r|=|\nabla u(x_0+r\,\cdot)|\leq L$ in $B_4$.
Therefore, we can apply the doubling estimate of Lemma~\ref{lem: doubling} to conclude
\[
\int_{B_1\cap\{u_r>0\}}|D^2u_r|^2
\leq
A\int_{B_{1/5}\cap\{u_r>0\}\cap\{|\nabla u_r|\geq\delta\}
\cap\{\frac1A\leq|D^2u_r|\leq A\}}
|D^2u_r|^2.
\]
In terms of the original minimizer $u$, this gives
\[
\int_{B_r(x_0)\cap\{u>0\}}|D^2u|^2
\leq
A\int_{B_{r/5}(x_0)\cap\{u>0\}\cap\{|\nabla u|\geq\delta\}
\cap\{\frac{M^k}{A^2}\leq|D^2u|\leq M^k\}}
|D^2u|^2.
\]
As a result, if we pick $M=A^2$ and let $\mathcal{K}_k = \{u>0\}\cap\{|\nabla u|\geq\delta\}
\cap\{M^{k-1}\leq|D^2u|\leq M^k\}$, this becomes  
\begin{equation}\label{eqn: doubling est 2}
\int_{B_r(x_0)\cap\{u>0\}}|D^2u|^2
\leq
A\int_{B_{r/5}(x_0)\cap \mathcal{K}_k}
|D^2u|^2.
\end{equation}

\medskip
\noindent\emph{Step 2: Inductive step.}
For each $x\in D_k$, we choose $r$ as in the previous step. Thus
$\{B_r(x)\}_{x \in D_k}$ forms a covering of $D_k$. We take a Vitali subcovering $\{B_r(x_j)\}$. Since $x\in B_{\frac34+(\frac14)^k}$ and  $r=\frac{A}{M^k}=\frac{A}{A^{2k}},$
after possibly increasing $A$, we have
\begin{equation}\label{eqn: containment}
B_r(x)\subset B_{\frac34+(\frac14)^{k-1}}
\qquad\text{for every }x\in D_k.
\end{equation}
Now, apply \eqref{eqn: doubling est 2} in each ball $\{B_r(x_j)\}$ in the covering. Using \eqref{eqn: containment} followed by \eqref{eqn: doubling est 2} and the fact that the balls $B_{r/5}(x_k)$ are pairwise disjoint, we find
\begin{align}\label{eqn: eta intermed}
\int_{D_k}|D^2u|^2
\leq \sum_j\int_{B_r(x_j)\cap\{u>0\}}|D^2u|^2
&\leq A\sum_j
\int_{B_{r/5}(x_j)\cap \mathcal{K}_k}
|D^2u|^2
\leq A
\int_{B_{\frac34+(\frac14)^{k-1}}\cap
\mathcal{K}_k}
|D^2u|^2   \,. 
\end{align}

The domain of integration in the last integral satisfies the containment
\[
B_{\frac34+(\frac14)^{k-1}}\cap
\mathcal{K}_k \subset D_{k-1}\setminus
\big(
B_{\frac34+(\frac14)^{k-1}}
\cap\{u>0\}
\cap\{|\nabla u|\geq\delta\}
\cap\{|D^2u|>M^k\}
\big)
\subset D_{k-1}\setminus D_k.
\]
Thus, we conclude from \eqref{eqn: eta intermed} that $
\int_{D_k}|D^2u|^2
\leq A\int_{D_{k-1}}|D^2u|^2
-A\int_{D_k}|D^2u|^2,$
or, in other words,
\begin{equation}\label{eqn: decay}
\int_{D_k}|D^2u|^2
\leq \frac{A}{1+A}\int_{D_{k-1}}|D^2u|^2.
\end{equation}

\medskip
\noindent\emph{Step 3: Conclusion.}
An iteration of the previous step gives
\[
\int_{D_k}|D^2u|^2
\leq
\left(\frac{A}{1+A}\right)^{k-1}
\int_{D_1}|D^2u|^2
\leq
\left(\frac{A}{1+A}\right)^{k-1}
\int_{B_{1}\cap\{u>0\}}|D^2u|^2.
\]
Since $0 \in \GU$, we may applying the curvature estimate of Proposition~\ref{prop: curvature estimates} to the right-hand side to find
\begin{equation}
    \label{eqn: integral bound}
\int_{D_k}|D^2u|^2
\leq C
\left(\frac{A}{1+A}\right)^{k-1}.
\end{equation}
Recalling the definition of $D_k$ in \eqref{eqn: D def} and applying Chebychev inequality to \eqref{eqn: integral bound}, we find
\[
\left|
\{|D^2u|\geq M^k\}
\cap\{|\nabla u|\geq\delta\}
\cap\{u>0\}
\cap B_{3/4}
\right|\leq |D_k|
\leq C\left(\frac{A}{1+A}\right)^{k-1}M^{-2k}
\]
for all $k$.
The conclusion follows by the layer-cake formula.
\end{proof}

\section{Proofs of the main theorems}\label{sec: main theorem proof 0}

In this section, we prove Theorems~\ref{thm: main} and \ref{thm: curvature estimate intro}. As we described in the introduction, to prove Theorem~\ref{thm: main}, we first show that $\Istarz = \II$ using a topological argument to show that $\Istarz$ is both open and closed in the connected nonempty set $\II$. Then, we apply the improved $L^{2+\eta}$ curvature estimates to complete the proof.  Theorem~\ref{thm: curvature estimate intro}
will then follow from Theorem~\ref{thm: main}, the corresponding estimates established in the previous section, and an approximation argument.

\subsection{Preparatory lemmas}
We need two additional ingredients before proving Theorem~\ref{thm: main}. The first is an $\e$-regularity statement with flatness measured in terms of the norm of the Hessian.  
It will be applied twice: once with $\eta=\delta=0$ and then with $\eta>0$ and $\delta = \db$. Recall $\db>0$ is the constant determined in  Lemma~\ref{lem: ball with big gradient}.
\begin{lemma}\label{lem: Hessian eps reg} 
For any $\eta \in [0, n-2)$ and $\delta  \in [0,\db]$, there exists $\e>0$ depending on  $\delta$,  $\eta$, and the universal parameters such that if $\Phi \in \II$ and  $u$ is a minimizer of $\J$ in $B_1$ with $ 0\in \GU$ and 
\[
\int_{B_1\cap\{u>0\}\cap\{|\nabla u|\geq\delta\}}|D^2u|^{2+\eta}
<\varepsilon,
\] 
then $0 \in \Reg (u)$.
\end{lemma}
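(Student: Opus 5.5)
I argue by contradiction and compactness, mirroring Step 2 of the proof of Lemma~\ref{lem: doubling}. Suppose the statement fails for some fixed $\eta\in[0,n-2)$ and $\delta\in[0,\db]$. Then there are integrands $\Phi_k\in\II$ and minimizers $u_k$ of $\E^{\Phi_k}$ in $B_1$ with $0\in\G(u_k)$, $0\in\Sing(u_k)$, and
\[
\int_{B_1\cap\{u_k>0\}\cap\{|\nabla u_k|\geq\delta\}}|D^2u_k|^{2+\eta}<\tfrac1k .
\]
By the compactness of $\II$ (Remark~\ref{rmk: connected}), we may assume $\Phi_k\to\Phi\in\II$. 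Since $0\in\G(u_k)$, Lemmas~\ref{lem:compactness} and \ref{lem:compactness2} give, along a subsequence, $u_k\to u$ locally uniformly and strongly in $H^1_{\mathrm{loc}}(B_1)$. The limit $u$ is a local minimizer of $\E^\Phi$ with $0\in\GU$, by item (4) of Lemma~\ref{lem:compactness2}.

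Next I show that $D^2u=0$ a.e. on $E:=B_{1}\cap\{u>0\}\cap\{|\nabla u|>\delta\}$. Take open sets $D\Subset D'\Subset E$. On $D'$ we have $u\geq c>0$, so $u_k>0$ there for large $k$. The interior $C^{1,\beta}$ estimate \eqref{eqn: gradient reg interior} gives local uniform convergence $\nabla u_k\to\nabla u$ on $D'$, hence $|\nabla u_k|>\delta$ on $\overline{D}$ for large $k$. The hypothesis then yields $\|D^2u_k\|_{L^{2+\eta}(D)}\to0$. Since $D^2u_k$ converges to $D^2u$ distributionally and the $L^{2+\eta}$ norm is weakly lower semicontinuous, $D^2u=0$ on $D$. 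Exhausting $E$ gives the claim. This argument does not use any Hessian bound on $\Phi$ near the origin, so it works equally for $\delta=0$.

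Now I apply Lemma~\ref{lem: local half-plane rigidity} to $u$, after rescaling to $v(y)=2u(y/2)$ so that it is a minimizer in $B_1$. Note $0\leq\delta\leq\db$, and $D^2v=0$ a.e. where $v>0$ and $|\nabla v|>\delta$. The lemma gives $u(x)=(e\cdot x)_+$ in $B_{\rho_0/2}$ with $\cG_\Phi(e)=1$. Let $e_k=a_k e/|e|$ be the unique vectors with $\cG_{\Phi_k}(e_k)=1$; convergence of the integrands gives $e_k\to e$. Fix $r=\rho_0/4$. Local uniform convergence together with nondegeneracy (Lemma~\ref{lem:nondeg}) and density estimates yields, for $k$ large, the flatness condition \eqref{eqn: flat 1} for $u_k$ in $B_r$ with direction $e_k$ and parameter $\bar\e$. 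Here $g\equiv0$, so \eqref{eqn: small RHS} holds trivially. By Proposition~\ref{prop: min is viscosity solution} and Theorem~\ref{thm: eps reg}, $0\in\Reg(u_k)$ for $k$ large, which is a contradiction.

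The main obstacle is the flatness step. One must verify that uniform closeness to $(e\cdot x)_+$ implies two-sided trapping $(x\cdot e_k-\bar\e r)_+\le u_k\le(x\cdot e_k+\bar\e r)_+$, including $u_k\equiv0$ on $\{x\cdot e_k<-\bar\e r\}$. For this I use Hausdorff convergence of the positivity sets (item (3) of Lemma~\ref{lem:compactness2}), which relies on nondegeneracy: if $u_k$ were positive deep in $\{x\cdot e<0\}$, then nondegeneracy would force $u_k\geq c\,\bar\e r$ somewhere nearby, contradicting $u_k\to0$ uniformly there. A second point needs care when $\delta>0$: a component of $\{u>0\}$ with $|\nabla u|\le\delta$ everywhere cannot appear near $0$. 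Lemma~\ref{lem: ball with big gradient} rules this out, and it is already built into Lemma~\ref{lem: local half-plane rigidity}.
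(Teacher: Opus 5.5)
Your proposal is correct and follows essentially the paper's argument: contradiction and compactness, vanishing of $D^2u$ on $\{u>0,\,|\nabla u|>\delta\}$ in the limit, the half-plane rigidity of Lemma~\ref{lem: local half-plane rigidity}, and a contradiction with $\e$-regularity. The differences are minor. You treat $\delta=0$ by the same local argument rather than via Lemma~\ref{lem:compactness2}(5), and you check flatness and apply Theorem~\ref{thm: eps reg} to $u_k$ directly, which just unpacks the propagation-of-singularities Corollary~\ref{cor: prop of singularities} that the paper cites; also, your threshold $\delta$ (not the paper's $\delta/2$) is the one for which $D\subset\{|\nabla u_k|\geq\delta\}$ actually holds for large $k$.
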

\begin{proof}  Suppose for a contradiction that we have $\eta \in [0,n-2)$, $\delta \in [0,\db]$, and a sequence of integrands $\Phi_k \in \II$ and minimizers $u_k$ of $\E^{\Phi_k}(\cdot)$ in $B_1$ with $0 \in \G_{u_k}$ such that $\int_{B_1\cap\{u_k>0\}\cap\{|\nabla u_k|\geq\delta\}}|D^2u_k|^{2+\eta}\to 0$ but $0 \in \Sing(u_k)$ for each $k$. By compactness for integrands and Lemmas~\ref{lem:compactness} and \ref{lem:compactness2}, up to passing to a subsequence, $\Phi_k \to \Phi \in \II$  and $u_k \to u$ such that $0 \in \G(u)$ and $u$ is a minimizer of $\J$ on compact subsets of  $B_1$.

In the case $\delta=0$, Lemma~\ref{lem:compactness2} guarantees that 
\[
\int_{\{ u>0\} \cap B_1} |D^2 u|^{2+\eta}\,dx \leq
\liminf_k  \int_{\{ u_k>0\} \cap B_1} |D^2 u_k|^{2+\eta}=0.
\]
So, by Lemma~\ref{lem: local half-plane rigidity}, in $\{u>0\} \cap B_{\rho_0}$,
$u = (x \cdot e)_+$ for some $e \in \R^n$ satisfying the free boundary condition $D\Phi(e)\cdot e -\Phi(e)=1.$ In particular, $\GU$ is a hyperplane and $0 \in \Reg(u)$. This contradicts Corollary~\ref{cor: prop of singularities}, completing the proof in this case.

For $\delta>0$, we argue as in Step 2 of the proof of Lemma~\ref{lem: doubling} above. Set $E_\delta=B_1\cap\{u>0\}\cap\{|\nabla u|>\delta/2\}$ and take an open set $D\subset\subset  E_\delta$. By local uniform convergence of $u_k$ and $\na u_k$, for all
sufficiently large $k$ we have $
 D\subset\{u_k>0\}\cap\{|\nabla u_k|\geq\delta\}$.
 Hence $D^2u_k\to0$ in $L^p(D)$.
Distributional convergence of the second derivatives gives
$D^2u=0$ in $D$.
Exhausting $E_\delta$, we obtain $D^2u=0$ a.e. in $E_\delta$.

So, once more, Lemma~\ref{lem: local half-plane rigidity} guarantees that in $\{u>0\} \cap B_{\rho_0}$, we have  $u(x)=(e\cdot x)_+$ for some $e\neq0$ with $D\Phi(e)\cdot e -\Phi(e)=1$.
In particular, $0\in\Reg(u)$, again contradicting
propagation of singularities of Corollary~\ref{cor: prop of singularities} from the sequence $u_k$.
\end{proof}
The second preparatory ingredient, which is a consequence of Lemma~\ref{lem: Hessian eps reg}, shows how curvature estimates like Proposition~\ref{prop: curvature estimates} and Proposition~\ref{prop: curvature estimate 2} give a measure bound on the singular set. Once again, this statement will be applied twice: first with $\eta=\delta =0$, and then with $\delta,\eta>0.$
\begin{corollary}
    \label{lem: hessian bound implies small sing set} Let $\eta \in [0, n-2)$ and $\delta \in [0,\db].$
    Suppose $\Phi \in \II$ and let $u$ be a minimizer of $\J$ in $B_1$ with 
    \begin{equation}
        \label{eqn: Hessian est hp for lemma}
    \int_{B_1\cap\{u>0\}\cap\{|\nabla u|\geq\delta\}}|D^2u|^{2+\eta}<\infty\,.
        \end{equation}
 Then $\Hm^{n-2-\eta}(\Sing(u)\cap B_1)=0.$
\end{corollary}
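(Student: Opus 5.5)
The plan is the standard covering argument that combines scaling invariance with absolute continuity of the integral. Write $s:=n-2-\eta>0$ and define the Radon measure
\[
\mu(E):=\int_{E\cap\{u>0\}\cap\{|\nabla u|\geq\delta\}}|D^2u|^{2+\eta},
\]
which is finite on $B_1$ by \eqref{eqn: Hessian est hp for lemma}. Let $\varepsilon$ be the constant from Lemma~\ref{lem: Hessian eps reg} associated with $\eta$ and $\delta$. The first step is to rescale. For $x_0\in\Sing(u)\cap B_1$ and $r<\operatorname{dist}(x_0,\partial B_1)$, the function $u_r(x)=u(x_0+rx)/r$ minimizes $\J$ in $B_1$ with $0\in\Gamma(u_r)$, and $0\in\Sing(u_r)$. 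Since $\nabla u_r(x)=\nabla u(x_0+rx)$ and $D^2u_r(x)=rD^2u(x_0+rx)$, the change of variables gives
\[
\int_{B_1\cap\{u_r>0\}\cap\{|\nabla u_r|\geq\delta\}}|D^2u_r|^{2+\eta}=r^{2+\eta-n}\mu(B_r(x_0))=r^{-s}\mu(B_r(x_0)).
\]
The key point is that the threshold $|\nabla u|\geq\delta$ is invariant under this rescaling. Lemma~\ref{lem: Hessian eps reg} applied to $u_r$ then forces $\mu(B_r(x_0))\geq\varepsilon r^{s}$ for every singular point $x_0$ and every admissible $r$.

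The second step is a Vitali covering argument. Fix a compact set $K\Subset B_1$ and $\tau>0$. Since $\Sing(u)$ is closed in $\Gamma(u)$, we may cover $\Sing(u)\cap K$ by balls $B_{r}(x)$ with $x\in\Sing(u)\cap K$ and $5r<\tau$, all contained in a fixed neighborhood $K'\Subset B_1$. Choosing a disjoint Vitali subfamily $\{B_{r_j}(x_j)\}$ whose fivefold enlargements still cover, we obtain
\[
\Hm^{s}_{10\tau}(\Sing(u)\cap K)\leq C\sum_j r_j^{s}\leq \frac{C}{\varepsilon}\sum_j\mu(B_{r_j}(x_j))\leq\frac{C}{\varepsilon}\,\mu\bigl(N_\tau(\Sing(u)\cap K)\bigr),
\]
where $N_\tau$ denotes the $\tau$-neighborhood. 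This already shows that $\Hm^{s}(\Sing(u)\cap K)$ is finite.

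The main obstacle is upgrading finiteness to zero. We have $\mu(N_\tau(\Sing(u)\cap K))\to\mu(\Sing(u)\cap K)$ as $\tau\to0$, so it suffices to show that $\mu$ gives no mass to $\Sing(u)\subset\Gamma(u)$. This holds because $\mu$ is absolutely continuous with respect to Lebesgue measure and $|\Gamma(u)|=0$ by Remark~\ref{rem:cv-limit-min}. Letting $\tau\to0$ gives $\Hm^{s}(\Sing(u)\cap K)=0$, and exhausting $B_1$ by compact sets completes the argument. One point to check is that the balls may shrink near $\partial B_1$; this is harmless because the constant $\varepsilon$ is independent of the scale.
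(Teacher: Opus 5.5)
Your proposal is correct and is essentially the paper's argument: Lemma~\ref{lem: Hessian eps reg} applied to rescalings gives the scale-invariant lower bound $\mu(B_r(x_0))\geq \varepsilon r^{n-2-\eta}$ at singular points, and a disjoint-ball covering combined with the vanishing of $\mu$ on shrinking neighborhoods of the singular set finishes the proof. The differences are minor. The paper uses a maximal $r$-separated family at a single scale, which yields the slightly stronger statement $|N_r(\Sigma_s)|/r^{2+\eta}\to 0$ (vanishing Minkowski content), whereas your Vitali covering bounds the Hausdorff premeasure directly. Also, you do not need $|\GU|=0$: $\mu$ is carried by the open set $\{u>0\}$, which is disjoint from $\GU$, so $\mu(\Sing(u))=0$ trivially.
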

\begin{proof} 
Let $x \in \Sing(u)\cap B_1$ and $r \in (0, \text{dist}(x,\partial B_1))$. Applying the contrapositive of Lemma~\ref{lem: Hessian eps reg} to the rescaled function $u_r(y) = u(x+ ry)/r$, which leaves the gradient lower bound invariant, we find that 
    \begin{equation}\label{eqn: Hessian big}
        \int_{ B_r(x) \cap\{u>0\} \cap \{|\na u|\geq \delta\}} |D^2u|^{2+\eta} \,dx \geq \e r^{n-2-\eta}
    \end{equation}
Here $\e>0$ is the constant from Lemma~\ref{lem: Hessian eps reg}. For $s \in (0,1)$, let $\Sigma_s = \Sing(u)\cap B_s$, and let $N_r(E) = \{ x: \text{dist}(x, E) <r\}$ 
    denote the radius-$r$ tubular neighborhood of a set $E$.
    We claim that for each $s \in (0,1)$,
    \begin{equation}
        \label{eqn: content bound}
    \lim_{r\to 0} \frac{|N_r(\Sigma_s)|}{r^{2+\eta}} = 0\,.
        \end{equation}
    To see this, fix $r <1-s$ and take a maximal $r$-dense subset $\{x_j\}_{j=1}^{J_r}$ of $\Sigma_s$. This way, the balls $B_{r}(x_j)$ are pairwise disjoint and their union is contained in $N_r(\Sigma_s)\subset B_1$, while 
    $\{B_{2r}(x_j)\}$ covers $\Sigma_s$ and $\{B_{3r}(x_j)\}$ covers $N_r(\Sigma_s)$. Applying \eqref{eqn: Hessian big} on each $B_r(x_j)$ and summing up, we find
    \begin{equation}\label{eqn: covering}
    \begin{split}
        |N_r(\Sigma_s)| \leq 
        3^n J_r \omega_n r^n 
       & \leq \frac{3^n \omega_n r^{2+\eta}}{\e} \sum_{i=1}^{J_r} \int_{\{u>0\}\cap \{|\na u|\geq \delta\} \cap B_r(x_j)} |D^2 u|^{2+\eta} \,dx\\
        &\leq \frac{3^n \omega_n r^{2+\eta}}{\e}  \int_{\{u>0\}\cap \{|\na u|\ge \delta\}\cap N_r(\Sigma_s)} |D^2 u|^{2+\eta} \,dx \,.
            \end{split}
    \end{equation}
  
 By \eqref{eqn: Hessian est hp for lemma} and the fact that $|\Sigma_s|=0$ (in fact $\Hm^{n-1}(\Sigma_s)<\infty$) by Corollary~\ref{cor: density estimates}, the integral on the right-hand side of \eqref{eqn: covering} tends to zero as $r\to 0$. Thus, dividing both sides of \eqref{eqn: covering} by $r^{2 +\eta}$ and sending $r \to 0$, we obtain \eqref{eqn: content bound}. By a standard argument, this in particular shows that $\Hm^{n-2-\eta}(\Sigma_s)=0$. Taking $s \to 1$ completes the proof.
\end{proof}

\subsection{Proofs of the main theorems}
We now prove Theorem~\ref{thm: main}. 
\begin{proof}[Proof of Theorem~\ref{thm: main}] We proceed in several steps.
Recall the space of good integrands $\Istarz$ defined in \eqref{def: good integrands}.

{\it Step 1: $\Istarz$ is open in $\II$.} To establish the openness of $\Istarz$ in $\II$, we show that the complement is closed. 
    Consider a sequence $\{\Phi_k\} \subset \II \setminus \Istarz$ with $\Phi_k\to \Phi \in \II$. We claim that $\Phi \not \in \Istarz.$ Since $\Phi_k \not \in  \Istarz$, for each $k$ we may find a minimizer $u_k$ in $B_1$ such that $0 \in \G_{u_k}$ and 
    \[
    \Hm^{n-2}(\Sing(u_k)\cap B_1) >0.
    \]
    Applying the same measure theory fact as in the proof of Lemma~\ref{lem: perturbed functional small singular set}, see \cite[Theorem 3.26(2)]{SimonLN}, for $\Hm^{n-2}$-a.e. $x \in \Sing(u_k)\cap B_1$ we have 
    $$\limsup_{r\to0}\frac{\Hm_{\infty}^{n-2}(\Sing(u_k)\cap B_1\cap B_r(x))}{\omega_{n-2}r^{n-2}} \geq \frac{1}{2^{n-2}}\,.
    $$
    In particular, for each $k$ we may find $x_k\in B_1$ and $r_k\in (0, \text{dist}(x_k,\partial B_1))$ such that the rescaled function $v_k(y)=u_k(x_k +r_ky)/r_k$, which again is a minimizer of $\cJ^{\Phi_k}$ in $B_1$, has 
    \[
    \Hm_\infty^{n-2}(\Sing(v_k)\cap \overline{B}_{1/2}) \geq 2^{-2n}.
    \]
    By Lemmas~\ref{lem:compactness} and \ref{lem:compactness2}, up to a subsequence, $v_k \to v$ (in the various senses of this lemmas) where $v$ is a minimizer of $\J$ on compact subsets of $B_1$. Then by Corollary~\ref{cor: prop of singularities}, we find $\Hm_\infty^{n-2}(\Sing(v)\cap \overline{B}_{1/2}) \geq 2^{-2n}$. Since $\Hm_\infty^{n-2}(E) \leq \Hm^{n-2}(E)$ for any set $E$, this means that
$\Hm^{n-2}(\Sing(v)\cap B_1)\geq 2^{-2n}>0$,
and therefore $\Phi\notin\Istarz$.

\medskip

{\it Step 2: $\Istarz$ is closed in $\II$.}
     Take a sequence $\{\Phi_k\} \subset \Istarz$ with $\Phi_k\to \Phi \in \II$. 
 As observed in Remark~\ref{rmk: connected}, smooth integrands $\Phi \in C^\infty(\R^n)$ are dense in $\II$. Therefore, thanks to the openness shown in Step 1 above, we may assume without loss of generality that $\Phi_k \in C^\infty(\R^n) \cap \Istarz.$ 
    
 Let $u$ be a minimizer of $\J(\cdot, \Omega)$. To prove that $\Istarz$ is closed, we must show that 
 $\Hm^{n-2}(\Sing(u)\cap \Omega) =0$ so that $\Phi \in \Istarz.$ Taking a countable exhaustion of $\Omega$ and using compactness, it suffices to show that for any $x \in \GU \cap \Omega$,  we have 
 $\Hm^{n-2}(\Sing(u) \cap B_{d/8}(x)) = 0$ where $d= \text{dist}(x, \partial \Omega)>0$. After rescaling, this claim becomes the following: 
 
 {\it Claim: } If $u \in H^1(B_4)$ minimizes $\J$ in $B_4$ and $0 \in \GU$, then 
 $\Hm^{n-2}(\Sing(u) \cap B_{1/2}) = 0$. 
 
 Applying Corollary~\ref{lem: hessian bound implies small sing set} with $\eta=\delta=0$, to prove the claim it suffices to prove that 
 \begin{equation}
     \label{eqn: hessian bound closedness proof}
 \int_{\{u>0\} \cap B_{1/2}}|D^2u|^{2} \,dx <\infty\,.
  \end{equation}

By Lemma~\ref{lem:interior-lip}, we have $\sup_{B_{2}} u + \sup_{B_{2}} |\na u| \leq L$ for $L=L(n,\Lambda)$. So, letting $f_k = u*\xi_{1/k}$ where $\xi$ is a standard mollifier and $\xi_\e = \frac{1}{\e^n} \xi(x/\e)$, we have the same bound  $\sup_{B_{2}} f_k + \sup_{B_{2}} |\na f_k| \leq L$ for each $f_k$. In particular,  up to a subsequence, $f_k \to u$ uniformly in $B_1$  and in $L^2(B_1)$. 

Now, for each $k\in \mathbb{N}$,  let $u_k$ be a minimizer of the energy functional $\mathcal{E}^{\Phi_k}_{f_k,1}$ on $B_1$ with boundary data $u_k=u$ on $\partial B_1$. Taking $u$ as a competitor for each $u_k$, we find that 
\begin{align*}
    \mathcal{E}^{\Phi_k}_{f_k,1}(u_k) \leq \mathcal{E}^{\Phi_k}_{f_k,1}(u) 
    &= \int_{B_1} \Phi_k(\na u) + |u-f_k|^2 \,dx \leq \int_{B_1} \Phi(\na u) + \int_{B_1} |\Phi_k(\nabla u) -\Phi(\nabla u)|dx +\int_{B_1} |u-f_k|^2 \,dx\\
       & \leq \int_{B_1} \Phi(\na u) + \|\Phi_k-\Phi\|_{C^0(B_L)} |B_1| +\int_{B_1} |u-f_k|^2 \,dx \to \int_{B_1} \Phi(\na u)
\end{align*}
In the final inequality, we use local $C^1$ convergence of $\Phi_k$ to $\Phi$ and the Lipschitz bound for $u$. 
In particular, $\sup_k  \mathcal{E}^{\Phi_k}_{f_k,1}(u_k)<\infty$. In particular, the sequence $\{u_k\}$ is uniformly bounded in $H^1(B_1)$, so up to a subsequence, $u_k \rightharpoonup v$ in $H^1(B_1)$ for some $v \in H^1(B_1)$. A slight modification of the proof of Lemma~\ref{lem:interior-lip} shows that $\sup_{B_\rho} (|\na u_k|+|u_k|)<\infty$ for each $\rho<1$ (here the functions are anchored by their boundary data rather than their zero set). Hence up to a diagonal subsequence as in the proof of Lemma~\ref{lem:compactness}, $u_k \to v$ locally uniformly in $B_1$. Since $f_k \to u$ uniformly, Lemma~\ref{lem:compactness2}(2) guarantees that $v$ is a minimizer of $\mathcal{E}^\Phi_{u,1}$ in $B_1$. Using the minimality of $v$ and $u$ for $\mathcal{E}^\Phi_{u,1}$ and $\J$ respectively, we find that
\begin{align*}
  \J(v) + \int_{B_1} |u-v|^2 =  \mathcal{E}^\Phi_{u,1}(v) \leq \mathcal{E}^\Phi_{u,1}(u) = \J(u)  \leq \J(v).
\end{align*}
The ordering of the left-most and right-most expressions imply that  $\int_{B_1} |v-u|^2 =0$ and thus $u=v$ in $B_1.$ In summary, $u_k\to u$ in $B_1$ locally uniformly and weakly in $H^1(B_1)$, and all conclusions of Lemma~\ref{lem:compactness2} hold.

Since $0 \in \GU$ and from the Hausdorff convergence of the free boundaries from Lemma~\ref{lem:compactness2}(4), we may find a sequence $\{x_k\}\subset B_1$ with $x_k \in \G({u_k})$ for each $k \in \mathbb{N}$ and  $x_k \to 0$. 
For any $\e>0$, we have $\text{dist}(x_k,\partial B_1) \geq 1-\e$ for $k$ large enough, so we may apply Proposition~\ref{prop: curvature estimates} with $x=x_k$ and $r=(1-\e)/2$. Thus $u_k$ satisfies the curvature estimate 
\[
   \int_{\{u_k>0\} \cap B_{1/2-2\e}} |D^2 u_k|^{2} \leq 
   \int_{\{u_k>0\} \cap B_{(1-\e)/2}(x_k)} |D^2 u_k|^{2}
   \leq C(1-\e)^{n-2}.
\]
In the final inequality, $C>0$ is the constant from  Proposition~\ref{prop: curvature estimates}, which depends on the universal parameters only thanks to the upper bound $\|f_k\|_{L^\infty} \leq L$ above. 
By Lemma~\ref{lem:compactness2}(5), the same bound holds for $u$. Letting $\e \to 0$, we obtain \eqref{eqn: hessian bound closedness proof}, completing the proof of the claim. Thus $\Istarz$ is closed.

\medskip

{\it Step 3: Conclusion.} The previous two steps show that $\Istarz$ is an open and closed subset of $\II.$ 
 Since $\II$ is connected, recall Remark~\ref{rmk: connected}, and nonempty by Theorem~\ref{thm: AC}, we have $\Istarz =\II.$
In other words, for every $\Phi\in\II$ and
for every minimizer $u$ of $\J(\cdot,\Omega)$,
\[
    \Hm^{n-2}(\Sing(u)\cap\Omega)=0.
\]
If $n=2$, this already gives $\Sing(u)\cap\Omega=\emptyset$, so we
may assume that $n\geq3$.

We first observe that the approximation argument in Step~2, Proposition~\ref{prop: curvature estimates},
and Lemma~\ref{lem:compactness2}(5) give
\[
    \int_{B_{1/4}\cap\{w>0\}}|D^2w|^2\,dx\leq C
\]
for every minimizer $w$ of $\J$ in $B_4$ with $0\in\G(w)$, where
$C$ is universal. Indeed, by Step~3 the smooth approximating integrands
can all be chosen in $\Istarz$, and the penalization data used in
Step~2 have a uniform $L^\infty$ bound. Applying this estimate to
$w(y)=u(p+4ry)/(4r)$, we obtain
\begin{equation}\label{eqn: conclusion all-scale L2}
    \int_{B_r(p)\cap\{u>0\}}|D^2u|^2\,dx
    \leq Cr^{n-2}
    \qquad\text{whenever }p\in\GU
    \text{ and }B_{16r}(p)\subset\Omega.
\end{equation}
Thus this estimate
holds for the original minimizer, without any additional smoothness
assumption on $\Phi$.

Fix $x_0\in\GU\cap\Omega$ and $R>0$ such that
$B_{8R}(x_0)\subset\Omega$, and set $v(y):={u(x_0+Ry)}/{R}$ for $y\in B_8.$ 
Then $v$ minimizes $\J$ in $B_8$ and $0\in\G(v)$.
If $q\in\G(v)$ and $B_{4\rho}(q)\subset B_2$, then
$B_{16\rho}(q)\subset B_8$. Consequently,
\eqref{eqn: conclusion all-scale L2}, applied to $v$, verifies the
$L^2$ hypothesis of Proposition~\ref{prop: curvature estimate 2} in
$B_2$. This proposition therefore gives a universal exponent
$\eta>0$ such that
\[
    \int_{B_{3/4}\cap\{v>0\}\cap\{|\na v|\geq\db\}}
        |D^2v|^{2+\eta}\,dy\leq C.
\]
Decreasing $\eta$ if necessary, using H\"older's inequality, we may
assume that $0<\eta<n-2$. Since
$\na v(y)=\na u(x_0+Ry)$, rescaling back gives
\begin{equation}\label{eqn: conclusion improved Hessian}
    \int_{B_{3R/4}(x_0)\cap\{u>0\}\cap\{|\na u|\geq\db\}}
        |D^2u|^{2+\eta}\,dx
    \leq CR^{n-2-\eta}.
\end{equation}
Applying Corollary~\ref{lem: hessian bound implies small sing set}
on $B_{3R/4}(x_0)$, after translation and rescaling, with
$\delta=\db$ and this choice of $\eta$, yields
$\Hm^{n-2-\eta}
        \bigl(\Sing(u)\cap B_{3R/4}(x_0)\bigr)=0$.
The exponent $\eta$ depends only on the universal parameters, and not
on $\Phi$, $u$, $x_0$, or $R$. Finally, a countable collection of such
balls covers $\GU\cap\Omega$. By countable subadditivity,
we deduce \eqref{eqn: intro measure est},
which completes the proof of Theorem~\ref{thm: main}.
\end{proof}

\begin{remark}\label{rmk: no holder}
The proof of Theorem~\ref{thm: main}  shows that 
      \begin{equation}\label{eqn: codim 2}
   \mathcal{H}^{n-2}(\Sing(u)) = 0 \qquad \text{ for any minimizer $u$ of $\J(\cdot)$}
   \end{equation}
   for $\Phi$ in a more general class of integrands $\tilde{I}_\Lambda.$ We say $\Phi \in \tilde{I}_\La$
whenever $\Phi$ satisfies properties \eqref{1}  and \eqref{2} in the definition of $\II$ in \eqref{def: good integrands}, i.e. $\Phi$ is nonnegative with $\Phi(0)=0$ and satisfies the ellipticity bounds \eqref{eqn: ellipt}, and additionally
   \[
   \Phi \in C^2(\overline{B_{4\sqrt{\Lambda}}}\setminus B_{1/4\sqrt{\Lambda}})\,.
   \]
Compared to $\II$, this class of integrands has no uniform modulus of continuity for the Hessian on an annulus.    
This class  in particular contains (a) integrands satisfying  \eqref{1} and \eqref{2} that are globally $C^2,$ and (b) nonnegative $2$-homogeneous integrands satisfying the ellipticity assumption \eqref{eqn: ellipt} whose restriction to the sphere is $C^2$.

To see the claimed statement, first note that for any such integrand, there is some 
modulus of continuity $\omega: \R_+\to \R_+$ such that 
  \begin{equation}
      \label{eqn: mod of cty}
       \left|D^2u(x)-D^2u(y)\right|\leq \omega(|x-y|) \qquad \text{for all } x,y \in \overline{B_{\Lambda}}\setminus B_{1/\Lambda}.
  \end{equation}
 Since the conclusion  \eqref{eqn: codim 2} does not depend on $\omega,$ it thus suffices to prove \eqref{eqn: codim 2} hold for the class of integrands $
\tilde{I}_\La^{\omega}$ in the class above that additionally satisfy \eqref{eqn: mod of cty}.
 Observe that $\tilde{I}_\La^\omega$ is compact, again with respect to the topology induced by local uniform convergence. Moreover, thanks to \eqref{eqn: first annular constraint}, for any $p\in\R^n$ satisfying the equation $D\Phi(p)\cdot p -\Phi(p)=1$ corresponding to the free boundary condition \eqref{eqn: EL}, $p$ lives in the annulus where $\Phi$ is $C^2$. These two properties allow the proof of the $\e$-regularity theorem, Theorem~\ref{thm: eps reg}, to go through. Since additionally $\tilde{I}_\La^\omega$ is connected and nonempty and smooth functions are dense in $\tilde{I}_\La^\omega$, the remainder of the proof of \eqref{eqn: codim 2} goes through verbatim.

 It may be possible to prove Theorem~\ref{thm: eps reg} for the class of integrands satisifying \eqref{1} and \eqref{2}, which would then imply that \eqref{eqn: codim 2} for every such integrand as well. We leave this as an interesting open problem.

Finally, we note that in the proof of Theorem~\ref{thm: main}, we use the quantitative H\"{o}lder modulus of continuity of $\Phi$ in $\An$ to prove the $\eta$-gain in the improved curvature estimates of Proposition~\ref{prop: curvature estimate 2}. Here what is needed is some higher regularity of the free boundary as in Corollary~\ref{cor: higher reg}, and an interior and boundary $C^0$ estimate for the Hessian, \eqref{eqn: Hessian bound interior} and \eqref{eqn: Hessian bound boundary}, where the latter assumes the same regularity of the boundary as what holds at flat points of the free boundary. Thus, any other assumptions on $\Phi$ giving rise to such uniform estimate would suffice. 
\end{remark}

Finally, the proof of Theorem~\ref{thm: curvature estimate intro} is essentially contained in the proof of Theorem~\ref{thm: main} above:
\begin{proof}[Proof of Theorem~\ref{thm: curvature estimate intro}]
The first estimate \eqref{eqn: curvature estimate intro} follows from \eqref{eqn: conclusion all-scale L2}, with $p=0$ and $r=1$. 
For the second estimate, we cover $\GU\cap\overline B_{5/4}$ by a bounded
number (depending only on $n$) of balls $B_{1/8}(x_j)$, with centers $x_j\in\GU\cap\overline B_{5/4}$. Since $B_{1/2}(x_j)\subset B_2$, by
\eqref{eqn: conclusion all-scale L2}, we can apply Proposition~\ref{prop: curvature estimate 2} to
$4u(x_j+y/4)$ in $B_2$. By rescaling, we get
$$
\int_{B_{3/16}(x_j)\cap\{u>0, |\nabla u|\geq\db\}}|D^2u|^{2+\eta}dx\leq C.
$$
We observe that $B_{3/16}(x_j)$ cover every point of $B_1$ whose distance from $\GU$ is less than $1/16$. At the remaining points of $B_1\cap\{u>0,\,|\nabla u|\geq\db\}$, then the interior
$C^{1,\beta}$ estimate provides a ball of universal radius
on which $|\nabla u|\geq3\db/4>r_\La$. By the uniform upper bound on the gradient, $\nabla u$ belongs to a fixed compact subannulus of $\An$. The interior Hessian estimate therefore gives $|D^2u|\leq C$ on the remaining set. We can just sum the two contributions, we deduce
\eqref{eqn: curvature estimate eta intro}, since $\db=2r_\La$.
\end{proof}
\section{Extensions of the method and the $p$-Laplacian case}\label{sec: p lap}
The arguments used to prove Theorem~\ref{thm: main} are robust. In this section we show how they can be applied to obtain singular set estimates for the free boundary for minimizers of the Alt-Caffarelli energies of  $p$-Laplace type.
Our goal is to demonstrate the method and to show how minor adaptations of the arguments above already yield the best known results on the size of the singular set in this context. 

\subsection{A Meta Theorem}
To begin, let us recall in broad strokes what ingredients are needed to run the proof of Theorem~\ref{thm: main}. For a compact and connected collection of integrands $I$ containing $|\cdot|^2$, let $I^*$ denote the (nonempty) subset of those integrands such that any minimizer has $\Hm^{n-2}(\Sing(u))=0$. 
To show that $I=I^*,$ we need:
\begin{enumerate}
    \item\label{p1} Geometric estimates, compactness and convergence, and $\e$-regularity that are uniform for all integrands in $I$, for minimizers of the base functional $\J$ as well as the perturbed functional \eqref{def: J f mu}. 
    \item\label{p2} A uniform curvature estimate along the lines of \eqref{eqn: curvature estimate intro} that holds for minimizers when $\Phi \in I^*$, which has suitable lower semicontinuity properties and gives rise to an $\e$-regularity theorem like Lemma~\ref{lem: Hessian eps reg}. Again, this should also be shown  for the perturbed functional \eqref{def: J f mu}.
\end{enumerate}
To then upgrade the estimate to $\Hm^{n-2-\eta}(\Sing(u))=0,$ we additionally need:
\begin{enumerate}[resume]
    \item \label{p3}
    A curvature estimate along the lines of \eqref{eqn: curvature estimate eta intro}, which again gives rise to an $\e$-regularity theorem like Lemma~\ref{lem: Hessian eps reg}. This upgraded estimate follows from a doubling estimate like Lemma~\ref{lem: doubling}, which basically needs uniform $C^0$ estimates for the Hessian of the interior equation, both in the interior and up to a ``nice enough'' boundary, and  a higher regularity result like Corollary~\ref{cor: higher reg} giving the corresponding ``nice enough'' regularity at flat free boundary points.
\end{enumerate}
Indeed, given ingredient \eqref{p1}, and in particular uniform $\e$-regularity and its standard corollary of  propagation of singularities (c.f. Corollary~\ref{cor: prop of singularities}), the argument of Step 1 of the proof of Theorem~\ref{thm: main}  shows $I^*$ is open in $I$. To show closedness, let $\Phi_k \in I^*$ with $\Phi_k\to \Phi$ and a minimizer $u$ of $\J$. Ingredient \eqref{p1} will yield  compactness and convergence lemmas akin to Lemma~\ref{lem:compactness} and \ref{lem:compactness2}. This means that, for the perturbed functionals chosen as in Step 2 of the proof of Theorem~\ref{thm: main},
 minimizers $u_k$ of the perturbed functionals will converge to $u$. Ingredient \eqref{p2} guarantees that the $u_k$ satisfy a  uniform curvature estimate and that this curvature estimate passes to the limit. Then, the corresponding $\e$-regularity theorem and the argument of Step 2 of the proof of Theorem~\ref{thm: main} shows that $I^*$ is closed in $I$.
Finally, Ingredient \eqref{p3}  and the argument of Step 3 of the proof of Theorem~\ref{thm: main} give the upgraded estimate.

We have already seen that it is possible to deal with cases where $\Phi$ degenerates or is singular outside a good set. To do so, it is important to additionally have a ``good ball lemma'' like Lemma~\ref{lem: ball with big gradient} showing that $\na u$ lies in the good set for a definite set of points. It is also important for $\Phi$ to be well-behaved in a neighborhood of the slopes satisfying the free boundary condition.

\subsection{The $p$-Laplacian}

Let us run through this framework in the context of the $p$-Laplacian counterpart of the Alt-Caffarelli energy, establishing Theorem~\ref{thm: p intro}. For $p \in (1,\infty)$, consider
\[
\E_p(u, \Omega) = \int_{\Omega} (|\na u|^p + \chi_{\{u>0\}} ) \,dx  \,
\]
and its corresponding penalized functional $\E_{p,\mu,f} = \E_p(\cdot ) + \int |\mu u-f|^2$ for $\mu \in [0,1]$ and $f \in C^\infty(\Omega)\cap L^\infty(\Omega)$;  compare \eqref{def: J f mu}.
A minimizer of $\E_{p,\mu,f}(\cdot)$ with suitably regular free boundary $\GU$ satsifies the free boundary problem
\begin{equation}
    \label{eqn: p EL}
    \begin{cases}
    \divv(|\na u|^{p-2} \na u) =\frac{2}{p}\mu(\mu u-f) & \text{ on }U\\
     |\na u|^{p} = \frac{1}{p-1}& \text{ on } \GU.
    \end{cases}
\end{equation}
Here we let $U= \Omega \cap \{u>0\}$. {For bounded right-hand sides, interior $C^{1,\beta}$ estimates for the $p$-Laplacian follow from \cite{DiBenedetto1983p}, while estimates up to the boundary on controlled smooth domains are provided by \cite{Lieberman1988p}, with estimates just like \eqref{eqn: gradient reg interior} and \eqref{eqn: gradient reg boundary}. Their constants can be chosen uniformly for $p$ in a fixed compact interval in $(1,\infty)$. On a region where $0<\delta\leq|\nabla u|\leq L$, the equation is uniformly elliptic. For H\"older right-hand sides, Schauder estimates then give $C^{2,\beta}$ bounds akin to \eqref{eqn: Hessian bound interior} and \eqref{eqn: Hessian bound boundary}.} 
\vem

Let $I\subset (1,\infty)$ be a connected compact interval containing $2$, and let  $I^*$ be the set of those $p \in I$ such that $\Hm^{n-2}(\Sing(u))= 0$ for any minimizer $u$ of $\E_p(\cdot)$. Note $I^*$  is nonempty, as $2\in I^*$ by Theorem~\ref{thm: AC}. Let us verify that each of the ingredients of the previous section hold in order to prove Theorem~\ref{thm: p intro}.

Ingredient \eqref{p1} above has already been established in this setting. Geometric estimates were shown by Petrosyan-Valinoci \cite{PV05} for the main functional $\E_p(\cdot)$, and as in Section~\ref{sec: reg}, minor adaptations of their proofs extend their results to minimizers of the perturbed functional $\E_{p,\mu,f}(\cdot)$. Minimizers can be shown to be viscosity solutions by arguing as in Proposition~\ref{prop: min is viscosity solution} or as in \cite{DK}. For viscosity solutions, $\e$-regularity was shown by Danielli-Petrosyan \cite{DP} with zero right-hand side and with nonzero right-hand side \cite{FL21}. An inspection of the proofs and a simple contradiction argument shows that all estimates are uniform for $p$ in the compact set $I$ and for $\|f \|_{L^\infty}\leq C$; in fact, this is already made explicit in \cite{FL21}. The geometric estimates also give rise to compactness and convergence lemmas by repeating the proofs of Lemma~\ref{lem:compactness} and \ref{lem:compactness2}.

We need to establish Ingredient \eqref{p2}. The key curvature estimate is the following, compare to Proposition~\ref{prop: curvature estimates}:
\begin{proposition}\label{prop: curvature est p}
For $p\in I^*$, $\mu \in [0,1]$, and $f \in C^\infty(\Omega)\cap L^\infty(\Omega)$ nonnegative, there exist $C>0$ depending on $n, I$, and an upper bound for $ \| f\|_{L^\infty(\Omega)}$ such that if $u$ is  a minimizer of $\E_{p, \mu,f}$ in $\Omega,$ then for all $x_0\in \GU$ and $r \in (0,1]$ with $B_{2r}(x_0)\subset \Omega,$
we have
    \begin{equation}
        \label{eqn: p curv}
    \int_{B(x_0,r)\cap \{u>0\} \cap \{|\na u |>0\}} |D^2 u|^2 |\na u |^{p-2 + (p-2)_+} \leq C r^{n-2}\,. 
    \end{equation}
\end{proposition}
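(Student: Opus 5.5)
We follow the scheme of Proposition~\ref{prop: curvature estimates}, with $\Phi(q)=|q|^p$, which is smooth away from $q=0$. By scaling we take $x_0=0$ and $r=1$. Since $p\in I^*$, the argument of Lemma~\ref{lem: perturbed functional small singular set} (via uniform $\e$-regularity and propagation of singularities, Ingredient~\eqref{p1}) gives $\Hm^{n-2}(\Sing(u)\cap B_2)=0$. The Lipschitz bound gives $|\na u|\le L$ on $B_{3/2}$. On $\Reg(u)$ the free boundary condition forces $|\na u|^p=1/(p-1)$, so $\na u$ stays away from $0$ near $\Reg(u)$. We work on $U=\{u>0\}\cap B_{3/2}$. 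There $u$ is smooth on $\{|\na u|>0\}$, since the equation is uniformly elliptic wherever the gradient is bounded below.

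\emph{Step 1: Bochner inequality.} Set $G=1+|\na u|^p$ and $A(q)=D^2\Phi(q)=p|q|^{p-2}\big(I+(p-2)\hat q\otimes\hat q\big)$. Repeating the computation of Lemma~\ref{lem: G eqn} on $\{|\na u|>0\}$ gives
\[
\divv(A(\na u)\na G)\ \ge\ |A^{1/2}D^2uA^{1/2}|^2-2\mu D\Phi(\na u)\cdot\na f .
\]
The eigenvalues of $A$ are comparable to $|\na u|^{p-2}$ with constants depending on $I$. Hence the first term on the right is at least $c|\na u|^{2(p-2)}|D^2u|^2$. We then multiply by a factor $G^{-1}$ or $G$ to reach the weight $|\na u|^{p-2+(p-2)_+}$. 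The point is that, because $G\ge1$, the form $A[\na G,\na G]$ controls $|\na u|^{p-2}|\na u|^{2(p-1)}|D^2u|^2$. For $p<2$ the weight $|\na u|^{2(p-2)}$ degenerates the wrong way, so we will instead bound $|\na u|^{p-2}|D^2u|^2$ using a Kato-type inequality for $|\na u|$ with right-hand side. The free boundary identity $A[\na G,\nu]=(D\Phi\cdot\nu)\langle A D^2u\,\nu,\nu\rangle$ carries over verbatim.

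\emph{Step 2: testing with a level-set cutoff.} Let $\eta_\e(G)$ be a nondecreasing cutoff that vanishes for $G\le1+\e$ and equals $1$ for $G\ge1+2\e$. We test the inequality above with $\zeta^2\eta_\e(G)G$, where $\zeta\in C^1_c(B_{3/2}\setminus\Sing(u))$. This test function vanishes near the critical set. The extra integration-by-parts term $\zeta^2G\,\eta_\e'(G)A[\na G,\na G]$ is nonnegative, so it can be dropped. The boundary term is exactly as in Proposition~\ref{prop: curvature estimates}, because $\eta_\e(G)=1$ on $\Reg(u)$ once $\e$ is small. It is therefore controlled by the stability inequality \eqref{eqn: stability}, applied with $\psi=G\zeta\sqrt{\eta_\e(G)}$ (valid since $\Phi$ is $C^2$ where $\psi\neq0$). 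The new gradient term produced by $\sqrt{\eta_\e}$ has a favorable sign, or can be absorbed. The $\na f$ term is handled by integrating by parts exactly as in \eqref{eqn: bound f term}--\eqref{eqn: bound f term2}, and the resulting Hessian piece is absorbed into the weighted left-hand side. Letting $\e\to0$ by monotone convergence gives
\[
\int_U\zeta^2|D^2u|^2|\na u|^{p-2+(p-2)_+}\le C\int(|\na\zeta|^2+\zeta^2).
\]

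\emph{Step 3: removing the singular set.} The capacity argument of Step~2 of Proposition~\ref{prop: curvature estimates} extends this to all $\zeta\in C^1_c(B_{5/4})$, because $\Hm^{n-2}(\Sing(u))=0$. This yields \eqref{eqn: p curv}.

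The main obstacle is Step~1 for $p<2$. There the natural Bochner weight $|\na u|^{2(p-2)}$ blows up near the critical set, so absorbing the $\na f$ and cutoff error terms requires a refined Kato-type inequality. Concretely, we will try testing with $\zeta^2\eta_\e(G)$ times a power $|\na u|^{\gamma}$ chosen to balance the exponents. For $p\ge2$ the weight $|\na u|^{2(p-2)}$ arises directly and no refinement is needed.
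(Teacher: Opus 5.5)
Your scheme is the right one for $p\ge 2$, but as written it does not prove the statement for $1<p<2$. The cutoff step also contains a false sign claim.

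\textbf{The case $1<p<2$.} Here the obstruction is not the Bochner weight on the left. Since $|\na u|\le L$, you have $|\na u|^{2(p-2)}\ge L^{p-2}|\na u|^{p-2}$, which is stronger than what you need. The obstruction is the stability term on the right. With the multiplier $G=1+|\na u|^p\ge1$, stability leaves you with
\[
\int \eta_\e(G)\,G^2A(\na u)[\na\zeta,\na\zeta]\ \approx\ \int_{\{|\na u|^p>\e\}}|\na u|^{p-2}|\na\zeta|^2 .
\]
This is not bounded by $C\int|\na\zeta|^2$ uniformly in $\e$. The reason is that $A(\na u)\sim|\na u|^{p-2}$ is singular at the critical set, $G$ does not vanish there, and you have no uniform $L^1$ bound on $|\na u|^{p-2}$. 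So the stability test function must itself vanish, at least linearly in $|\na u|$, on $\{\na u=0\}$. The Hessian terms produced by differentiating that cutoff must then cancel exactly, since they cannot be absorbed. Your last paragraph only proposes to ``try'' a power of $|\na u|$.

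The missing ingredients, which the paper supplies, are two.

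(i) A Sternberg--Zumbrun exact cancellation with a multiplier linear in $|\na u|$. One tests
\[
\divv(A\na|\na u|)=\frac{Q}{|\na u|}+\frac{2\mu^2}{p}|\na u|-\frac{2\mu}{p}\frac{\na u}{|\na u|}\cdot\na f ,\qquad Q=\sum_iA[\na u_i,\na u_i]-A[\na|\na u|,\na|\na u|]\ge0,
\]
against $\varphi^2h_\e$ with $h_\e=(|\na u|-\e)_+$, and applies stability with $\xi=\varphi h_\e$. This works because $A[\na|\na u|,\nu]$ equals $|\na u|$ times the stability weight on $\G$. Consequently all terms with $A[\na|\na u|,\na|\na u|]$ and all mixed terms cancel, and the boundary mismatch is $O(\e)$. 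What remains is $h_\e^2A[\na\varphi,\na\varphi]\le|\na u|^p|\na\varphi|^2$, which is bounded.

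(ii) The Kato-type inequality $Q\ge c_p|\na u|^{p-2}|D^2u|^2-|\na u|^{2-p}g^2$. It uses the equation to recover $u_{11}$, the one Hessian entry that $Q$ does not control.

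\textbf{The sign claim, already for $p\ge2$.} You claim the term coming from $\na\sqrt{\eta_\e(G)}$ in $\psi=G\zeta\sqrt{\eta_\e(G)}$ ``has a favorable sign.'' This is incorrect. Expand $A[\na\psi,\na\psi]$: its piece $\zeta^2G\eta_\e'(G)A[\na G,\na G]$ exactly cancels the favorable term $-\int\zeta^2G\eta_\e'(G)A[\na G,\na G]$ produced by integrating by parts. What survives on the upper-bound side is
\[
\int\zeta^2G^2\frac{(\eta_\e')^2}{4\eta_\e}A[\na G,\na G]\ \ge 0,
\]
plus a cross term. On the layer $\{\e<|\na u|^p<2\e\}$ this can be of order $\e^{-1}$ times the Bochner term available there, and that Bochner term is moreover damped by $\eta_\e$. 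So it cannot be absorbed.

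For $p\ge2$ the fix is simply not to cut off the stability test function. Since $A$ is bounded and $|\na u|^p\equiv\frac1{p-1}$ on $\Reg(u)$, you can test $\divv(A\na G)$ with $\varphi^2\eta(G)$, so that the $\eta'$ term is genuinely $\le0$. Then handle the mixed term by Young's inequality and apply stability with $\xi=\varphi$. This is what the paper does.
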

We prove Proposition~\ref{prop: curvature est p} in Section~\ref{ssec: pf p curvature} below. The weight appearing on the left-hand side of \eqref{eqn: p curv} is natural in the $p$-Laplacian setting. When $p\in (1,2)$, this weight is bounded below thanks to the Lipschitz bound, but can be genuinely degenerate in the case $p>2$. 

Nevertheless, the estimate \eqref{eqn: p curv} is lower semicontinuous with respect to local $H^1$ and local uniform convergence, and gives rise to an $\e$-regularity lemma like Lemma~\ref{lem: Hessian eps reg}, arguing along the same lines as we have above.

Indeed, suppose $u_k \to u$ in $B_2$ the sense of Lemma~\ref{lem:compactness} and \ref{lem:compactness2}, suppose $0\in \G$, and suppose each $u_k$ satisfies the conclusion of Proposition~\ref{prop: curvature est p} in $B_2.$ We claim that \eqref{eqn: p curv} holds for $u$ with $x_0=0$ and $r=1$. 

To see this, we argue as in Step 2 of the proof of  Lemma~\ref{lem: doubling} and the proof of Lemma~\ref{lem: hessian bound implies small sing set}. For $\delta>0$, let $E_\delta = B_1\cap \{|\na u|>\delta\}\cap \{u>0\}$, and fix open sets $D\Subset D'\Subset E_\delta$. Since $u_k \to u$ locally uniformly, interior $C^{1,\beta}$ estimates for the $p$-Laplacian also guarantee that $\na u_k \to \na u$ locally uniformly. Hence for $k$ large enough, $|\na u_k|>\delta$ and $u_k>0$ on $D'$. In turn, uniform $C^{2,\beta}$ estimates away from the critical set show that 
\begin{equation}
    \label{eqn: intermed p}
\int_{D} |\na u|^{p-2+(p-2)_+} |D^2u|^2 = \lim_{k\to \infty} \int_{D} |\na u_k|^{p-2 +(p-2)_+} |D^2u_k|^2  \leq C.
\end{equation}
In the second inequality we apply \eqref{eqn: p curv} to $u_k$ with $x_0=x_k$ and $r = 1- o_k(1).$
Taking an exhaustion of $E_\delta$ extends the domain of integration to $E_\delta$ on the left-hand side of \eqref{eqn: intermed p}. Since this holds for all $\delta>0$, we obtain the claim. 

The $\e$-regularity lemma akin to Lemma~\ref{lem: Hessian eps reg} is deduced from the $\e$-regularity of \cite{DP, FL21} by repeating this argument, as we did in  the proof of Lemma~\ref{lem: Hessian eps reg}. {In identifying the limiting minimizer, we use the localized version of Lemma~\ref{lem:rigiditystrict}: on each positivity component, $\nabla u$ is continuous, while $W^{2,1}_{\mathrm{loc}}$ regularity is needed only where $|\nabla u|>\delta$. Lemma \ref{lem: uti} below ensures that this set is nonempty.} Therefore, Ingredient \eqref{p2} holds. Thus $I=I^*$.

This shows that $\Hm^{n-2}(\Sing(u))=0$ for a minimizer of $\E_p(\cdot)$ for any $p\in I$. To improve the estimate to $\Hm^{n-2-\eta}(\Sing(u))=0$, where $\eta$ depends on $n$ and $I$, we need Ingredient \eqref{p3}. The same challenge arises here as arose in our main theorem: the interior equation only admits uniform $C^2$ estimates outside a neighborhood of the critical set. (Thanks to the Lipschitz estimate \cite{PV05}, we do not have to worry about degeneracy when the gradient blows up.) This challenge can be dealt with in the same way, with a ``good ball lemma'' like Lemma~\ref{lem: ball with big gradient}:

\begin{lemma}\label{lem: uti}
    Suppose that $u$ minimizes $\E_p$ in $B_1$ and $0\in \GU$. 
    There is a $\db=\db(n, I)$ such that every connected component
    $V$ of $B_1\cap \{u>0\}$ that intersects $B_{1/8}$ 
    contains a ball $B_{\db}(y_V) \subset V\cap B_{1/2}
    \cap \{|\na u| \geq 8\db\}$.
\end{lemma}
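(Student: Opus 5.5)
The plan is to repeat the proof of Lemma~\ref{lem: ball with big gradient} verbatim, replacing the uniformly elliptic ingredients by their $p$-Laplacian counterparts with constants uniform for $p\in I$. Let $L=L(n,I)$ bound the Lipschitz constant of $u$ on $B_{3/4}$; this comes from the Petrosyan--Valdinoci Lipschitz estimate \cite{PV05}, which is uniform over the compact interval $I$, and uses that $0\in\GU$ anchors $u$. Fix $x\in V\cap B_{1/8}$. We apply the componentwise nondegeneracy of \cite{PV05} (the $p$-analogue of Lemma~\ref{lem:nondeg}) in $B_{1/8}(x)$. This gives a point $q\in V\cap B_{1/4}$ with $u(q)\ge c_0(n,I)>0$.

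Next, let $z$ be a closest zero of $u$ to $q$ and put $d=|q-z|$. Since $u(0)=0$, the Lipschitz bound gives $c_0/L\le d\le 1/4$, and $B_d(q)\subset V\cap B_{1/2}$. Set $t_0=c_0/(4L)$ and $y=z+t_0(q-z)/d$. Then $u(y)\le Lt_0=c_0/4$. By the mean value theorem on $[y,q]$ there is a point $a$ with
\[
|\na u(a)|\ \ge\ \frac{3c_0/4}{d}\ \ge\ 3c_0,
\]
using $d\le 1/4$. Moreover $B_{t_0}(a)\subset B_d(q)\subset\{u>0\}$, so $u$ is $p$-harmonic there; for $\E_p$ itself the right-hand side vanishes.

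The key step is to convert the pointwise lower bound into one on a ball of universal size. For this we use the interior $C^{1,\beta}$ estimate of DiBenedetto \cite{DiBenedetto1983p}, applied to $u-u(a)$ on $B_{t_0}(a)$:
\[
t_0^{1+\beta}[\na u]_{C^{0,\beta}(B_{t_0/2}(a))}\le C\,\|u-u(a)\|_{L^\infty(B_{t_0}(a))}\le C L t_0 .
\]
Here $\beta$ and $C$ depend only on $n$ and $I$, since the constants are uniform for $p$ in a compact subinterval of $(1,\infty)$. Hence $[\na u]_{C^{0,\beta}}\le C L t_0^{-\beta}$, and there is $\rho=\rho(n,I)<t_0/2$ with $|\na u|\ge 2c_0$ on $B_\rho(a)$. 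Finally choose $\db\le\min\{\rho,c_0/4,1/64\}$. Then $B_{\db}(a)\subset V\cap B_{1/2}\cap\{|\na u|\ge 8\db\}$, and we take $y_V=a$.

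The only genuine obstacle is checking uniformity in $p\in I$ of the three inputs: Lipschitz, nondegeneracy, and the $C^{1,\beta}$ estimate. The first two follow from \cite{PV05} by the contradiction/compactness remark already made for Ingredient~\eqref{p1}. The third follows from the explicit dependence in \cite{DiBenedetto1983p} on $p$ through bounds on $p$ and $1/(p-1)$. No degeneracy issue arises, because the $C^{1,\beta}$ estimate does not require a gradient lower bound.
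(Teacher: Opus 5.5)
Your proposal is correct and is the paper's argument: the paper proves Lemma~\ref{lem: uti} by rerunning the proof of Lemma~\ref{lem: ball with big gradient} with the Lipschitz, nondegeneracy, and interior $C^{1,\beta}$ estimates for the $p$-Laplacian, taken uniform in $p\in I$, and that is exactly what you do. Omitting the constraint $\db\le\sqrt{2/\La}/16$ is harmless, because the statement only asks for some $\db$ and its conclusion survives any decrease of $\db$.
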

This lemma is proven using the uniform Lipschitz, nondegeneracy, and interior $C^{1,\beta}$ estimates via exactly the same argument as the proof of Lemma~\ref{lem: ball with big gradient}. In view of the uniform interior and boundary $C^{2,\beta}$ estimates that hold away from the critical set, Ingredient~\eqref{p3} is proven exactly as Proposition~\ref{prop: curvature estimate 2}. We thus see that Theorem~\ref{thm: p intro} holds.

\subsection{Proof of the curvature estimate}\label{ssec: pf p curvature}
As we have seen above, the new estimate needed for Theorem~\ref{thm: p intro} is Proposition~\ref{prop: curvature est p}. We now prove the proposition. Like Proposition~\ref{prop: curvature estimates}, its proof is based on the stability inequality and a positive supersolution to the linearized equation.  
Similar computations to those in Appendix~\ref{app: variations} show that a minimizer of $\E_{p, \mu,f}(\cdot)$ in $\Omega$ satisfies the following stability inequality:   
\begin{equation}
    \label{eqn: stability p}
\int_{\{u>0\}\cap\{|\na u|>0\}} \!\!\! A(\na u)[\na \xi, \na \xi] + \mfrac{2}{p} \mu^2 \xi^2\, dx  \geq 
-(p-1)^{3/p}\int_{\GU} \partial_{\nu \nu} u\, \xi^2  \, d\Hm^{n-1}
\qquad \text{for all }\xi \in C^1_c(\Omega \setminus \Sing(u))\,.
\end{equation}
Here and below, $\nu$ is the outer unit normal to $\{u>0\}$ and 
\begin{equation}
    \label{A}
A(\na u) = |\na u|^{p-2}\Big(\text{Id} + (p-2) \mfrac{\na u}{|\na u|} \otimes \mfrac{\na u}{|\na u|}\Big)\,.
\end{equation}

\begin{proof}[Proof of Proposition~\ref{prop: curvature est p}]

The basic scheme of the proof is the same as the in Proposition~\ref{prop: curvature estimates}: a power of $|\na u|$ is a subsolution to the linearized equation and its right-hand side controls the norm of the Hessian. We test this equation with a cutoff function and relate it to the stability inequality. The case $p\geq 2$ is essentially the same as Proposition~\ref{prop: curvature estimates}, while $p<2$ requires a more refined analysis using the Kato-type inequality \eqref{eqn: kato} below.
In what follows, $C$ and $c$ denote positive constants that may change from line to line and depends on $n$, $I$, and $\|f\|_{L^\infty(B_2)}$. We will also write $A$ as shorthand for $A(\na u)$.

{\it Step 0: Set up.}
 By scaling, we may assume $x_0=0$ and $r=1$, so  $u$ minimizes $\E_{p,\mu,f}(\cdot)$ on $B_2$. Thanks to the Lipschitz estimates, we may assume $|\na u|\leq L$ in $B_{3/2}$. Let $U = \{u>0\} \cap B_{3/2}$, $\G = \partial U \cap B_{3/2}$, and let $\nu$ denote the outer unit normal to $U$.  
We will argue separately for $p\geq 2$ and $p<2$, starting with the simpler case $p\geq 2.$

{\it Step 1 for $p\geq 2$: Cutoff function vanishing on $\Sing(u)$.} In this step, let $p\geq 2$ and let  $G = |\na u|^p$.
For $\e >0$, let $U_\e = U\cap \{G>\e\}$ be the superlevel sets of $G$. Direct computation a la Lemma~\ref{lem: G eqn} shows that   for each $\e>0$, 
    \begin{equation}
        \label{eqn: G eqn p ge 2}
    \begin{cases}
        \divv(A(\na u)\na G) \ge C |\na u|^{2(p-2)}|D^2 u|^2 -2 \mu |\na u|^{p-2} \na u \cdot \na f& \text{ on }U_\e \\
       \langle A \na  G, \nu\rangle = -p(p-1)^{(3-p)/p}\partial_{\nu \nu} u & \text{ on }\GU,
    \end{cases}
        \end{equation}

Let  $\varphi\in C_c^1(B_{3/2}\setminus \Sing(u)).$ Let $\e \ll 1$ be a regular value of $G$ so that $\partial U_\e \cap U$ is $C^1$ and $U_\e$ is a set of finite perimeter. Let $\eta: \R_+ \to \R$ be an increasing function such that $\eta(t)=0$ for $t \leq \e$ and $\eta \equiv 1$ on $\{t>\frac{1}{2(p-1)}\}$.  
We multiply $\divv(A(\nabla u)\nabla G)$ by 
$$\varphi^2 \, \eta\circ G$$
on $U_\e$ and integrate by parts. The portion of boundary term on $\partial U_\e \cap U$ vanishes since $\eta(G(x))= \eta(\e)$ is zero there.
 Moreover, for all $x \in \G \cap \text{spt}{\varphi}$ we have $x\in U_\e$ and $\eta(G(x))=1$, thanks to the assumption that $\varphi$ vanishes on $\Sing(u)$ and  the free boundary condition $G(x) = 1/(p-1)$ on $\Reg(u)$. Thus, writing $A=A(\na u)$, we obtain
\begin{equation}\label{eqn: int eqn G}
\begin{split}
  \int_{U_\e}\varphi^2\eta(G) \divv(A\nabla G)
  & =  -\int_{U_\e}\varphi^2 \,\eta'(G)\, 
 \langle A\nabla G,\nabla G\rangle -   \int_{U_\e }2 \eta(G)\varphi\, 
 \langle A\nabla G,\nabla\varphi\rangle 
 + \int_{\G} \varphi^2 A[\nabla G, \nu] \, d\Hm^{n-1}\\
 &= I + II +III .   
\end{split} 
\end{equation}
We bound the right-hand side above. Term $I$ has a sign that allows us to discard it: since $\eta'\geq 0$ and $\langle A q,q\rangle \geq 0$, we see that $I\leq 0.$ For term $II$, keeping in mind \eqref{A}, we have $ |A \nabla G|\leq C |\na  u|^{2p-3}|D^2u| $. So, for $\delta>0$ small to be specified later and using $\eta \leq 1$ and the Lipschitz bound for $u$,
\begin{equation}
    \label{eqn: term 1}
 |II|\leq C\delta \int_{U_\e}\eta(G)\varphi^2|\na u|^{2(p-2)}|D^2u|^2
 +\frac{C}{\delta}
 \int_{U}|\nabla\varphi|^2.
\end{equation}
The boundary condition  for $G$ in \eqref{eqn: G eqn p ge 2} 
shows that $III$ is the left-hand side of the stability inequality \eqref{eqn: stability p}, so
$$
III =-C\int_{\G }
 \partial_{\nu \nu} u\, \varphi^2 
 \leq C \int_U A[\na \varphi, \na \varphi] \leq C\int_U |\na \varphi|^2\,.
 $$
 In the final inequality, we used $p-2\geq 0$ and the Lipchitz bound for $u$ to say $A[\na \varphi, \na \varphi] \leq C|\na \varphi|^2$.
Next, we  bound the left-hand side of \eqref{eqn: int eqn G} below using  \eqref{eqn: G eqn p ge 2}.
Choosing $\delta$ to absorb the first term on the right-hand side of \eqref{eqn: term 1} into this lower bound, exactly as in the proof of Proposition~\ref{prop: curvature estimates}, we obtain
\begin{align*}
\int_{U_\e}\eta(G)\varphi^2 |\na u|^{2(p-2)}|D^2u|^2 \leq C \int_{U_\e}\eta(G)\varphi^2\,
 |\na u|^{p-2} \nabla u\cdot\nabla f\,dx
+C
 \int_{U}|\nabla\varphi|^2 \,dx  
\end{align*}
Now, take an increasing sequence $\eta_k$ tending to  $\chi_{(\e,\infty)}$ and then send $\e \to 0$. Using dominated convergence theorem for the right-hand side and monotone convergence theorem on the left-hand side, we find
\begin{align}\label{eqn: Hessian bound intermediate p}
\int_{U}\varphi^2|D^2u|^2 |\na u|^{2(p-2)} \leq 
C\int_{U}\varphi^2\,
 |\na u|^{p-2}\nabla u\cdot\nabla f
\, dx+C
 \int_{U}|\nabla\varphi|^2 \,dx  \,.
\end{align}
Finally, integrating by parts in the first term on the right-hand side and bounding this term precisely as in \eqref{eqn: bound f term} in the proof of Proposition~\ref{prop: curvature estimates}, using $f\geq 0$ and the interior equation for $u$,  we find
\begin{equation}\label{eqn: bound f term p}
    \begin{split}
     \int_{U}\varphi^2\, |\na u|^{p-2}\nabla u\cdot\nabla f
  &\leq 2\mu\int_{U}\varphi^2f^2
 -2\int_{U}\varphi f\, |\na u|^{p-2}\nabla u \cdot\nabla\varphi  \leq C \int_U (\varphi^2 f^2  + |\na u|^{2(p-1)} |\na \varphi|^2)\,dx.
    \end{split}
\end{equation}
Combined with \eqref{eqn: Hessian bound intermediate p}, this gives that, for all $\varphi \in C^1_c(B_{3/2} \setminus \Sing(u)),$
\begin{align}\label{eqn: Hessian bound final p}
\int_{U}\varphi^2|D^2u|^2 |\na u|^{2(p-2)} \leq  C
 \int_{U} \varphi^2 +|\nabla\varphi|^2 \,dx  \,.
\end{align}

\vem

{
\noindent\emph{Step 1 for $1<p<2$: Cancellation with a linear truncation.}
Let $\varphi\in C_c^1(B_{3/2}\setminus\Sing(u))$. We denote
$$
 Q=\sum_{i=1}^n A[\nabla u_i,\nabla u_i]-A[\nabla |\nabla u|,\nabla |\nabla u|]\geq0.
$$
Differentiating \eqref{eqn: p EL} we get
\begin{equation}\label{eqn: p norm equation}
\begin{split}
\operatorname{div}(A\nabla |\nabla u|)&=\frac{Q}{|\nabla u|}+\frac{2\mu^2}{p}|\nabla u| -\frac{2\mu}{p}\frac{\nabla u}{|\nabla u|}\cdot\nabla f \quad\text{in }U\setminus \{|\nabla u|(x)=0\},\\
A[\nabla |\nabla u|,\nu]&=-(p-1)^{3/p}u_{\nu\nu}|\nabla u|\quad\text{on }\G.
 \end{split}
\end{equation}
We choose regular values $\varepsilon\downarrow0$ with $\varepsilon<(p-1)^{-1/p}/2$, and set $U_\varepsilon=U \cap \{|\nabla u|(x)>\varepsilon\}$, and $h_\varepsilon=(|\nabla u|-\varepsilon)_+$. We multiply \eqref{eqn: p norm equation} by $\varphi^2h_\varepsilon$, integrate over $U_\varepsilon$, and apply \eqref{eqn: stability p} with $\xi=\varphi h_\varepsilon$. The stability test can be extended across the regular free boundary; its support in the positive phase stays away from $\{|\nabla u|(x)=0\}$. The boundary term on $\{|\nabla u|=\varepsilon\}$ vanishes. Since $\nabla h_\varepsilon=\nabla |\nabla u|$ in $U_\varepsilon$, the terms containing $A[\nabla |\nabla u|,\nabla |\nabla u|]$ and the mixed gradient terms cancel exactly. Using $|\nabla u|=h_\varepsilon+\varepsilon$, we obtain
$$
\int_{U_\varepsilon}\varphi^2\frac{h_\varepsilon}{|\nabla u|}Q+\frac{2\mu^2\varepsilon}{p}\int_{U_\varepsilon}\varphi^2h_\varepsilon \leq \int_{U_\varepsilon}h_\varepsilon^2 A[\nabla\varphi,\nabla\varphi] +\frac{2\mu}{p}\int_{U_\varepsilon} \varphi^2h_\varepsilon \frac{\nabla u}{|\nabla u|}\cdot\nabla f +\varepsilon\int_\G -(p-1)^{3/p}u_{\nu\nu} h_\varepsilon\varphi^2 d\Hm^{n-1}.
$$
The last term tends to zero, since on $\Gamma$ we have that $-(p-1)^{3/p}u_{\nu\nu}$ is integrable and $h_\varepsilon=(p-1)^{-1/p}-\varepsilon$. Moreover,
$$
h_\varepsilon^2A[\nabla\varphi,\nabla\varphi] \leq |\nabla u|^p|\nabla\varphi|^2.
$$
Discarding the nonnegative zero-order term on the left-hand side, and passing to the limit by monotone and dominated convergence we deduce
\begin{equation}\label{eqn: p geometric cancellation}
 \int_{U\setminus \{|\nabla u|(x)=0\}}\varphi^2Q \leq\int_{U\setminus \{|\nabla u|(x)=0\}}|\nabla u|^2A[\nabla\varphi,\nabla\varphi] +\frac{2\mu}{p}\int_U \varphi^2\nabla u\cdot\nabla f.
\end{equation}
A direct computation, which we postpone until the end of the proof, yields the Kato-type inequality since $\Delta_p u=g:=\frac{2}{p}\mu(\mu u -f)$:
\begin{equation}\label{eqn: kato}
 Q\geq c_p |\nabla u|^{p-2}|D^2u|^2-|\nabla u|^{2-p}g^2, \qquad c_p=\frac{(p-1)^2}{2n-1}.
\end{equation}
In particular, since $u$, $|\nabla u|$, and $f$ are bounded and $2-p>0$, \eqref{eqn: p geometric cancellation} and \eqref{eqn: kato} imply that
\begin{equation}\label{eq: finite}
\int_{U\setminus \{|\nabla u|(x)=0\}} \varphi^2|\nabla u|^{p-2}|D^2u|^2<\infty.
\end{equation}
We now remove the dependence on $\nabla f$ in \eqref{eqn: p geometric cancellation}. Integrating by parts, we know that
$$
\int_{U_\varepsilon}\varphi^2h_\varepsilon \frac{\nabla u}{|\nabla u|}\cdot\nabla f=-\int_{U_\varepsilon} f\varphi^2\operatorname{div}(h_\varepsilon \frac{\nabla u}{|\nabla u|})-2\int_{U_\varepsilon} f\varphi h_\varepsilon \frac{\nabla u}{|\nabla u|}\cdot\nabla\varphi-((p-1)^{-1/p}-\varepsilon) \int_\Gamma f\varphi^2\,d\mathcal H^{n-1},
$$
as $h_\varepsilon=0$ on $\{|\nabla u|=\varepsilon\}$ and the outward normal component of $h_\varepsilon \nabla u/|\nabla u|$ on $\G$ is $-((p-1)^{-1/p}-\varepsilon)$. Using that
$$
\operatorname{div}(h_\varepsilon \frac{\nabla u}{|\nabla u|})=\left(1-\frac{\varepsilon}{|\nabla u|}\right)\Delta u +\frac{\varepsilon}{|\nabla u|}D^2u[\frac{\nabla u}{|\nabla u|},\frac{\nabla u}{|\nabla u|}] \quad\text{in }U_\varepsilon,
$$
and recalling that $f\geq 0$, we obtain
$$
\int_{U_\varepsilon}\varphi^2h_\varepsilon \frac{\nabla u}{|\nabla u|}\cdot\nabla f \leq \sqrt n\,\|f\|_\infty \int_{U_\varepsilon}\varphi^2|D^2u| + 2\|f\|_\infty \int_{U_\varepsilon}|\varphi||\nabla\varphi| |\nabla u|.
$$
Thanks to \eqref{eq: finite}, we can pass the latter inequality to the limit as $\e \to 0$ and deduce
$$
 \int_U\varphi^2\nabla u\cdot\nabla f \leq \sqrt n\,\|f\|_\infty \int_{U\setminus \{|\nabla u|(x)=0\}}\varphi^2|D^2u| +2\|f\|_\infty\int_U |\varphi||\nabla u||\nabla\varphi|,
$$
which in turn, by Young's inequality and the bound $|\na u|\leq L$, provides
$$
\frac{2\mu}{p}\int_U\varphi^2\nabla u\cdot\nabla f \leq\frac{c_p}{2}\int_{U\setminus \{|\nabla u|(x)=0\}} \varphi^2|\nabla u|^{p-2}|D^2u|^2 +C\int_U\bigl(\varphi^2+|\nabla\varphi|^2\bigr).
$$
Combining this with \eqref{eqn: p geometric cancellation} and \eqref{eqn: kato} proves
\begin{equation}\label{eqn: step 2 final}
\int_{U\setminus \{|\nabla u|(x)=0\}}\varphi^2|\nabla u|^{p-2}|D^2u|^2\leq C\int_U\bigl(\varphi^2+|\nabla\varphi|^2\bigr),
\end{equation}
where the constant $C$ depends only on $n$, $I$, and $\|f\|_\infty$.
}

{\it Step 3: Capacity argument.} We now unify the cases for $p\geq 2$ and $p<2$. Arguing as in Step 2 of the proof of Proposition~\ref{prop: curvature estimates}, we can localize a cutoff function  around the singular set to extend \eqref{eqn: Hessian bound final p} and \eqref{eqn: step 2 final} to general $\varphi \in C^1_c(B_{5/4})$. 
Indeed, by the same blowup argument as Lemma~\ref{lem: perturbed functional small singular set} and the assumption $\Phi \in I^*,$ we know $\Hm^{n-2}(\Sing(u)\cap B_2)=0.$  The set $\Sigma = \Sing(u)\cap \overline{B}_{5/4}$ is compact thanks to epsilon regularity \cite{DP, FL21}. So, since $\Hm^{n-2}(\Sigma)=0$, we can repeat the construction of the proof of Proposition~\ref{prop: curvature estimates}: for  any $\e>0$, we obtain a cutoff function $\psi$ vanishing on $\Sing(u)$ so that for any $\varphi \in C^{\infty}_c(B_2)$ with support contained in $B_{5/4}$, we have 
\begin{equation}
    \label{eqn: capacity thing2}
|\na (\psi\varphi)(x)|^2 \leq 2|\na \varphi(x)|^2+ 2|\varphi|^2 \sum_{k} r_k^{-2} \chi_{B_{r_k}(z_k)}, 
\end{equation}
and $\psi\varphi\in C^1_c(B_{3/2}\setminus \Sing(u))$, and $\psi \varphi \to \varphi$ a.e. as $\e \to 0$. This penultimate point means that  \eqref{eqn: step 2 final} holds for $\zeta=\psi \varphi,$ so
$$
\int_{U}(\psi \varphi )^2|D^2u|^2 |\na u|^{p-2+(p-2)_+}  \leq  C \int_{U}|\na \varphi|^2\,dx +C \| \varphi\|_{L^\infty}^2 ( \sum_k r_k^{n-2} +1 )
$$
Take a sequence $\e_k \to 0$ and the corresponding $\psi = \psi_{\e_k}$ and $\zeta_k = \psi_{\e_k}\varphi$, applying
 Fatou's lemma to pass to the limit on the left-hand side.
Choosing a cutoff $\varphi$ with $\varphi\equiv 1$ in $B_{1}$ and $|\na \varphi|\leq C(n)$, we conclude.

{\it Step 4: Proof of the Kato-type inequality.} It remains to show \eqref{eqn: kato}. 
At a point $x \in U \setminus\{|\na u|=0\}$, choose an orthonormal basis such that $\nu = -\na u/|\na u| = e_1$. This way, we have $\sum|\na u_k \cdot \nu|^2 = \sum (\partial_k |\na u|)^2 = |\na |\na u||^2$ and $|\na |\na u|\cdot \nu|^2 = u_{11}^2$. 
 Hence,
\begin{equation}\label{eqn: kato pf}
\begin{split}
\sum_k\langle A \na u_k ,  \na u_k\rangle - \langle A\na |\na u|, \na |\na u|\rangle
&= |\na u|^{p-2} \Big\{\sum_{j,k\geq 1} u_{jk}^2  -\sum_{k\geq 1} u_{1k}^2
   +(p-2)\Big(\sum_{k\geq 1} u_{1k}^2- u_{11}^2 \Big)\Big\}\\
&= |\na u|^{p-2} \Big\{\sum_{j,k>1} u_{jk}^2  +(p-1)\sum_{k>1} u_{1k}^2\Big\}.
\end{split}
\end{equation}
The interior equation in
\eqref{eqn: p EL} in this basis becomes $g=\divv(|\na u|^{p-2} \na u)= |\na u|^{p-2}(\Delta u +(p-2) u_{11})$, or, after rearranging,
\[
(p-1)\, u_{11} = -\sum_{k>1} u_{kk} + g\,|\na u|^{2-p}.
\]
Squaring this expression and applying Cauchy--Schwarz inequality
 then yields
\[
u_{11}^2 \leq \mfrac{2(n-1)}{(p-1)^2} \sum_{j,k>1} u_{jk}^2 + \mfrac{2}{(p-1)^2}\, g^2 |\na u|^{4-2p}.
\]
In turn, we find that
\begin{align}\label{eqn: 1016}
|D^2 u|^2 & = \sum_{j,k>1} u_{jk}^2  +2\sum_{k>1} u_{1k}^2 + u_{11}^2\leq  \Big(1 + \mfrac{2(n-1)}{(p-1)^2}\Big)\sum_{j,k>1} u_{jk}^2  +2\sum_{k>1} u_{1k}^2
  + \mfrac{2}{(p-1)^2}\, g^2 |\na u|^{4-2p}.
\end{align}
As above, we let $c_p:=\frac{(p-1)^2}{2n-1}$. Since $1<p\leq 2$ and $n\geq 2$, we have
$c_p(1 + \frac{2(n-1)}{(p-1)^2}) = \frac{(p-1)^2+2(n-1)}{2n-1}\leq 1$. Moreover, 
$2c_p\leq p-1$ and $\frac{2c_p}{(p-1)^2} = \frac{2}{2n-1}\leq 1$. Therefore, multiplying \eqref{eqn: 1016}  by $c_p|\na u|^{p-2}$ gives us 
\[
c_p|\na u|^{p-2}|D^2 u|^2 \leq |\na u|^{p-2} \Big\{\sum_{j,k>1} u_{jk}^2  +(p-1)\sum_{k>1} u_{1k}^2\Big\}
 + g^2 |\na u|^{2-p}.
\]
Together with \eqref{eqn: kato pf}, this gives \eqref{eqn: kato}.
\end{proof}

\subsection{Further Applications}\label{rmk: meta thm app}

The framework above suggests extensions beyond the standard $p$-Laplacian. Indeed, there are several classes of integrands for which a singular set bound like Theorem~\ref{thm: main} is an open problem, and for which we believe the meta theorem above may be applicable.
Proving such a theorem via this method in these settings would require establisxhing Ingredients~\eqref{p1}-\eqref{p3} uniformly along a compact, connected family containing a quadratic integrand, together with an appropriate good-ball lemma.

A first class consists of anisotropic Orlicz energies
$$\Phi(\xi)=G(F(\xi)), \qquad 0<m\leq\frac{tG''(t)}{G'(t)}\leq M<\infty \quad(t>0),$$
where $F$ is a sufficiently smooth norm away from the origin, $D^2(F^2/2)$ is uniformly elliptic there, and $G(0)=G'(0)=0$, with $G'>0$ and sufficient regularity on $(0,\infty)$. This includes smooth anisotropic $p$-energies $F(\xi)^p$, as well as the nonhomogeneous examples $|\xi|^p+a|\xi|^q$, with $a\geq0$ and $1<p\leq q<\infty$. Suitable logarithmic modifications of power growth provide further candidates. The geometric estimates of \cite{PV05} and the radial Orlicz free-boundary theory of \cite{MW08} provide relevant starting points. Uniform estimates for the penalized problems and the required weighted curvature bounds would still have to be established.

A related direction concerns spatially dependent energies, for example $\int_\Omega (|\nabla u|^{p(x)}/p(x) +\lambda(x)\chi_{\{u>0\}})\,dx$, with sufficiently regular $p(x)$ bounded away from $1$ and infinity, and positive $\lambda(x)$ bounded away from zero. Blow-ups freeze these coefficients, hence the
Hausdorff dimension estimate transfers the corresponding
constant-exponent singular-set estimates, provided
compactness to minimizers of the frozen functionals and
uniform propagation of singularities hold.
This approach would not require a new curvature estimate
for the variable-coefficient equation; see \cite{FL21}
for the variable-exponent improvement-of-flatness theory.

{The orthotropic, or pseudo-$p$-Laplacian, associated with $\Phi(\xi)=\sum_{i=1}^n|\xi_i|^p$, presents a different difficulty. For $p\ne2$, its Hessian is degenerate or singular on coordinate hyperplanes. These hyperplanes contain nonzero gradients satisfying the
Bernoulli condition
$D\Phi(\xi)\cdot\xi-\Phi(\xi) =(p-1)\sum_{i=1}^n|\xi_i|^p=1$; for instance, $\xi=(p-1)^{-1/p}e_1$. Thus, a lower bound for $|\nabla u|$ alone does not provide the ellipticity required in Ingredient~\eqref{p3}. An extension to this setting would require curvature and regularity estimates adapted to the coordinatewise degeneracy, rather than a direct adaptation of the argument above.}

\appendix

\section{First and second variation}\label{app: variations}
Let $f \in C^\infty(\Omega)$ be a nonnegative function and $\mu \in [0,1]$. We compute the first and second variations of the functional $
\Jfm$ defined in \eqref{def: J f mu}
at a minimizer of the functional. To make it apparent where derivatives fall, we will use the notation
\[
A(p)=D\Phi (p) \qquad \text{ and } \qquad Q(p)=D^2\Phi(p).
\]
Taking outer variations $u_\varepsilon=u+\varepsilon\varphi$ for $\varphi\in C_c^\infty(U)$
shows that a minimizer  $u$  of $\Jfm$ satisfies
\begin{equation}\label{anislap}
-\operatorname{div}(A(\nabla u))+2\mu (\mu u-f)=0 \quad \text{in } \{ u>0\} \cap \Omega\,.
\end{equation}

Throughout the section, we assume $u\in W^{1,2}(\Omega)$ is continuous and nonnegative. Letting $U=\{u>0\}\cap \Omega$, we assume that $\G = \partial U\cap \Omega$ is of class $C^2$ and that $u$ solves \eqref{anislap}.  We let $\nu$ denote the outer unit normal to $U$. 

Let $\{\FL_t\}_{t \in (-\e,\e)}$ be a smooth one-parameter family of diffeomorphisms on $\R^n$ with \(\{\FL_t\neq Id\} \subset \subset \Omega\) for all $t \in (-\e,\e)$ and \(\FL_0=Id\). Let $X = \frac{d}{dt}|_{t=0} \FL_t$ denote the initial velocity of this flow, and let \(U_t:=\FL_t(U)\) and $\G_t = \FL_t(\G)$ so that $\partial U_t \cap \Omega = \G_t$. Let $u_t$ be the unique weak solution to
\[
-\divv (A(\nabla u_t))+2\mu (\mu u_t-f)=0 \qquad \text{ in } U_t
\]
 with \(u_t=0\) on $\G$ and $u_t=u$ on $\partial \Omega$.
\begin{lemma}\label{lem: first var} In the setup above, 
\begin{equation}
    \label{eqn: first var}
\mfrac{d}{dt}\Big|_{t=0}\,  \Jfm(u_t)
=
\int_\Gamma \bigl(1 +{\Phi(\nabla u)- A(\na u )\cdot \na u }\bigr)\, X\cdot \nu .
\end{equation}
In particular, if $u$ is a minimizer of $\Jfm$, then ${ A(\na u )\cdot \na u -\Phi(\na u) = 1}$ on $\G.$
\end{lemma}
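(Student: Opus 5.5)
The plan is to compute the derivative of $t\mapsto \Jfm(u_t)$ by splitting it into the bulk Dirichlet-type part and the volume part. Write
\[
\Jfm(u_t)=\int_{U_t}\Phi(\nabla u_t)+|U_t|+\int_{U_t}|\mu u_t-f|^2+\int_{\Omega\setminus U_t}f^2 .
\]
The volume terms are routine. Reynolds' transport theorem gives $\frac{d}{dt}|_{t=0}|U_t|=\int_\Gamma X\cdot\nu$. It also gives that the boundary contributions of the $f$-terms are
\[
\int_\Gamma(|\mu u-f|^2-f^2)X\cdot\nu=0,
\]
since $u=0$ on $\Gamma$.

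For the remaining terms I introduce the shape derivative $\dot u:=\frac{d}{dt}|_{t=0}u_t$, defined on $U$. Differentiating the identity $u_t\circ\FL_t=0$ on $\Gamma$ gives the boundary condition $\dot u=-\nabla u\cdot X$ on $\Gamma$. Also $\dot u=0$ near $\partial\Omega$, because $u_t=u$ outside the support of $\FL_t-Id$. Reynolds' theorem for the bulk integrand then yields
\[
\frac{d}{dt}\Big|_{t=0}\int_{U_t}\bigl(\Phi(\nabla u_t)+|\mu u_t-f|^2\bigr)
=\int_U\bigl(A(\nabla u)\cdot\nabla\dot u+2\mu(\mu u-f)\dot u\bigr)+\int_\Gamma\Phi(\nabla u)\,X\cdot\nu .
\]
Here I used again that $|\mu u-f|^2=f^2$ on $\Gamma$, and that this cancels against the $\Omega\setminus U_t$ term.

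Next I integrate by parts in the first bulk integral and use the interior equation \eqref{anislap}, so the bulk integrals vanish and only the flux term survives:
\[
\int_\Gamma \dot u\,A(\nabla u)\cdot\nu .
\]
On $\Gamma$ the function $u$ vanishes, so $\nabla u=\partial_\nu u\,\nu$ there. Substituting $\dot u=-\partial_\nu u\,(X\cdot\nu)$, the flux term becomes
\[
-\int_\Gamma A(\nabla u)\cdot\nabla u\;X\cdot\nu .
\]
Summing the three contributions gives \eqref{eqn: first var}.

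For the ``in particular'' statement, minimality of $u$ means $t\mapsto\Jfm(u_t)$ has a minimum at $t=0$, since each $u_t$ is an admissible competitor. Hence the derivative in \eqref{eqn: first var} vanishes for every $X$. Because $X\cdot\nu$ is an arbitrary smooth function on $\Gamma$, the fundamental lemma gives $A(\nabla u)\cdot\nabla u-\Phi(\nabla u)=1$ on $\Gamma$.

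The main obstacle is justifying the differentiability of $t\mapsto u_t$ and the regularity of $\dot u$ up to $\Gamma$, which is needed for the boundary integration by parts. The equation is quasilinear and $\Phi$ is only $C^{1,1}$. I would first try pulling back to the fixed domain via $v_t=u_t\circ\FL_t$, which solves a uniformly elliptic equation with coefficients depending smoothly on $t$. The boundary $C^{1,\beta}$ estimates of Lemma~\ref{lem: C1beta}, together with an implicit-function or difference-quotient argument in $t$, should then give $\dot u$ with enough regularity up to the $C^2$ boundary $\Gamma$.
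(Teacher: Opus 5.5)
Your computation is correct, but it is organized differently from the paper's. You work in Eulerian form. You apply Reynolds' transport theorem on $U_t$ with the shape derivative $\dot u$, integrate by parts once, and use the interior equation \eqref{anislap}. The trace identity $\dot u=-\nabla u\cdot X$ on $\Gamma$ then evaluates the surviving flux $\int_\Gamma\dot u\,A(\nabla u)\cdot\nu$.

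The paper works in Lagrangian form. It pulls back to the fixed domain via $\tilde u_t=u_t\circ\FL_t$ and writes $\int_{U_t}\Phi(\nabla u_t)=\int_U\Phi(D\FL_t^{-T}\nabla\tilde u_t)\,J\FL_t$. Differentiating under the integral gives $\divv X\,\Phi(\nabla u)-A(\nabla u)\cdot DX^T\nabla u+A(\nabla u)\cdot\nabla w$, with $w=\frac{d}{dt}|_{t=0}\tilde u_t$. The first two terms become a boundary integral via the anisotropic Rellich identity \eqref{Rellich}. The last term is integrated by parts with no boundary contribution, because $w=0$ on $\Gamma$ automatically ($\tilde u_t=0$ on $\Gamma$ for all $t$). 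The leftover bulk term $\int_U(X\cdot\nabla u-w)\,\divv A(\nabla u)=-\int_U\dot u\,\divv A(\nabla u)$ cancels the fidelity contribution, exactly as in your argument.

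Your version is shorter and needs no Rellich identity. The cost is that you need the boundary behaviour of $\dot u$: its trace on $\Gamma$, and enough integrability of $\nabla\dot u$ up to $\Gamma$ for Reynolds' theorem and Gauss--Green. In the paper, the only boundary fact about the time derivative is the trivial $w=0$ on $\Gamma$. Reynolds' theorem for an integrand defined only on $U_t$ is itself usually proved by pulling back to $U$, so the two routes are really the same computation arranged differently.

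The Lagrangian setup is also the natural setting for the justification you sketch; your $v_t$ is exactly the paper's $\tilde u_t$. It produces \eqref{dertilde}, which is reused in the second-variation computation of Lemma~\ref{lem: second var}.

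One small correction: $\dot u$ need not vanish near $\partial\Omega$. The function $u_t$ solves a boundary value problem on all of $U_t$, so it generally changes throughout $U_t$ when $\Gamma$ moves. What holds, and all you actually use, is $\dot u=0$ on $\partial\Omega\cap\partial U$, since $u_t=u$ on $\partial\Omega$ for every $t$.
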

Before showing Lemma~\ref{lem: first var}, we collect a few facts. To write the first variation as an integral over \(\Gamma\) rather than \(U\), we will use an
anisotropic version of the Rellich identity: for \(u\in C^2(U)\) and for a \(C^1\) vector field  \(X\),
\begin{equation}\label{Rellich}
\operatorname{div}X\,\Phi(\nabla u)
-
\, DX^T\nabla u \cdot A(\na u)
=
\divv \bigl(\Phi(\nabla u) X-(X\cdot \nabla u)A( \na u) \bigr)
+(X\cdot \nabla u)\divv (A (\na u)).
\end{equation}
The proof of \eqref{Rellich} is a direct computation:
\begin{equation*}
\begin{split}
\divv \big(\Phi&(\nabla u)X-(X\cdot \nabla u)A(\na u) \big)\\
&=
\Phi(\nabla u)\divv X
+ D^2 u(A(\na u),  X)
-  D^2u (X, A(\na u))
- DX^T\na u \cdot A(\na u)
-(X\cdot \nabla u)\operatorname{div}(A(\na u)).
\end{split}
\end{equation*}
We also observe that 
\begin{equation}
    \label{eqn: inverse flow D}
D\FL_t = Id + tDX + o(t) \qquad \text{ and } \qquad (D\FL_t)^{-1}=Id-tDX+o(t),
\end{equation}
and recall that the Jacobian $J\FL_t=| \det D\FL_t|$ has
\begin{equation}
    \label{eqn: Jacobian prime}
\mfrac{d}{dt}\Big|_{t=0}J\FL_t=\divv X,
\end{equation}
see, e.g. \cite[Lemma 17.4]{MaggiBook}.  With these tools in hand, we can prove the lemma:
\begin{proof}[Proof of Lemma~\ref{lem: first var}]
We compute derivatives of the three terms in the energy separately. First, by the area formula, we have $
\int_\Omega \chi_{\{u_t>0\}}\,dx=
\int_U  J\Phi_t(y)\,dy,$
and so \eqref{eqn: Jacobian prime} gives 
\begin{equation}\label{firstterm}
\mfrac{d}{dt}\big|_{t=0}\int_\Omega \chi_{\{u_t>0\}}\,dx =\int_U \divv X dy=\int_\Gamma X\cdot \nu.
\end{equation}
Now, let $\tilde u_t(x)=u_t\circ \FL_t$ be the pullback of $u_t$, so that the positivity set of $\tilde{u}_t$ is $U$. By the chain rule, we have
\begin{equation}\label{tilde}
\nabla u_t|_{\FL_t(x)}
=
D\FL_t^{-T}(x)\,\nabla \tilde u_t(x) \qquad \text{ and} \qquad \mfrac{d}{dt}\Big|_{t=0} \tilde{u}_t = \dot{u} + \na u \cdot X\,
\end{equation}
where we let $\dot{u} = \frac{d}{dt}|_{t=0} u_t$.
Therefore, applying the area formula followed by the first identity in \eqref{tilde}, we write the first term in the energy as
\[
\int_{U_t} \Phi(\na u_t(y))\,dy
=\int_{U_t} \Phi\bigl(\nabla u_t|_{\FL_t\circ \Psi_t(y)}\bigr) \,dy=
\int_U \Phi\bigl(\nabla u_t|_{\FL_t(x)}\bigr) J\FL_t(x)\,dx=
\int \Phi\bigl(D\FL_t^{-T}(x)\nabla \tilde u_t(x)\bigr)\, J\FL_t(x)\,dx .
\]
From the second term in \eqref{eqn: inverse flow D}, we have $
\frac{d}{dt}|_{t=0}\, D\FL_t^{-T}=-DX^T$. 
Therefore, letting \(w=\left.\frac{d}{dt}\right|_{t=0}\tilde u_t\), we have
\begin{equation}\label{dertilde}
\mfrac{d}{dt}\Big|_{t=0} D\FL_t^{-T}(x)\nabla \tilde u_t(x)
=
- DX^T\nabla u + \nabla w\,.
\end{equation}
Hence, using this and \eqref{eqn: Jacobian prime} followed by the Rellich identity \eqref{Rellich}, we have
\begin{equation}
    \label{eqn: intermed first var 1}
    \begin{split}
\mfrac{d}{dt}\Big|_{t=0}\, \int_{U_t} \Phi(\na u_t) \,dx
&=
\int_U \divv X\, \Phi(\nabla u) -  A(\na u)\cdot
 DX^T\nabla u +  A(\na u)\cdot \nabla w \\
& = \int_U \divv \big(\Phi(\nabla u) X-(X\cdot \nabla u)A( \na u) \big)
+(X\cdot \nabla u)\divv (A (\na u)) + A(\na u)\cdot \nabla w\\
& = \int_{\G} \big(\Phi(\nabla u) X-(X\cdot \nabla u)A( \na u) \big)\cdot \nu
+ \int_U(X\cdot \nabla u - w)\divv (A (\na u))\,.
    \end{split}
\end{equation}
In the final identity, we applied the divergence theorem and integrated by parts in the final bulk term. Note that the integration by parts produces no boundary term, since  $\tilde u_t(x)=u_t\circ \Phi_t(x)=0$ on \(\Gamma\) for all $t$, so differentiating shows \(w(x)=0\) for all \(x\in \Gamma\). 

We may simplify the boundary term in \eqref{eqn: intermed first var 1}. 
{Since $u$ is constant on $\G$, we have $\nabla u=\partial_\nu u\,\nu$, thus 
\begin{equation}\label{eqn: euler consequence}
    \begin{split}
(\nabla u \cdot X)(A(\na u)\cdot \nu)=
(\partial_\nu u)(X\cdot \nu)(A(\na u) \cdot \nu)
&= (X\cdot \nu)(A(\na u )\cdot \na u) 
    \end{split}
\end{equation}}
Combining this with \eqref{eqn: intermed first var 1} and using $\divv (A(\na u)) = 2 \mu(\mu u-f)$ and the second identity in \eqref{tilde}, we have  
\begin{align}\label{eqn: first var term 1}
\mfrac{d}{dt}\Big|_{t=0}\, \int_{U_t} \Phi(\na u_t) \,dx
&=
 \int_{\G} {(\Phi(\nabla u)- A(\na u )\cdot \na u )}\, X\cdot \nu
-2 \mu \int_U (\mu u-f)\mfrac{d}{dt}\Big|_{t=0} u_t\,.
\end{align}
Finally, we now differentiate the last term in the energy:
\begin{align}\label{eqn: first var term 3}
    \mfrac{d}{dt}\Big|_{t=0} \int_{\Omega}(\mu u_t-f)^2\,dx = 2 \mu \int_{\Omega} (\mu u_t - f)  \mfrac{d}{dt}\Big|_{t=0} u_t(x) \,dx = 2 \mu \int_{U} (\mu u_t - f)  \mfrac{d}{dt}\Big|_{t=0} u_t(x) \,dx 
\end{align}
In the last identity we use that $\frac{d}{dt}|_{t=0} {u}_t = 0$ on $\Omega\setminus U$. Combining \eqref{firstterm}, \eqref{eqn: first var term 1}, and \eqref{eqn: first var term 3} completes the proof.
\end{proof}

Now we compute the second variation of $\Jfm$ at a critical point.
\begin{lemma}\label{lem: second var}
Let $u$, $u_t$ be as in the setup above, and assume further that ${ A(\na u )\cdot \na u -\Phi(\na u) = 1}$ on $\G.$ Then  
\begin{equation}
    \label{eqn: second deriv}
\mfrac{d^2}{dt^2}\Big|_{t=0}\Jfm(u_t)=\int_U Q(\nabla u)[\nabla \dot{u},\nabla \dot{u}]
+2\mu^2 \dot{u}^2 \,dx 
-\int_\Gamma  \big\langle Q(\na u)D^2u[\nu ], \nu\big\rangle  \frac{1 +\Phi(\na u)}{ (A(\na u )\cdot \nu)} \, (X\cdot \nu)^2.
\end{equation}
where as above $\dot{u} = \frac{d}{dt}|_{t=0} u_t$.
If $u$ is a minimizer of $\Jfm$, then for all $\psi \in C^\infty(\Omega)$,
\begin{equation}\label{eqn: appendix stability}
\int_U Q(\nabla u)[\nabla \psi,\nabla \psi]+2\mu^2\psi^2 \,dx \geq
\int_\Gamma\big\langle D^2\Phi(\na u)D^2u[\nu ], \nu \big\rangle \frac{ (D \Phi(\na u )\cdot \nu)}{1 +\Phi(\na u)}\psi^2 \, \mathcal{H}^{n-1}.
\end{equation}
\end{lemma}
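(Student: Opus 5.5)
The plan is to differentiate $t\mapsto \Jfm(u_t)$ twice at $t=0$, using the same pullback formalism as in Lemma~\ref{lem: first var}. Write $\dot u=\frac{d}{dt}|_{t=0}u_t$, and let $u'_t=\frac{d}{dt}u_t$ for $t$ near $0$. Since $\G_t=\partial U_t\cap\Omega$ is $C^2$ for small $t$, the first variation formula \eqref{eqn: first var} applies at every such $t$, with the flow re-based at time $t$. This gives
\[
\mfrac{d}{dt}\Jfm(u_t)=\int_{\G_t}\big(1+\Phi(\na u_t)-A(\na u_t)\cdot\na u_t\big)\,V_t\cdot\nu_t ,
\]
where $V_t=\partial_t\FL_t\circ\FL_t^{-1}$ is the velocity field. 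Differentiating once more at $t=0$, the hypothesis $A(\na u)\cdot\na u-\Phi(\na u)=1$ kills the integrand at $t=0$. So no terms arise from varying the domain $\G_t$, the normal $\nu_t$, or $V_t$. Only the derivative of $\cG(\na u_t)$ survives:
\[
\mfrac{d^2}{dt^2}\Big|_{t=0}\Jfm(u_t)=-\int_\G \mfrac{d}{dt}\Big|_{t=0}\Big[\cG(\na u_t)\circ\FL_t\Big]\,(X\cdot\nu).
\]
The first key computation is
\[
\mfrac{d}{dt}\big[\cG(\na u_t)\circ\FL_t\big]=D\cG(\na u)\cdot\big(\na\dot u+D^2u\,X\big),\qquad D\cG(p)=Q(p)p .
\]
Since $\na u=\partial_\nu u\,\nu$ on $\G$, this equals $\partial_\nu u\,\langle Q(\na u)\nu,\na\dot u+D^2u X\rangle$.

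Next I identify $\dot u$. Differentiating the interior equation gives $\divv(Q(\na u)\na\dot u)=2\mu^2\dot u$ in $U$. Differentiating $\tilde u_t=u_t\circ\FL_t=0$ on $\G$ gives $\dot u=-\na u\cdot X=-\partial_\nu u\,(X\cdot\nu)$ on $\G$. Multiplying the linearized equation by $\dot u$ and integrating by parts yields
\[
\int_U Q(\na u)[\na\dot u,\na\dot u]+2\mu^2\dot u^2=\int_\G \dot u\,\langle Q(\na u)\na\dot u,\nu\rangle .
\]
The remaining task is to match the boundary integrals. I will split $X=(X\cdot\nu)\nu+X^\top$. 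The tangential part produces terms that I plan to eliminate by tangentially differentiating the Bernoulli condition, exactly as in the proof of Lemma~\ref{lem: G eqn}: $\langle Q(\na u)D^2u[\tau],\na u\rangle=0$ for $\tau\perp\nu$. After this, the $\na\dot u$ contributions should combine into the quadratic form, and a term $-\int_\G\langle Q D^2u[\nu],\nu\rangle(\partial_\nu u)^2(X\cdot\nu)^2$ should remain. Finally I convert $(\partial_\nu u)^2$ using
\[
A(\na u)\cdot\nu=\partial_\nu u^{-1}A(\na u)\cdot\na u=\partial_\nu u^{-1}(1+\Phi(\na u)),
\]
which turns $(\partial_\nu u)^2$ into $(1+\Phi)^2/(A\cdot\nu)^2$. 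This factor has to be reconciled with the stated weight $\frac{1+\Phi}{A\cdot\nu}$ through the boundary-term bookkeeping, using the identity $(X\cdot\nu)=-\dot u/\partial_\nu u$. This bookkeeping is the step I expect to be the main obstacle: the tangential terms and the exact power of $\partial_\nu u$ must be tracked carefully.

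For the stability inequality \eqref{eqn: appendix stability}, fix $\psi\in C^\infty_c$. I choose $X$ so that $\dot u=\psi$ on $\G$, namely $X\cdot\nu=-\psi/\partial_\nu u=\psi\,\frac{A\cdot\nu}{1+\Phi}$ up to sign. This is possible because $\partial_\nu u\neq0$ on the regular boundary. Minimality gives a nonnegative second derivative, and expressing $(X\cdot\nu)^2$ in terms of $\psi^2$ produces the right-hand side weight. On the bulk side, $\dot u$ is the $Q$-harmonic extension (with zero-order term $2\mu^2$) of $\psi|_\G$. By the Dirichlet principle for the convex quadratic form $\int_U Q[\na\cdot,\na\cdot]+2\mu^2(\cdot)^2$, this extension has energy no larger than that of $\psi$ itself. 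This yields \eqref{eqn: appendix stability}.
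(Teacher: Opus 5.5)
Your route is the paper's: re-use the first-variation formula at each time, note that $\cG(\na u)=1$ on $\G$ kills every term except $\frac{d}{dt}[\cG(\na u_t)\circ\Psi_t]$, test the linearized equation for $\dot u$ against $\dot u$, and get stability from the Dirichlet principle. Your treatment of a tangential component of $X$, via the tangential derivative of the Bernoulli condition, is a harmless refinement; the paper simply takes $X=V_0\nu$ on $\G$.

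The one real problem is the step you flag as the main obstacle. It comes from a miscount, not a missing idea: the remaining boundary term carries a single factor of $\partial_\nu u$, not $(\partial_\nu u)^2$. By your own key computation, $D\cG(\na u)=Q(\na u)\na u=\partial_\nu u\,Q(\na u)\nu$ supplies exactly one such factor. The normal part $(X\cdot\nu)\nu$ of $X$ inside $D^2u\,X$, together with the outer factor $X\cdot\nu$, supplies $(X\cdot\nu)^2$. So the remainder is
\[
-\int_\G \partial_\nu u\,\big\langle Q(\na u)D^2u[\nu],\nu\big\rangle\,(X\cdot\nu)^2 .
\]
Since $\partial_\nu u\,(A(\na u)\cdot\nu)=A(\na u)\cdot\na u=1+\Phi(\na u)$, this factor $\partial_\nu u$ is exactly the stated weight $\frac{1+\Phi(\na u)}{A(\na u)\cdot\nu}$, and \eqref{eqn: second deriv} follows with no further bookkeeping.

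The reconciliation you propose through $X\cdot\nu=-\dot u/\partial_\nu u$ could not work anyway. That identity only rewrites the same quantity; it cannot remove a power of $\partial_\nu u$ from a term quadratic in $X\cdot\nu$. Its only role is to turn the $\na\dot u$ boundary term into $\int_\G \dot u\,\langle Q(\na u)\na\dot u,\nu\rangle$, which you already do correctly.

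The miscount matters for the stability inequality. Take $X\cdot\nu=-\psi/\partial_\nu u$ on $\G$, which is possible since $\partial_\nu u\neq0$ there. The correct remainder then yields $\partial_\nu u\,(\partial_\nu u)^{-2}\psi^2=\psi^2/\partial_\nu u=\frac{A(\na u)\cdot\nu}{1+\Phi(\na u)}\psi^2$, which is the weight in \eqref{eqn: appendix stability}. Your $(\partial_\nu u)^2$ would instead give the unweighted term $\int_\G\langle Q(\na u)D^2u[\nu],\nu\rangle\psi^2$, which is wrong. With this correction, the rest of your argument coincides with the paper's proof: nonnegativity of the second derivative at a minimizer, then replacing $\dot u$ by $\psi$ via the Dirichlet principle.
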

\begin{proof} Let $\nu_t$ be the outer unit normal to $U_t$,  $
G_t(x)= {1 +\Phi(\nabla u)- A(\na u )\cdot \na u }$
and $V_t=X(x)\cdot \nu_t(x)$. We can then express \eqref{eqn: first var} as 
\begin{equation}\label{linearize}
\mfrac{d}{dt}\, \Jfm(u_t)
=
\int_{\Gamma_t}G_t(y) V_t(y )\,  d\mathcal H^{n-1}(y) = 
\int_\Gamma G_t(\FL_t(x))\cdot V_t(\FL_t(x))\, J^\Gamma \FL_t(x)\, d\mathcal H^{n-1}
\end{equation} 
for all $t\in (-\e, \e)$. In the second identity we use the area formula on hypersurfaces (see \cite[Theorem 11.3]{MaggiBook}).
Keeping in mind that \(G_0(\FL_0(x))=G_0(x)=0\) on \(\Gamma\), the chain rule gives
\begin{equation}\label{secvar}
\mfrac{d^2}{dt^2}\Big|_{t=0}\Jfm(u_t)
 =\int_\Gamma \mfrac{d}{dt}\Big|_{t=0} G_t(\FL_t(x))\, V_0 .
\end{equation}
To compute $\frac{d}{dt}|_{t=0} G_t(\FL_t)$, recall from \eqref{tilde} and \eqref{dertilde} that
$\frac{d}{dt}|_{t=0}\nabla u_t|_{\FL_t(x)}=- DX^T\nabla u + \nabla w$. Just as before, here we let \(\tilde u_t(x)=u_t\circ \FL_t\) and \(w=\frac{d}{dt}|_{t=0}\tilde u_t\).
So, applying the chain rule, we find {
\begin{equation}\label{11}
\begin{split}
\mfrac{d}{dt}\Big|_{t=0}G_t(\FL_t(x)) &=
\mfrac{d}{dt}\Big|_{t=0}\Phi(\nabla u_t|_{\FL_t(x)}) - \big\langle A(\na u) , \mfrac{d}{dt}\Big|_{t=0}\nabla u_t|_{\FL_t(x)} \big\rangle - \Big\langle Q(\na u)\Big[ \mfrac{d}{dt}\Big|_{t=0}\nabla u_t|_{\FL_t(x)}\Big], \na u\Big\rangle\\
&
=- \Big\langle Q(\na u)\Big[ \mfrac{d}{dt}\Big|_{t=0}\nabla u_t|_{\FL_t(x)}\Big], \na u\Big\rangle = - \big\langle Q(\na u)[ -DX^T\nabla u+\nabla w], \na u\big\rangle .
\end{split}
\end{equation}}
Differentiating both sides of the second identity of \eqref{tilde} with respect to $x$, we find
\[
\nabla w=\nabla \dot{u}+\nabla(\nabla u\cdot X)
=\nabla \dot{u}(x)+D^2uX(x)+DX^T\nabla u.
\]
Thus, since \(X=V_0 \nu\) and \(\nabla u(x)=(\partial_\nu u)\nu\) for \(x\in \Gamma\), 
\eqref{11} becomes
\[
\mfrac{d}{dt}\Big|_{t=0}G_t(\FL_t(x))
= - \big\langle Q(\na u)[ \nabla \dot{u}+D^2u[X]], \na u\big\rangle 
=- \big\langle Q(\na u)[ \nabla \dot{u}], \na u\big\rangle 
- \big\langle Q(\na u)D^2u[\nu ], \na u\big\rangle  V_0
\]
for \(x\in \Gamma\).
Thus by \eqref{secvar},
\begin{equation}\label{secvar2}
\mfrac{d^2}{dt^2}\Big|_{t=0} \Jfm(u_t)
=-\int_\Gamma  \langle Q(\na u)[ \nabla \dot{u}], \na u\big\rangle  V_0
-\int_\Gamma  \big\langle Q(\na u)D^2u[\nu ], \na u\big\rangle  V_0^2.
\end{equation}

Next, we use the fact that $\dot{u}$ solves the equation
\begin{equation}\label{secvar3}
\begin{cases}
-\operatorname{div}(Q(\nabla u)\nabla \dot{u})+2\mu^2 \dot{u}=0 & \text{on } U,\\
\dot{u}=-\partial_\nu u\,V_0 & \text{on } \Gamma,\\
\dot{u}=0 & \text{on } \partial\Omega\cap \partial U .
\end{cases}
\end{equation}
Indeed, the interior equation holds by differentiating the equation $-\divv(A(\nabla u_t))+2\mu (\mu u_t-f)=0$ so
\[
0=\mfrac{d}{dt}\Big|_{t=0}\bigl(-\divv(A(\nabla u_t))+2\mu (\mu u_t-f)\bigr)
=
-\operatorname{div}\bigl(Q(\nabla u)\nabla \dot{u}\bigr)+2\mu^2 \dot{u}
\]
with respect to $t$.
To see the boundary condition on \(\Gamma\) in \eqref{secvar3}, we again use that \(w=0\) on \(\Gamma\)
and apply the second identity in \eqref{tilde}. Multiplying the equation by $\dot{u}$ and integrating by parts in  $U$, we find
\begin{equation}\label{secondterm}
    \begin{split}
   0 & =\int_U Q(\nabla u)[\nabla \dot{u},\nabla \dot{u}]+2\mu^2 (\dot{u})^2 \,dx- \int_\G \dot{u} Q(\nabla u)[\nabla \dot{u},\nu]\,d\Hm^{n-1}\\
        & =\int_U Q(\nabla u)[\nabla \dot{u},\nabla \dot{u}]+2\mu^2 (\dot{u})^2 \,dx+ \int_\G V_0 \, Q(\nabla u)[\nabla \dot{u},\na u]\,d\Hm^{n-1}   \,.   
    \end{split}
\end{equation}
So putting \eqref{secondterm}  into \eqref{secvar2} yields
$$
\mfrac{d^2}{dt^2}\Big|_{t=0}\Jfm(u_t)
=\int_U Q(\nabla u)[\nabla \dot{u},\nabla \dot{u}]
+2\mu^2 \dot{u}^2 \,dx 
-\int_\Gamma  \big\langle Q(\na u)D^2u[\nu ], \na u\big\rangle  V_0^2.
$$
This shows \eqref{eqn: second deriv}. Next, if $u$ is a minimizer, we have $\frac{d^2}{dt^2}|_{t=0}\Jfm(u_t) \geq 0$. Since \(\dot{u}\) has the least energy
of all extensions of \(-\partial_\nu u\,V_0\) into
\(U\), if we let \(\psi\in C_c^\infty(\Omega)\) be
any extension of \(-\partial_\nu u\,V_0\in C_c^\infty(\Gamma)\), then
the non-negativity of $\frac{d^2}{dt^2}|_{t=0}J(u_t)$ guarantees that
\begin{equation}\label{temp}
\int_U Q(\nabla u)[\nabla \psi,\nabla \psi]+2\mu^2\psi^2 \,dx 
\ge
\int_\Gamma 
 \big\langle Q(\na u)D^2u[\nu ], \na u\big\rangle \frac{\psi^2}{(\partial_\nu u)^2}
=
\int_\Gamma \big\langle Q(\na u)D^2u[\nu ], \nu \big\rangle \frac{ (A(\na u )\cdot \nu)}{1 +\Phi(\na u)}\psi^2
\end{equation}
where we used that $\partial_\nu u (A(\na u )\cdot \nu) =  A(\na u )\cdot \na u =1 +\Phi(\na u)$.
\end{proof}

\begin{remark}\label{rmk: 2 hom}
    \rm{ 
    In the computations above, we assumed that $\Phi$ was smooth, which we are free to do in our proof via a mollification argument. 
    However, if $u$ is a minimizer of $\J(\cdot, \Omega)$ with free boundary of class $C^{2,\alpha}$, where $\Phi \in \II$ is an even and two-homogeneous integrand (hence not twice differentiable at the origin), the computations above go through and the variations take a cleaner form.

Indeed, by Lemma~\ref{lem: first var}, which does not require $C^2$ regularity of $\Phi$, and using Euler's identity $\na \Phi (p) \cdot p = 2\Phi(p)$, the Euler Lagrange equation for $u$ becomes
    \begin{equation}\label{eqn: EL 2hom}
    \begin{cases}
        \divv(D\Phi(\na u))= 0 & \text{ in  } \{ u>0\} \cap \Omega\\
        \Phi(\na u) = 1 & \text{ on } \partial \{u>0\} \cap \Omega.
    \end{cases}
 \end{equation}
   Making further use of Euler's identity, the stability inequality \eqref{eqn: appendix stability} takes the form 
 \begin{equation}\label{eqn: 2hom stability 1}
    \int_{\partial \{u>0\}} D^2u[D \Phi (\nabla u),\nu]\,\varphi^2 \,\Phi(\nu) \, \Hm^{n-1} \leq \int_{\{u>0\}} D^2\Phi(\na u)[\na \varphi, \na \varphi]  \,dx \qquad \text{ for all } \varphi\in C^\infty_c(\Omega).
\end{equation}
Here the integrand on the right-hand side is defined a.e, since the critical set $\mathcal{C}=\{ |\na u|=0\}\cap \{u>0\}$ has Lebesgue measure zero, see e.g. \cite[Prop. 3.2]{DMMN}.
 The proof of Lemma~\ref{lem: second var} goes through in this setting because the free boundary condition $\Phi(\na u)=1$ together with the $C^{2,\alpha}$ regularity of the free boundary allow us to localize the variation to a neighborhood of the free boundary, excluding degeneracy of the gradient. 
 }
     
\end{remark}

\begin{remark}\label{rmk: aniso MC} \rm{ Continuing to assume that $\Phi \in \II$ is even and $2$-homogeneous and $u$ is a minimizer of $\J(\cdot, \Omega)$ with free boundary of class $C^{2,\alpha}$, we can further rewrite the stability inequality \eqref{eqn: 2hom stability 1} to take an analogous form to the usual form of the stability inequality for the classical Alt-Caffarelli energy; see for instance \cite[Eqn (2.4)]{JS}.

Indeed, let $F = \sqrt{\Phi}$, and let $H_\Sigma^F(x)$ denote the anisotropic $F$-mean curvature of $\G$ arising as the first variation of the surface energy $\int_{\partial U} F(\nu_{U}) \, \partial \Hm^{n-1}$, oriented with respect to the inner unit normal. Then, making use of the level set identity for the anisotropic mean curvature \cite[Eqn (3.11)]{DMMN} and Euler's identity once again, the left-hand side of  \eqref{eqn: 2hom stability 1} can be rewritten so that \eqref{eqn: 2hom stability 1} takes the form
\begin{equation}\label{eqn: 2hom stability 2}
    \int_{\partial \{u>0\}} H_\G^F \varphi^2 F(\nu) \,d \Hm^{n-1} \leq \int_{\{u>0\}} D^2(F^2/2)(\na u)[\na \varphi, \na \varphi]  \,dx \qquad \text{ for all } \varphi\in C^\infty_c(\Omega)\,.
\end{equation} 
} 
\end{remark}

\section{Proof of the convergence lemma}\label{sec: cptness proof}

In this Appendix, we prove the compactness properties of minimizers stated in Lemma \ref{lem:compactness}. For the classical Alt-Caffarelli energy, this result is rather standard. Here the main differences are the varying integrands and the lower-order perturbations. We sketch the main ingredients of the proof.
 
\begin{proof}
To simplify our notation, we set \(F:=\sup_k\|f_k\|_{L^\infty(B_1)}<\infty\), 
\(\mathcal{E}_k:=\mathcal{E}^{\Phi_k}_{f_k,\mu_k}\),
\(\mathcal{E}:=\Jfm\) and 
\[
    \omega_k(M):=
    \sup_{|p|\leq M}|\Phi_k(p)-\Phi(p)|
    \longrightarrow0
    \qquad\text{for each fixed }M<\infty.
\]
As a result, for each $s\in(0,1)$ and  \(w_k\in W^{1,\infty}(B_s)\) with
\(\|\na w_k\|_{L^\infty(B_s)}\leq M\), we have
\begin{equation}\label{eq:cv-bounded-grad}
    \left|
    \int_{B_s}\bigl(\Phi_k(\na w_k)-\Phi(\na w_k)\bigr)\,dx
    \right|
    \leq |B_s|\,\omega_k(M)\longrightarrow0.
\end{equation}

For \(s\in(0,1)\), locally uniform convergence of the sequence $\{u_k\}$ and  Lemma~\ref{lem:interior-lip}
give
\begin{equation}\label{eq:cv-lip}
    \sup_k\bigl(
        \|u_k\|_{L^\infty(B_s)}
        +\operatorname{Lip}(u_k;B_s)
    \bigr)\leq L, \qquad 
    \|u\|_{L^\infty(B_s)}
        +\operatorname{Lip}(u;B_s)\leq L.
\end{equation}

Moreover, uniform convergence of $\{f_k\}$ gives a modulus of continuity $\omega_f$ such that  for \(y\in B_s\) and \(D\in(0,1)\),
\begin{equation}\label{eq:cv-fmod}
    |f_k((1-D)y)-f_k(y)|
    \leq2\|f_k-f\|_{L^\infty(B_1)}+\omega_f(D)
    =o_k(1)+\omega_f(D) .
\end{equation}
 
\medskip
\noindent\emph{Step 1: The dilation competitor.}
In this step we show that  for all \(0<\rho<s<1\), if  \(v-u=0\) on \(\partial B_\rho\), then
\begin{equation}\label{eq:cv-dilation}
    \limsup_{k\to\infty}\,\mathcal{E}_k(u_k,B_s)
    \leq
    \mathcal{E}(v,B_s) .
\end{equation}

Fix $
    0<D<\min\{\frac12,1-\frac\rho s\}$.
 Let
\(\ell:[(1-D)s,s]\to[0,1]\) be the affine function with \(\ell((1-D)s)=1\),
\(\ell(s)=0\), so \(|\ell'|=1/(Ds)\). Writing \(\pi(x):=s\,x/|x|\) for the
radial projection onto \(\partial B_s\), define
\[
    w_k(x):=
    \begin{cases}
        v\bigl(\tfrac{x}{1-D}\bigr), & |x|\leq(1-D)s,\\[2pt]
        \ell(|x|)\,u(\pi(x))+\bigl(1-\ell(|x|)\bigr)u_k(\pi(x)),
        & (1-D)s<|x|<s,\\[2pt]
        u_k(x), & |x|\geq s .
    \end{cases}
\]
Since \(w_k\) is an admissible
competitor for \(u_k\), we have
\begin{equation}\label{eq:cv-min-wk}
    \mathcal{E}_k(u_k,B_s)\leq\mathcal{E}_k(w_k,B_s) .
\end{equation}

We estimate \(\mathcal{E}_k(w_k,B_s)\). Set \(t=1-D\).
For every \(p\in\mathbb R^n\), Hessian bound on $\Phi_k$ gives
$$
    |\Phi_k(t^{-1}p)-\Phi_k(p)|
    \leq
    \int_1^{t^{-1}}|D\Phi_k(\tau p)|\,|p|\,d\tau\leq
    \frac{\La}{2}t^{-2}|p|^2
    \leq C D|p|^2.
$$
Therefore, with \eqref{eq:cv-fmod}, we find
$$
    \int_{B_{ts}}\Phi_k(\na w_k)\,dx
    =t^n\int_{B_s}\Phi_k(t^{-1}\na v)\,dy\leq
    \int_{B_s}\Phi(\na v)\,dy
    +CD\int_{B_s}|\na v|^2\,dy+o_k(1),
    $$
    $$
    |\{w_k>0\}\cap B_{ts}|
    =t^n|\{v>0\}\cap B_s|
    \leq|\{v>0\}\cap B_s|,
    $$

and 
$$
    \int_{B_{ts}}|\mu_kw_k-f_k|^2\,dx
    =t^n\int_{B_s}|\mu_kv(y)-f_k(ty)|^2\,dy\leq
    \int_{B_s}|\mu v-f|^2\,dy
    +o_k(1)+C_v\omega_f(D).
$$

In the annulus
\(A_D:=B_s\setminus B_{(1-D)s}\), whose volume is at most \(C(n)D\), we
compute
\[
    \na w_k
    =\ell'(|x|)\,\frac{x}{|x|}\,
    \bigl(u(\pi(x))-u_k(\pi(x))\bigr)
    +\ell\,\na\bigl(u\circ\pi\bigr)
    +(1-\ell)\,\na\bigl(u_k\circ\pi\bigr).
\]
 By \eqref{eq:cv-lip}, we have
\[
    |\na w_k|
    \leq\frac{\eta_k}{Ds}+CL,
    \qquad
    \eta_k:=\|u-u_k\|_{L^\infty(\overline{B_s})}\longrightarrow0 .
\]
Hence, bounding \(\Phi_k(p)\leq C(\La)|p|^2\), the characteristic function
by \(1\), and the fidelity term by \(C(1+F^2)\), we have $ \mathcal{E}_k(w_k,A_D)
    \leq C(\frac{\eta_k^2}{D}+D).$Combining these estimates with \eqref{eq:cv-min-wk} gives
$$
    \mathcal{E}_k(u_k,B_s)
    \leq
    \mathcal{E}(v,B_s)
    +C_v\bigl(D+\omega_f(D)\bigr)
    +C\frac{\eta_k^2}{D}
    +o_k(1).
$$
Taking first $\limsup_{k\to\infty}$ at fixed $D$, and then
letting $D\downarrow0$, we obtain \eqref{eq:cv-dilation}.
 
\medskip
\noindent\emph{Step 2: Lower semicontinuity.}
For every \(s\in(0,1)\), we show that
\begin{equation}\label{eq:cv-lsc}
    \mathcal{E}(u,B_s)\leq\liminf_{k\to\infty}\mathcal{E}_k(u_k,B_s) .
\end{equation}

Indeed, \eqref{eq:cv-lip} and \eqref{eq:cv-bounded-grad} give $|
    \int_{B_s}\bigl(\Phi_k(\na u_k)-\Phi(\na u_k)\bigr)\,dx|
    \leq |B_s|\omega_k(L)\longrightarrow0.$
Convexity and weak $H^1(B_s)$ convergence then imply
$$
    \liminf_k\int_{B_s}\Phi_k(\na u_k)\,dx
    \geq\int_{B_s}\Phi(\na u)\,dx.
$$
For the measure term, 
Fatou's lemma gives
\(\liminf_k|\{u_k>0\}\cap B_s|\geq|\{u>0\}\cap B_s|\). 
Finally, the fidelity term converges, since \(u_k\to u\) in \(L^2(B_s)\), \(f_k\to f\)
uniformly, and \(\mu_k\to\mu\).
 
\medskip
\noindent\emph{Step 3: proof of (2).}
Let \(0<\rho<s<1\) and let \(v-u\in H^1_0(B_\rho)\). Combining
\eqref{eq:cv-lsc} and \eqref{eq:cv-dilation}, we have
\[
    \mathcal{E}(u,B_s)
    \leq\liminf_{k\to\infty}\mathcal{E}_k(u_k,B_s)
    \leq\limsup_{k\to\infty}\mathcal{E}_k(u_k,B_s)
    \leq\mathcal{E}(v,B_s) .
\]
Since \(v=u\) on \(B_s\setminus B_\rho\), subtracting the
common finite contribution on that annulus gives
\begin{equation}\label{eq:cv-localmin}
    \mathcal{E}(u,B_\rho)\leq\mathcal{E}(v,B_\rho)
    \qquad
    \text{whenever }v-u\in H^1_0(B_\rho),\quad \rho<1.
\end{equation}
Thus \(u\) is a local minimizer in \(B_1\).

\medskip
\noindent\emph{Step 4: Proof of (1).}
Fix \(s\in(0,1)\). Taking \(v=u\) in \eqref{eq:cv-dilation} and combining
with \eqref{eq:cv-lsc},
\[
    \mathcal{E}_k(u_k,B_s)\longrightarrow\mathcal{E}(u,B_s) .
\]
In particular, using \eqref{eq:cv-bounded-grad} and \eqref{eq:cv-lip},
\(\int_{B_s}\Phi(\na u_k)\to\int_{B_s}\Phi(\na u)\). The uniform
convexity of \(\Phi\) gives
\[
    \int_{B_s}|\na u_k-\na u|^2dx
    \leq
    C\int_{B_s}\Bigl[\Phi(\na u_k)-\Phi(\na u)
    -D\Phi(\na u)\cdot(\na u_k-\na u)\Bigr]dx
    \longrightarrow0.
\]

\medskip
\noindent\emph{Step 5: proof of (3) and (4).}
Now that the limit $u$ is an energy minimizer, it satisfies the Lipschitz bound as well as non-degeneracy properties. The convergence of the positive sets as well as the free boundaries follow from standard arguments; see, e.g., \cite{V}.

\medskip
\noindent\emph{Step 6: proof of (5).}
For notational simplicity we prove the claim with \(r=1\). We may assume that the left-hand side of (5) is
finite and, passing to a subsequence realizing the \(\liminf\), that
\[
    \sup_k\|D^2u_k\|_{L^p(\{u_k>0\}\cap B_1)}\leq C_0<\infty .
\]
Set \(\Omega^+:=B_1\cap\{u>0\}\).
For \(0<\e<1\), let
\[
    V_\e:=
    \left\{
        x\in\Omega^+:
        \operatorname{dist}
        (x,\mathbb R^n\setminus\Omega^+)>\e
    \right\}.
\]
Then \(\overline{V_\e}\Subset\Omega^+\), and
\(V_\e\uparrow\Omega^+\) as \(\e\downarrow0\).
If \(V_\e=\emptyset\), the desired inequality on \(V_\e\)
is trivial.
Otherwise \(m_\e:=\min_{\overline{V_\e}}u>0\), and locally
uniform convergence gives \(u_k\geq m_\e/2\) on \(V_\e\)
for all sufficiently large \(k\).
In particular, \(V_\e\subset\{u_k>0\}\) and
\(\|D^2u_k\|_{L^p(V_\e)}\leq C_0\). Let \(q=p/(p-1)\). For
\(\varphi\in C^\infty_c(V_\e)\) and any \(i,j\), the weak \(H^1\)
convergence and integration by parts give
\[
    \int_{V_\e}\partial_iu\,\partial_j\varphi\,dx
    =\lim_{k\to\infty}\int_{V_\e}\partial_iu_k\,\partial_j\varphi\,dx
    =-\lim_{k\to\infty}\int_{V_\e}\partial_{ij}u_k\,\varphi\,dx ,
\]
and the last integrals are bounded by \(C_0\|\varphi\|_{L^q(V_\e)}\). By
duality, \(u\) admits weak second derivatives
\(\partial_{ij}u\in L^p(V_\e)\). Moreover, since \((\partial_{ij}u_k)_k\) is
bounded in \(L^p(V_\e)\) and its distributional limit is
\(\partial_{ij}u\), we have \(D^2u_k\rightharpoonup D^2u\) componentwise,
weakly in \(L^p(V_\e)\). By weak lower semicontinuity of the convex
functional \(w\mapsto\int|w|^p\),
\[
    \int_{V_\e}|D^2u|^p\,dx
    \leq\liminf_{k\to\infty}\int_{V_\e}|D^2u_k|^p\,dx
    \leq\liminf_{k\to\infty}\int_{\{u_k>0\}\cap B_1}|D^2u_k|^p\,dx .
\]
Since \(V_\e\uparrow\{u>0\}\cap B_1\) as \(\e\downarrow0\), the monotone
convergence theorem concludes the proof of (5).
\end{proof}

\section{Proof of $\e$-Regularity}\label{sec: e reg proof}
We sketch the proof of Theorem~\ref{thm: eps reg}, emphasizing the differences from \cite{DFS15}. 
The first step is the following partial Harnack inequality; c.f. \cite[Theorem 4.1]{DFS15} \cite[Theorem 3.1]{DeSilva} or \cite[Theorem 7.7]{V}.
\begin{lemma}[Partial Harnack] \label{lem: partial harnack}
   There exist $\bar{\e} =\bar{\e}(n,\La, \alpha)>0$ and $c=c(n,\La)\in (0,1)$ such that the following holds. Suppose $u$ is a viscosity solution to \eqref{eqn: e reg sec EL} in $B_1$ for some $g \in C(B_1)\cap L^\infty(B_1)$ and suppose $0 \in \overline{\{u>0\}}$. If there exist  $e \in \R^n$ with $\cG(e)=1$ and $a_0<b_0$ with $|a_0-b_0|\leq \bar\e$
   such that
   \begin{equation}\label{eqn: linear trapped 1}
   (x \cdot e+a_0 )_+ \leq u(x) \leq 
      (x \cdot e+b_0)_+ \qquad \text{ in }B_1
      \end{equation}
      and
      \begin{equation}\label{eqn: small RHS partial BH}
          \|g\|_{L^\infty(B_1)} \leq |a_0-b_0|^2,
      \end{equation}
   then there are $a_1, b_1$ with $a_0 \leq a_1 <b_1\leq b_0$ and $|b_1-a_1|\leq (1-c)|a_0-b_0|$ such that 
   \[
    (x \cdot e+a_1 )_+ \leq u(x) \leq 
    (x \cdot e+b_1 )_+ \qquad \text{ in }B_{1/64\Lambda}.
   \]
\end{lemma}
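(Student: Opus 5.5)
We follow De Silva's scheme, as in \cite[Theorem 4.1]{DFS15}. Set $\e_0=b_0-a_0$ and normalize the point $\bar x=\frac{1}{5\Lambda}e/|e|$, which for $\bar\e$ small lies in $\{u>0\}$ at a definite distance from the free boundary. The proof splits into two alternatives, according to whether $u(\bar x)$ is closer to the lower or to the upper linear barrier:
\[
u(\bar x)\geq (\bar x\cdot e+a_0)+\tfrac{\e_0}{2}
\qquad\text{or}\qquad
u(\bar x)\leq (\bar x\cdot e+a_0)+\tfrac{\e_0}{2}.
\]
We treat the first alternative, where we improve the lower bound and set $b_1=b_0$. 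The second is symmetric: there we improve the upper bound and set $a_1=a_0$.

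\emph{Interior Harnack step.} In the ball $B_{1/(10\Lambda)}(\bar x)\subset\{u>0\}$ the gradient of $u$ is close to $e$, by \eqref{eqn: linear trapped 1} together with the interior estimate \eqref{eqn: gradient reg interior}. Since $\cG(e)=1$, Remark~\ref{rmk: annulus} puts $e$ in a compact subannulus of $\An$, and there $\Phi$ is $C^{2,\alpha}$. Consider $w=u-(x\cdot e+a_0)\geq0$. This function solves, in the viscosity sense, a uniformly elliptic linear equation whose coefficients are $\int_0^1 D^2\Phi(e+t\nabla w)\,dt$ and whose right-hand side is $g$. Even without passing through this linear equation, we can apply the Pucci-type Harnack inequality: by the observation following \eqref{eqn: visc int cond}, $u$, and hence $w$, satisfies the Pucci inequalities with right-hand side $g$. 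This gives
\[
w\geq c\,w(\bar x)-C\|g\|_{L^\infty}\geq c\e_0-C\e_0^2\geq \tfrac{c}{2}\e_0
\qquad\text{on } B_{1/(20\Lambda)}(\bar x),
\]
where the last step uses \eqref{eqn: small RHS partial BH} and $\bar\e$ small.

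\emph{Barrier step.} This is the main obstacle. We need a family of strict subsolutions $v_t(x)=(x\cdot e+a_0+t c\e_0\psi(x))_+$, with $t\in[0,1]$. The bump $\psi$ is the standard radial function centered at $\bar x$,
\[
\psi(x)=\kappa\bigl(|x-\bar x|^{-\gamma}-(3/4)^{-\gamma}\bigr),
\]
truncated so that $\psi\equiv1$ on $B_{1/(20\Lambda)}(\bar x)$, $\psi=0$ outside $B_{3/4}(\bar x)$, and $\psi>0$ on $B_{1/64\Lambda}$. Two properties must be checked.

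First, $v_t$ must be a strict viscosity subsolution in the annular region. The gradient there is $e+O(\e_0)$, which lies in $\An$, so we linearize $\Lo v_t$ using the $C^{2,\alpha}$ bound of $\Phi$ on $\An$. This gives $\Lo v_t\geq c\e_0\,\mathcal M^-(D^2\psi)-C\e_0^2>\|g\|_\infty$ once $\gamma=\gamma(n,\Lambda)$ is large and $\bar\e$ small, using \eqref{eqn: small RHS partial BH}. The $C^{2,\alpha}$ bound enters only in the error term $C\e_0^2$; this is the source of the dependence of $\bar\e$ on $\alpha$.

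Second, $v_t$ must satisfy the strict free boundary condition $\cG(\nabla v_t)>1$. Here $\nabla v_t=e+tc\e_0\nabla\psi$, and $\nabla\psi$ has a positive component along $e$ on the relevant region. Since $t\mapsto\cG(te)$ is strictly increasing and $D\cG(e)=D^2\Phi(e)e$, a first-order expansion gives $\cG(\nabla v_t)\geq 1+c'\e_0$.

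The finicky part is arranging the containments so that $\{v_t>0\}$ stays inside $B_1$ and the touching point avoids $B_{1/(20\Lambda)}(\bar x)$. This is where the radius $1/64\Lambda$ comes from: it accounts for the anisotropy of $D^2\Phi(e)$.

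\emph{Continuity argument.} Let $\bar t$ be the largest $t\in[0,1]$ with $v_t\leq u$. We have $v_0\leq u$, and $v_t\leq u$ on $B_{1/(20\Lambda)}(\bar x)$ by the Harnack step. If $\bar t<1$, then $v_{\bar t}$ touches $u$ from below at some point. An interior touching point contradicts the strict subsolution property through the viscosity definition \eqref{eqn: visc int cond}; note that this touching happens at gradients in $\An$, so the first bullet of that definition applies. A free boundary touching point contradicts \eqref{eqn: visc BC}. Hence $\bar t=1$, and $u\geq x\cdot e+a_0+c\e_0\psi$ on $B_{1/64\Lambda}$. Since $\psi\geq c''>0$ there, we may take $a_1=a_0+cc''\e_0$.
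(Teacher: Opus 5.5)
\textbf{There is a genuine gap in the barrier step: the strict free boundary inequality.}

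Expanding gives $\mathcal{G}(\nabla v_t)=1+tc\varepsilon_0\,\nabla\psi\cdot D^2\Phi(e)e+o(\varepsilon_0)$. What you need at every possible contact point on $\Gamma_u$ is therefore $\nabla\psi\cdot D^2\Phi(e)e>0$, not $\nabla\psi\cdot e>0$. Monotonicity of $t\mapsto\mathcal{G}(te)$ only gives $e\cdot D^2\Phi(e)e>0$. For an anisotropic integrand, $D^2\Phi(e)e$ is in general not parallel to $e$; this already happens for $\Phi(p)=\frac12 p\cdot Ap$, where $D\mathcal{G}(e)=Ae$.

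Your Euclidean-radial bump has $\nabla\psi$ parallel to $\bar x-x$. A priori, $\Gamma_u$ can sit anywhere in the slab $\{-b_0\le x\cdot e\le -a_0\}$. This includes points of $B_{3/4}(\bar x)$ at lateral distance close to $3/4$ from the axis through $\bar x$, where $(\bar x-x)\cdot e/|e|\approx 1/(5\Lambda)$. There $\bar x-x$ is nearly orthogonal to $e$, so for suitably anisotropic $A$ one has $(\bar x-x)\cdot D^2\Phi(e)e<0$. Hence $\mathcal{G}(\nabla v_t)<1$ at such points, and a contact there contradicts nothing.

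Your remark that the radius $1/(64\Lambda)$ ``accounts for the anisotropy'' does not address this. The obstruction is the direction of $\nabla\psi$, not the size of the region. This is exactly the one place where the proof must depart from \cite{DFS15}. The paper handles it by building the bump from the anisotropic quadratic $q(x)=M_e[x-\bar x,x-\bar x]$ with $M_e=(D^2\Phi(e))^{-1}$. Then $\nabla w$ is a positive multiple of $-M_e(x-\bar x)$, so $\nabla w\cdot D\mathcal{G}(e)=-2\bar c\gamma\exp(-\gamma q)\,(x-\bar x)\cdot e$. This is bounded below on $\{x\cdot e\le\tau\}$, a region containing every free boundary point. (A Euclidean bump supported in a thin cap around the axis, keeping every relevant $\bar x-x$ within a small angle of $e$, could also be made to work, but only with a much smaller, $\Lambda$-dependent improvement radius.)

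\textbf{A second, more routine defect is the continuity family.} Since $v_t=(x\cdot e+a_0+tc\varepsilon_0\psi)_+$ scales the bump by $t$, you actually have $\mathcal{L} v_t=tc\varepsilon_0\,D^2\Phi(\nabla v_t):D^2\psi$; your estimate dropped the factor $t$. This need not exceed $\|g\|_\infty\le\varepsilon_0^2$ when $t\lesssim\varepsilon_0$. Also, on the edge of $\operatorname{supp}\psi$ one has $v_t=(x\cdot e+a_0)_+$, so a contact there yields no contradiction. In particular, $\bar t=0$ cannot be excluded.

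Use De Silva's vertical translation instead: $v_t=\ell+c\varepsilon_0(\psi-1)+tc\varepsilon_0$, which is the paper's $\ell+c_*\varepsilon w-tc_*\varepsilon$ with $t$ reversed. Then $D^2v_t$ is independent of $t$, and $v_t<u$ strictly off the annulus when $t<1$.

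Finally, the interior check needs only the Pucci bounds, with no linearization error. The dependence of $\bar\varepsilon$ on $\alpha$ comes instead from the $o(\varepsilon_0)$ remainder in the expansion of $\mathcal{G}$ at the free boundary, which uses the uniform modulus of $D^2\Phi$ on $\mathcal{A}(\Lambda)$.
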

\begin{proof} The proof of this lemma only uses that $\mathcal{M}^-_{1/\La,\La}(D^2 u ) \leq g \leq \mathcal{M}^+_{1/\La,\La}(D^2 u )$ in the viscosity sense in $\{u>0\}$ and that $\cG(\na u)=1$ on $\partial \{u>0\}$ in the viscosity sense. The proof essentially differs from \cite[Theorem 4.1]{DFS15} only in the construction of the barrier $w$ below, since here the free boundary condition does not depend on $|\na u|$ only.

Let $\bar{\e} \leq 1/(24\La^2)$ and $\TT ={|e|}/{(24 \Lambda})$. Note that $|e| \in [ 1/\Lambda,\Lambda]$ (as $\cG(e)=1$) and thus $\bar{\e} \leq \TT$.
   Moreover, since $0\in\overline{\{u>0\}}$, we know $b_0 \geq 0$ and thus $a_0 \geq -\TT.$ We consider two cases: $a_0\geq \TT$ and $|a_0|\leq \TT.$

{\it Case 1:} First suppose $a_0 \geq \TT$. Then $u$ satisfies $\mathcal{M}^-_{1/\La,\La}(D^2 u ) \leq g \leq \mathcal{M}^+_{1/\La,\La}(D^2 u )$ in the viscosity sense in $B_{1/(24\La)} \subset B_1 \cap \{x \cdot e > -\TT\}$.  We let
\begin{equation}
    \label{eqn: ell def}
\ell(x) = x \cdot e+a_0 \qquad \text{ and }\qquad \e = b_0-a_0,
\end{equation} 
so that the assumption \eqref{eqn: linear trapped 1} implies $\ell(x) \leq u(x) \leq \ell(x)+\e$ in $B_{1/(24\La)}$. If $u(0) \geq \ell(0) + \e/2,$ let $v=u-\ell$.
In $B_{1/(24\La)}$, the function $v$ is nonnegative and again satisfies $\mathcal{M}^-(D^2 v, 1/\La,\La) \leq g \leq \mathcal{M}^+(D^2 v, 1/\La,\La) $ in the viscosity sense.
So, by Harnack inequality (see \cite[Theorem 4.3]{CaffCabre}) there is a constant $C =C(n,\Lambda)>0$ such that 
\[
\frac{\e}{2} \leq v(0) \leq C\Big(\inf_{B_{1/(48\La)}}v + \mfrac{1}{(24\La)^2}\|g\|_{L^\infty}\Big) \leq C\Big(\inf_{B_{1/(48\La)}} v + \mfrac{1}{(24\La)^2}\e^2\Big) 
\]
Choosing $\bar{\e}=\bar{\e}(n,\Lambda)$ small enough,  $C/{(24\La)^2} \e^2\leq \e/4$, so absorbing this and rearranging terms, we get 
\[
\ell(x) + \mfrac{\e}{4C}  \leq u(x) \qquad \text{ in }B_{1/(48\La)},
\]
which yields the conclusion with $a_1 = a_0 + \e/(4C)$ and $b_1=b_0$ in this case.
If instead $u(0) \leq \ell(0) + \e/2$, we argue analogously with the function $w = \ell +\e-u$ to find $
u(x) \leq \ell(x) + (1-\mfrac{1}{4C})\e$ in $B_{{1}/{(48 \La)}}.$
This yields the conclusion with $b_1 = b_0 -\e/(4C)$ and $a_1=a_0.$

{\it Case 2:} Now suppose $|a_0| \leq \TT.$ Let $\bar{x} =  \frac{e}{3\Lambda|e|}$. For the linear function $\ell$ and number $\e$ defined in \eqref{eqn: ell def} above, first consider the case when $u(\bar{x}) \geq \ell(\bar{x}) +\e/2.$ In this case, we claim there exists $c=c(n,\La)>0$ such that
\begin{align}
    \left(x \cdot e+ a_0 + c\e \right)_+ \leq u(x)& \qquad \text{ for all $x$ in } B_{1/(64\Lambda)}\,.
\end{align}
 This will prove the claim with $a_1= a_0+ c\e$ and $b_1 =b_0.$ To this end, first note that $x\cdot e>\TT$ for all $B_{1/(4\Lambda)}(\bar{x})$, so this ball is contained in $\{u>0\}$.  Applying Harnack inequality to the function $v= u-\ell$ just as in Case 1, we find 
\[
\frac{\e}{2} \leq v (\bar{x}) \leq  C\Big(\inf_{B_{1/(8\Lambda)}(\bar{x})} v + \mfrac{1}{16\Lambda^2}\|g\|_{L^\infty}\Big) \leq C \inf_{B_{1/(8\Lambda)}(\bar{x})} v + \frac{\e}{4}
\]
provided $\bar{\e}=\bar{\e}(n,\Lambda)$ is chosen small enough. Rearranging, this means
\begin{equation}\label{eqn: aa}
   u \ge \ell + c_*\e \qquad \text{ in } B_{1/(8\Lambda)}(\bar{x}) 
\end{equation}
where $c_* = 1/(4C)$. Now, fix the matrix $M_e = (D^2\Phi(e))^{-1}.$ Then letting $q(x) = M_e[x-\bar{x},x-\bar{x}],$ and $\gamma =64n\Lambda^7$, consider the barrier $w$ defined piecewise by
\[
w(x) = \begin{cases}
    1 & \text{ if } q(x) < 1/(64\Lambda^3)\\
    0 & \text{ if }  q > 1/(4\Lambda) \\
    \bar{c}(e^{-\gamma q(x) } - e^{-\gamma/(4\Lambda)}) & \text{ if  }1/(64\Lambda^3) < q <1/(4\La)\,.
\end{cases}
\]
Here $\bar{c}= \bar{c}(n,\Lambda)$ is chosen to make $w$ continuous. Let $A(\bar{x})$ denote the annular region $A(\bar{x}) :=\{1/(64\Lambda^3 )<q(x) < 1/(4\Lambda)\}$ in which $w$ is nonconstant. Using that $|x-\bar{x}|^2/\Lambda \leq q(x)\leq \Lambda |x-\bar{x}|^2$, we see that $ \{ q < 1/(4\Lambda) \}\subset B_{1}$ and $\{ q <1/(64\La^3)\} \subset B_{1/(8\Lambda)}(\bar{x})$. Moreover,
we have
\begin{align}\label{eqn: grad and hessian}
    \na w(x) = -2\bar{c}\gamma e^{-\gamma q(x) } M_e(x-\bar{x}) \qquad \text{ and } \qquad 
    D^2 w(x) =2\bar{c}\gamma e^{-\gamma q(x) } \big( 2\gamma M_e(x-\bar{x})\otimes M_e(x-\bar{x})  - M_e\big)
\end{align} 
In particular, for 
$x \in A(\bar{x}) \cap \{x \cdot e \leq \TT\}$, we have
\begin{align}\label{eqn: pos partial}
  \na w(x) \cdot D \cG(e) =- 2\bar{c}\gamma e^{-\gamma q(x) }  (x-\bar{x})\cdot e \geq c(n,\Lambda) (-\tau + |e|/(3\Lambda)) \geq c(n,\La).
\end{align}
Here we have used that $D\cG(p) = D^2\Phi(p)[p]$ for any $p$ for which $D^2\Phi(p)$ exists. 
Now, fix $t \in (0,1)$ and let 
$$v_t(x) = \ell(x) + c_*\e w(x) -tc_*\e.$$
Noting that $D^2 v_t = c_*\e D^2 w,$ we have 
\begin{equation}\label{eqn: subsoln}
\begin{split}
\mathcal{M}_{1/\La, \La}^-(D^2v_t(x)) 
&\geq 
c_*\e 
\mathcal{M}_{ 1/\La, \La}^-(D^2 w(x))\geq 2c_*\e \bar{c}\gamma e^{-\gamma q(x) }   \Big(\mfrac{2\gamma q(x)}{\Lambda^2} - n\Lambda^2\Big) \geq c(n,\Lambda)\e
\end{split}
\end{equation}
for $x \in A(\overline{x})$, thanks to our choice of $\gamma$.
Choosing $\bar{\e}$ small enough, \eqref{eqn: subsoln} and \eqref{eqn: small RHS partial BH} guarantee $\mathcal{M}_{1/\La, \La}^-(D^2v_t(x))  >g$ in $A(\bar{x})$.
We claim that $u>v_t$ in $A(\bar{x})$. Suppose not.
Noting that $u >v_t$ on $B_{1/(8\La)}(\bar{x})$ by \eqref{eqn: aa} and on $B_1\setminus \{q < 1/(4\La)\}$ by construction, this means that $u-v_t$ has a local minimum at some $x$ in $A(\bar{x})$, so a vertical translation of $v_t$ touches $u$ from below at this $x$. We note that $x$ cannot lie in $A(\bar{x}) \cap \{u>0\}$ since  $\mathcal{M}_{1/\La, \La}^-(D^2u(x))\leq g$  and  $\mathcal{M}_{1/\La, \La}^-(D^2v_t(x))  >g(x)$ in the viscosity sense  in this set. Next, $x$ cannot lie in $A(\bar{x})\cap \GU$, since
\[
\cG(\na v_t) =  \cG(e) +  D\cG (e) \cdot (\na v_t-e) + o(\e) = 1+ c_* \e \na w(x)\cdot D^2 \Phi(e)[e] + o(\e) >1
\]
for $\e$ small enough,
thanks to \eqref{eqn: pos partial}, again violating $u$ being a viscosity solution. 
Finally, we cannot have $x \in A(\bar{x}) \cap \{u=0\}$, since at such a point we would have $\na (u-v_t)(x)=-e-c_*\e \na w(x) =0$, contradicting \eqref{eqn: pos partial} again.

So, $u >v_t$ in $A(\overline{x})$ for all $t \in (0,1)$. Hence, taking $t\to0$, we have $u \geq \ell + c_* \e w$ in $A(\overline{x})$. In particular, since $w$ is bounded below in $B_{\sfrac{1}{(64\La)}}$, this establishes the claim in this case. If instead $u(\bar{x} )\leq \ell(\bar{x}) + \e/2,$ an analogous proof shows $(x\cdot e + b_0 - c\e)_+ \geq u(x)$
for all $x$ in $B_{\sfrac{1}{(64\Lambda)}}$.
This completes the proof.
\end{proof}
By iterating Lemma~\ref{lem: partial harnack}, we find the following:
\begin{corollary}\label{cor: iof convergence}
    Let $\Phi_k \in \II$ with $\Phi_k \to \Phi$, $g_k \in C(B_1)\cap L^\infty(B_1)$, and $\e_k\to 0$. For each $k$, suppose $u_k$ satisfies \eqref{eqn: e reg sec EL} with $g_k$ in place of $g$ and with $\Lo =\Lo_{\Phi_k}$, and for $e_k$ with $\cG_k(e_k)=1$ we have
    \[
    (x\cdot e_k - \e_k)_+ \leq u_k(x) \leq (x\cdot e_k + \e_k)_+  \qquad \text{ in }B_1
    \]
    and
    \begin{equation}\label{eqn: small g hyp contra arg}
    \| g_k\|_{L^\infty(B_1)} \leq \e_k^2.
        \end{equation}
    Then let $\tilde{u}_k (x) := (u_k(x) - x\cdot e_k)/\e_k$ and $\text{graph}(\tilde{u}_k):= \{ (x,\tilde{u}_k(x)) : x \in \overline{\Omega}_{u_k} \cap B_{1/2} \}\subset \R^{n+1}$. Then
    there exist $e \in \R^n$ with {$\cG(e)=1$} and  a H\"{o}lder continuous function $\tilde{u}:B_{1/2} \cap \{e \cdot x \geq 0\} \to \R$ such that,
    up to a subsequence, $e_k \to e$,  $\tilde{u}_k \to \tilde{u}$ locally uniformly on $B_{1/2}\cap \{e\cdot x >0\}$ and $\text{graph}(\tilde{u}_k) \to \text{graph}(\tilde{u})$ in the Hausdorff distance. Here $\text{graph}(\tilde{u}) = \{(x, \tilde{u}(x) : x \in B_{1/2} \cap \{ x\cdot e\geq 0\}\}$.
\end{corollary}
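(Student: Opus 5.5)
The plan is the standard De Silva compactness argument: iterate Lemma~\ref{lem: partial harnack} at dyadic scales to get equicontinuity of the rescaled differences $\tilde u_k$ away from small scales, then extract a limit by Arzel\`a--Ascoli. First, since $\Phi_k\in\II$ and $\cG_k(e_k)=1$, \eqref{eqn: first annular constraint} gives $|e_k|\in[\sqrt{2/\La},\sqrt{2\La}]$. Passing to a subsequence, $e_k\to e$. Because $\Phi_k\to\Phi$ in $C^1$ on compact sets (Section~\ref{sec: prelim integrands}), we get $\cG_k\to\cG$ locally uniformly, and hence $\cG(e)=1$.

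Next, fix $x_0\in B_{1/2}\cap\overline{\{u_k>0\}}$. The rescaling $u_k(x_0+\tfrac12 x)/(\tfrac12)$ is again a viscosity solution, with right-hand side $\tfrac12 g_k(x_0+\cdot/2)$. The trapping hypothesis in $B_1$ becomes a trapping between $(x\cdot e_k+a_0)_+$ and $(x\cdot e_k+b_0)_+$ with $b_0-a_0\le 4\e_k$. The right-hand side bound $\|g\|\le \e_k^2$ fits \eqref{eqn: small RHS partial BH} up to a constant, which we handle by shrinking $\bar\e$; the lemma only needs $\|g\|\lesssim|a_0-b_0|^2$ after renormalizing constants.

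We then apply Lemma~\ref{lem: partial harnack} $m$ times, rescaling by the factor $64\La$ each time. At step $j$ the oscillation is at most $(1-c)^j 4\e_k$, while the right-hand side has been multiplied by $(64\La)^{-j}$. The smallness condition \eqref{eqn: small RHS partial BH} is preserved as long as $(64\La)^{-j}\le (1-c)^{2j}$, which we can arrange by decreasing $c$ if necessary. The iteration is allowed while the oscillation at that scale stays below $\bar\e$ times the scale, that is, while $(1-c)^j\e_k\lesssim \bar\e (64\La)^{-j}$; this permits $m\sim\log(1/\e_k)$ steps. Translating back, we obtain for $|x-x_0|\ge \e_k/\bar\e$ the estimate
\[|\tilde u_k(x)-\tilde u_k(x_0)|\le C|x-x_0|^{\gamma},\qquad \gamma=\log(1-c)^{-1}/\log(64\La)\,.\]
Here we use that $u_k-x\cdot e_k$ lies, up to an additive constant, between the shifted half-plane profiles on the positive set.

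These bounds give uniform boundedness ($|\tilde u_k|\le1$) and H\"older equicontinuity of $\tilde u_k$ down to scale $\e_k\to0$. A variant of Arzel\`a--Ascoli yields a subsequence converging uniformly on compact subsets of $B_{1/2}\cap\{x\cdot e>0\}$; such compact sets eventually lie in $\{u_k>0\}$, since the free boundary of $u_k$ lies within $\e_k$ of $\{x\cdot e_k=0\}$. The limit $\tilde u$ is $C^{0,\gamma}$. It extends to $\{x\cdot e\ge0\}$ because the oscillation estimate is uniform up to the free boundary, and this same uniformity gives Hausdorff convergence of the graphs, since points of $\overline{\{u_k>0\}}$ near $\{x\cdot e=0\}$ have values controlled by nearby interior points.

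The main obstacle is the bookkeeping in the iteration: showing the right-hand side smallness \eqref{eqn: small RHS partial BH} and the flatness threshold survive each rescaling with the varying $e_k$, which is only close to $e$ but still satisfies $\cG_k(e_k)=1$ exactly, and handling points $x_0$ close to $\partial\{u_k>0\}$, where the lemma is applied with $0\in\overline{\{u>0\}}$ after translating.
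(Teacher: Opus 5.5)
Your proposal is correct and takes the same route as the paper. The paper omits the proof and refers to Velichkov's Lemmas 7.13--7.14, which iterate the partial Harnack inequality of Lemma~\ref{lem: partial harnack} (with constants uniform over $\II$) down to a scale comparable to $\e_k/\bar\e$, obtain a uniform H\"older-type oscillation bound for $\tilde u_k$ up to the free boundary, and conclude with an asymptotic Arzel\`a--Ascoli argument and Hausdorff convergence of the graphs. One small point: the right-hand-side condition is easier to propagate than you suggest. In the renormalized picture, each step multiplies the oscillation by $64\La(1-c)$ and the right-hand side by $(64\La)^{-1}$, so \eqref{eqn: small RHS partial BH} propagates as soon as $(1-c)^2(64\La)^3\geq1$, and only the flatness threshold $\bar\e$ limits the number of iterations.
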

The proof is the same as those of \cite[Lemmas 7.13 and 7.14]{V} and is thus omitted.
Following \cite{DeSilva}, the next step toward proving Lemma~\ref{lem: IOF} is the following linearization; c.f. \cite[Lemma 7.15]{V}.
\begin{lemma}\label{lem: lin}
  With the same assumptions and notation as Corollary~\ref{cor: iof convergence}, $\tilde{u}$ has $|\tilde{u}|\leq 1$ in $B_{1/2}\cap\{x\cdot e\geq0 \}$ and solves 
    \begin{equation}
        \label{eqn: linearized eqn}
    D^2 \Phi(e):D^2 \tilde{u} = 0 \text{ in } \{x\cdot e>0\} \cap B_{1/2}, \qquad D^2\Phi(e)[e]  \cdot \na \tilde{u} =0 \text{ on }\{x\cdot e =0\} \cap B_{1/2}.
    \end{equation}
\end{lemma}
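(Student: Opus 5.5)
The proof follows De Silva's linearization scheme, in the form of \cite[Lemma 7.15]{V}, adapted to the anisotropic operator. The bound $|\tilde u|\le 1$ is immediate: the trapping $(x\cdot e_k-\e_k)_+\le u_k\le (x\cdot e_k+\e_k)_+$ gives $-1\le \tilde u_k\le 1$ wherever $u_k>0$, and this passes to the Hausdorff limit of the graphs from Corollary~\ref{cor: iof convergence}. Both equations in \eqref{eqn: linearized eqn} are proved in the viscosity sense by contradiction with test functions. Fix a polynomial $P$ that touches $\tilde u$ strictly from below (the case from above is symmetric) at a point $x_0$. By the Hausdorff convergence of graphs, the translates $P+c_k$ touch $\tilde u_k$ from below at points $x_k\to x_0$, with $x_k\in\overline{\{u_k>0\}}$. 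Undoing the rescaling, $\phi_k(x):=x\cdot e_k+\e_k(P(x)+c_k)$ touches $u_k$ from below at $x_k$.

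\emph{Interior equation.} Suppose $x_0\cdot e>0$. Then $x_k\in\{u_k>0\}$ for large $k$, and $\na\phi_k(x_k)=e_k+O(\e_k)$. Since $\cG_k(e_k)=1$, Remark~\ref{rmk: annulus} places $e_k$, and hence $\na\phi_k(x_k)$, in a compact subannulus of $\An$. So the first alternative of \eqref{eqn: visc int cond} applies and gives $D^2\Phi_k(\na\phi_k(x_k)):\e_k D^2P\le g_k(x_k)\le \e_k^2$. Dividing by $\e_k$ and passing to the limit, using the uniform convergence $D^2\Phi_k\to D^2\Phi$ on $\An$ (Section~\ref{sec: prelim integrands}) and $e_k\to e$, we obtain $D^2\Phi(e):D^2P(x_0)\le 0$. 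This shows $\tilde u$ is a viscosity solution of the constant-coefficient equation, so it is smooth in the interior.

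\emph{Boundary condition.} This is the main obstacle. Let $x_0\in\{x\cdot e=0\}$ and suppose, to the contrary, that $D^2\Phi(e)[e]\cdot\na P(x_0)>0$. As in \cite{V}, we first perturb $P$: replace it by $P(x)-\kappa|x'-x_0'|^2+C\kappa (x\cdot e)^2$, adjusted using the matrix $D^2\Phi(e)$ so that it is strictly subharmonic for $D^2\Phi(e):D^2$. With this perturbation, touching cannot occur at interior points, by the previous step together with a strict inequality argument. The touching points $x_k$ must then lie on $\G(u_k)$ for a subsequence. There $\phi_k$ touches $u_k$ from below, so the viscosity condition gives $\cG_k(\na\phi_k(x_k))\le 1$. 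We expand $\cG_k(e_k+\e_k\na P(x_k))=1+\e_k D\cG_k(e_k)\cdot\na P(x_k)+o(\e_k)$, where $D\cG_k(e_k)=D^2\Phi_k(e_k)[e_k]$ and the error is uniform because $\Phi_k$ is uniformly $C^{2,\alpha}$ on $\An$. This yields $D^2\Phi(e)[e]\cdot\na P(x_0)\le 0$, a contradiction. The upper touching case gives the reverse inequality.

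The delicate point is ruling out touching in the positive phase near $\{x\cdot e=0\}$, where $\tilde u_k$ is only defined on $\overline{\{u_k>0\}}$. I would handle this exactly as in \cite{V}, working with the strict subsolution property of the perturbed $P$. Finally, since viscosity solutions of this oblique-derivative problem for a constant-coefficient operator are classical, the linearized problem \eqref{eqn: linearized eqn} holds as stated.
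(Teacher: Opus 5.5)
Your overall scheme is the same as the paper's: the interior equation comes from the uniform convergence $D^2\Phi_k\to D^2\Phi$ on $\An$, and the oblique condition comes from ruling out positive-phase contact with a strict subsolution and then expanding $\cG_k$ at a free-boundary contact point. Your condition $D^2\Phi(e):D^2P>0$, in place of the paper's $\mathcal M^-(D^2P,1/\Lambda,\Lambda)\ge 1$, is enough to exclude interior contact, since $\nabla\phi_k(x_k)\to e$ inside $\An$.

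\textbf{The gap.} The reduction to a strict subsolution fails as you wrote it. The polynomial $P(x)-\kappa|x'-x_0'|^2+C\kappa(x\cdot e)^2$ need not lie below $\tilde u$. The term $C\kappa(x\cdot e)^2$ is positive on $\{x\cdot e>0\}$. Knowing that $P$ touches $\tilde u$ from below, even strictly, gives no quantitative separation of $\tilde u-P$ in the normal direction that could absorb it. For example, if $\tilde u-P$ vanishes, or vanishes to third order, along the normal ray through $x_0$, the perturbed polynomial crosses $\tilde u$ at points arbitrarily close to $x_0$. Nor is the added term small: to make the polynomial strictly $D^2\Phi(e)$-subharmonic, $C\kappa$ must dominate $-D^2\Phi(e):D^2P$, which may be of order one. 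Without touching from below at $x_0$ you get no contact points $x_k\to x_0$. So the argument breaks exactly at the step you flagged as delicate.

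\textbf{The fix.} What is missing is a linear correction in the normal direction, which is what the paper uses. With $x_0=0$, take $P_\eta(x)=P(x)-\eta|x|^2-\eta(x\cdot e)+C(\eta)(x\cdot e)^2$.
\begin{itemize}
\item Since $-\eta t+C(\eta)t^2\le 0$ for $0\le t\le \eta/C(\eta)$, $P_\eta$ still touches $\tilde u$ strictly from below at $0$, on $\{x\cdot e\ge 0\}$ intersected with a small ball. This holds however large $C(\eta)$ is, so $C(\eta)$ can be chosen to make $P_\eta$ a strict subsolution.
\item The price is $\nabla P_\eta(0)=\nabla P(0)-\eta e$, so the oblique derivative drops by $\eta\,D^2\Phi(e)[e]\cdot e$.
\item Under your contradiction hypothesis $D^2\Phi(e)[e]\cdot\nabla P(0)>0$, the oblique derivative stays positive for $\eta$ small. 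Equivalently, as in the paper, prove $D^2\Phi(e)[e]\cdot\nabla P_\eta(0)\le 0$ and let $\eta\to 0$.
\end{itemize}
With this change the rest of your argument goes through.
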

\begin{proof}

First we show $\tilde{u}$ solves the equation in the interior. Suppose $\phi \in C^\infty(B_1)$ touches $\tilde{u}$ from below at $x_0 \in \{x\cdot e>0\}\cap B_{1/2}$. We can find $x_k \to x_0$ with $x_k \in \{u_k>0\}\cap B_{1/2}$ and $c_k \to 0$ such that $\phi+c_k$ touches $\tilde{u}_k$ from below locally at $x_k$. At such a point, $Q_k(x) =x\cdot e_k + \e_k(\phi(x)+c_k)$ touches $u_k$ from below at $x_k$. Since $\na Q_k(x_k) \in \An$ for $k$ large, the assumption that $u_k$ is a viscosity solution guarantees that 
\[
D^2 \Phi_k(e_k + \e_k \na \phi(x_k)): D^2\phi(x_k) \leq g_k(x_k)/\e_k.
\]
The right hand side tends to zero thanks to \eqref{eqn: small g hyp contra arg}, while the left-hand side converges to $D^2\Phi(e) : D^2\phi(x_0)$ thanks to the uniform convergence of $D^2\Phi_k$ on $\An$ and the smoothness of $\phi$. Thus $D^2 \Phi(e):D^2\phi(x_0) \leq 0.$ An analogous argument holds for a smooth function $\phi$ touching $\tilde{u}$ from above at $x_0 \in \{ x\cdot e>0\}$. Thus $\tilde{u}$ satisfies $D^2\Phi(e):D^2 \tilde{u} =0$ in $\{ x\cdot e>0\}$ in the viscosity sense; this is a constant coefficient linear equation, so standard regularity theory shows that $\tilde{u}$ is analytic in $\{ x \cdot e>0\}\cap B_{1/2}$ and solves the equation classically. 

Next, suppose $P$ is a quadratic polynomial that touches $\tilde{u}$ locally from below at  $x_0 \in \{x\cdot e =0\}\cap B_{1/2}$. Without loss of generality, we may assume $x_0=0$, and by replacing $P(x)$ with $P(x)-\eta|x|^2-\eta( e \cdot x)+ C(\eta)(x\cdot e)^2$ for $\eta$ small, we may assume that $P$ locally touches $\tilde{u}$ strictly from below at $0$ and that $\mathcal{M}^-(D^2P, 1/\La, \La) \geq 1$ in a small neighborhood of the origin. So, as before, we can find $x_k\in \overline{\{u_k>0\}}$ with $x_k \to 0$ and $c_k\to 0$ such that $P +c_k$ touches $\tilde{u}_k$ from below at $x_k$. 

In terms of $u_k$, this means the polynomial $Q_k = \e_k (P+c_k) + e_k\cdot x $ touches $u_k$ from below at $x_k$ and satisfies $\mathcal{M}^-(D^2Q_k, 1/\La, \La) \geq \e_k$. 
In view of \eqref{eqn: small g hyp contra arg} and the fact that $\Lo_{\Phi_k} u_k =g_k$ in $\{u_k>0\}$ in the viscosity sense, we cannot have $x_k \in \{u_k>0\}$ and hence $x_k \in \G_{u_k}$ for $k$ large enough. At such a free-boundary contact point, $Q_k^+$ touches $u_k$
from below. Hence, as $u_k$ satisfies the free boundary condition in the viscosity sense and using the uniform $C^1$ modulus of continuity, we have
$$1 \geq \cG_k(\na Q_k(u_k)) = \cG_k(e_k) + \e_kD^2\Phi_k(e_k)[e_k] \cdot \na P(x_k) +o(\e_k)= 1 + \e_k D^2\Phi_k(e_k)[e_k]  \cdot \na P(x_k) +o(\e_k).$$
Here we again used that $D\cG(p) = D^2\Phi(p)[p]$ whenever $D^2\Phi(p)$ exists (and in particular for any $p \in \An.$
Therefore, $D^2\Phi_k(e_k)[e_k]  \cdot \na P(x_k) \leq 0$, and so passing to the limit, $D^2\Phi_k(e_k)[e_k]  \cdot \na P(x_0) \leq0$
 We argue analogously if $P$ touches $\tilde{u}$ from above at $0$ to find $D^2\Phi_k(e_k)[e_k]  \cdot \na P(x_0) \geq 0.$ Thus the oblique boundary condition in \eqref{eqn: linearized eqn} in the viscosity sense, and by boundary regularity for viscosity solutions with oblique boundary conditions, see e.g. \cite[Theorem 1.2]{LZoblique}, $\tilde{u}$ is regular up to the boundary and this holds classically.
\end{proof}

From here, another standard arguement yields the following improvement of flatness statement; c.f \cite[Lemma 4.1]{DeSilva}, \cite[Theorem 7.4]{V}. 
\begin{lemma}\label{lem: IOF}
There exist ${\e}_0 = {\e}_0(\La, n) \in (0,1)$, $C = C(\La ,n)>0$, $\sigma(\La, n)\in (0,1)$, and  $r_0 =r_0 (\La, n) \in (0,\sigma^2/2)$, and such that the following holds.
Given $g \in C(B_1)\cap L^\infty(B_1)$, and $e \in \R^n$ with $\cG(e)=1$,
suppose $u$ is a viscosity solution to \eqref{eqn: e reg sec EL} in $B_1$ with   $0 \in \GU$ such that
\begin{align}\label{eqn: IOF flat}
 &   \left(x \cdot e -\e\right)_+ \leq u(x) \leq \left(x \cdot e +\e \right)_+ \qquad \text{in } B_1
\end{align}
and 
\begin{align}\label{eqn: IOF small RHS}
      \|g\|_{L^\infty(B_1)} \leq \e^{2}
\end{align}
 for some $\e \leq \e_0$. Then there is some $e' \in \R^n$ with $\cG(e')=1$ and $|e-e'|<C\e$ such that $u_{r_0}(x) = u(r_0x)/r_0$ satisfies
\begin{equation}\label{eqn: improvement}
(x \cdot e' -\sigma \e)_+ \leq u_{r_0}(x) \leq \left(x \cdot e' +\sigma \e \right)_+ \qquad \text{in } B_{1}\,.
\end{equation}
\end{lemma}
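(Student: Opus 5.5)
We argue by contradiction and compactness, following De Silva's scheme \cite{DeSilva} (compare \cite[Lemma 4.1]{DeSilva}, \cite[Theorem 7.4]{V}). The constants $\sigma$ and $r_0$ will be fixed first, from the regularity of the linearized problem \eqref{eqn: linearized eqn}. Only then will we choose $\e_0$, small, by contradiction.

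\textbf{Choice of $\sigma$ and $r_0$.} After a linear change of variables, the operator $D^2\Phi(e):D^2$ becomes the Laplacian. The oblique condition $D^2\Phi(e)[e]\cdot\na\tilde u=0$ then becomes a Neumann-type condition on the transformed half-space; this uses $D^2\Phi(e)[e]\cdot e>0$. By \cite{LZoblique}, every solution with $|\tilde u|\leq1$ in $B_{1/2}\cap\{x\cdot e\geq0\}$ is $C^{2}$ up to the flat boundary, with universal bounds, since $e$ ranges over the compact set $\{\cG=1\}\subset\An$ and the $D^2\Phi$ stay in a compact family. A second-order Taylor expansion at $0$ gives
\[
|\tilde u(x)-\tilde u(0)-\na\tilde u(0)\cdot x|\leq C_1|x|^2 .
\]
The boundary condition forces $D^2\Phi(e)[e]\cdot\na\tilde u(0)=0$, i.e. $\na\tilde u(0)$ is tangent to the level set $\{\cG=1\}$ at $e$. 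We choose $r_0$ with $C_1r_0\leq\sigma/8$ and, say, $\sigma=1/2$; $r_0$ can be made smaller than $\sigma^2/2$.

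\textbf{Contradiction.} Suppose the lemma fails. Then there are $\Phi_k\in\II$, $g_k$, $e_k$ with $\cG_k(e_k)=1$, $\e_k\to0$, and solutions $u_k$ satisfying \eqref{eqn: IOF flat} and \eqref{eqn: IOF small RHS}, for which \eqref{eqn: improvement} fails for every admissible $e'$. Letting $\Phi_k$ vary as well is harmless, since $\II$ is compact and it gives uniformity in $\Phi$. By Corollary~\ref{cor: iof convergence} and Lemma~\ref{lem: lin}, along a subsequence $e_k\to e$, $\Phi_k\to\Phi$, and $\tilde u_k\to\tilde u$ in the Hausdorff sense of graphs, where $\tilde u$ solves \eqref{eqn: linearized eqn}. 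Since $0\in\G(u_k)$, we have $\tilde u(0)=0$. Set $\xi=\na\tilde u(0)$. We then define
\[
e_k'=\frac{t_k(e_k+\e_k\xi)}{|\,\cdot\,|}\,\big|e_k+\e_k\xi\big|,
\]
where the scalar $t_k=1+O(\e_k^2)$ is chosen so that $\cG_k(e_k')=1$. Such a $t_k$ exists because $t\mapsto\cG_k(tp)$ is strictly increasing, $\cG_k$ is $C^1$ uniformly on $\An$, and $\xi$ is tangent to $\{\cG=1\}$ at $e$ up to $o(1)$. Thus $|e_k'-e_k|\leq C\e_k$.

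\textbf{Transferring the estimate.} Uniform Hausdorff convergence of the graphs gives, for $k$ large,
\[
|\tilde u_k(x)-\xi\cdot x|\leq \tfrac{\sigma r_0}{2}\qquad\text{on }\overline{\{u_k>0\}}\cap B_{r_0}.
\]
Unwinding the definition of $\tilde u_k$, this yields
\[
(x\cdot e_k'-\sigma\e_k r_0)_+\leq u_k\leq(x\cdot e_k'+\sigma\e_k r_0)_+
\]
in $B_{r_0}$, which after rescaling is \eqref{eqn: improvement}. The errors $O(\e_k^2)$ coming from $t_k$ are absorbed, and on the zero phase the bound follows from the Hausdorff closeness of the free boundaries to $\{x\cdot e_k'=0\}$. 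This contradicts the choice of the sequence.

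\textbf{Main obstacle.} The delicate step is choosing $e'$ so that it exactly satisfies the nonradial Bernoulli condition $\cG(e')=1$ while staying $O(\e^2)$-close to $e+\e\xi$. This is where the oblique boundary condition of Lemma~\ref{lem: lin} is essential, together with the uniform $C^1$ control of $D\cG=D^2\Phi[\cdot]$ on $\An$. The uniform $C^2$ boundary regularity for the linearized oblique problem over the compact family of $e$ and $D^2\Phi(e)$ is the other point needing care.
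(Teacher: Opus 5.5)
Your argument is correct and follows the same standard De Silva compactness scheme the paper appeals to: contradiction, linearization through Corollary~\ref{cor: iof convergence} and Lemma~\ref{lem: lin}, second-order expansion of the limit $\tilde u$ at the origin, and transfer back via Hausdorff convergence of the graphs. One small correction: $\xi$ is tangent to $\{\cG=1\}$ only at the limit $e$, and $D\cG_k$ is merely H\"older continuous on $\An$, so you get $t_k=1+o(\e_k)$ rather than $1+O(\e_k^2)$; this still suffices to absorb the error into $\sigma\e_k r_0$ for $k$ large.
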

 Finally,  Theorem~\ref{thm: eps reg} follows from Lemma~\ref{lem: IOF} via a standard argument, as in \cite{DeSilva, V}.

\bibliographystyle{acm}
\bibliography{references.bib}

\end{document}